\documentclass[12pt,a4paper]{amsart}
\usepackage[a4paper, left=28mm, right=28mm, top=28mm, bottom=34mm]{geometry}
\usepackage{latexsym}
\usepackage{amsmath}
\usepackage{amssymb}
\usepackage{amsthm}
\usepackage{amscd}
\usepackage{mathrsfs}
\usepackage{mathtools}
\usepackage[all]{xy}
\usepackage{graphicx}
\usepackage{comment}
\usepackage{arydshln}
\usepackage{stmaryrd}
\usepackage{url} 
\usepackage{textcase}
\usepackage[dvipsnames]{xcolor}
\usepackage[normalem]{ulem}

\newtheorem{theorem}{Theorem}[section]
\newtheorem{lemma}[theorem]{Lemma}
\newtheorem{corollary}[theorem]{Corollary}
\newtheorem{proposition}[theorem]{Proposition}

\newtheorem{example}[theorem]{Example}
\theoremstyle{definition}

\newtheorem{remark}[theorem]{Remark}
\newtheorem{definition}[theorem]{Definition}

\def\A{{\mathbb A}}
\def\F{{\mathbb F}}

\def\Z{{\mathbb Z}}
\def\C{{\mathbb C}}
\def\Ql{\overline{\mathbb{Q}}_\ell}

\theoremstyle{remark}

\makeatletter

\@addtoreset{equation}{section}
\makeatother

\makeatletter

\@addtoreset{equation}{section}
\makeatother

\makeatletter
\newcommand{\subsubsubsection}{\@startsection{paragraph}{4}{\z@}%
 {1.0\Cvs \@plus.5\Cdp \@minus.2\Cdp}%
 {.1\Cvs \@plus.3\Cdp}%
 {\reset@font\sffamily\normalsize}
 }
\makeatother
\DeclareMathOperator{\Tr}{Tr}
\DeclareMathOperator{\Nr}{Nr}

\DeclareMathOperator{\Ima}{Im}

\usepackage{bm}

\begin{document}

\title[Additive Polynomial Factorization \& 
 van der Geer--van der Vlugt curves]{Factorization of Additive Polynomials and van der Geer--van der Vlugt curves in characteristic $2$}

\author{Tetsushi Ito}
\address{
Department of Mathematics, Faculty of Science, Kyoto University
Kyoto, 606--8502, Japan}
\email{tetsushi@math.kyoto-u.ac.jp}

\author{Daichi Takeuchi}
\address{Department of Mathematics,
Institute of Science Tokyo,
2-12-1 Ookayama, Meguro-ku, Tokyo, 152-8551, Japan
}
\email{daichi.takeuchi4@gmail.com}

\author{Takahiro Tsushima}
\address{
Keio University School of Medicine,
4-1-1 Hiyoshi, Kohoku-ku,
Yokohama, 223-8521, Japan}
\email{tsushima@keio.jp}

\date{}

\subjclass[2020]{Primary: 14F20, 14G15; Secondary: 14H25, 14G10.}

\keywords{Artin--Schreier curves; van der Geer--van der Vlugt curves; maximal curves}

\begin{abstract}
In our previous work, we gave a formula for the Frobenius eigenvalues of van der Geer--van der Vlugt curves in characteristic $2$ by considering suitable quotients of the curve. 
Although the formula is explicit, it depends on many choices, which makes the formula complicated.
In this article, we take a different approach using a factorization of additive polynomials, and prove a new formula. The resulting formula is simpler and is useful for explicit computations. 
As applications, we provide a method for constructing maximal and minimal van der Geer--van der Vlugt curves, and show that every such curve arises from this construction. We also compute various examples of van der Geer--van der Vlugt  curves and study their periods. 
\end{abstract}

\maketitle

\section{Introduction}
\label{Introduction}

Let $p_0$ be a prime, $p$ a power of $p_0$, and $q$ a power of $p$. 
Let $R(x)\in\F_q[x]$ be an $\F_p$-linearized polynomial, namely a polynomial of the form 
\[
R(x)=\sum_{i=0}^e a_i x^{p^i} \in \F_q[x].\]
Assume that $e\geq1$ and $a_e \neq 0$. We consider the smooth plane curve over $\F_q$ defined by 
\[
C_R: y^p-y=xR(x). 
\] 
We denote by $\overline{C}_R$ its smooth compactification. We call $C_R$ and $\overline{C}_R$ the \textit{van der Geer--van der Vlugt curves} associated with $R(x)$. 
Explicit computation of the Frobenius eigenvalues of van der Geer--van der Vlugt curves is an interesting problem in number theory and coding theory: see \cite{BP, C, GV, ITT, ITT0, TT}.
In~\cite{ITT} and~\cite{TT}, when $p_0$ is odd, a formula for the Frobenius eigenvalues was given in terms of quadratic Gauss sums. The case $p_0=2$ is excluded there, which reflects the fact that the quadratic Gauss sum is not defined when $p_0=2$. For this reason, the Frobenius eigenvalues of van der Geer--van der Vlugt curves are more mysterious, especially in characteristic $2$. 

From now on, we assume that $p_0=2$. In~\cite{ITT0}, the authors studied the case $p_0=2$ and proved a formula for the Frobenius eigenvalues of van der Geer--van der Vlugt curves in characteristic $2$.
However, the formula proved in~\cite{ITT0} depends on many choices, which makes the formula complicated. 
In this article, we take another approach to the Frobenius eigenvalues and prove a new formula. The resulting formula is simpler and is useful for explicit computations. 
As applications, we determine the maximal and minimal curves among a family of twists of van der Geer--van der Vlugt curves.
We also study the periods and parities of van der Geer--van der Vlugt curves. 

\medskip

We describe our results in more detail. In~\cite{ITT0}, the curve $C_R$ is studied through an action of a Heisenberg group $H_R$. In loc.~cit., we fix a maximal abelian subgroup $A\subset H_R$, and associate an $\F_p$-linearized polynomial $F_A$. The formula proved in \cite{ITT0} is described using such a datum $(A,F_A)$. In this article, we start with an $\F_p$-linearized polynomial $F$, which serves as $F_A$. It turns out that this approach simplifies the computation of the Frobenius eigenvalues, and hence the resulting formula. Note that this computation does not require detailed group-theoretic properties of $H_R$. 

In this paper, we start with a pair $(F,t)$ where 
\[F(x)=\sum_{i=0}^e b_i x^{p^i} \in \F_q[x]
\] 
is an $\F_p$-linearized polynomial and $t\in \F_q$. We further impose the following conditions on $F$. Let $\F$ be an algebraic closure of $\F_q$. For each $x\in \F$, define 
\[
F^\ast(x):=\sum_{i=0}^e (b_i x)^{p^{-i}}\in\F. \]
Such an $F^\ast$ is called the \emph{adjoint} of the $\F_p$-linearized polynomial $F$. 
The adjoint $F^\ast$ defines an $\F_p$-linear map $\F\to \F$, and we denote by $W_F^\ast$ its kernel. 
\begin{enumerate}
    \item We have $e\geq1$ and $b_0b_e\neq0$. 
    \item  We have $F^\ast(1)=0$. 
\item We have $W_F^\ast\subset \F_q$.   
\end{enumerate}
For such a pair $(F,t)$, define 
\begin{align*}
    R_{F}(x):=&\sum_{i=1}^e a_i x^{p^i} \in \F_q[x], \quad 
\textrm{with } a_i:=\sum_{j=0}^{e-i} (b_j b_{j+i})^{p^{-j}},\\
\alpha_F(t):=&F^\ast(t)^2+\sum_{0 \le i<j \le e} b_i^{p^{-i}} b_j^{p^{-j}} \in \F_q.
\end{align*}
We also set 
\[R_{F,t}(x):=R_F(x)+\alpha_F(t)x.\]
We then associate to $(F,t)$ a smooth affine $\F_q$-curve 
\[
D_{F,t} \colon y^p-y=x R_{F,t}(x), 
\]
which is one of the main objects of study in this paper. 
In our previous notation, the curve $D_{F,t}$ is denoted by $C_{R_{F,t}}$. 

If one uses the adjoint operator $f\mapsto f^\ast$ for $\F_p$-linearized polynomials, whose precise definition is recalled in Definition~\ref{defadjoint}, then one can verify that $R_F$ satisfies 
\[
R_F+R_F^\ast=F^\ast\circ F. 
\]
The new insight presented in this article is that, for a given $\F_p$-linearized polynomial $R$, a factorization of the form $R+R^\ast=F^\ast\circ F$ allows one to compute the Frobenius eigenvalues of $C_R$ explicitly.

To some extent, one may say that the curves $D_{F,t}$ form a sufficiently large subclass of the van der Geer--van der Vlugt curves. For instance, every $\F_q$-maximal or $\F_q$-minimal van der Geer--van der Vlugt curve is isomorphic to $D_{F,t}$ for some pair $(F,t)$. See Theorem~\ref{recipe for ex curve intro}. In fact, for a given $R(x)$, we give a necessary and sufficient condition for $R$ to admit a pair $(F,t)$ satisfying  $R=R_{F,t}$. For the precise statement, see Theorem~\ref{detR}; see also Corollary~\ref{VrFq} and Theorem~\ref{mainthm}.

In what follows, we highlight some of the main results of this paper.

\subsection{An Explicit Formula for the Frobenius Eigenvalues of $D_{F,t}$}

Let $(F,t)$ be a pair as above, and let $D_{F,t}$ be the associated curve. Let $\overline{D}_{F, t}$ denote the smooth compactification of $D_{F, t}$. The $L$-polynomial of $\overline{D}_{F,t}$ is described as follows. 

Let $W_2$ denote the Witt vector scheme of length two. 
Let $\ell$ be an odd prime and  
let 
\[
Q_q \colon \F_q \xrightarrow{x \mapsto (x,0)} W_2(\F_q) \xrightarrow{\Tr_{q/2}} W_2(\F_2) \xrightarrow{\xi_2}
\overline{\mathbb{Q}}^{\times}_{\ell}, 
\]
where $\Tr_{q/2} \colon W_2(\F_q) \to W_2(\F_2)$ denotes the trace map and 
$\xi_2$ is a fixed faithful character of the group $W_2(\F_2) \simeq \mathbb{Z}/4\mathbb{Z}$. Set $\sqrt{-1}:=\xi_2(1,0)$, which is a primitive $4$-th root of unity. 

The function $Q_q$ is an even-characteristic analogue of the quadratic form $\frac{1}{2}x^2$, in the sense that 
\[
Q_q(x+y)Q_q(x)^{-1}Q_q(y)^{-1}=(-1)^{\Tr_{\F_q/\F_2}(xy)}. 
\]
\begin{theorem}[Theorem \ref{FEV1}]\label{thm1 intro}
The $L$-polynomial 
\[
L(\overline{D}_{F,t}/\F_q,T):=\det(1-{\rm Fr}_qT \mid H^1(\overline{D}_{F,t,\F},\Ql))
\]
satisfies 
\[
L(\overline{D}_{F,t}/\F_q,T)=\prod_{v\in W_F^\ast}(1-\tau_vT)^{p-1},
\]
where $\tau_v:=Q_q(t+v)^{-1}(-1-\sqrt{-1})^{[\F_q:\F_2]}$. 
\end{theorem}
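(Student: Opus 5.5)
The plan is to compute the numbers of points $\#\overline{D}_{F,t}(\F_{q^n})$ for all $n\ge1$ as explicit exponential sums. Then I would recognise them as $q^n+1-(p-1)\sum_{v\in W_F^\ast}\tau_v^n$, and conclude by the Grothendieck--Lefschetz trace formula. This determines the $L$-polynomial: the right-hand side has degree $(p-1)\#W_F^\ast$, which has to agree with $2g$ (a check I have not yet done). Set $q_n:=q^n$ and $\Tr:=\Tr_{\F_{q_n}/\F_2}$.

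By the standard Artin--Schreier decomposition,
\[
\#D_{F,t}(\F_{q_n})=q_n+\sum_{\psi\neq1}\sum_{x\in\F_{q_n}}\psi\bigl(\Tr_{q_n/p}(xR_{F,t}(x))\bigr),
\]
with one extra point at infinity. Every nontrivial $\psi$ has the form $a\mapsto(-1)^{\Tr_{p/2}(ca)}$ with $c\in\F_p^\times$. The summand is then $(-1)^{\Tr(c\,xR_{F,t}(x))}$. My first attempt would be to show directly that this sum is independent of $c$, which would give the factor $p-1$. For this I would substitute $x\mapsto\lambda x$ with $\lambda\in\F_p^\times$: since $R_{F,t}$ is $\F_p$-linear, this multiplies $xR_{F,t}(x)$ by $\lambda^2$, and squaring is bijective on $\F_p^\times$. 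So it suffices to treat the character $(-1)^{\Tr(\cdot)}$.

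The key step is a quadratic identity. Write $\langle x,y\rangle=\Tr(xy)$ and use the adjoint relation $\Tr(aF(b))=\Tr(F^\ast(a)b)$ on $\F_{q_n}$. Then $x\mapsto\Tr(xR_F(x))$ is a quadratic form whose polar form is
\[
\Tr\bigl(x(R_F+R_F^\ast)(y)\bigr)=\Tr\bigl(F(x)F(y)\bigr).
\]
By the defining property of $Q_{q_n}$, the function $x\mapsto Q_{q_n}(F(x))$ has the same ``polarisation''. Hence the ratio $(-1)^{\Tr(xR_F(x))}Q_{q_n}(F(x))^{-1}$ is a character of $\F_{q_n}$, say $(-1)^{\Tr(\beta x^2)}$. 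I would identify $\beta=\sum_{i<j}b_i^{p^{-i}}b_j^{p^{-j}}$ by comparing the diagonal terms. This uses $Q(u)^2=(-1)^{\Tr(u^2)}$, expanding $F(x)^2$ and $xR_F(x)$ monomial by monomial, and $F^\ast(1)=0$ to cancel the remaining terms. I expect this bookkeeping in $W_2$ to be the main obstacle.

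Next, write $(-1)^{\Tr(F^\ast(t)^2x^2)}=(-1)^{\Tr(F^\ast(t)x)}=(-1)^{\Tr(tF(x))}$. Then the summand becomes
\[
Q_{q_n}\bigl(F(x)+t\bigr)^{-1}Q_{q_n}(t)\cdot(\text{sign fixes from }Q^{-1}\text{ vs }Q),
\]
and the sum depends on $x$ only through $y=F(x)$. The image of $F$ is the orthogonal complement of $W_F^\ast$, because $W_F^\ast\subset\F_q\subset\F_{q_n}$, and each fibre of $F$ has size $q_n/\#\operatorname{Im}F$. Expanding the indicator of $\operatorname{Im}F$ as $(\#W_F^\ast)^{-1}\sum_{v}(-1)^{\Tr(vy)}$, I would complete the square again via $(-1)^{\Tr(vy)}=Q(y+v)Q(y)^{-1}Q(v)^{-1}$. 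This reduces everything to the Gauss-type sum
\[
\sum_{y\in\F_{q_n}}Q_{q_n}(y)^{-1}=(-1-\sqrt{-1})^{n[\F_q:\F_2]}.
\]
I would prove this by Hasse--Davenport from the case $\F_2$, where the sum is $1+\xi_2(1,0)^{-1}=1-\sqrt{-1}$; the exact sign convention is to be checked. I would also use $Q_{q_n}|_{\F_q}=Q_q^{\,n}$. Collecting terms gives $-\sum_v\tau_v^n$ for each $\psi$, up to verifying that the fibre-size factors cancel, which they should since $\#\ker F=\#W_F^\ast$.
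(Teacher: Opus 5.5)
Your plan is sound. It takes a more elementary route that mirrors the paper's argument at the level of trace functions.

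The paper never counts points over $\F_{q^n}$. Instead:
\begin{itemize}
\item it upgrades Lemma~\ref{F0} to a sheaf isomorphism $F^\ast\mathcal{L}_{\xi_p}(x,tx)\cong\mathcal{L}_{\psi_p}(xR_{F,t}(x))$;
\item it uses the projection formula and $F_\ast\Ql\cong\bigoplus_{v\in W_F^\ast}\mathcal{L}_{\psi_p}(vx)$ to split $H^1_c$ Frobenius-equivariantly into the lines $H^1_c(\A^1_{\F},\mathcal{L}_{\xi_p}(x,(t+v)x))$, which are one-dimensional by Proposition~\ref{rkLxi} (resting on \cite[3.3]{TT});
\item it reads off each eigenvalue by completing the square and applying Corollary~\ref{HD}.
\end{itemize}
Your ingredients are exactly the trace functions of these sheaf statements: $Q_{q_n}(F(x))=\psi_{q_n}(xR_{F,0}(x))$, the expansion of the indicator of $F(\F_{q_n})=(W_F^\ast)^\perp$ over $v\in W_F^\ast$, and completing the square. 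Recovering $L$ from all power sums $S_n$ replaces the eigenspace decomposition. You lose the attribution of each $\tau_v$ to a specific cohomological line. You gain that, once the Gauss sum is handled as below, everything except the Lefschetz formula for $\overline{D}_{F,t}$ is elementary.

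Your degree check is automatic, since power sums for all $n$ determine $\prod_i(1-\alpha_iT)$; in any case $2g=p^e(p-1)=(p-1)\#W_F^\ast$. The polarization detour is also unnecessary. The $W_2$ bookkeeping you anticipate is Lemmas~\ref{ij} and~\ref{F0}, which give $Q_{q_n}(F(x))$ directly.

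Two repairs are needed.

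\emph{The Gauss sum.} Your displayed value is wrong and contradicts your own base case $1-\sqrt{-1}$. The correct values are $\sum_{y\in\F_{2^s}}Q_{2^s}(y)=-(-1-\sqrt{-1})^s$ and $\sum_{y\in\F_{2^s}}Q_{2^s}(y)^{-1}=-(-1+\sqrt{-1})^s$. With your convention you then get $S_n=\bigl(\sum_yQ_{q_n}(y)^{-1}\bigr)\sum_{v}Q_{q_n}(t+v)$. This is the image of $-\sum_v\tau_v^n$ under $\sqrt{-1}\mapsto-\sqrt{-1}$, hence equal to it because $S_n\in\Z$. If you instead keep $Q$, the $v$-term is $Q(t)^{-2}Q(v)^{-2}Q(t+v)$. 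It becomes $Q(t+v)^{-1}$ only via $Q(u)^2=\psi_{q_n}(u)$, and $Q(v)^2\neq1$ can occur on $W_F^\ast$ (e.g.\ $F=\tau+1$, $q=2$, $n$ odd).

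\emph{Hasse--Davenport.} The classical theorem does not literally cover $\sum Q_{2^s}$, which is a $W_2$-character sum. You need either the paper's one-dimensionality of $H^1_c(\A^1_{\F},\mathcal{L}_\xi(x,0))$ or Weil's argument:
\begin{itemize}
\item For monic $f\in\F_2[x]$, put $\lambda(f):=\xi_2(c_1,c_2)$, where $c_1,c_2$ are the coefficients of $x^{\deg f-1},x^{\deg f-2}$.
\item By Lemma~\ref{ij}, $(c_1,c_2)=\sum_\alpha(\alpha,0)$ over the roots. So $\lambda$ is multiplicative, with $\lambda(P)=Q_{2^d}(\alpha)$ for irreducible $P$ of degree $d$ and root $\alpha$.
\item The degree-$d$ sums of $\lambda$ equal $1+\sqrt{-1}$ for $d=1$ and $0$ for $d\ge2$.
\end{itemize}
Hence $\prod_P(1-\lambda(P)T^{\deg P})^{-1}=1+(1+\sqrt{-1})T$, and the logarithmic derivative gives $-\sum_{y\in\F_{2^s}}Q_{2^s}(y)=(-1-\sqrt{-1})^s$.
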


The above formula is simpler than the one obtained in  \cite[Theorem 1.1]{ITT0}. In loc.~cit., we impose a certain condition on $C_R$. One can show that this condition is equivalent to the existence of a pair $(F,t)$ as above such that $R(x)=R_{F,t}(x)$. See Theorem~\ref{detR} for details.  

\subsection{Maximal and Minimal Twists}
Fix a polynomial 
\[
R(x)=\sum_{i=1}^ea_ix^{p^i}
\]
with $a_i\in\F_q$ and $a_e\neq0$. For each $a\in \F_q$, set $R_a(x):=R(x)+ax$, and consider the van der Geer--van der Vlugt curve $C_a:=C_{R_a}$. 
Its compactification is denoted by $\overline{C}_{a}$.
The curves $C_{a}$, $\overline{C}_{a}$
are called the \textit{twists} of $C_{0}$, $\overline{C}_{0}$, respectively. 
We regard $C_a$ as a family of van der Geer--van der Vlugt curves parametrized by $a$. 

Let $T_{R,\max}$ (resp.\ $T_{R,\min}$) be the set of 
$a \in \F_q$ for which 
the twist $\overline{C}_{a}$ is 
$\F_q$-maximal (resp.\ $\F_q$-minimal). Here, a smooth projective curve over $\F_q$ is said to be \emph{$\F_q$-maximal} (resp.\ \emph{$\F_q$-minimal}) if it attains the Hasse--Weil upper bound (resp.\ lower bound). We also say that $\overline{C}_a$ is \emph{$\F_q$-extremal} if it is either $\F_q$-maximal or $\F_q$-minimal. 
For the precise definition, see Definition~\ref{mmx}. 

For each $x\in \F$, define 
\[
E(x):=R(x)+\sum_{i=1}^e(a_ix)^{p^{-i}}, 
\]
and 
\[V:=\ker(E\colon \F\to \F). \]
It is known that, if either of $T_{R,\max}$ or $T_{R,\min}$ is nonempty, then $V\subset \F_q$. Moreover, the condition $V\subset\F_q$ implies that $[\F_q:\F_p]$ is even. 
For this reason, we henceforth assume that $V\subset \F_q$ and $2\mid[\F_q:\F_p]$. In this case, van der Geer and van der Vlugt \cite[(6.2)]{GV} determine the cardinalities of the sets $T_{R,\max}$ and $T_{R,\min}$, at least when $p=2$. In this paper, we describe these sets by using Theorem~\ref{thm1 intro} as follows. 

Assume that $V\subset\F_q$. Then, by Lemma~\ref{F always exists}, there exists $F$ satisfying the above conditions such that $R=R_F$. Choose and fix such an $F$. 
Let $\Tr_{q/2} \colon \F_q \to \F_2$ denote the trace map and set 
$\psi_q(x):=(-1)^{\Tr_{q/2}(x)}$ for $x \in \F_q$. 
We define 
\begin{align*}
S_{F}:&=\{t \in \F_q \mid \forall v \in W_F^\ast,\ Q_q(v)=\psi_q(tv)\}, \\
S_{F,-}:&=\left\{t \in S_F \mid Q_q(t)=-(\sqrt{-1})^{[\F_q:\F_2]/2}\right\}, \\
S_{F,+}:&=\left\{t \in S_F \mid Q_q(t)=(\sqrt{-1})^{[\F_q:\F_2]/2}\right\}. 
\end{align*}
In the following theorem,
we give a parametrization of maximal and minimal twists.

\begin{theorem}[Theorem \ref{mainthm}]\label{mainthm intro}
We have
\[
\alpha_F(S_{F,-}) = T_{R_F,\max}, \qquad \alpha_F(S_{F,+}) = T_{R_F,\min}. 
\]
\end{theorem}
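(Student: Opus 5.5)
We derive Theorem~\ref{mainthm intro} from Theorem~\ref{thm1 intro} by reading off when all Frobenius eigenvalues equal $\mp q^{1/2}$.

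\textbf{Step 1: extremality in terms of $t$.} Fix $F$ with $R=R_F$. Since $\alpha_F(t)=F^\ast(t)^2+c$ with $c$ a constant, the map $\alpha_F\colon\F_q\to\F_q$ is $\F_2$-affine. Its fibres are cosets of $\ker F^\ast\cap\F_q=W_F^\ast$, using $W_F^\ast\subset\F_q$. We check that $\alpha_F$ is surjective onto $\F_q$, using $R_F+R_F^\ast=F^\ast\circ F$ and the hypothesis $V\subset\F_q$. If surjectivity fails, we instead restrict attention to $a$ in the image, and separately argue that no twist outside the image is extremal; this can likely be done via the count in \cite[(6.2)]{GV}. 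Granting surjectivity, every $a\in\F_q$ equals $\alpha_F(t)$ for some $t$, and then $\overline{C}_a=\overline{D}_{F,t}$.

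By Theorem~\ref{thm1 intro}, $\overline{D}_{F,t}$ is $\F_q$-maximal if and only if $\tau_v=-q^{1/2}$ for all $v\in W_F^\ast$, and minimal if and only if $\tau_v=q^{1/2}$ for all $v$. Here $q^{1/2}=2^{n/2}$ with $n=[\F_q:\F_2]$ even.

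\textbf{Step 2: compute $(-1-\sqrt{-1})^n$.} We have $(-1-\sqrt{-1})^2=2\sqrt{-1}$, so
\[
(-1-\sqrt{-1})^n=2^{n/2}(\sqrt{-1})^{n/2}.
\]
Hence
\[
\tau_v=q^{1/2}(\sqrt{-1})^{n/2}Q_q(t+v)^{-1}.
\]
Maximality is therefore equivalent to $Q_q(t+v)=-(\sqrt{-1})^{n/2}$ for all $v\in W_F^\ast$, and minimality to $Q_q(t+v)=(\sqrt{-1})^{n/2}$ for all $v\in W_F^\ast$.

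\textbf{Step 3: reduce the condition over all $v$ to $t\in S_F$ plus one value.} By the polarization identity,
\[
Q_q(t+v)=Q_q(t)Q_q(v)\psi_q(tv).
\]
So the condition "$Q_q(t+v)$ is constant in $v$" is equivalent to $Q_q(v)\psi_q(tv)=1$ for all $v$, i.e.\ $Q_q(v)=\psi_q(tv)$, since both sides are $\pm1$. This uses $Q_q(v)\in\{\pm1\}$ whenever $\psi_q(tv)$-compatibility is forced, which follows from the same identity. Taking $v=0$ gives the required constant value $Q_q(t)$. Thus $\overline{D}_{F,t}$ is maximal if and only if $t\in S_{F,-}$, and minimal if and only if $t\in S_{F,+}$. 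Applying $\alpha_F$ gives $\alpha_F(S_{F,\mp})\subset T_{R_F,\max/\min}$. Conversely, by surjectivity each extremal $a$ equals $\alpha_F(t)$ for some $t$, and that $t$ lies in the corresponding set, which gives the reverse inclusions.

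\textbf{Main obstacle.} The delicate point is Step 1: showing that the image of $\alpha_F$ captures every extremal twist, i.e.\ surjectivity of $\alpha_F$ onto $\F_q$, equivalently of $F^\ast$ on $\F_q$ (automatic when $W_F^\ast\subset\F_q$ makes $F^\ast|_{\F_q}$ have kernel of full size $p^e$ with image of index $p^e$). Surjectivity of $F^\ast$ itself may fail, in which case one shows that for $a\notin\alpha_F(\F_q)$ one can choose a different $F'$ with $R_{F'}=R$ and $a\in\alpha_{F'}(\F_q)$. Alternatively, one compares cardinalities of $S_{F,\pm}$ modulo $|W_F^\ast|$ with the counts in \cite{GV}. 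I would first try to prove directly that $F^\ast(\F_q)^2+c$ covers $\F_q$ by a dimension count over $\F_2$.
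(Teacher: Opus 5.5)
Your Steps 2 and 3 are correct and match the paper's argument. Since $(-1-\sqrt{-1})^n=2^{n/2}(\sqrt{-1})^{n/2}$ and $Q_q(t+v)Q_q(t)^{-1}=\psi_q(tv)Q_q(v)$, the curve $D_{F,t}$ is extremal iff $t\in S_F$. It is then maximal or minimal according as $Q_q(t)=\mp(\sqrt{-1})^{n/2}$, which gives $\alpha_F(S_{F,\mp})\subset T_{R_F,\max/\min}$. Your remark about needing $Q_q(v)\in\{\pm1\}$ is unnecessary: the condition $Q_q(v)\psi_q(tv)=1$ is literally $Q_q(v)=\psi_q(tv)$, because $\psi_q$ is $\pm1$-valued.

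The gap is in the reverse inclusions. Step 1 asserts that $\alpha_F$ is surjective onto $\F_q$, and this is false whenever $e\ge1$. As you note yourself, the fibres of $\alpha_F(t)=\gamma_F+F^\ast(t)^2$ are cosets of $W_F^\ast\subset\F_q$, which has $p^e$ elements. Hence $|\alpha_F(\F_q)|=q/p^e<q$, and the dimension count you propose would prove the opposite of what you want. Replacing $F$ by some other $F'$ does not help, because the statement concerns the fixed $F$. The appeal to the counts in \cite{GV} is also not carried out, and it would require computing $|S_{F,\pm}|$.

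What you actually need is only that every extremal twist lies in the image, namely $T_{R_F,\max}\cup T_{R_F,\min}\subset\alpha_F(\F_q)$. The paper obtains this from Proposition~\ref{Ima}:
\[
\alpha_F(\F_q)=\{a\in\F_q\mid \Tr_{q/p}\bigl(uR_F(u)+au^2\bigr)=0\ \text{for all } u\in W_F\cap\F_q\}.
\]
It proves this in two steps. First, the right-hand side is a coset of $F^\ast(\F_q)^2$, because Lemma~\ref{kfin}(ii) identifies $F^\ast(\F_q)$ with the annihilator of $W_F\cap\F_q$ under the trace pairing. Second, the right-hand side contains $\gamma_F$, because by Lemma~\ref{F0} the condition $F(u)=0$ forces $\Tr_{q/p}(uR_{F,0}(u))=0$.

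Now combine this with the criterion \eqref{N0pm}: $a$ is extremal iff $\Tr_{q/p}(uR_a(u))=0$ for all $u\in V$, where $uR_a(u)=uR_F(u)+au^2$. Since $W_F\subset V$, every extremal $a$ has the form $\alpha_F(t)$. Your Step 3 then places that $t$ in $S_{F,-}$ or $S_{F,+}$, which completes the proof.
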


In other words, the curve $\overline{C}_a$ is $\F_q$-maximal if and only if $a=\alpha_F(t)$ for some $t\in S_{F,-}$. A similar statement holds for $\F_q$-minimal twists. 

In certain cases, $S_{F,-}$ and $S_{F,+}$ admit a more explicit description; see Theorems \ref{easycase} and \ref{exmax}.

\medskip

As a consequence of Theorem~\ref{mainthm intro}, we obtain a recipe for constructing $\F_q$-extremal van der Geer--van der Vlugt curves from linear-algebraic data. Let $p$ be a power of $2$, and $q$ be a power of $p$. Assume that $[\F_q:\F_p]$ is even. 

Consider a pair $(W,t)$, where 
\begin{itemize}
    \item $W$ is an $\F_p$-linear subspace of $\F_q$ such that $1\in W$, 
    \item $t$ is an element of $\F_q$ such that 
    \[
    Q_q(v)=\psi_q(tv)\qquad\text{for all } v\in W. 
    \]
\end{itemize}
To such a pair $(W,t)$, we define 
\[
f_W(x):=\prod_{v\in W}(x-v),\qquad F_W:=(\tau^{-e}f_W)^\ast, 
\]
where $e:=\dim_{\F_p}W$. One can check that $F_W$ satisfies conditions \textup{(i)--(iii)} stated above. 
Let $D_{F_W,t}$ be the associated $\F_q$-curve. 
\begin{theorem}[Theorem~\ref{recipe for ex curve}]\label{recipe for ex curve intro}
The following statements hold: 
\begin{enumerate}
    \item The $\F_q$-curve $D_{F_W,t}$ is $\F_q$-extremal. It is $\F_q$-maximal (resp.\ $\F_q$-minimal) if and only if 
    \[
    Q_q(t)=-(\sqrt{-1})^{[\F_q:\F_2]/2}\qquad\text{(resp.\ }Q_q(t)=(\sqrt{-1})^{[\F_q:\F_2]/2}\text{)}. 
    \]
    \item Every $\F_q$-extremal van der Geer--van der Vlugt curve is obtained in this way. More precisely, let 
\[R(x)=\sum_{i=0}^e a_i x^{p^i} \in 
 \F_q[x],\]
 where $e\geq1$ and $a_e\neq0$, and assume that $C_R$ is $\F_q$-extremal. Then there exists a pair $(W,t)$ as above and $a\in\F_q^\times$ such that $(x,y)\mapsto (ax,y)$ induces an isomorphism 
 \[
 C_R\xrightarrow{\cong}D_{F_W,t}. 
 \]
\end{enumerate}
\end{theorem}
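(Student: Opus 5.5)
Part (i) rests on Theorem~\ref{thm1 intro}, applied to the pair $(F_W,t)$, with the kernel $W_{F_W}^\ast$ identified with $W$.

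\textbf{Identifying the kernel.} Since $F_W=(\tau^{-e}f_W)^\ast$ and taking adjoints is an involution, we get $F_W^\ast=\tau^{-e}f_W$. Concretely, $F_W^\ast(x)=f_W(x)^{p^{-e}}$. Its kernel on $\F$ is therefore exactly the root set of $f_W$, so $W_{F_W}^\ast=W\subset\F_q$. This also gives conditions (ii) and (iii): $1\in W$ yields $F_W^\ast(1)=0$. Condition (i) holds because $f_W$ is separable of degree $p^e$, which gives $b_0b_e\neq0$ after transport through the adjoint.

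\textbf{Computing the eigenvalues.} By Theorem~\ref{thm1 intro}, the Frobenius eigenvalues are $\tau_v=Q_q(t+v)^{-1}(-1-\sqrt{-1})^{[\F_q:\F_2]}$ for $v\in W$. The polarization identity gives $Q_q(t+v)=Q_q(t)Q_q(v)\psi_q(tv)$. The hypothesis $Q_q(v)=\psi_q(tv)$ then yields $Q_q(t+v)=Q_q(t)\psi_q(tv)^2=Q_q(t)$. So all $\tau_v$ are equal to
\[
\tau:=Q_q(t)^{-1}(-1-\sqrt{-1})^{n}, \qquad n=[\F_q:\F_2],
\]
where $n$ is even because $[\F_q:\F_p]$ is even. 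Using $(-1-\sqrt{-1})^2=2\sqrt{-1}$, we get $\tau=Q_q(t)^{-1}\,2^{n/2}(\sqrt{-1})^{n/2}=Q_q(t)^{-1}\sqrt q\,(\sqrt{-1})^{n/2}$.

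Next, $Q_q(t)$ is a fourth root of unity. The value $Q_q(t)^2=\psi_q(t^2)=\psi_q(t)$ is forced by taking $v=1$ in the hypothesis: $Q_q(1)=\psi_q(t)$. A short check should show that $Q_q(t)=\pm(\sqrt{-1})^{n/2}$; this is the point I would verify most carefully. Granting it, $\tau=\pm\sqrt q$, and the sign is $-$ exactly in the maximal case. Hence the curve is maximal iff $Q_q(t)=-(\sqrt{-1})^{n/2}$, and minimal iff $Q_q(t)=(\sqrt{-1})^{n/2}$.

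To justify the fourth-root claim, I would compute $Q_q(1)=\xi_2(\Tr(1,0))$. The Witt trace of $(1,0)$ over $[\F_q:\F_2]=n$ copies equals $n\cdot(1,0)$ in $\Z/4$. For $n$ even this is $(0,n/2 \bmod 2)$, so $Q_q(1)=(\sqrt{-1})^{n}=(-1)^{n/2}$. Hence $Q_q(t)^2=\psi_q(t)=(-1)^{n/2}=((\sqrt{-1})^{n/2})^2$, which gives the claim.

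\textbf{Part (ii).} Let $C_R$ be extremal. By the known result quoted above, $V\subset\F_q$. Rescaling $x\mapsto ax$ to arrange normalization, I would then use Theorem~\ref{detR} or Lemma~\ref{F always exists} to write $R=R_{F,t}$ for some admissible pair $(F,t)$.

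By Theorem~\ref{thm1 intro}, extremality forces all $\tau_v$ to equal a common value $\pm\sqrt q$. Thus $Q_q(t+v)=Q_q(t)$ for all $v\in W_F^\ast$, i.e.\ $Q_q(v)=\psi_q(tv)$, so $(W,t)$ with $W:=W_F^\ast$ is an admissible pair (it contains $1$ by condition (ii)).

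It remains to compare $F$ with $F_W$. Both $F^\ast$ and $F_W^\ast$ are $\F_p$-linearized with kernel $W$ and the same degree, so $F^\ast=c\,F_W^\ast$ for some $c\in\F_q^\times$. The substitution $x\mapsto ax$ scales $F$ compatibly: it replaces $F(x)$ by $F(ax)$, and $R_F$ correspondingly. This substitution, rather than the identification $F^\ast=c\,F_W^\ast$ itself, is the main obstacle. One must choose $a$ with a suitable $a^{p^{-e}}$-type relation to $c$ so that $F$ becomes exactly $F_W$, then check that $\alpha_F(t)$ transforms to $\alpha_{F_W}(t')$ with $t'$ still satisfying the pair condition, and that $a\in\F_q^\times$ exists. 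I would first try matching the leading adjoint coefficient, using that $W\subset\F_q$ keeps $c\in\F_q$, and absorb the remaining scalar into $t$.
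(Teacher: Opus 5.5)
Part (i) is correct and follows the paper's argument. Your explicit check that $Q_q(t)=\pm(\sqrt{-1})^{n/2}$ (via $Q_q(t)^2=\psi_q(t)=Q_q(1)=(-1)^{n/2}$) is valid; the paper leaves this implicit, since it follows from Lemma~\ref{mmeCa}. In (ii), the first steps are also fine: obtaining $R=R_{F,t}$ from Corollary~\ref{VrFq}(a) and Theorem~\ref{detR}, deducing $Q_q(v)=\psi_q(tv)$ on $W:=W_F^\ast$ directly from Theorem~\ref{FEV1}, and getting $F^\ast=c\,F_W^\ast$ with $c\in\F_q^\times$. The preliminary rescaling is unnecessary.

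The gap is the final step, which you leave open. Your sketched plan for it also points the wrong way: no $p^{-e}$-type relation between $a$ and $c$ is needed, and $t$ must not be changed. The fix is as follows.

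Take adjoints of $F^\ast=c\,F_W^\ast$. Since the adjoint is an anti-involution and $c^\ast=c$, you get $F=F_W\,c$, i.e.\ $F(x)=F_W(cx)$, so $a:=c$ works exactly. Then
\[
R_F+R_F^\ast=F^\ast F=c\,F_W^\ast F_W\,c=c\,R_{F_W}\,c+(c\,R_{F_W}\,c)^\ast .
\]
Here $c\,R_{F_W}\,c$ has only positive-degree terms and its adjoint only negative-degree ones. Comparing positive-degree parts therefore gives $R_F(x)=cR_{F_W}(cx)$.

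Next, writing $F=\sum b_i\tau^i$ and $F_W=\sum b_i'\tau^i$, you have $b_i=b_i'c^{p^i}$, hence $b_i^{p^{-i}}=c\,b_i'^{p^{-i}}$. This gives $\gamma_F=c^2\gamma_{F_W}$ and $F^\ast(t)=c\,F_W^\ast(t)$, so $\alpha_F(t)=c^2\alpha_{F_W}(t)$ with the same $t$.

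Consequently $R(x)=R_{F,t}(x)=cR_{F_W,t}(cx)$, that is, $xR(x)=(cx)R_{F_W,t}(cx)$. So $(x,y)\mapsto(cx,y)$ is the required isomorphism $C_R\to D_{F_W,t}$, and $(W,t)$ is admissible by what you already showed. This is exactly how the paper concludes.
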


\subsection{Construction of Maximal $\F_{q^2}$-curves}

The following theorem provides an explicit and effective construction of maximal curves from the $\F_p$-linearized polynomial $F$.

\begin{theorem}[Theorem \ref{3}]
Assume $\{x \in \F \mid F^\ast(F(x))=0\} \subset \F_q$. Then, for $t \in \F_q$, the curve $\overline{D}_{F, t}$ is $\F_{q^2}$-maximal if and only if it satisfies $\Tr_{q/2}(t)={([\F_q:\F_2]/2)+1}$. 
\end{theorem}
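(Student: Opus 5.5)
The plan is to base change the eigenvalue formula of Theorem~\ref{thm1 intro} (Theorem~\ref{FEV1}) from $\F_q$ to $\F_{q^2}$ and compute the squared eigenvalues explicitly. Write $n:=[\F_q:\F_2]$. Since $H^1$ is unchanged by base change and $\mathrm{Fr}_{q^2}=\mathrm{Fr}_q^2$, the Frobenius eigenvalues of $\overline{D}_{F,t}$ over $\F_{q^2}$ are the $\tau_v^2$, $v\in W_F^\ast$. The curve is $\F_{q^2}$-maximal if and only if every eigenvalue equals $-q$. So the goal is to show $\tau_v^2=-q$ for all $v\in W_F^\ast$ exactly when $\Tr_{q/2}(t)=n/2+1$ in $\F_2$.

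\emph{Step 1: evaluate $\tau_v^2$.} First, $(-1-\sqrt{-1})^2=2\sqrt{-1}$, so $(-1-\sqrt{-1})^{2n}=2^n(\sqrt{-1})^n=q(\sqrt{-1})^n$. Next, in $W_2$ over a field of characteristic $2$ we have $2\cdot(x,0)=(0,x^2)$. Trace is additive and $\Tr(0,y)=(0,\Tr_{q/2}(y))$, so
\[
Q_q(x)^2=\xi_2\bigl(0,\Tr_{q/2}(x^2)\bigr)=(-1)^{\Tr_{q/2}(x)}=\psi_q(x).
\]
Hence $\tau_v^2=q\,(\sqrt{-1})^n\,\psi_q(t)\,\psi_q(v)$.

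\emph{Step 2: parity of $n$.} From $R_F+R_F^\ast=F^\ast\circ F$ we get $E=F^\ast\circ F$ for $R=R_F$, so $V=\ker(F^\ast\circ F)$. The hypothesis is therefore $V\subset\F_q$. As recalled before Theorem~\ref{mainthm intro}, this forces $[\F_q:\F_p]$, and hence $n$, to be even; I would cite the corresponding lemma in the body of the paper for this. Consequently $(\sqrt{-1})^n=(-1)^{n/2}$.

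\emph{Step 3: $\psi_q(v)=1$ for $v\in W_F^\ast$.} This is the step where the hypothesis is really used, and I expect it to be the main point. Take $v\in W_F^\ast$. Since $b_0\neq0$, the polynomial $F$ is separable and $F\colon\F\to\F$ is surjective, so there is $u\in\F$ with $F(u)=v$. Then $F^\ast(F(u))=F^\ast(v)=0$, and the hypothesis gives $u\in\F_q$. Now use the adjunction identity $\Tr_{q/2}(F(x)y)=\Tr_{q/2}(xF^\ast(y))$ for $x,y\in\F_q$. This follows termwise from $\Tr(b_ix^{p^i}y)=\Tr(x(b_iy)^{p^{-i}})$; the terms $(b_iy)^{p^{-i}}$ lie in $\F_q$ because $b_i,y\in\F_q$. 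Applying it with $x=u$, $y=1$ and using condition (ii), $F^\ast(1)=0$, gives
\[
\Tr_{q/2}(v)=\Tr_{q/2}(F(u)\cdot 1)=\Tr_{q/2}(uF^\ast(1))=0 .
\]

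\emph{Conclusion.} Combining the three steps, $\tau_v^2=q\,(-1)^{n/2+\Tr_{q/2}(t)}$ for every $v\in W_F^\ast$, and this value does not depend on $v$. Thus all eigenvalues equal $-q$ precisely when $\Tr_{q/2}(t)\equiv n/2+1 \pmod 2$, which is the claimed criterion.
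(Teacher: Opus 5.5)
Your proof is correct and follows essentially the paper's route: apply Theorem~\ref{FEV1} and use that $Q_q$ is $\pm1$ on $W_F^\ast$ (your Step~3 is the $y=1$ case of the trace argument in Lemma~\ref{ghom}, and $\psi_q(v)=1$ is equivalent to $Q_q(v)^2=1$), which gives $\tau_v^2=q(-1)^{n/2+\Tr_{q/2}(t)}$. Squaring directly, rather than first showing $\tau_v=\pm\tau_0$ as the paper does, has the small advantage of making the ``only if'' direction explicit; the body version, Theorem~\ref{3}, states and proves only the ``if'' direction.
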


For examples of van der Geer--van der Vlugt curves that are $\F_q$-maximal, see Subsections~\ref{MEDST} and~\ref{EMT}. In particular, Proposition~\ref{Hermitian curve} treats twists of the Hermitian curve. Another example is provided in Theorem~\ref{exRx}. This example may be viewed as a characteristic two analogue of the examples in \cite[Theorem 1.1]{ITT}.

\subsection{Periods and Parities}\label{Periods and Parities}

For a supersingular curve $C$ over $\F_p$, the smallest positive integer $\mu$ such that $C$ is $\F_{p^\mu}$-maximal or $\F_{p^\mu}$-minimal is called the \textit{$\F_p$-period} of $C$. Its \emph{$\F_p$-parity} $\delta$ is defined to be $-1$ if $C$ is $\F_{p^\mu}$-maximal and $1$ otherwise\footnote{We have chosen a sign convention different from the one used in the literature \cite{KP,SX}. Our convention appears more natural for the following reason. Let $S$ be a normal $\F_p$-scheme of finite type, and let $C$ be a relative van der Geer--van der Vlugt curve over $S$ (see Appendix~\ref{Gal} for a precise definition). 
Assume that, for every closed point $s\in S$, the fiber $C_s$ is $k(s)$-extremal. Then, using Corollary~\ref{T2 dominates T1}, one can show that there exists a quadratic character 
\[
\chi\colon \pi_1(S)^{\rm ab}\to \Ql^\times
\]
such that $\chi({\rm Fr}_s)=1$ if and only if $C_s$ is $s$-minimal, and $\chi({\rm Fr}_s)=-1$ if and only if $C_s$ is $s$-maximal. }.

In Subsection~\ref{periods}, we investigate which pairs $(\mu,\delta)$ can arise as the periods and parities of van der Geer--van der Vlugt curves.  Let 
\[R(x)=\sum_{i=0}^ea_ix^{p^i}\in \F_p[x],\]
where $e\geq1$ and $a_e\neq0$. Let $\overline{C}_R$ denote the smooth compactification of the associated curve $C_R$. 
Let $\mu_R$ and $\delta_R$ denote the period and parity of $\overline{C}_R$, respectively.

The results in Subsection~\ref{periods} are summarized below. 
\begin{itemize}
    \item The period $\mu_R$ is always even. 
    \item For every even integer $\mu\geq2$, the polynomial  $R(x):=x^{p^{\mu/2}}$ satisfies 
    \[
    (\mu_R,\delta_R)=(\mu,-1). 
    \]
    \item When $p=2$, for any $R$, we have 
    \[
    (\mu_R,\delta_R)\neq(4,1). 
    \]
    \item When $p>2$, the polynomial $R(x):=x^p+ax$ with $a\in\F_p^\times$ and $\Tr_{p/2}(a)=0$ satisfies 
    \[
    (\mu_R,\delta_R)=(4,1). 
    \]
    \item Let $\mu$ be a positive multiple of $4$. Then the polynomial 
    \[
    R(x):=\sum_{i=1}^{\mu/4}x^{p^{2i-1}}
    \]
    satisfies 
    \[
    (\mu_R,\delta_R)=(\mu,1). 
    \]
\end{itemize}

Hence, the remaining case is when $2\mid \mu$, $4\nmid \mu$, and $\delta=1$. 
In this situation, we have the following result.
\begin{proposition}[Corollary \ref{lasc}]
Let $m\geq1$ be an odd integer. Assume that $\F_p$ contains a primitive $m$-th root of unity. Then, for any $R$, 
\[
(\mu_R,\delta_R)\neq(2m,1). 
\]
\end{proposition}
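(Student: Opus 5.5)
We argue by contradiction. Suppose $R\in\F_p[x]$ has $(\mu_R,\delta_R)=(2m,1)$ with $m$ odd, and let $q_1:=p^{2m}$. Then $\overline{C}_R$ is $\F_{q_1}$-minimal and not extremal over any smaller $\F_{p^k}$. Being extremal, it has $V\subset\F_{q_1}$ with $[\F_{q_1}:\F_p]$ even. So Lemma~\ref{F always exists}, and Theorem~\ref{recipe for ex curve intro}(2) after the rescaling $x\mapsto ax$, put the curve in the form $D_{F,t}$ over $\F_{q_1}$. By Theorem~\ref{thm1 intro}, every Frobenius eigenvalue over $\F_{q_1}$ is
\[
\tau_v=Q_{q_1}(t+v)^{-1}(-1-\sqrt{-1})^{[\F_{q_1}:\F_2]},
\]
and minimality means $\tau_v=q_1^{1/2}=p^m$ for all $v$.

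The goal is to descend to $\F_{p^2}$. Since $m$ is odd, $\gcd(2,m)$-type arguments suggest comparing with the curve over $\F_{p^2}$, where $q_1=(p^2)^m$. Let $\beta_1,\dots,\beta_{2g}$ be the Frobenius eigenvalues of $\overline{C}_R$ over $\F_{p^2}$. Then $\beta_j^m=p^m$, so $\beta_j=\zeta_j p$ with $\zeta_j^m=1$. We must show that all $\zeta_j=1$, or at least that they are all equal to a single value $\zeta$ with $\zeta p$ an extremal eigenvalue. That would make the curve extremal over $\F_{p^2}$ and give period $2<2m$ unless $m=1$. When $m=1$ the claim is that period $2$ forces maximality, i.e.\ the case $(2,1)$ must be excluded directly. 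So the key step is:

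\emph{Claim.} A curve $\overline{C}_R$ with $R\in\F_p[x]$ that is extremal over $\F_{p^2}$, or whose $\F_{p^2}$-eigenvalues are $p$ times $m$-th roots of unity, is in fact $\F_{p^2}$-maximal.

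To attack the Claim, apply the structure over $\F_{p^2}$: $\beta_j\in\{p\zeta\}$, and these eigenvalues must be compatible with the formula $\tau_v$ over $\F_{p^2}$. That formula applies once $V\subset\F_{p^2}$. Here the hypothesis $\mu_{m}\subset\F_p$ enters. The roots of unity $\zeta_j$ correspond to twisting by characters of order dividing $m$. Using the $\F_p^\times$-action $(x,y)\mapsto(\lambda x,y)$ is not available since $R$ changes, but $R\in\F_p[x]$ is stable under $\lambda\in\mu_m\subset\F_p^\times$ in the form $x\mapsto \lambda x$ followed by replacing $R$ by $R(\lambda x)\lambda$. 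I expect one shows that $V$ is stable under multiplication by $\mu_m$, hence $\F_p(\mu_m)$-linear, and deduces $V\subset\F_{p^2}$ from $V\subset \F_{p^{2m}}$ via Frobenius of order dividing $m$ commuting with this action. Then over $\F_{p^2}$ the eigenvalues are $\tau_v$, which are $\pm p$ times powers of $\sqrt{-1}$. Since they are also $p$ times odd-order roots of unity, they are all exactly $p$ (possibly $-p$ after sign bookkeeping). This gives extremality over $\F_{p^2}$, and hence $\mu_R=2$.

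Finally we rule out $(2,1)$: an $\F_{p^2}$-minimal curve with eigenvalues $p$ becomes, over $\F_{p^{2m}}$, a curve with eigenvalues $p^m$, consistent with minimality, so the period would be $2$, not $2m$. That contradicts $m>1$. For $m=1$ we need a separate argument: for $R$ defined over $\F_p$, the eigenvalues over $\F_p$ square to $\tau$, and the formula over $\F_{p^2}$ with $t\in\F_p$ forces $Q_{p^2}(t+v)$ to take the maximal sign (the intro already excludes $(4,1)$ for $p=2$ by such a computation).

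The main obstacle is proving that $V\subset\F_{p^2}$ using the $\mu_m$ hypothesis. This is where I would try the Galois-descent results of Appendix~\ref{Gal}, in particular Corollary~\ref{T2 dominates T1}.
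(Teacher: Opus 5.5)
Your proposal has a genuine gap at the step you flag as the main obstacle, and the mechanism you suggest for it cannot work.

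\textbf{Why the proposed route fails.}
\begin{itemize}
\item Because $\mu_m\subset\F_p^\times$, saying that $V$ is $\mu_m$-stable and hence $\F_p(\mu_m)$-linear says nothing: $\F_p(\mu_m)=\F_p$.
\item For $\lambda\in\F_p^\times$ one has $x\cdot\lambda R(\lambda x)=\lambda^2xR(x)$. This only permutes the $\psi$-isotypic parts, which Lemma~\ref{decAS} already shows are isomorphic, and it gives no information about Frobenius.
\item The conclusion $V\subset\F_{p^2}$ cannot follow from $V\subset\F_{p^{2m}}$ and $\mu_m\subset\F_p$. Since $\dim_{\F_p}V=2e$, it would force $e=1$. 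For a counterexample take $p=4$, $m=3$, $R(x)=x^{p^3}$: then $\mu_3\subset\F_4$ and $V=\ker(\tau^6+1)=\F_{p^6}\not\subset\F_{p^2}$. This curve even has period $6$, with parity $-1$.
\item Without something like $V\subset\F_{p^4}$ you cannot write $R=R_{F,t}$ over $\F_{p^2}$ with $W_F^\ast\subset\F_{p^2}$. So the $\tau_v$-formula over $\F_{p^2}$ is unavailable, and nothing forces the $\zeta_j$ to equal $1$.
\item Your Claim is not a genuine reduction. A curve whose $\F_{p^2}$-eigenvalues are $p\zeta_j$ with $\zeta_j^m=1$ and $m$ odd can never be $\F_{p^2}$-maximal, since $(-1)^m=-1$. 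So the Claim is the statement itself in disguise.
\item Your $m=1$ sketch is also off: there the relevant $t$ satisfies $t^p+t=1$, so $t\notin\F_p$.
\end{itemize}

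\textbf{The missing idea.} The hypothesis $\mu_m\subset\F_p$ is used to diagonalize Frobenius, not to act on $V$ by scalars. Let $\varphi(u)=u^p$ on $V$.
\begin{itemize}
\item On all of $V$, $\varphi$ has order dividing $2m$. Only on $V\cap\F_{p^m}=\ker(\varphi^m+\mathrm{id})$ does it have order dividing $m$.
\item This subspace contains its own orthogonal complement. Since $x^m-1$ splits over $\F_p$, $\varphi$ has eigenvectors on the symplectic quotient $(V\cap\F_{p^m})/(V\cap\F_{p^m})^\perp$.
\item Induction (Lemma~\ref{mti stable}) then gives a $\varphi$-stable maximal totally isotropic $W\subset V\cap\F_{p^m}$.
\item Corollary~\ref{dec2} turns this into $F\in\F_p\{\tau\}$ with $F^\ast F=R+R^\ast$ and $W_F\subset\F_{p^m}$ (Proposition~\ref{2odd impossible 2}).
\end{itemize}

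\textbf{How the paper finishes.} It stays over $q=p^{2m}$ and shows $S_{F,+}=\emptyset$.
\begin{itemize}
\item Let $t\in S_{F,+}$ and $v\in W_F^\ast\subset\F_{p^m}$. Then $Q_q(v)=\psi_{p^m}(v)$ and $\psi_q(tv)=\psi_{p^m}((t^{p^m}+t)v)$.
\item Lemma~\ref{kerW_F} then gives $t^{p^m}+t+1=F(s)$ with $s\in\F_{p^m}$. Moreover $F(s)\in\F_p$, because $F^\ast(t)^2=a_0+\gamma_F\in\F_p$.
\item If $F(s)=0$, then $t^{p^m}+t=1$, and Lemma~\ref{Trt_0}(iii) gives $Q_q(t)=-(\sqrt{-1})^{[\F_q:\F_2]/2}$, i.e.\ maximality. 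This contradicts $t\in S_{F,+}$.
\item If $F(s)\neq0$, then $\ker((\tau+1)F)\subset\F_{p^m}$. But this kernel contains $\F_{p^2}$, since $F\in\F_p\{\tau\}$ and $F(1)=0$, which is impossible for $m$ odd (Proposition~\ref{2odd impossible}).
\end{itemize}
So the curve is never $\F_{p^{2m}}$-minimal. No descent to $\F_{p^2}$ and no period bookkeeping are needed.
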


\medskip

Finally, in Appendix \ref{Gal}, we discuss relative van der Geer--van der Vlugt curves over a base scheme $S$. Theorem \ref{T_12} relates the maximality of the fiber over a closed point $s\in S$ to the splitting behavior of the geometric Frobenius element at $s$. 

\subsection*{Organization}

The organization of this paper is as follows.
In Section \ref{linearized}, we introduce notation on linearized polynomials.
We study the relationship between the kernels of a linearized polynomial and its adjoint.
In Section \ref{langtorsor}, we study $\ell$-adic sheaves coming from the Lang torsor over the group scheme $W_2$ of Witt vectors of length two.
In Section \ref{CFE}, we calculate the Frobenius eigenvalues explicitly.
In Section \ref{max2}, we study maximality or minimality of twists of van der Geer--van der Vlugt curves.
Then, in Section \ref{constrmaxcurve}, we construct several examples  using our formulas. 
In Appendix \ref{Gal}, we relate maximality to the splitting behavior of the geometric Frobenius element in the relative setting.

\subsection*{Notation}
Throughout the paper,
we fix a prime number $p_0$. 
We fix an algebraic closure $\mathbb{F}$ of $\mathbb{F}_{p_0}$, and for every power $q$ of $p_0$, we write $\F_q$ for the subfield of $\F$ with $q$ elements. We denote by ${\rm Fr}_q$ the geometric Frobenius element of ${\rm Gal}(\F/\F_q)$. 

Let $p$ be a power of $p_0$, and $q$ a power of $p$. We write $\Tr_{q/p}$ for the trace map 
$\mathrm{Tr}_{\mathbb{F}_{q}/\mathbb{F}_p} \colon \mathbb{F}_{q} \to \mathbb{F}_p$. 
We  use the same notation for the trace map 
\[
\Tr_{q/p}\colon W_2(\F_q)\to W_2(\F_p),\quad (x,y)\mapsto \sum_{i=0}^{n-1}(x^{p^i},y^{p^i}), 
\]
where $q=p^n$. 

In most of the paper, we consider the case $p_0 = 2$. 
We fix a prime number $\ell\neq p_0$, and fix an algebraic closure $\Ql$ of $\mathbb{Q}_\ell$.

\section{Computations on $\F_p$-Linearized Polynomials}
\label{linearized}

Let $p_0$ be a prime number, and let $p$ be a power of $p_0$. Let $k$ be a perfect field containing $\F_p$. 
Let  $k\{\tau^{\pm1}\}$ denote the $k$-algebra 
\[\bigoplus_{i\in\mathbb{Z}}k\tau^i,\]
whose multiplication is determined by 
\[(a\tau^i)\cdot (b\tau^j)=ab^{p^i}\tau^{i+j} \text{ \hspace{3mm} for $i,j\in \mathbb{Z}$ and $a,b\in k$.}\] 
For $f=\sum_ia_i\tau^i\in k\{\tau^{\pm1}\}$, we define the corresponding polynomial 
\[
f(x):=\sum_ia_ix^{p^i}\in k[x^{p^{-\infty}}],  \]
where $k[x^{p^{-\infty}}]$ denotes the perfection of $k[x]$. 
Conversely, given a polynomial of the form $f(x)=\sum_ia_ix^{p^i}\in k[x^{p^{-\infty}}]$, we write $f$ for the corresponding element $\sum_ia_i\tau^i\in k\{\tau^{\pm1}\}$.

A polynomial of the form $\sum_{i\geq0}a_ix^{p^i}$ is called an \emph{$\F_p$-linearized polynomial}. 
By abuse of terminology, we also call the corresponding element $\sum_{i\geq0}a_i\tau^i\in k\{\tau^{\pm1}\}$ an $\F_p$-linearized polynomial. 

An $\F_p$-linearized polynomial $f(x)=\sum_{i\geq0}a_ix^{p^i}$ is said to be \emph{separable} if $a_0\neq0$. Equivalently, all roots of $f(x)$ are simple.

Let $\overline{k}$ be an algebraic closure of $k$. To an element $f=\sum_ia_i\tau^i\in k\{\tau^{\pm1}\}$, we associate the $\F_p$-linear map 
\[\overline{k}\to \overline{k},\quad x\mapsto f(x)=\sum_ia_ix^{p^i}.
\]
By abuse of notation, we also denote this linear map by $f$.  This construction defines a ring homomorphism $k\{\tau^{\pm1}\}\to{\rm End}_{\F_p}(\overline{k})$. We write $\ker f$ for the kernel of the associated map $\overline{k}\to \overline{k}$. An element of the kernel is called a \emph{root} of $f$.

We recall some basic results. 
Let ${\mathcal V}$ denote the set of finite-dimensional $\F_p$-subspaces of $\overline{k}$. For $W\in\mathcal V$, define  
\[f_W(x):=\prod_{v\in W}(x-v).\]
Then $f_W(x)$ is a monic separable $\F_p$-linearized polynomial with $\ker f_W=W$. Furthermore, the element $f_W\in k\{\tau^{\pm1}\}$ is uniquely characterized by these properties. 

\begin{lemma}\label{basic}
The following statements hold: 
\begin{enumerate}
\item The map 
\[\overline{k}^\times\times\mathbb{Z}\times{\mathcal V}\to \overline{k}\{\tau^{\pm1}\}\setminus\{0\},\qquad (a,n,W) \to a\tau^nf_W \]
 is bijective. In particular, for every nonzero $f\in \overline{k}\{\tau^{\pm1}\}$, there exists a unique triple $(a,n,W)$ such that 
 \[
 f=a\tau^nf_W. 
 \]
 Moreover, we have $\ker f=W$. 
 
\item Let $f=\sum_{i=r}^sa_i\tau^i$ with $a_ra_s\neq0$. Then 
\[\dim_{\F_p}\ker f=s-r. \]
\item For $f,g\in\overline{k}\{\tau^{\pm1}\}$, the following conditions are equivalent: 
\begin{enumerate}
\item There exists $h\in\overline{k}\{\tau^{\pm1}\}$ such that $f=hg$. 
\item We have $\ker g\subset \ker f$. 
\end{enumerate}
\end{enumerate}
\end{lemma}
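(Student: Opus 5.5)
The plan is to prove (ii) first, using the factorization $f=\tau^r g$ with $g=\sum_{i=0}^{s-r}a_{i+r}^{p^{-r}}\tau^{i}$. Then derive (iii)(b)$\Rightarrow$(a) by a division algorithm, and deduce (i) from (ii) and (iii).

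\emph{Step 1: part (ii).} Since $\tau^{r}$ acts as the bijection $x\mapsto x^{p^r}$ on $\overline{k}$, we have $\ker f=\ker g$. Here $g$ is an honest $\F_p$-linearized polynomial of degree $p^{s-r}$ with nonzero constant coefficient $a_r^{p^{-r}}$. Hence $g'(x)=a_r^{p^{-r}}\neq0$, so $g$ is separable. Therefore $g$ has exactly $p^{s-r}$ distinct roots, and they form an $\F_p$-subspace, so $\dim_{\F_p}\ker f=s-r$.

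\emph{Step 2: part (iii).} The direction (a)$\Rightarrow$(b) is immediate, since $f=hg$ as endomorphisms of $\overline{k}$.

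For (b)$\Rightarrow$(a), first multiply by powers of $\tau$ on the left, which changes neither the kernels nor the conclusion. This reduces us to $f,g$ lying in $\overline{k}\{\tau\}$ with $g$ separable (if $g=0$, then $\ker g=\overline{k}$ forces $f=0$). Next run right division in $\overline{k}\{\tau\}$: if $\deg_\tau f\geq \deg_\tau g=d$, subtract $c\tau^{m}g$ with $c$ chosen to kill the leading term. This is possible because $\overline{k}$ is perfect, so $p^m$-th roots of coefficients exist. The result is $f=hg+\rho$ with $\deg_\tau\rho<d$. Then $\ker g\subset\ker \rho$, and $\ker g$ has $p^d$ elements by Step 1, while a nonzero $\rho$ has at most $p^{\deg_\tau \rho}<p^d$ roots as a polynomial. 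Hence $\rho=0$.

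\emph{Step 3: part (i).} For existence, write a nonzero $f$ as $\tau^{r}\cdot(\text{separable }g)$ as in Step 1, and set $W=\ker g$. By Step 1, $W$ is finite, so $f_W$ has $\tau$-degree $\dim W=\deg_\tau g$. By (iii), $g=hf_W$, and comparing $\tau$-degrees and constant terms shows $h=c\in\overline{k}^\times$. Thus $f=\tau^r c f_W=c^{p^r}\tau^r f_W$. This gives $(a,n,W)=(c^{p^r},r,W)$ and $\ker f=W$.

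For uniqueness, suppose $a\tau^nf_W=a'\tau^{n'}f_{W'}$. Then $W=\ker f=W'$, since $a\tau^n$ is bijective. The lowest nonzero term then gives $n=n'$ (because $f_W$ has nonzero constant term), and comparing that coefficient gives $a=a'$.

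The main obstacle is the division argument in (iii). It requires care with the twisted multiplication and with perfection to cancel leading terms, together with the root count for the remainder. Everything else is bookkeeping.
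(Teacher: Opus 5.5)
Your proof is correct, but it is organized differently from the paper's. The paper proves (i) first, and directly. It normalizes $f=a_s\tau^r\sum_{i=0}^{s-r}b_i\tau^i$ with $b_{s-r}=1$ and $b_0\neq0$, and observes that this monic separable linearized polynomial must be $f_W$ with $W=\ker f$; (ii) falls out of the same argument. It then derives (iii) from (i). Writing $f=\alpha_f f_{W_f}$ and $g=\alpha_g f_{W_g}$ with units $\alpha_f,\alpha_g$, it uses the identity $f_{W_f}(x)=\prod_i\bigl(f_{W_g}(x)-f_{W_g}(v_i)\bigr)$, that is, $f_{W_f}=f_U\circ f_{W_g}$ with $U=f_{W_g}(W_f)$. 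This identity produces the quotient explicitly.

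You instead prove (iii) by the right Euclidean algorithm in $\overline{k}\{\tau\}$, together with a root count on the remainder. This is the standard Ore-ring argument. It is independent of (i) and of the composition identity for subspace polynomials. Your reduction to separable $g$ is exactly what makes the root count work, since only then is $|\ker g|=p^{\deg_\tau g}$.

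One small correction: the division step does not actually use perfectness. If $f$ has leading term $\beta\tau^{m+d}$ and $g$ has leading term $\gamma\tau^{d}$, you simply take $c=\beta\gamma^{-p^m}$. Roots of coefficients are needed only for division with $g$ on the left.

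Your deduction of (i) from (iii) is valid but more roundabout than necessary. Since $g$ is separable of degree $p^d$ with root set $W$, you get $g=\gamma f_W$ immediately, where $\gamma$ is the leading coefficient. That is precisely the paper's argument for (i).
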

\proof{
We first prove (i).  It is clear that $a\tau^nf_W$ lies in 
$\overline{k}\{\tau^{\pm1}\}\setminus\{0\}$. We construct the inverse map. Take a nonzero element $f\in \overline{k}\{\tau^{\pm1}\}$, and write $f=\sum_{i=r}^sa_i\tau^i$ with $r\leq s$ and $a_r,a_s\neq0$. Then $f$ can be written uniquely as  
\[f=a_s\tau^r\sum_{i=0}^{s-r}b_i\tau^i,\]
where $b_0\neq0$ and $b_{s-r}=1$. 

Let $W=\ker f$. Since $a_s\tau^r$ acts as an automorphism, $W$ coincides with the kernel of  $\sum_{i=0}^{s-r}b_ix^{p^i}$. 
Since this polynomial is monic and separable, the uniqueness of $f_W$ implies that 
\[\sum_{i=0}^{s-r}b_ix^{p^i}=f_W(x). \]
Therefore, the assignment $f\mapsto (a_s,r,W)$ defines the inverse map. The same argument also proves (ii).

We now prove (iii).  The implication $\text{(a)}\Rightarrow\text{(b)}$ is clear, since $\overline{k}\{\tau^{\pm1}\}\to{\rm End}_{\F_p}(\overline{k})$ is a ring homomorphism. We prove $\text{(b)}\Rightarrow\text{(a)}$. If $f=0$, then we may  take $h=0$. Assume now that $f\neq0$. Set $W_f:=\ker f$ and $W_g:=\ker g$, respectively. Since $W_g\subset W_f$ and $f\neq0$, the space $W_g$ is finite-dimensional. In particular, we have $g\neq0$. Therefore, we may write 
\[
f=\alpha_f f_{W_f},\quad g=\alpha_g f_{W_g}
\]
where $\alpha_f,\alpha_g$ are units in $\overline{k}\{\tau^{\pm1}\}$. 

Let $v_1,\dots,v_r\in W_f$ be representatives of the quotient $W_f/W_g$. 
Then 
\[f_{W_f}(x)=\prod_{i=1}^r(f_{W_g}(x)-f_{W_g}(v_i)).\]
Hence, $f_{W_f}=h_1f_{W_g}$ for some $h_1\in \overline{k}\{\tau^{\pm1}\}$. Since $\alpha_f$ and $\alpha_g$ are units, it follows that
\[
f=hg
\]
for some $h\in \overline{k}\{\tau^{\pm1}\}$. 
\qed}

Assume that $f\in k\{\tau^{\pm1}\}$ is of the form $\sum_{i\geq0}a_i\tau^i$. Then $f$ induces a $k$-morphism 
\[\A^1_k\to\A^1_k,\]
which we denote by the same symbol $f$. Note that the $\F_p$-linear map $f\colon \overline{k}\to \overline{k}$ considered above coincides with the map on the $\overline{k}$-valued points induced by the morphism $ f\colon \A_k^1\to \A_k^1$. 
\begin{lemma}\label{b2}
The following statements hold: 
\begin{enumerate}
\item Let
\[
f=\sum_{i\ge0} a_i\tau^i \in k\{\tau^{\pm1}\}
\]
with $a_0\ne0$. 
\begin{enumerate}
\item The morphism $f\colon \A^1_k\to \A^1_k$ is finite \'etale. 
\item The morphism $f$ is a Galois covering if and only if 
$\ker f\subset k$. In this case, $f$ is an abelian Galois covering with Galois group $\ker f$.
\end{enumerate} 

\item Let $f_1,f_2,f_3$ be nonzero elements of $k\{\tau^{\pm1}\}$, and put $f=f_1f_2f_3$. Assume  that  $\ker f\subset k$. Then $\ker f_2\subset k$. 
\end{enumerate}
\end{lemma}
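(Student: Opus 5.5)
For (i)(a), I would first reduce to the case $f(x)=\sum_{i=0}^s a_i x^{p^i}$ with $a_s\neq0$ and $s\geq 0$; only finitely many $a_i$ are nonzero, so such an $s$ exists. The morphism $\A^1_k\to\A^1_k$ corresponds to the ring map $k[y]\to k[x]$ given by $y\mapsto f(x)$. Since $f(x)$ has leading coefficient $a_s\in k^\times$, the ring $k[x]$ is free over $k[y]$ with basis $1,x,\dots,x^{p^s-1}$, so the morphism is finite and flat. For étaleness I would use the Jacobian criterion: the derivative of $f(x)-y$ with respect to $x$ is $f'(x)=a_0$, because all higher terms $x^{p^i}$ with $i\geq1$ have zero derivative in characteristic $p_0$. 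Since $a_0$ is a unit, the morphism is unramified everywhere, and flat plus unramified gives étale.

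For (i)(b), I would use the $\F_p$-linearity of $f$. Each $w\in\ker f$ gives a translation $x\mapsto x+w$ on $\A^1_{\overline{k}}$, and $f(x+w)=f(x)$, so $\ker f$ acts on the cover by deck transformations. This action is free, and $|\ker f|=p^s$ equals the degree of the cover by Lemma~\ref{basic}(ii). Hence the cover is Galois over $\overline{k}$ with group $\ker f$. The translations are defined over $k$ exactly when $w\in k$. So if $\ker f\subset k$, the cover is Galois over $k$ with the abelian group $\ker f$. Conversely, suppose the cover is Galois over $k$. The fiber over the rational point $0$ is the reduced scheme $\operatorname{Spec} k[x]/(f(x))$, which has the rational point $x=0$. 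A Galois group of order equal to the degree acts simply transitively on geometric fibers, and automorphisms defined over $k$ preserve rationality of points. Therefore every point of the fiber is $k$-rational, which means $\ker f\subset k$. An equivalent route is to say that the function field extension $k(x)/k(y)$ is Galois iff it is normal, and normality forces all roots $x+w$ of $f(X)-y$ to lie in $k(x)$; that in turn means $w\in k$.

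For (ii), write $f=f_1f_2f_3$, where composition is understood as $f(x)=f_1(f_2(f_3(x)))$. Take $w\in\ker f_2$. Since $f_3$ is a nonzero additive polynomial, it is surjective on $\overline{k}$, so we can choose $u\in\overline{k}$ with $f_3(u)=w$. Then $f(u)=f_1(f_2(w))=0$, so $u\in\ker f\subset k$. It follows that $w=f_3(u)\in k$, because $f_3$ has coefficients in $k$. This gives $\ker f_2\subset k$. The surjectivity of $f_3$ holds for $a\tau^n$ (with $k$ perfect and $\overline{k}$ perfect) and for $f_W$ (since $\overline{k}$ is algebraically closed), so it holds in general by Lemma~\ref{basic}(i).

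The main obstacle is the converse direction of (i)(b), specifically making the meaning of "Galois covering over $k$" precise enough to extract rationality of the kernel. The fiber argument over $0$ handles this cleanly.
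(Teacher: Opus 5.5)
Your proposal is correct. Part (i) follows the paper's argument. Finiteness and the Jacobian criterion give (a). For (b), $\ker f$ acts freely by translations over $\overline{k}$ with order equal to the degree, and the cover descends to a Galois cover over $k$ exactly when these translations are defined over $k$. Your fiber-over-$0$ argument simply spells out the converse direction of (i)(b), which the paper states in one line.

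For (ii) you take a genuinely different route from the paper.
\begin{itemize}
\item The paper's route stays in covering theory. It multiplies each factor by a power of $\tau$ so that $\widetilde{f}=\tau^{n_1+n_2+n_3}f$ is a separable $\F_p$-linearized polynomial, and uses (i)(b) to see that $\widetilde{f}$ is an abelian Galois covering. The middle factor $g=\tau^{n_2+n_3}f_2\tau^{-n_3}$ is then an intermediate covering, hence Galois. Applying (i)(b) again gives $\ker g\subset k$, and the result transfers back via $\ker f_2=(\ker g)^{p^{-n_3}}$ and the perfectness of $k$.
\item Your route is elementwise. You lift $w\in\ker f_2$ through the surjective map $f_3\colon\overline{k}\to\overline{k}$ to some $u\in\ker f\subset k$. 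Then $w=f_3(u)\in k$, using perfectness of $k$ when $f_3$ involves negative powers of $\tau$.
\end{itemize}

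Your argument is shorter and independent of (i)(b). It needs no $\tau$-normalization, only the surjectivity of nonzero elements of $k\{\tau^{\pm1}\}$ on $\overline{k}$, which you correctly derive from Lemma~\ref{basic}(i). The paper's version keeps everything in the language of Galois coverings, matching how the lemma is used later (e.g.\ in Lemma~\ref{even2}), but the conclusion is the same.
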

\proof{
(i)(a) Finiteness is immediate. \'Etaleness follows from  the Jacobian criterion. 

\medskip

\noindent
(i)(b) Let $f_{\overline{k}}\colon \A^1_{\overline{k}}\to \A^1_{\overline{k}}$ be the base change of $f$. For each $a\in \ker f$, consider the automorphism of $\A^1_{\overline{k}}$ given by $x\mapsto x+a$. Since  $f_{\overline{k}}(x+a)=f_{\overline{k}}(x)$, we obtain a group homomorphism \[\ker f\to {\rm Aut}(f_{\overline{k}}).
\]
This homomorphism is injective. 
Since the degree of $f_{\overline{k}}$ is equal to the cardinality of $\ker f$, it follows that the above map is an isomorphism. Hence $f_{\overline{k}}$ is an abelian Galois covering with Galois group $\ker f$. 

Therefore, $f$ is Galois over $k$ if and only if each automorphism $x\mapsto x+a$  is defined over $k$, which is equivalent to $\ker f\subset k$. This proves the assertion. 

\medskip

\noindent
(ii) For each $i=1,2,3$, choose an integer $n_i$ such that $\tau^{n_i}f_i$ defines a separable $\F_p$-linearized polynomial. Put 
\[\widetilde{f}:=\tau^{n_1+n_2+n_3}f.\]
Then $\widetilde{f}$ also defines a separable $\F_p$-linearized polynomial. By (i)(b), the $k$-morphism $\widetilde{f}\colon \A^1_k\to\A^1_k$ is an abelian Galois covering. Since 
\[
\widetilde{f}=(\tau^{n_1+n_2+n_3}f_1\tau^{-n_2-n_3})\cdot(\tau^{n_3+n_2}f_2\tau^{-n_3})\cdot(\tau^{n_3}f_3), 
\]
the morphism 
\[g:=\tau^{n_3+n_2}f_2\tau^{-n_3}\colon \A^1_k\to\A^1_k
\]
is an intermediate covering of $\tilde{f}$. Hence it is also a Galois covering. Applying (i)(b) again, we obtain 
$\ker g\subset k$. 
Since 
\[\ker f_2=(\ker g)^{p^{-n_3}}:=\{a\in \bar{k}\mid a^{p^{n_3}}\in \ker g\},\]
and since $k$ is perfect, the assertion follows. 
\qed}
\begin{definition}\label{defadjoint}
For $f=\sum_ia_i\tau^i\in k\{\tau^{\pm1}\}$, define  
\[f^\ast \coloneqq \sum_ia_i^{p^{-i}}\tau^{-i}. \] 
This element is called the \emph{adjoint} of $f$. 
\end{definition}
The adjoint defines an anti-involution on
$k\{\tau^{\pm1}\}$; namely,
\[
(fg)^\ast=g^\ast f^\ast,
\qquad
(f^\ast)^\ast=f.
\]
The following observation will be useful. 
\begin{lemma}\label{b4}
For an $\F_p$-algebra $R$ and elements $a,b\in R$, write $a\sim b$ if $a-b=c^p-c$ for some $c\in R$. 

Let $f\in k\{\tau^{\pm1}\}$. Then 
\[xf(y)\sim yf^\ast(x)\]
in $k[x^{p^{-\infty}}, y^{p^{-\infty}}]$. 
\end{lemma}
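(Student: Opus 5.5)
Since both sides are additive in $f$ and $\sim$ is compatible with sums (if $a-b=c^p-c$ and $a'-b'=c'^p-c'$, then $(a+a')-(b+b')=(c+c')^p-(c+c')$), it suffices to treat a single monomial $f=a\tau^i$ with $a\in k$ and $i\in\mathbb{Z}$. In that case $xf(y)=axy^{p^i}$, and by Definition~\ref{defadjoint} we have $f^\ast=a^{p^{-i}}\tau^{-i}$, so $yf^\ast(x)=y(ax)^{p^{-i}}$. The claim therefore reduces to showing
\[
axy^{p^i}\sim a^{p^{-i}}x^{p^{-i}}y \qquad\text{in } k[x^{p^{-\infty}},y^{p^{-\infty}}].
\]

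The key observation is that $\sim$ is an equivalence relation satisfying $z\sim z^p$ for every $z$ in the ring: indeed $z^p-z=z^p-z$ with $c=z$. Iterating, we get $z\sim z^{p^m}$ for all $m\ge 0$. Because the ring $k[x^{p^{-\infty}},y^{p^{-\infty}}]$ is perfect ($k$ is perfect and all $p$-power roots of $x$ and $y$ have been adjoined), each element has a $p^m$-th root. Hence also $z^{p^{-m}}\sim (z^{p^{-m}})^{p^m}=z$. So $z\sim z^{p^n}$ for every $n\in\mathbb{Z}$.

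Now apply this with $z=axy^{p^i}$ and $n=-i$. Frobenius is a ring endomorphism, so
\[
z^{p^{-i}}=a^{p^{-i}}x^{p^{-i}}y,
\]
which is exactly $yf^\ast(x)$. This gives the monomial case, and summing over the monomials of $f$ completes the argument.

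There is no real obstacle here. The only points needing care are that the ring is perfect, so negative powers of Frobenius make sense, and that symmetry and transitivity of $\sim$ hold, which is clear since the set $\{c^p-c\}$ is an additive subgroup.
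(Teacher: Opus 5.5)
Your proof is correct and follows the same route as the paper: reduce to a single term $a_i\tau^i$, then observe that $a_ixy^{p^i}$ and $y(a_ix)^{p^{-i}}$ are Frobenius twists of one another in a perfect ring, hence equivalent under $\sim$. The paper writes the difference directly as $z_i^{p^i}-z_i$ with $z_i=y(a_ix)^{p^{-i}}$ and leaves implicit the fact that $z^{p^n}\sim z$ for all $n\in\mathbb{Z}$, which you justify explicitly.
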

\proof{Let $f=\sum_ia_i\tau^i$. Then 
\[xf(y)-yf^\ast(x)=\sum_i(a_ixy^{p^i}-y(a_ix)^{p^{-i}}). \]
Set $z_i=y(a_ix)^{p^{-i}}$. Then we have  
\[a_ixy^{p^i}-y(a_ix)^{p^{-i}}=z_i^{p^i}-z_i.\]
Hence 
\[a_ixy^{p^i}\sim y(a_ix)^{p^{-i}}\]
for every $i$. 
Summing over $i$, the assertion follows. 
\qed}


\begin{lemma}\label{b3}
For every nonzero element $F \in k\{\tau^{\pm1}\}$, 
there exists a unique element $g(x,y)\in k[x^{p^{-\infty}},y^{p^{-\infty}}]$ satisfying $g(0,0)=0$ and 
\begin{equation}\label{a1}
g(x,y)^p-g(x,y)=xF(y)-yF^\ast(x). 
\end{equation}
Furthermore, $g(x,y)$ satisfies 
\begin{align*}
   g(x+z,y)&=g(x,y)+g(z,y), \\
g(x,y+z)&=g(x,y)+g(x,z), \\
g(ax,y)&=ag(x,y)=g(x,ay) \qquad \text{for all } a\in\F_p. 
\end{align*}
If moreover \(F=F^\ast\), then
\[
g(x,y)=-g(y,x).
\]
\end{lemma}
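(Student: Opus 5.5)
The plan is to build $g$ explicitly, term by term, following the computation in the proof of Lemma~\ref{b4}. Write $F=\sum_i a_i\tau^i$ (a finite sum) and put $z_i:=y(a_ix)^{p^{-i}}\in k[x^{p^{-\infty}},y^{p^{-\infty}}]$. The proof of Lemma~\ref{b4} shows that
\[
a_ixy^{p^i}-y(a_ix)^{p^{-i}}=z_i^{p^i}-z_i .
\]
For $i\ge 0$ we have $z_i^{p^i}-z_i=h^p-h$ with $h=\sum_{j=0}^{i-1}z_i^{p^j}$. For $i<0$ we have $z_i^{p^i}-z_i=-(w^{p^{|i|}}-w)$ with $w=z_i^{p^i}$, so $h=-\sum_{j=0}^{|i|-1}w^{p^j}$. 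Summing these $h$ over $i$ gives a $g_0$ satisfying \eqref{a1}. Every monomial in $g_0$ has positive degree, so $g_0(0,0)=0$. This settles existence.

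For uniqueness, suppose $g_1,g_2$ both satisfy \eqref{a1} and vanish at $(0,0)$. Then $h=g_1-g_2$ satisfies $h^p=h$. The ring $k[x^{p^{-\infty}},y^{p^{-\infty}}]$ is a domain, so $h$ is a root of $T^p-T$, and hence $h\in\F_p\subset k$. Since $h(0,0)=0$, we get $h=0$. This is the step that needs the most care, but it reduces to the domain property of the perfection, which holds because it is a directed union of polynomial rings.

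The identities then all follow from uniqueness.
\begin{itemize}
\item Additivity in $x$: $G(x,y):=g(x+z,y)-g(x,y)-g(z,y)$, viewed in three variables, satisfies $G^p-G=0$ because the right-hand side of \eqref{a1} is additive in $x$; here $F^\ast$ is additive. Also $G(0,0,0)=0$. The same domain argument in $k[x^{p^{-\infty}},y^{p^{-\infty}},z^{p^{-\infty}}]$ gives $G\in\F_p$, and then $G=0$.
\item Additivity in $y$: the same argument works.
\item Scalars $a\in\F_p$: both $g(ax,y)$ and $ag(x,y)$ solve \eqref{a1} with right-hand side $a(xF(y)-yF^\ast(x))$, because $F^\ast(ax)=aF^\ast(x)$ and $a^p=a$. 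Both vanish at the origin, so uniqueness applies. The same holds for $g(x,ay)$.
\item If $F=F^\ast$: $-g(y,x)$ satisfies $(-g(y,x))^p+g(y,x)=-(yF(x)-xF^\ast(y))=xF(y)-yF^\ast(x)$. It vanishes at the origin, so $g(x,y)=-g(y,x)$.
\end{itemize}

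Alternatively, all the identities can be read off from the explicit formula for $g_0$. Each $z_i$ is additive in $x$ and in $y$ separately, and $\F_p$-homogeneous, so $g_0$ inherits these properties. This gives a second check.
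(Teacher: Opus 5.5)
Your proof is correct and follows essentially the same route as the paper: existence comes from the Artin--Schreier relation underlying Lemma~\ref{b4}, uniqueness from $h^p=h$ forcing $h\in\F_p$ in the domain $k[x^{p^{-\infty}},y^{p^{-\infty}}]$, and every identity from uniqueness. The only differences are cosmetic. You write the solution $g_0$ out explicitly, so it already vanishes at $(0,0)$ and no constant has to be subtracted. You also spell out the domain argument that the paper leaves implicit.
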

\proof{
We first show uniqueness. Let $h(x,y)$ be another element satisfying the same conditions. Then we have 
\[(g-h)^p-(g-h)=0\]
in $k[x^{p^{-\infty}},y^{p^{-\infty}}]$. Hence \(g-h\) must belong to \(\F_p\). Since
\[
g(0,0)-h(0,0)=0,
\]
we conclude that \(g=h\). 

We now prove existence. Since $xF(y)\sim yF^\ast(x)$ by Lemma \ref{b4}, we can write 
\[g_1(x,y)^p-g_1(x,y)=xF(y)-yF^\ast(x)\]
for some $g_1 \in k[x^{p^{-\infty}},y^{p^{-\infty}}]$. Evaluating at $x=y=0$, we obtain $g_1(0,0)^p-g_1(0,0)=0$. Therefore, $g(x,y):=g_1(x,y)-g_1(0,0)$ satisfies \eqref{a1} and $g(0,0)=0$. 

\medskip 

\noindent
We prove $g(x+z,y)=g(x,y)+g(z,y)$;  the other identities are proved similarly. Set $h(x,y,z):=g(x+z,y)-g(x,y)-g(z,y)$. Then 
\[h^p-h=(x+z)F(y)-yF^\ast(x+z)-(xF(y)-yF^\ast(x))-(zF(y)-yF^\ast(z))=0. \]
Since $h(0,0,0)=0$, it follows that $h=0$. 
\qed}

Let $F\in k\{\tau^{\pm1}\}$ be a nonzero element. In Lemma \ref{b5} below, we recall results from \cite[4.14]{Go}, stated in a form suitable for our purposes. Set 
\[
W:=\ker F,\qquad W^\ast:=\ker F^\ast. 
\]
By Lemma \ref{basic}(ii), these $\F_p$-vector spaces have the same dimension. 
\begin{lemma}\label{b5}
Let $g(x,y)$ be as in Lemma \ref{b3}. 
For $u\in W$ and $u^\ast\in W^\ast$, define 
\[\omega(u,u^\ast):=g(u^\ast,u). \]
Then the following hold: 
\begin{enumerate}
\item We have $\omega(u,u^\ast)\in\F_p$. 
\item The assignment $(u,u^\ast)\mapsto \omega(u,u^\ast)$ defines a nondegenerate pairing 
\[\omega\colon W\times W^\ast\to \F_p. \]
\item Assume further that $F=F^\ast$, and hence $W=W^\ast$. Then $\omega$ is a symplectic form on $W$. 
\item Let $G:={\rm Gal}(\overline{k}/k)$. Then for every $u\in W$, $u^\ast\in W^\ast$, and $\sigma\in G$, we have 
\[\omega(\sigma u,\sigma u^\ast)=\omega(u,u^\ast). \]
\end{enumerate}
\end{lemma}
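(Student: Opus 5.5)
We argue directly from the defining identity $g(x,y)^p-g(x,y)=xF(y)-yF^\ast(x)$ of Lemma~\ref{b3}, together with the bilinearity proved there.

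For (i), substitute $x=u^\ast\in W^\ast$ and $y=u\in W$. Since $F(u)=0$ and $F^\ast(u^\ast)=0$, the right-hand side vanishes, so $\omega(u,u^\ast)^p=\omega(u,u^\ast)$ and $\omega(u,u^\ast)\in\F_p$. Biadditivity and $\F_p$-bilinearity are exactly the identities of Lemma~\ref{b3}. For (iv), note that $F$ has coefficients in $k$. Then $g$ is unique among elements with $g(0,0)=0$, and $g^\sigma$, obtained by applying $\sigma$ to the coefficients, satisfies the same equation, so $g$ has coefficients in $k$. Hence $\sigma(g(u^\ast,u))=g(\sigma u^\ast,\sigma u)$, and $\sigma$ fixes $\F_p$, which gives (iv).

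For (iii), take $F=F^\ast$. Lemma~\ref{b3} gives $g(x,y)=-g(y,x)$, so $\omega$ is antisymmetric. We still need $\omega(u,u)=0$, which matters when $p_0=2$. Put $h(x):=g(x,x)$. Then $h^p-h=xF(x)-xF^\ast(x)=0$, so $h$ is constant, and $h=h(0)=0$. Nondegeneracy then follows from (ii).

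The main step is (ii), nondegeneracy. Since $\dim W=\dim W^\ast$ by Lemma~\ref{basic}(ii), it suffices to show that if $u^\ast\in W^\ast$ satisfies $\omega(u,u^\ast)=0$ for all $u\in W$, then $u^\ast=0$.

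Fix $u^\ast$ and consider the additive function $\phi(y):=g(u^\ast,y)$ in $k[y^{p^{-\infty}}]$. It satisfies $\phi^p-\phi=u^\ast F(y)$, since $F^\ast(u^\ast)=0$. Write $F=a\tau^nf_W$ using Lemma~\ref{basic}(i). Then
\[
u^\ast F(y)=\lambda(f_W(y))\qquad\text{with }\lambda:=u^\ast a\tau^n .
\]
The pairing vanishing means $\phi|_W=0$. Since $\phi$ is an additive polynomial vanishing on $W$, Lemma~\ref{basic}(iii) lets us factor $\phi=\psi\circ f_W$ for some $\psi\in\overline{k}\{\tau^{\pm1}\}$. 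Then $(\tau-1)\psi\circ f_W=\lambda\circ f_W$, and since $f_W$ is surjective on $\overline{k}$, this gives
\[
\lambda=(\tau-1)\psi .
\]
Applying Lemma~\ref{b4} with $x=1$, the element $\lambda(z)$ is $\sim$ to $z\,\lambda^\ast(1)$. On the other hand, $\lambda(z)=\psi(z)^p-\psi(z)\sim 0$. So $z\lambda^\ast(1)\sim 0$ in $\overline{k}[z^{p^{-\infty}}]$. A monomial $cz$ can be an Artin--Schreier coboundary only if $c=0$, which we check by comparing degrees. Hence
\[
\lambda^\ast(1)=(a\tau^n)^\ast(u^\ast)=0,
\]
and $(a\tau^n)^\ast$ is invertible, so $u^\ast=0$.

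The delicate points are the legitimacy of the factorization $\phi=\psi\circ f_W$ inside the perfected polynomial ring and the coboundary argument. If these prove awkward, the alternative is the count in \cite[4.14]{Go}, which we may also simply cite.
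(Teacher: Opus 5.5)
Your argument is correct. For (i), (iv), and the bilinearity and antisymmetry claims it coincides with the paper's proof; the real difference is in nondegeneracy.

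The paper works multiplicatively in the polynomial ring. From the vanishing of $g(u^\ast,y)$ on $W$ it writes $g(u^\ast,y)=hF(y)$ with a cofactor $h\in\overline{k}[y^{p^{-\infty}}]$, and substitutes into \eqref{a1} to get $h^pF(y)^{p-1}-h=u^\ast$. It then concludes by evaluating at a root of $h$, or, if $h$ is constant, by noting that $F$ would be constant. You instead stay inside $\overline{k}\{\tau^{\pm1}\}$: you factor $\phi=\psi f_W$ compositionally, cancel $f_W$ on the right (legitimate since $\overline{k}\{\tau^{\pm1}\}$ is a domain), and obtain the identity $(\tau-1)\psi=u^\ast a\tau^n$.

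The paper's route is a quick root-and-degree argument, but yours is more careful. Lemma~\ref{basic}(iii) only supplies a compositional factorization $g(u^\ast,\cdot)=\psi F$. Turning that into a multiplicative cofactor $h$ is immediate only when $\psi$ involves no negative powers of $\tau$, and your cancellation never runs into this issue.

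Your last step can be shortened. Taking adjoints gives $\lambda^\ast=\psi^\ast(\tau^{-1}-1)$, which kills $1$, so $\lambda^\ast(1)=0$ without Lemma~\ref{b4}. More directly still, $(\tau-1)\psi$ has nonzero coefficients in two distinct extreme degrees unless $\psi=0$, whereas $u^\ast a\tau^n$ is a monomial. If you keep the coboundary claim, note that in the perfected ring comparing top degrees is not enough: $w=cz^{1/p}$ already gives $\deg w^p=1$. You must also use that the lowest positive exponent of $w$ survives in $w^p-w$. Also, $\phi$ lies in $\overline{k}[y^{p^{-\infty}}]$ rather than $k[y^{p^{-\infty}}]$, since $u^\ast\in\overline{k}$.

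Finally, your check that $g(x,x)=0$ in (iii) improves on just citing Lemma~\ref{b3}. In characteristic $2$, the case this paper is about, $g(x,y)=-g(y,x)$ is only symmetry, and alternation is what \emph{symplectic} requires.
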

\proof{(i) The assertion follows from the identity  
\[\omega(u,u^\ast)^p-\omega(u,u^\ast)=u^\ast F(u)-uF^\ast(u^\ast)=0.\]

(ii), (iii)  The assertions other than nondegeneracy follow from Lemma \ref{b3}. 

We prove nondegeneracy. Let $u^\ast\in W^\ast$ be nonzero. Assume that the polynomial $g(u^\ast,y)$ vanishes on $W$. Then, by Lemma~\ref{basic}(iii), there exists $h\in \overline{k}[y^{p^{-\infty}}]$ such that 
\[g(u^\ast,y)=hF(y).\]
 Substituting $x=u^\ast$ into \eqref{a1}, we obtain  
\[
h^pF(y)^p-hF(y)=u^\ast F(y). \]
Since $F(y)\neq0$, dividing by \(F(y)\) yields  
\[h^pF(y)^{p-1}-h=u^\ast.\]

Suppose that $h$ is non-constant. Since $\overline{k}$ is algebraically closed, there exists $a\in\overline{k}$ such that $h(a)=0$. Evaluating at $y=a$ yields $0=u^\ast$, a contradiction. Therefore, $h$ must be constant. Moreover, $h\neq0$. It follows that 
\[F(y)^{p-1}=h^{-p}(h+u^\ast),\]
and hence $F(y)$ is constant. 
Consequently, $F^\ast$ is a nonzero constant, 
which contradicts the assumption that $W^\ast$ contains a nonzero element $u^\ast$. 

Therefore, the linear map $\omega(-,u^\ast)\colon W\to \F_p$ is nonzero. Since $W$ and $W^\ast$ have the same dimension, it follows that $\omega$ is nondegenerate. 

(iv) Since $g(x,y)$ has coefficients in $k$, we have 
\[
\sigma\bigl(g(u^\ast,u)\bigr)=g(\sigma u^\ast,\sigma u) 
\]
for every $\sigma\in G$. 
The assertion now follows from (i), since $G$ acts trivially on $\F_p$. 
\qed}

\begin{remark}\label{omega in ITT0}
Let $R(x)=\sum_{i=0}^ea_ix^{p^i}$ be an $\F_p$-linearized polynomial in $k[x]$, and set 
\[E:=R+R^\ast,\qquad V:=\ker E. 
\]
By Lemma~\ref{b6}(iii), the vector space $V$ carries a symplectic form $\omega$. 

On the other hand, in \cite{ITT0} and \cite{TT}, the authors consider an $\F_p$-vector space $V_R$ equipped with a symplectic form $\omega_R$. 
The relation between $(V,\omega)$ and $(V_R,\omega_R)$ is as follows: 
\[
V_R=V,\qquad \omega_R(x,y)=\omega(x,y)^{p^e}\quad\text{for all }x,y\in V=V_R. 
\]
Indeed, $V_R$ is defined as the kernel of 
\[
E_R:=R(x)^{p^e}+\sum_{i=0}^e (a_ix)^{p^{e-i}}. 
\]
 Since $E_R=\tau^e E$, we obtain $V=V_R$. 

 Furthermore, by \cite[(2.5)]{ITT0}, the pairing $\omega_R$ is defined by the unique polynomial $\omega_R(x,y)\in\F_q[x,y]$ satisfying 
\[
\omega_R(0,0)=0,\qquad \omega_R(x,y)^p-\omega_R(x,y)=y^{p^e}E_R(x)-x^{p^e}E_R(y). 
\]
On the other hand, the pairing $\omega$ is defined by the unique element $\omega(x,y)\in\F_q[x^{p^{-\infty}},y^{p^{-\infty}}]$ satisfying 
\[
\omega(0,0)=0,\qquad \omega(x,y)^p-\omega(x,y)=yE(x)-xE(y). 
\]
Since $E_R=\tau^e E$, uniqueness implies the equality $\omega_R(x,y)=\omega(x,y)^{p^e}$. 
\end{remark}

\begin{lemma}\label{kfin}
Assume that $k$ is a finite field containing $\F_p$. 
\begin{enumerate}
\item The $\F_p$-vector spaces $W\cap k$ and $W^\ast\cap  k$ have the same dimension. 
\item For an $\F_p$-vector space $V$, set $V^{\vee}:={\rm Hom}_{\F_p}(V,\F_p)$. Then the kernel of the surjection \[k\to (W\cap k)^{\vee}, \quad a\mapsto \Tr_{k/\F_p}(a-)|_{W\cap k}\]
is equal to $F^\ast(k):=\{F^\ast(x) \mid x \in k\}$. 
\end{enumerate}
\end{lemma}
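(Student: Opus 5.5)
We prove (ii) first, via the pairing identity of Lemma~\ref{b4}, and then deduce (i) from it by a counting argument. Throughout, $k=\F_Q$ is finite, and $F$, $F^\ast$ are viewed as $\F_p$-linear endomorphisms of $k$; since $k$ is perfect, both preserve $k$.

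\textbf{Step 1: $F^\ast(k)$ lies in the kernel.} Take $x\in k$ and $u\in W\cap k$. Lemma~\ref{b4} gives $uF^\ast(x)\sim xF(u)=0$. Here $uF^\ast(x)-xF(u)=\sum_i(z_i^{p^i}-z_i)$ with $z_i=x(a_iu)^{p^{-i}}\in k$. Each $z_i^{p^i}-z_i$ has trace zero to $\F_p$, because $\Tr_{k/\F_p}$ is invariant under the $p$-power Frobenius, and so under $z\mapsto z^{p^{i}}$ for every integer $i$. Hence
\[
\Tr_{k/\F_p}\bigl(F^\ast(x)u\bigr)=\Tr_{k/\F_p}\bigl(xF(u)\bigr)=0 .
\]
In other words, the trace form $\langle a,b\rangle=\Tr_{k/\F_p}(ab)$ satisfies $\langle F^\ast(x),y\rangle=\langle x,F(y)\rangle$ for all $x,y\in k$, so $F^\ast|_k$ is the adjoint of $F|_k$ with respect to this nondegenerate symmetric form on $k$.

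\textbf{Step 2: the map is surjective and the kernel is exactly $F^\ast(k)$.} The map $k\to(W\cap k)^\vee$ is the composite of the isomorphism $k\cong k^\vee$ given by the nondegenerate trace form with the restriction $k^\vee\to(W\cap k)^\vee$, which is surjective. Its kernel is the orthogonal complement $(W\cap k)^\perp=(\ker F|_k)^\perp$. Linear algebra for the adjoint from Step 1 gives
\[
(\ker F|_k)^\perp=\operatorname{Im}(F^\ast|_k)=F^\ast(k).
\]
To see this directly: Step 1 shows $F^\ast(k)\subset(\ker F|_k)^\perp$, and the dimensions agree, since
\[
\dim F^\ast(k)=\dim\operatorname{Im}(F^\ast|_k)=\dim\operatorname{Im}(F|_k)=\dim k-\dim(W\cap k),
\]
where the middle equality holds because the matrix of the adjoint is the transpose.

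\textbf{Step 3: deduce (i).} For an endomorphism of a finite-dimensional space,
\[
\dim(W^\ast\cap k)=\dim\ker(F^\ast|_k)=\dim k-\dim\operatorname{Im}(F^\ast|_k),
\]
and this equals $\dim(W\cap k)$ by Step 2. This proves (i).

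The only delicate point is Step 1, namely checking that the Artin--Schreier defect $z^{p^i}-z$ is trace-free even when $i$ is negative and the $p^{-i}$-th roots are involved. This holds because $k$ is finite and therefore perfect, so each $z_i$ lies in $k$ and Frobenius-invariance of the trace applies for all $i\in\Z$. Note also that $F\neq0$ is implicitly assumed, although the argument above does not actually use it.
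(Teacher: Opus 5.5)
Your proof is correct, but it runs in the opposite direction from the paper's. The paper proves (i) first. The nondegenerate, Galois-equivariant pairing $\omega\colon W\times W^\ast\to\F_p$ of Lemma~\ref{b5} gives $W^\ast\cap k\cong (W_G)^\vee$, and the exact sequence $0\to W\cap k\to W\xrightarrow{{\rm Fr}_k-1}W\to W_G\to0$ gives $|W_G|=|W\cap k|$. The paper then proves (ii) by your Step~1 together with the count $|F^\ast(k)|=|k|/|W^\ast\cap k|=|k|/|W\cap k|=|K|$, where $K$ is the kernel; this count uses (i).

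You instead observe that $F^\ast|_k$ is the adjoint of $F|_k$ for the trace form, so it corresponds to the transpose of $F|_k$ under $k\cong k^\vee$. Equality of ranks then gives (ii) directly, and (i) follows by rank--nullity. Your route is shorter and is pure finite-dimensional linear algebra on $k$. It avoids the kernels over $\overline{k}$, the nondegeneracy of $\omega$ (the nontrivial part of Lemma~\ref{b5}), and Galois coinvariants, and it works even for $F=0$. The paper's route gives in addition the canonical identification $W^\ast\cap k\cong (W_G)^\vee$, and it reuses the pairing $\omega$ that drives Proposition~\ref{b6} and Theorem~\ref{detR}.

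There is one harmless slip in Step~1. With $z_i=x(a_iu)^{p^{-i}}$, the sum $\sum_i(z_i^{p^i}-z_i)$ equals $uF(x)-xF^\ast(u)$, not $uF^\ast(x)-xF(u)$. You want $z_i=u(a_ix)^{p^{-i}}$, which gives $xF(u)-uF^\ast(x)$. Your direct check via Frobenius-invariance of $\Tr_{k/\F_p}$ already yields $\Tr_{k/\F_p}(F^\ast(x)u)=\Tr_{k/\F_p}(xF(u))$, so nothing downstream is affected.
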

\proof{
(i) 
By Lemma \ref{b5}, the pairing $\omega$ induces a $G$-equivariant isomorphism $W^\ast\to W^\vee$. Taking $G$-invariants, we obtain 
\[W^\ast\cap k\cong (W_G)^\vee,\]
where $W_G$ denotes the coinvariants of $W$. 

It therefore suffices to show that $|W_G|=|W\cap k|$. Let ${\rm Fr}_k\in G$ be the geometric Frobenius element. Since $G$ is topologically generated by ${\rm Fr}_k$, we have an  exact sequence 
\[0\to W\cap k\to W\xrightarrow{{\rm Fr}_k-1}W\to W_G\to 0. \]
This proves the assertion. 

\medskip 

\noindent
(ii) Let $K$ denote the kernel of $k\to (W\cap k)^\vee$. 
 By Lemma \ref{b4}, for every $a\in k$ and $u\in W\cap k$, we have 
\[F^\ast(a)u\sim aF(u)=0. \]
Applying $\Tr_{k/\F_p}$, we obtain $\Tr_{k/\F_p}(F^\ast(a)u)=0$. Hence $F^\ast(k)\subset K$. 

Thus it remains to show that $|F^\ast(k)|=|K|$. Since the map $k\to (W\cap k)^\vee$ is surjective, we have $|K|=|k|/|W\cap k|$. On the other hand, we have a short exact sequence 
\[0\to W^\ast\cap k\to k\xrightarrow{F^\ast} F^\ast(k)\to0.\]
The assertion now follows from (i). 
\qed}

We return to the situation where $k$ is a general perfect field. 
For an $\F_p$-vector subspace $X\subset W$, define its orthogonal complement $X^\perp\subset W^\ast$ by 
\begin{equation}\label{b7}
X^\perp:=\{u^\ast\in W^\ast\mid  \omega(u,u^\ast)=0\text{ for all } u\in X\}.
\end{equation}

\begin{proposition}
\label{b6}
Let the notation be as above, and let $X$ be an $\F_p$-vector subspace of $W$. Let $f$ be an element of $\overline{k}\{\tau^{\pm1}\}$ satisfying $\ker f=X$. Then the following statements hold: 
\begin{enumerate}
\item There exists $h\in \bar{k}\{\tau^{\pm1}\}$ such that $F=hf$. 
\item We have $\ker( h^\ast)=X^\perp$. 
\end{enumerate}
\end{proposition}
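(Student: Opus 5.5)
Part (i) follows directly from Lemma~\ref{basic}(iii). Since $\ker f=X\subset W=\ker F$, there is $h\in\overline{k}\{\tau^{\pm1}\}$ with $F=hf$. Note that $f\neq 0$ and $h\neq0$, because $F\neq0$.

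For (ii), the plan is to show $X^\perp\subset\ker h^\ast$ and then compare dimensions. First, from $F=hf$ we get $F^\ast=f^\ast h^\ast$. By Lemma~\ref{basic}(ii), $\dim\ker h^\ast=\dim\ker h$, and $\dim\ker F=\dim\ker h+\dim\ker f$, since the $\tau$-degree spans add under multiplication. Hence
\[
\dim\ker h^\ast=\dim W-\dim X=\dim X^\perp,
\]
where the last equality uses the nondegeneracy of $\omega$ from Lemma~\ref{b5}(ii).

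The main step is the relation between the pairing $g_F$ for $F$ and the pairing $g_h$ for $h$ (Lemma~\ref{b3} applied to $h$). I would verify the identity
\[
g_F(x,y)=g_h(x,f(y))+c(x,y),\qquad c^p-c=f(y)\,h^\ast(x)-y\,f^\ast(h^\ast(x)).
\]
This works as follows. We have $xF(y)-yF^\ast(x)=\bigl(xh(f(y))-f(y)h^\ast(x)\bigr)+\bigl(f(y)h^\ast(x)-yf^\ast(h^\ast(x))\bigr)$. The first bracket equals $g_h(x,f(y))^p-g_h(x,f(y))$. The second bracket equals $g_f(h^\ast(x),y)^p-g_f(h^\ast(x),y)$, by Lemma~\ref{b3} for $f$ with the variables $(x,y)$ replaced by $(h^\ast(x),y)$. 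By the uniqueness in Lemma~\ref{b3}, and since everything vanishes at $(0,0)$, we get
\[
g_F(x,y)=g_h(x,f(y))+g_f(h^\ast(x),y).
\]

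Now take $u^\ast\in\ker h^\ast$ and $u\in X=\ker f$. Then
\[
\omega(u,u^\ast)=g_F(u^\ast,u)=g_h(u^\ast,0)+g_f(0,u)=0
\]
by additivity in each variable (Lemma~\ref{b3}). Note that $\ker h^\ast\subset\ker F^\ast=W^\ast$. Hence $\ker h^\ast\subset X^\perp$, and equality holds by the dimension count above.

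The point needing care is the substitution: the uniqueness statement holds in the perfected polynomial ring $\overline{k}[x^{p^{-\infty}},y^{p^{-\infty}}]$. Composing with $f(y)$ and $h^\ast(x)$, which lie in this ring and vanish at $0$, keeps us inside it, so the substitution is legitimate.
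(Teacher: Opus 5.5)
Your proof is correct, but it argues differently from the paper. The paper first treats $\dim X=1$. There it writes $f=a(\tau-1)v^{-1}$ with $a$ a unit, takes adjoints of $Fv=ha(\tau-1)$ to get $vF^\ast=(\tau-1)bh^\ast$ with $b$ a unit, and compares this with $g(x,v)^p-g(x,v)=-vF^\ast(x)$ to obtain $bh^\ast(x)=-g(x,v)$. For general $X$ it factors $f=h_vf_v$ through each line $\langle v\rangle\subset X$, deduces $\ker h^\ast\subset\langle v\rangle^\perp$, intersects over $v$, and finishes with the same dimension count you use.

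You instead prove the composition rule $g_F(x,y)=g_h(x,f(y))+g_f(h^\ast(x),y)$ for an arbitrary factorization $F=hf$. This gives $\ker h^\ast\subset X^\perp$ for every $X$ at once, with no normalization of $f$ and no reduction to lines. Your identity gives more than you use. For $u\in X$ and $u^\ast\in W^\ast$ it says $\omega(u,u^\ast)=\omega_f(u,h^\ast(u^\ast))$, where $h^\ast(u^\ast)\in\ker f^\ast$ and $\omega_f$ is the pairing of Lemma~\ref{b5} for $f$ over $\overline{k}$. Nondegeneracy of $\omega_f$ then yields both inclusions directly, so the dimension count is not even needed. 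The paper's route, in exchange, gives an explicit formula for $h^\ast$ in the rank-one case.

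One small slip: you state the plan as proving $X^\perp\subset\ker h^\ast$. What you actually prove, and what the dimension equality needs, is the reverse inclusion $\ker h^\ast\subset X^\perp$.
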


\begin{proof}
Part (i) follows from Lemma \ref{basic}(iii). 

We now prove (ii). Since $F^\ast=f^\ast h^\ast$, we have $\ker h^\ast\subset W^\ast$ . Therefore, it suffices to show that, for every $u^\ast\in W^\ast$, the condition $h^\ast(u)=0$ is equivalent to $u^\ast\in X^\perp$. 

We first consider the case where $\dim X=1$. Choose a nonzero vector $v\in X$. Since 
\[\ker f=X=\F_p  v,\]
Lemma~\ref{basic}\textup{(i)} implies that
\[
f=a(\tau-1)v^{-1}
\]
for some unit $a\in k\{\tau^{\pm1}\}$. Hence $Fv=ha(\tau-1)$.  Taking adjoints, we obtain 
\[vF^\ast=(\tau-1)bh^\ast,\]
where $b:=-\tau^{-1}a^\ast$ is also a unit. Therefore,  
\[vF^\ast(x)=(bh^\ast(x))^p-bh^\ast(x). \]
On the other hand, substituting $y=v$ into \eqref{a1}, we obtain
\[
g(x,v)^p-g(x,v)
=
-vF^\ast(x),
\]
since $F(v)=0$. Thus we obtain 
\[
bh^\ast(x)=-g(x,v).
\]
Since $b\colon \overline{k}\to \overline{k}$ is an isomorphism, it follows that for $u^\ast\in W^\ast$, the condition $h^\ast(u^\ast)=0$ is equivalent to $\omega(v,u^\ast)=0$. This proves the assertion when $\dim_{\F_p} X=1$.

\medskip 

\noindent
We now treat the general case. Let $X_1:=\ker(h^\ast)$. We already know that $X_1\subset W^\ast$. We will show that $X_1=X^\perp$. 

For each nonzero element $v\in X$, we set $f_v:=(\tau-1)v^{-1}$ and write $f=h_vf_v$. Then we have $F=hh_vf_v$. By the one-dimensional case, we have 
\[X_1\subset \ker (hh_v)^\ast=\langle v\rangle^\perp.\]
Taking the intersection over all $v\in X\setminus\{0\}$, 
we obtain
\[
X_1\subset X^\perp.
\]
To prove equality, it suffices to compare dimensions.
Since $\omega$ is nondegenerate, we have $\dim X^\perp=\dim W-\dim X$. On the other hand, since $F=hf$, Lemma \ref{basic}(ii) implies that $\dim X_1+\dim X=\dim W$. This completes the proof.
\end{proof}

The proposition has the following corollary. Let  $E\in k\{\tau^{\pm1}\}$ satisfy $E=E^\ast$. By definition of the adjoint, we see that $E$ is of the form 
\begin{equation}\label{a}
a_0\tau^0+\sum_{i=1}^e\bigl(a_i\tau^i+a_i^{p^{-i}}\tau^{-i}\bigr), 
\end{equation}
where $a_i\in k$. Assume that $e\geq1$ and $a_e\neq0$. Let $V:=\ker E$. 
\begin{corollary}\label{dec2}
Let the notation and assumptions be as above. Let $W\subset V$ be a maximal totally isotropic $\F_p$-vector subspace with respect to the symplectic pairing $\omega$ defined in Lemma~\ref{b5}.  Let $F(x)=\sum_{i=0}^eb_ix^{p^i}\in\overline{k}[x]$ be a separable $\mathbb{F}_p$-linearized polynomial with $\ker F=W$. Then there exists a unique constant $a\in \overline{k}^\times$ satisfying 
\[E(x)=F^\ast(aF(x)).\]
If $F(x)\in k[x]$, then $a\in k^\times$. 

If $p_0=2$, then one can choose $F$ so that \[E(x)=F^\ast(F(x)).\]
Moreover, for such an $F$, we have $a_0=F^\ast(1)^2$. 
\end{corollary}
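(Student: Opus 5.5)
Apply Proposition~\ref{b6} with $E$ in the role of $F$ and $X:=W$; here $W=\ker F\subset V=\ker E$. Part (i) gives $h\in\overline{k}\{\tau^{\pm1}\}$ with $E=hF$. Part (ii) gives $\ker h^\ast=W^\perp$, where $\perp$ is taken with respect to $\omega$ on $V$, which is symplectic by Lemma~\ref{b5}(iii) because $E=E^\ast$. Since $W$ is maximal totally isotropic, $W^\perp=W$. So $\ker h^\ast=W=\ker F$. Lemma~\ref{basic}(i) then writes $h^\ast=c\,\tau^n f_W$ and $F=b_e f_W$, hence $h^\ast=u F$ for some unit $u=a'\tau^m$. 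We pin down $m$ by degrees. Since $E$ has terms from $\tau^{-e}$ to $\tau^{e}$, $\dim V=2e$ by Lemma~\ref{basic}(ii), so $\dim W=e$, consistent with $\deg F=e$. From $E=F^\ast u^\ast$ and $E=E^\ast$ we get $E=hF=F^\ast u^\ast F$. Comparing the lowest $\tau$-degree $-e$, and using that $F^\ast$ spans $\tau^{-e},\dots,\tau^0$ and $F$ spans $\tau^0,\dots,\tau^e$, forces $u^\ast$ to be of degree $0$, i.e.\ $u^\ast=a\in\overline{k}^\times$. This proves existence of $a$ with $E(x)=F^\ast(aF(x))$.

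For uniqueness: if $F^\ast aF=F^\ast a'F$, then $F^\ast(a-a')F=0$. The ring $\overline{k}\{\tau^{\pm1}\}$ has no zero divisors (leading coefficients multiply), so $a=a'$. For rationality: if $F\in k[x]$, apply $\sigma\in{\rm Gal}(\overline{k}/k)$ coefficientwise. Since $E$ and $F$ are fixed, $E=F^\ast\sigma(a)F$, and uniqueness gives $\sigma(a)=a$, so $a\in k$.

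Now let $p_0=2$. Replace $F$ by $cF$ with $c\in\overline{k}^\times$; then $(cF)^\ast=F^\ast c$, so $E=(cF)^\ast a c^{-2}(cF)$. Since squaring is surjective on $\overline{k}$, choosing $c^2=a$ gives $E=F'^\ast F'$ with $F'=cF$ still separable and $\ker F'=W$. For the last claim, compare the $\tau^0$ coefficient of $F^\ast F=\sum_{i,j}b_i^{p^{-i}}\tau^{-i}b_j\tau^{j}$. The $\tau^0$ term comes from $i=j$, giving $\sum_i b_i^{p^{-i}}b_i^{p^{-i}}=\sum_i b_i^{2p^{-i}}$. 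Because we are in characteristic $2$, this equals $\bigl(\sum_i b_i^{p^{-i}}\bigr)^2=F^\ast(1)^2$, and it is equal to $a_0$.

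The main obstacle is the degree bookkeeping that shows $u^\ast$ is a scalar. In particular one must check that $F$ being separable of degree $e$ really matches $\dim W=e$, and that the $\tau$-powers line up at both ends. This is handled by comparing the top-degree terms $\tau^e$ and the bottom-degree terms $\tau^{-e}$ in $E=F^\ast u^\ast F$.
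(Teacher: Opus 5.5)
Your proposal is correct and follows the paper's proof essentially step by step. Proposition~\ref{b6} gives $E=hF$ with $\ker h^\ast=W^\perp=W$, Lemma~\ref{basic}(i) makes $h^\ast$ a unit times $F$, and a degree comparison removes the $\tau$-power; the uniqueness argument, Galois descent, rescaling by $a^{1/2}$, and $\tau^0$-coefficient computation also match the paper. There are two small points: you compare $F^\ast u^\ast F$ directly with the known span $\tau^{-e},\dots,\tau^{e}$ of $E$, whereas the paper compares $F^\ast\tau^{-n}aF$ with its adjoint $F^\ast a\tau^nF$, and ``$E=F^\ast u^\ast$'' in your text should read $h=F^\ast u^\ast$.
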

\proof{Write $E=hF$ in $\overline{k}\{\tau^{\pm1}\}$. By Lemma \ref{basic}(i) and Proposition \ref{b6}, the adjoint $h^\ast$ can be written uniquely in the form 
\[h^\ast=a\tau^nF\]
for some $n\in\Z$ and $a\in \overline{k}^\times$. We claim that $n=0$. Indeed, we have 
\[E=hF=F^\ast\tau^{-n}aF.\]
Taking adjoints, we also obtain $E=F^\ast a\tau^n F$. Comparing the highest degrees in the equality $F^\ast\tau^{-n}aF=F^\ast a\tau^n F$, we conclude that $n=0$. 

Therefore, $h=(aF)^\ast$, and hence $E=F^\ast aF$. The uniqueness of $a$ follows from the uniqueness of the above factorization. 

Assume now that $F\in k\{\tau^{\pm1}\}$. Then, for every $\sigma\in G$, we obtain 
\[
E=F^\ast\sigma(a) F. 
\]
By uniqueness, $a=\sigma(a)$. Since this holds for every $\sigma$, it follows that $a\in k^\times$. 

\medskip 

\noindent
Assume that $p_0=2$. Let $b\in k^\times$ and set  $F_1=b^{-1}F$. Then 
\[E=(bF_1)^\ast abF_1=F_1^\ast ab^2F_1. \]
Thus, choosing $b=a^{-1/2}$, we obtain $E=F_1^\ast F_1$. 

Now assume that $F$ satisfies $E=F^\ast F$.  Write $F(x)=\sum_{i=0}^eb_ix^{p^i}$. Then 
\[E(x)=\sum_{i=0}^e(b_iF(x))^{p^{-i}}=\sum_{i,j=0}^e(b_ib_j)^{p^{-i}}x^{p^{j-i}}. \]
Comparing the coefficients of $x$, we obtain $a_0= F^\ast(1)^2$, which completes the proof. 
\qed}

\section{Lang Torsor over $W_2$} 
\label{langtorsor}

In this section, we assume that $p_0=2$. Recall that, for an $\F_2$-algebra $R$, the ring $W_2(R)$ of Witt vectors of length two is defined to be the set $R^2$  whose ring structure is given by 
\begin{align}\label{add}
(a,b)+(c,d)&=(a+c,b+d+ac), \\\notag(a,b) \cdot (c,d)&=(ac,a^2d+c^2b). 
\end{align}
Let $p$ be a power of $2$. 
Let $W_2$ be the commutative group $\F_p$-scheme $\A^2_{\F_p}$ whose addition is given by \eqref{add}. 
Consider the Lang morphism  
\[L_p\colon W_2\to W_2\]
defined by $L_p(x,y):=(x^p,y^p)-(x,y)=(x^p+x,y^p+y+x^2+x^{p+1})$. 
\begin{lemma}\label{Lp}
The morphism $L_p$ is finite \'etale of degree $p^2$. It is a Galois covering with Galois group $W_2(\F_p)$. 
\end{lemma}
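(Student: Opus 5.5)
The plan is the standard Lang isogeny argument for the commutative connected group scheme $W_2$ over $\F_p$, made completely explicit for $W_2\cong\A^2_{\F_p}$.

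\emph{Finiteness and degree.} Set $L_p(x,y)=(u,v)$, so that
\[
u=x^p+x,\qquad v=y^p+y+x^2+x^{p+1}.
\]
The coordinate ring $\F_p[x,y]$ is then an algebra over $\F_p[u,v]$. Since $x$ satisfies the monic equation $x^p+x-u=0$ of degree $p$ over $\F_p[u]$, the ring $\F_p[x,v]$ is free of rank $p$ over $\F_p[u,v]$. Next, $y$ satisfies $y^p+y-(v+x^2+x^{p+1})=0$, which is monic of degree $p$ over $\F_p[u,v,x]$. Hence $\F_p[x,y]$ is free of rank $p^2$ over $\F_p[u,v]$, with basis $x^iy^j$ for $0\le i,j<p$. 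So $L_p$ is finite and flat of degree $p^2$.

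\emph{\'Etaleness.} I use the Jacobian criterion. The Jacobian matrix of $(u,v)$ with respect to $(x,y)$ is triangular, because $\partial u/\partial y=0$. Its diagonal entries are $\partial u/\partial x=px^{p-1}+1=1$ and $\partial v/\partial y=py^{p-1}+1=1$ (using $p=0$ in characteristic $2$). The determinant is therefore $1$, so $L_p$ is \'etale.

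\emph{Galois structure.} For $c\in W_2(\F_p)$, let $\phi$ denote the absolute $p$-Frobenius. Since $\phi$ is an endomorphism of the group scheme $W_2$ over $\F_p$ and $\phi(c)=c$, we get
\[
L_p(w+c)=\phi(w)+\phi(c)-w-c=L_p(w).
\]
So translation by $c$ is an automorphism of $W_2$ over the target. This gives a homomorphism $W_2(\F_p)\to \mathrm{Aut}(L_p)$, defined over $\F_p$. It is injective because translations act freely. Its image has order $p^2=\deg L_p$, and the target $W_2\cong\A^2$ is connected. Since a finite \'etale cover of a connected base whose automorphism group has order equal to the degree is Galois, $L_p$ is Galois with group $W_2(\F_p)$. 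This is the same argument as in Lemma~\ref{b2}(i)(b).

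It remains to identify the kernel, i.e.\ the geometric fiber over $0$, with exactly $W_2(\F_p)$. This follows from the degree count: the fiber is reduced of size $p^2$, and it contains $W_2(\F_p)$, which already has $p^2$ elements. No step is a real obstacle; the only care needed is that the computations use the Witt addition law \eqref{add}, which is encoded in the formula for $L_p$.
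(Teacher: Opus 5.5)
Your proof is correct and follows essentially the same route as the paper: the Jacobian criterion gives \'etaleness, and translation by $W_2(\F_p)$ gives an injective map $W_2(\F_p)\to \mathrm{Aut}(L_p)$ whose image has order equal to the degree. The only minor difference is that you obtain finiteness and the degree $p^2$ from the explicit free basis $x^iy^j$ ($0\le i,j<p$), whereas the paper reads off the degree from the fiber $L_p^{-1}(0,0)=W_2(\F_p)$.
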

\proof{The finiteness of $L_p$ is clear. By the Jacobian criterion, $L_p$ is smooth; hence it is finite \'etale. To determine the degree, we compute the fiber $L_p^{-1}(0,0)$, which is equal to $W_2(\F_p)$. Thus the degree is  $p^2$. 

We define an action of  $ W_2(\F_p)$ on $W_2$ by 
\[W_2(\F_p)\times W_2\to W_2, \quad (\alpha,\beta)\mapsto \alpha+\beta. \]
Since $L_p(\alpha)=0$ for every $\alpha\in W_2(\F_p)$, the morphism $L_p$ is invariant under this action. This induces an injective homomorphism $W_2(\F_p)\to {\rm Aut}(L_p)$. Since  $L_p$ has degree $p^2$, this is an isomorphism. 
\qed}

Let $\ell$ be a prime number different from $2$. Let $W_2(\F_p)^\vee$ denote the character group ${\rm Hom}(W_2(\F_p),\Ql^\times)$. For $\xi\in W_2(\F_p)^\vee$, let ${\mathcal L}_\xi$ be the rank one smooth $\Ql$-sheaf on $W_2$ associated with the $W_2(\F_p)$-torsor $L_p$ and the character $\xi$. 
\begin{lemma}\label{pLp}
There is an isomorphism of sheaves on $W_2$
\[
L_{p\ast}\Ql\cong \bigoplus_{\xi\in W_2(\F_p)^\vee}{\mathcal L}_\xi. \]
\end{lemma}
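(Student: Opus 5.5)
This is the standard decomposition of the pushforward along a finite étale Galois covering with finite abelian Galois group. The plan is to use Lemma~\ref{Lp}, which says that $L_p\colon W_2\to W_2$ is a finite étale Galois covering with group $A:=W_2(\F_p)$. Here $A$ acts on the source by translation, so $L_p$ is an $A$-torsor.

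First I describe $L_{p\ast}\Ql$ through the dictionary between lisse sheaves and representations. Because $L_p$ is finite étale, $L_{p\ast}\Ql$ is a lisse sheaf of rank $p^2=|A|$. Its stalk at a geometric point $\bar x$ is $\Ql^{L_p^{-1}(\bar x)}$, the space of functions on the fiber. The fiber $L_p^{-1}(\bar x)$ is an $A$-torsor, and the deck transformations act on it. Through this action, $L_{p\ast}\Ql$ becomes a lisse sheaf of $\Ql[A]$-modules, and the induced action of $A$ on each stalk is the regular representation. This $A$-action commutes with the monodromy action of $\pi_1(W_2,\bar x)$, since both come from the torsor structure.

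Next I decompose using idempotents. The characteristic $\ell$ is odd and $|A|=p^2$ is a power of $2$, so $\Ql[A]$ is semisimple. It is split as well, since $\Ql$ contains all $|A|$-th roots of unity. It therefore has the orthogonal idempotents
\[
e_\xi=\frac{1}{|A|}\sum_{a\in A}\xi(a)^{-1}a,\qquad \xi\in A^\vee,
\]
with $\sum_\xi e_\xi=1$. These act as endomorphisms of the sheaf $L_{p\ast}\Ql$, and this gives
\[
L_{p\ast}\Ql=\bigoplus_{\xi}e_\xi L_{p\ast}\Ql.
\]
On each stalk, $e_\xi$ cuts out the $\xi$-isotypic line of the regular representation. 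So each summand is a rank one lisse sheaf on which $A$ acts through $\xi$.

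The remaining step, which is the only one needing care, is to identify $e_\xi L_{p\ast}\Ql$ with $\mathcal L_\xi$. By definition, $\mathcal L_\xi$ is the pushout of the torsor $L_p$ along $\xi$; equivalently, it is the $\xi$-isotypic part of $L_{p\ast}\Ql$, possibly after replacing $\xi$ by $\xi^{-1}$ depending on the sign convention. I would check this on stalks. A function $\phi$ on the fiber lies in the $\xi$-part exactly when $\phi(a+y)=\xi(a)^{\pm1}\phi(y)$, and such functions are the sections of the sheaf attached to the torsor and $\xi^{\pm1}$. If the convention produces $\xi^{-1}$ instead of $\xi$, nothing changes: $\xi\mapsto\xi^{-1}$ is a bijection of $A^\vee$, so reindexing the sum gives the stated isomorphism.
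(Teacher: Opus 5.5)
Your proposal is correct and takes the same approach as the paper. The paper's proof is just ``It follows from Lemma~\ref{Lp}'': since $L_p$ is a Galois covering with group $W_2(\F_p)$, the pushforward $L_{p\ast}\Ql$ decomposes into the isotypic components for the characters of that group. Your idempotent argument, together with the reindexing $\xi\mapsto\xi^{-1}$ to absorb the sign convention, is exactly the standard verification behind that one line.
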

\proof{
It follows from Lemma \ref{Lp}.\qed}

For a morphism $(f,g)\colon X\to W_2$ of schemes, we denote by ${\mathcal L}_{\xi}(f,g)$ the pullback $(f,g)^\ast {\mathcal L}_{\xi}$. 
\begin{lemma}\label{Lxi1}
Let $X$ be an $\F_p$-scheme. 
Then the following hold:
\begin{enumerate}
\item For two morphisms $(f,g),\, (f',g')\colon X\to W_2$, we have 
\[{\mathcal L}_{\xi}(f,g)\otimes {\mathcal L}_{\xi}(f',g')\cong {\mathcal L}_{\xi}(f+f',g+g'+ff'). \]
\item For a morphism $(f,g)\colon X\to W_2$, we have 
\[{\mathcal L}_{\xi}(f,g)\cong {\mathcal L}_{\xi}(f^p,g^p). \]
\end{enumerate}
\end{lemma}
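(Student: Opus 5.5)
Both statements follow from general properties of the sheaves $\mathcal{L}_\xi$ attached to the Lang torsor $L_p$ of Lemma~\ref{Lp}. First, $\mathcal{L}_\xi$ is a character sheaf on the commutative group scheme $W_2$. Second, the relative Frobenius of $W_2$ acts trivially on it up to isomorphism. Since $(f,g)^\ast$ commutes with tensor products, it suffices to prove the corresponding identities on $W_2$ (respectively on $W_2\times W_2$) and then pull back along $(f,g)$, or along $((f,g),(f',g'))$.

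\textbf{Part (i).} Let $m\colon W_2\times W_2\to W_2$ be the group law \eqref{add}, so that
\[
m\bigl((x,y),(x',y')\bigr)=(x+x',\,y+y'+xx').
\]
I will show that $m^\ast\mathcal{L}_\xi\cong \mathrm{pr}_1^\ast\mathcal{L}_\xi\otimes\mathrm{pr}_2^\ast\mathcal{L}_\xi$, by comparing the underlying torsors. The three torsors involved are:
\begin{itemize}
\item the pullback torsor $m^\ast L_p$;
\item the product torsor $L_p\times L_p$ over $W_2\times W_2$;
\item the contracted product of $L_p\times L_p$ along the addition map $W_2(\F_p)\times W_2(\F_p)\to W_2(\F_p)$.
\end{itemize}
Because $L_p$ is a group homomorphism ($W_2$ is commutative and the Frobenius $F$ is additive), the map
\[
(\alpha,\beta)\mapsto \alpha+\beta
\]
from $L_p\times L_p$ to $W_2$ lies over $m$. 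It is equivariant for the addition map on Galois groups, so it identifies the contracted product with $m^\ast L_p$. Passing to the sheaves attached to a character $\xi$, the contracted product gives the tensor product of the sheaves. This yields the isomorphism above, and pulling back along $((f,g),(f',g'))\colon X\to W_2\times W_2$ gives (i), since $m\circ((f,g),(f',g'))=(f+f',\,g+g'+ff')$.

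\textbf{Part (ii).} Write $F_{W_2}(x,y)=(x^p,y^p)$; this is an endomorphism of the group scheme $W_2$ over $\F_p$. Because $L_p=F_{W_2}-\mathrm{id}$ and $W_2$ is commutative, $F_{W_2}$ commutes with $L_p$ and is compatible with the $W_2(\F_p)$-action: the Frobenius fixes $W_2(\F_p)$ pointwise. Hence $F_{W_2}^\ast L_p$ is again a $W_2(\F_p)$-torsor, which is what allows $F_{W_2}^\ast L_p$ and $L_p$ to be compared as torsors. Concretely, the identity
\[
F_{W_2}(z)=z+L_p(z)
\]
exhibits $F_{W_2}$ as $\mathrm{id}+L_p$. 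So by (i), applied on $W_2$ itself,
\[
F_{W_2}^\ast\mathcal{L}_\xi\cong\mathcal{L}_\xi\otimes L_p^\ast\mathcal{L}_\xi .
\]
It then remains to see that $L_p^\ast\mathcal{L}_\xi$ is trivial. This holds because the pullback of the torsor $L_p$ along $L_p$ itself has the tautological section (the diagonal), so it is a trivial torsor. Hence $F_{W_2}^\ast\mathcal{L}_\xi\cong\mathcal{L}_\xi$, and pulling back along $(f,g)$ gives (ii), since $F_{W_2}\circ(f,g)=(f^p,g^p)$.

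\textbf{Main obstacle.} The only delicate point is bookkeeping: keeping the twisted addition law \eqref{add} consistent through the equivariance checks, and making the dictionary between torsors and sheaves precise. The dictionary needs the convention $\mathcal{L}_\xi=(L_{p\ast}\Ql)^{\xi}$, together with the fact that contracted product of torsors corresponds to tensor product of the attached sheaves. I would state this dictionary once, for a general finite abelian group, and then apply it twice.
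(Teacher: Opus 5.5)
Your proposal is correct and follows essentially the same route as the paper. Part (i) is proved in the universal case $W_2\times W_2$, using that the addition map $L_p\times L_p\to W_2$ lies over the group law and is equivariant for addition on $W_2(\F_p)\times W_2(\F_p)$; part (ii) is reduced via (i) to the triviality of the pullback of $\mathcal{L}_\xi$ along $L_p$. The only cosmetic difference is that the paper applies (i) directly on $X$ to $(f^p,g^p)-(f,g)$, whereas you write Frobenius universally as $\mathrm{id}+L_p$ on $W_2$ and then pull back along $(f,g)$, which avoids the (easy) identification $\mathcal{L}_\xi(f,g)^{-1}\cong\mathcal{L}_\xi(-(f,g))$.
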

\proof{
(i) It suffices to prove the assertion in the universal situation, namely, when 
\[X=W_2\times W_2, \quad(f,g)={\rm pr}_1, \quad (f',g')={\rm pr}_2. \]
We show that 
\begin{equation}\label{pr1+pr2}
    {\mathcal L}_{\xi}({\rm pr}_1)\otimes {\mathcal L}_{\xi}({\rm pr}_2)\cong {\mathcal L}_{\xi}({\rm pr}_1+{\rm pr}_2). 
\end{equation}
The sheaf on the left-hand side is associated with the $W_2(\F_p)\times W_2(\F_p)$-torsor 
\[L_p\times L_p\colon W_2\times W_2\to W_2\times W_2\]
together with the character 
\[
W_2(\F_p)\times W_2(\F_p)\to \Ql^\times,\quad (\alpha,\beta)\mapsto \xi(\alpha)\cdot \xi(\beta)=\xi(\alpha+\beta). 
\]
On the other hand, since $L_p$ is additive, we have a commutative diagram 
\[\xymatrix{
W_2\times W_2\ar[d]\ar[rr]^-{L_p\times L_p}&& W_2\times W_2\ar[d]\\
W_2\ar[rr]^-{L_p}& & W_2,}\]
where the vertical arrows are the addition morphisms. Moreover, the left vertical arrow is equivariant with respect to the group homomorphism 
\[
W_2(\F_p)\times W_2(\F_p)\to W_2(\F_p),\quad(\alpha,\beta)\mapsto \alpha+\beta. 
\]
This proves \eqref{pr1+pr2}. 

\smallskip

(ii) By part~(i), we have 
\[ {\mathcal L}_{\xi}(f^p,g^p)\otimes{\mathcal L}_{\xi}(f,g)^{-1}\cong {\mathcal L}_{\xi}((f^p,g^p)-(f,g)).\]
Since the morphism $(f^p,g^p)-(f,g)$ factors through $L_p$, the rank one sheaf on the right-hand side is trivial. This proves the assertion. 
\qed}

We describe the relationship between ${\mathcal L}_{\xi}$ and Artin--Schreier sheaves. 
Let 
\[\xi\colon W_2(\F_p)\to\Ql^\times\]
be a character. Define a character $\psi\colon \F_p\to\Ql^\times$  as the composition  
\[\F_p\to W_2(\F_p)\xrightarrow{\xi}\Ql^\times,\]
where the first map is given by $x\mapsto (0,x)$. 
Let ${\mathcal L}_\psi$ be the rank one smooth sheaf on $\A^1_{\F_p}={\rm Spec}(\F_p[x])$ associated with the Artin--Schreier covering $y^p-y=x$ and the character $\psi$. 

For a morphism $f\colon X\to \A^1_{\F_p}$ of schemes, we set ${\mathcal L}_\psi(f):=f^\ast {\mathcal L}_\psi$. 
\begin{lemma}\label{xipsi}
Let $f\colon X\to \A^1_{\F_p}$ be a morphism of schemes. Then 
\[{\mathcal L}_\psi(f)\cong {\mathcal L}_\xi(0,f). \]
\end{lemma}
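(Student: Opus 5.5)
The plan is to compare the two torsors directly: show that the Artin--Schreier covering, pushed along the inclusion $\F_p\to W_2(\F_p)$, $x\mapsto(0,x)$, is isomorphic to the pullback of the Lang torsor $L_p$ along $(0,f)$. By functoriality of the sheaf attached to a torsor and a character, it suffices to treat the universal case $X=\A^1_{\F_p}$, $f=\mathrm{id}$, and prove
\[
\mathcal L_\psi\cong \mathcal L_\xi(0,x)\quad\text{on }\A^1_{\F_p}.
\]

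First I would compute the pullback torsor $P:=(0,x)^\ast L_p$. Using the formula $L_p(a,b)=(a^p+a,\,b^p+b+a^2+a^{p+1})$, $P$ is the closed subscheme of $\A^1\times W_2$ cut out by $a^p+a=0$ and $b^p+b+a^2+a^{p+1}=x$, with $W_2(\F_p)$ acting by Witt vector addition on $(a,b)$. Inside $P$ sits the closed subscheme $\{a=0,\ b^p+b=x\}$, which is exactly the Artin--Schreier covering $y^p-y=x$ (signs are irrelevant in characteristic $2$). This subscheme is stable under the subgroup $\{(0,c)\}\cong\F_p$, since $(0,b)+(0,c)=(0,b+c)$, and the induced action is the Artin--Schreier action $y\mapsto y+c$.

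The key step is then the standard fact that if $Q$ is an $H$-torsor and $Q\subset P$ is $H$-equivariant for an $G$-torsor $P$ with $H\subset G$, then $P\cong Q\times^H G$. The map $Q\times^H G\to P$, $(q,g)\mapsto q\cdot g$, is a morphism of $G$-torsors, hence an isomorphism. Consequently, the sheaf attached to $(P,\xi)$ equals the sheaf attached to $(Q,\xi|_H)$, and $\xi|_H=\psi$ by definition of $\psi$. This gives the claimed isomorphism.

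Equivalently, in the language of the paper one can note that $(0,x)$ and the Lang torsor restricted to $a=0$ are compatible via the commutative square formed by $\A^1\xrightarrow{y\mapsto y^p-y}\A^1$ and $L_p$, with vertical maps $y\mapsto(0,y)$. This square holds because $L_p(0,y)=(0,y^p+y)$, and the left map is equivariant for $\F_p\to W_2(\F_p)$. The argument then runs exactly as in the proof of Lemma~\ref{Lxi1}(i). The only point needing care is this equivariance and the compatibility of the character; no real obstacle is expected.
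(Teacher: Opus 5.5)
Your proposal is correct and is essentially the paper's proof. The paper only records that the square formed by $y\mapsto y^p-y$ and $L_p$, with vertical maps $y\mapsto(0,y)$, commutes — this is your last paragraph. Your contracted-product argument ($P\cong Q\times^{\F_p}W_2(\F_p)$, so the character $\xi$ restricts to $\psi$) simply spells out why that commutative, equivariant square identifies the two sheaves.
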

\proof{
The diagram 
\[\xymatrix{
\A^1_{\F_p}\ar[d]\ar[rr]^-{x \mapsto x^p-x}& &\A^1_{\F_p}\ar[d]\\
W_2\ar[rr]^-{L_p} & &W_2,}
\]
where the vertical arrows are given by $x\mapsto(0,x)$, is commutative. The assertion follows. 
\qed}

\begin{proposition}\label{rkLxi}
Let $k$ be an algebraically closed field containing $\F_p$. Assume that $\xi\in W_2(\F_p)^\vee$ has order $4$. Then, for any $t\in k$, we have 
\[
\dim H^1_c(\A^1_k,{\mathcal L}_\xi(x,tx))=1. 
\]
\end{proposition}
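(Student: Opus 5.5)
The plan is to compute the Euler characteristic of the rank one smooth sheaf $\mathcal{G}:={\mathcal L}_\xi(x,tx)$ on $\A^1_k$ via the Grothendieck--Ogg--Shafarevich formula, and to show that $H^0_c$ and $H^2_c$ vanish. Since $\mathcal{G}$ is lisse of rank one on $\A^1_k$, we have
\[
\chi_c(\A^1_k,\mathcal{G})=\chi_c(\A^1_k)\cdot 1-{\rm Sw}_\infty(\mathcal{G})=1-{\rm Sw}_\infty(\mathcal{G}).
\]
Moreover, $H^0_c$ vanishes because $\A^1_k$ is affine. Also $H^2_c$ vanishes as soon as $\mathcal{G}$ is geometrically nontrivial. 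Hence it suffices to prove two things: $\mathcal{G}$ is nontrivial, and ${\rm Sw}_\infty(\mathcal{G})=2$.

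First I simplify the sheaf using Lemma~\ref{Lxi1}(i). Since $(x,tx)=(x,0)+(0,tx)$ in $W_2$ (the cross term is $x\cdot 0=0$), Lemma~\ref{Lxi1}(i) and Lemma~\ref{xipsi} give
\[
\mathcal{G}\cong{\mathcal L}_\xi(x,0)\otimes{\mathcal L}_\psi(tx).
\]
The factor ${\mathcal L}_\psi(tx)$ is either trivial or has Swan conductor $1$ at $\infty$. Next I compute the tensor square: again by Lemma~\ref{Lxi1}(i), $(x,tx)+(x,tx)=(0,x^2)$, so
\[
\mathcal{G}^{\otimes2}\cong{\mathcal L}_\xi(0,x^2)\cong{\mathcal L}_{\psi}(x^2).
\]
Since $\xi$ has order $4$, the character $\psi$ obtained by restricting to $(0,\F_p)$ is nontrivial (otherwise $\xi$ would factor through $W_2(\F_p)/(0,\F_p)\cong\F_p$, of exponent $2$). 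Hence $\mathcal{G}^{\otimes2}$ is an Artin--Schreier sheaf attached to a polynomial of degree $2$. Its Swan conductor is nonzero (it equals $1$ after reducing $x^2$ modulo $z^p-z$ and taking a square root when $p=2$, and in general is positive). Therefore $\mathcal{G}^{\otimes 2}$, and a fortiori $\mathcal{G}$, is geometrically nontrivial, and $H^2_c=0$.

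The main step, and the expected obstacle, is the exact value ${\rm Sw}_\infty(\mathcal{G})=2$. I would first invoke Brylinski's (equivalently Kato's) formula for the Swan conductor of Artin--Schreier--Witt sheaves. For a length two Witt vector $(f_0,f_1)$ that is reduced at $\infty$ (no monomials of degree divisible by $2$ in $f_0$ that can be removed), and a character of exact order $4$, the formula gives
\[
{\rm Sw}_\infty=\max\bigl(2\deg f_0,\ \deg f_1\bigr).
\]
Here $f_0=x$ has odd degree $1$, so the vector is already reduced, and we get $\max(2,1)=2$ independently of $t$.

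If I want to avoid citing this formula, I would argue by local upper ramification at $\infty$. The character of $\mathcal{G}$ has order $4$, and its square has break $1$ (computed above). The character of ${\mathcal L}_\xi(x,0)$ has a break $b$ with $b\ge1$ and $b$ odd. I would then compute the break directly from the Witt-vector equation $L_p(X,Y)=(x,0)$ after the substitution $x=u^{-1}$, showing that the second component produces jumps $1$ and $2$ in the upper filtration. Tensoring with ${\mathcal L}_\psi(tx)$, whose break is at most $1<2$, does not change the top break, so ${\rm Sw}_\infty(\mathcal{G})=2$.

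Combining the pieces, $\chi_c=1-2=-1$ and $H^0_c=H^2_c=0$, so $\dim H^1_c(\A^1_k,\mathcal{G})=1$.
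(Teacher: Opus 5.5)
Your argument is correct, but it follows a different route from the paper.

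\textbf{Your route.} You compute $\chi_c$ by Grothendieck--Ogg--Shafarevich. You then read off ${\rm Sw}_\infty({\mathcal L}_\xi(x,tx))=2$ from the Brylinski--Kato conductor formula for Artin--Schreier--Witt characters.

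\textbf{The paper's route.} It never computes a Witt-vector Swan conductor. It first shows that $\dim H^1_c$ is independent of $t$, via $(x,tx)=(x+t,0)+(t,t^2)$ and a translation. It then pulls ${\mathcal L}_\xi(x,0)$ back along the isogeny $L(x)=x^p+x$, using two facts: $L^\ast{\mathcal L}_\xi(x,0)\cong{\mathcal L}_\psi(x^2+x^{p+1})$, and $L_\ast\Ql\cong\bigoplus_{a\in\F_p}{\mathcal L}_\psi(ax)$. By the projection formula, the $p$-dimensional space $H^1_c(\A^1_k,{\mathcal L}_\psi(x^2+x^{p+1}))$ from \cite[3.3]{TT} splits into the $p$ pieces $H^1_c(\A^1_k,{\mathcal L}_\xi(x,ax))$ for $a\in\F_p$. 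These pieces have equal dimension, so each is $1$-dimensional.

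\textbf{Comparison.} The paper's argument is self-contained modulo ordinary Artin--Schreier theory and the formal properties of ${\mathcal L}_\xi$ proved in Section 3. Yours is shorter and yields the finer fact ${\rm Sw}_\infty=2$ for every $t$. The price is importing the Witt-vector conductor formula.

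If you keep your route, two points need care.

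\begin{enumerate}
\item \textbf{Applying the formula.} It is stated for the Lang torsor over $\F_2$ with a faithful character of $W_2(\F_2)\cong\Z/4\Z$. It also requires \emph{every} component of the Witt vector to be reduced, not only $f_0$. For $p=2^r>2$, first write $\xi=\xi_2\circ\Tr_{p/2}(c\,\cdot)$ with $c=(c_0,c_1)\in W_2(\F_p)$; this is possible because the trace form on $W_2(\F_p)$ is perfect. Order $4$ forces $c_0\neq0$. This replaces $(x,tx)$ by $(c_0x,\,c_0^2tx+c_1x^2)$. The term $c_1x^2$ is not reduced, but it is equivalent to $(c_0x,\,c_0^2tx+c_1^{1/2}x)$, and the formula then still gives $\max(2,1)=2$.
\item \textbf{Your fallback via upper breaks.} As written it is inconsistent: you assert the break of ${\mathcal L}_\xi(x,0)$ is odd, but its top break is $2$. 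The correct constraint for a cyclic quartic extension whose quadratic layer has break $u_1=1$ is $u_2\geq 2u_1=2$, and equality is allowed.
\end{enumerate}
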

\proof{We first show that the dimension of $H^1_c(\A^1_k,{\mathcal L}_\xi(x,tx))$ is independent of the choice of $t$. Indeed, we have the equality $(x,tx)=(x+t,0)+(t,t^2)$. Therefore, by Lemma \ref{Lxi1}(i), we have 
\[{\mathcal L}_\xi(x,tx)\cong {\mathcal L}_\xi(x+t,0)\otimes{\mathcal L}_\xi(t,t^2). \]
Since ${\mathcal L}_\xi(t,t^2)$ is geometrically constant, we obtain an isomorphism 
\[
H^1_c(\A^1_k,{\mathcal L}_\xi(x,tx))\cong H^1_c(\A^1_k,{\mathcal L}_\xi(x+t,0)). 
\]
By the change of variable $x\mapsto x+t$, the right-hand side is isomorphic to $H^1_c(\A^1_k,{\mathcal L}_\xi(x,0))$. This proves the claim. 

We now compute the dimension for $t=0$. Consider the Lang morphism $L\colon \A^1_k\to\A^1_k$ given by $x\mapsto x^p-x=x^p+x$. 
By Lemma \ref{Lxi1} and the equality 
\[(x^p+x,0)=(x^p,0)-(x,0)+(0,x^2+x^{p+1}), \]
we have 
\[
L^\ast{\mathcal L}_\xi(x,0)={\mathcal L}_\xi(x^p+x,0)\cong
{\mathcal L}_\xi(0,x^2+x^{p+1}). 
\]
Let $\psi\colon \F_p\to\Ql^\times$ be the character defined by $\psi(x)=\xi(0,x)$. Since $\xi$ has order $4$, $\psi$ is nontrivial. By Lemma \ref{xipsi}, we have 
${\mathcal L}_\xi(0,x^2+x^{p+1})\cong {\mathcal L}_\psi(x^2+x^{p+1})$. Consequently, by the projection formula, we have 
\[L_\ast {\mathcal L}_\psi(x^2+x^{p+1})\cong {\mathcal L}_\xi(x,0)\otimes L_\ast\Ql. \]
We have an isomorphism $L_\ast\Ql\cong \bigoplus_{\psi'\in\F_p^\vee}{\mathcal L}_{\psi'}(x)$. Since $\psi$ is nontrivial, for each $\psi'\in\F_p^\vee$, there exists a unique $a\in \F_p$ such that $\psi'=\psi(a-)$. Therefore, 
\[L_\ast {\mathcal L}_\psi(x^2+x^{p+1})\cong \bigoplus_{a\in\F_p}{\mathcal L}_\xi(x,ax), \]
which induces an isomorphism 
\[H^1_c(\A_k^1,{\mathcal L}_\psi(x^2+x^{p+1}))\cong \bigoplus_{a\in\F_p}H^1_c(\A^1_k,{\mathcal L}_\xi(x,ax)).\]
By \cite[3.3]{TT}, the vector space on the left-hand side has dimension  $p$. Since we have already shown that
$\dim H^1_c(\A^1_k,{\mathcal L}_\xi(x,ax))$ are independent of $a$, each summand must have dimension $1$. 
\qed}

Fix a primitive $4$-th root of unity $\sqrt{-1}\in\Ql$. Since  the group $W_2(\F_2)$ is isomorphic to $\mathbb{Z}/4\mathbb{Z}$ with generator $(1,0)$, the choice $\sqrt{-1}$ defines a faithful additive character $\xi\colon W_2(\F_2)\to\Ql^\times$ by $\xi(1,0)=\sqrt{-1}$.

For a power $r=2^s$ of $2$, let 
\[
Q_r\colon\F_r\to\Ql^\times, \quad 
x \mapsto \xi \circ \Tr_{r/2}(x,0)=\xi\left(\sum_{i=0}^{s-1}(x^{2^i},0)\right). 
\]
The following result was also proved in \cite[Lemma 3.11]{ITT0} by a slightly different method. 
\begin{corollary}\label{HD}
For every power $r=2^s$ of $2$, we have an equality 
\[-\sum_{x\in \F_r}Q_r(x)=(-1-\sqrt{-1})^s. \]
\end{corollary}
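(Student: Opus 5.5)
We deduce Corollary~\ref{HD} from Proposition~\ref{rkLxi} via the Grothendieck--Lefschetz trace formula. We apply it to the sheaf ${\mathcal L}_\xi(x,0)$ on $\A^1_{\F_2}$, taking $p=2$ in the setting of Section~\ref{langtorsor}. With $\xi$ the faithful character fixed above, the trace formula for the Lang torsor gives
\[
\Tr\bigl({\rm Fr}_r\mid {\mathcal L}_\xi(x,0)_{\bar x}\bigr)=\xi\bigl(\Tr_{r/2}(x,0)\bigr)=Q_r(x)
\qquad (x\in\F_r).
\]
This holds up to the usual sign convention for geometric Frobenius, which one checks is compatible: replacing $\xi$ by $\xi^{-1}$ would just swap $\sqrt{-1}$ and $-\sqrt{-1}$, so we only need to pin down conventions once. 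Consequently
\[
\sum_{x\in\F_r}Q_r(x)=\sum_{i}(-1)^i\Tr\bigl({\rm Fr}_r\mid H^i_c(\A^1_{\F},{\mathcal L}_\xi(x,0))\bigr).
\]

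Next we determine the cohomology. We have $H^0_c=0$ since $\A^1$ is affine and the sheaf is lisse. We also have $H^2_c=0$ because ${\mathcal L}_\xi(x,0)$ is geometrically nontrivial. To see this, note that its pullback along $x\mapsto x^2+x$ is ${\mathcal L}_\psi(x^2+x^3)$, as in the proof of Proposition~\ref{rkLxi}, and this is nontrivial since $x^3$ has odd degree. By Proposition~\ref{rkLxi} with $t=0$ and $\xi$ of order $4$, $H^1_c$ is one-dimensional. Let $\gamma$ denote the eigenvalue of ${\rm Fr}_2$ on it. Then
\[
-\sum_{x\in\F_{2^s}}Q_{2^s}(x)=\gamma^s,
\]
so it suffices to identify $\gamma$.

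For this we take the case $s=1$ directly: $-\sum_{x\in\F_2}Q_2(x)=-(\xi(0,0)+\xi(1,0))=-1-\sqrt{-1}$. Hence $\gamma=-1-\sqrt{-1}$, and the formula for general $s$ follows.

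The main obstacle is bookkeeping rather than substance. We must verify the trace formula identity for the Lang-torsor sheaf with the correct orientation of $\xi$, namely that the Frobenius trace at $x\in\F_r$ is $\xi(\Tr_{r/2}(x,0))$ and not its inverse. Here the fact that the trace $W_2(\F_r)\to W_2(\F_2)$ is given by summing Frobenius conjugates is what identifies the composite with $Q_r$. Once that is fixed, the one-dimensionality supplied by Proposition~\ref{rkLxi} makes the $s$-dependence automatic.
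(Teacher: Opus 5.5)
Your proposal is correct and follows the paper's proof essentially step for step. You use the trace formula for $\mathcal{L}_\xi(x,0)$, the vanishing of $H^0_c$ and $H^2_c$, one-dimensionality of $H^1_c$ from Proposition~\ref{rkLxi}, then write the sum as $\gamma^s$ and determine $\gamma$ from the case $s=1$, which the paper leaves implicit. Your justification of geometric nontriviality via the pullback $\mathcal{L}_\psi(x^2+x^3)$, and your remark that the orientation of $\xi$ is harmless because Proposition~\ref{rkLxi} holds for any character of order $4$, are sound additions the paper does not spell out.
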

\proof{Let $\F$ be an algebraic closure of $\F_r$. We apply Proposition \ref{rkLxi} with $p=2$ and $t=0$. By the Grothendieck--Lefschetz trace formula, we have 
\begin{equation}\label{LTF}
\sum_{x\in \F_r}Q_r(x)=\sum_{i=0}^2(-1)^i \Tr({\rm Fr}_r\mid H^i_c(\A^1_{\F},{\mathcal L}_\xi(x,0))). 
\end{equation}
The group 
$H^0_c(\A^1_{\F},{\mathcal L}_\xi(x,0))$ vanishes by the affine vanishing. Also, the group 
\[H^2_c(\A^1_{\F},{\mathcal L}_\xi(x,0))\cong H^0(\A^1_{\F},{\mathcal L}_\xi(x,0)^{-1}\otimes\Ql(-1))\]
vanishes since ${\mathcal L}_\xi(x,0)^{-1}\otimes\Ql(-1)$ is a nontrivial rank one sheaf. Therefore, \eqref{LTF} reduces to the equality 
\[-\sum_{x\in \F_r}Q_r(x)=\Tr({\rm Fr}_r \mid H^1_c(\A^1_{\F},{\mathcal L}_\xi(x,0))).\]
By Proposition \ref{rkLxi}, $H^1_c(\A^1_{\F},{\mathcal L}_\xi(x,0))$ has dimension $1$. It follows that 
\[\Tr({\rm Fr}_r\mid H^1_c(\A^1_{\F},{\mathcal L}_\xi(x,0)))=(\Tr({\rm Fr}_2 \mid H^1_c(\A^1_{\F},{\mathcal L}_\xi(x,0))))^s.\] The assertion follows. 
\qed}

\section{Computation of Frobenius Eigenvalues}\label{CFE}
Let $p$ be a power of $2$, and $q$ be a power of $p$. 

We first recall some basic results. Let \[R(x)=\sum_{i=0}^ea_ix^{p^i}\in \F_q[x],\] 
where $e\geq1$ and $a_e\neq0$. Consider the $\F_q$-curve 
\[
C_R\colon y^p+y=xR(x). 
\]
\begin{lemma}\label{g_C} 
The $\F_q$-curve $C_R$ is smooth and geometrically connected. Let $\overline{C}_R$ denote its smooth compactification. Then $\overline{C}_R$ has genus $p^e(p-1)/2$, and the complement $\overline{C}_R\setminus C_R$ consists of a single $\F_q$-rational point. 
\end{lemma}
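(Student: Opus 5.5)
We will verify the three claims in turn: smoothness and geometric connectedness of the affine curve, the structure at infinity, and the genus.

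For smoothness, write $G(x,y):=y^p+y+xR(x)$. Since $p$ is a power of $2$, we have $\partial G/\partial y = p y^{p-1}+1 = 1$ in characteristic $2$. This partial derivative never vanishes, so the Jacobian criterion shows that $C_R$ is smooth over $\F_q$.

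For geometric connectedness, it suffices to show that $G$ is irreducible over $\F$. The morphism $C_R\to\A^1$, $(x,y)\mapsto x$, is an Artin--Schreier covering $y^p+y=xR(x)$ with group $\F_p$. It is geometrically irreducible as soon as, for every nonzero $c\in\F_p$, the function $c\,xR(x)$ is not of the form $h^p-h$ with $h$ in $\F(x)$. We use the standard reduction: $xR(x)$ is a polynomial, so $h$ would have to be a polynomial. In characteristic $2$ the term $h^p+h$ has degree divisible by $p$ if it is nonconstant. The degree of $xR(x)$ is $p^e+1$, which is odd and hence not divisible by $p$. This rules out such an $h$, so the covering is geometrically irreducible.

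At infinity, $x\mapsto 1/x$ turns the equation locally into an Artin--Schreier extension of $\F_q((1/x))$ defined by a function with a pole of order $m=p^e+1$ at $x=\infty$. Since $m$ is prime to $p$, the place at infinity is totally ramified, so exactly one point lies above it. That point is necessarily $\F_q$-rational, because it is the unique point over the $\F_q$-rational point $\infty$ of $\mathbb{P}^1$. Moreover, the finite part $C_R\to\A^1$ is étale, so all ramification is concentrated at this single point.

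The genus then follows from the Riemann--Hurwitz formula for Artin--Schreier coverings. The covering has degree $p$ and is totally ramified at one point with conductor exponent $m=p^e+1$, so the different exponent is $(m+1)(p-1)$. This gives
\[
2g-2 = p(-2) + (p^e+2)(p-1),
\]
and hence $g=(m-1)(p-1)/2=p^e(p-1)/2$. The only point requiring care is the case where $p$ is a proper power of $2$ rather than a prime. Here we use the standard formula for extensions with group $\F_p$ of order $p$: all $p-1$ nontrivial characters of $\F_p$ have the same Swan conductor $m$, which gives the different $(m+1)(p-1)$. Alternatively, one may decompose the covering into the $p-1$ rank-one sheaves ${\mathcal L}_{\psi}(xR(x))$ and use the Grothendieck--Ogg--Shafarevich formula, exactly as in \cite[3.3]{TT}.
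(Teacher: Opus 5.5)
Your overall route is the paper's route unpacked: the Jacobian criterion for smoothness, then standard Artin--Schreier theory for everything else, which the paper quotes from \cite[6.4.1]{St}. However, the irreducibility criterion you state is false when $p$ is a proper power of $2$. Since $c^p=c$ for $c\in\F_p$, the identity $f=h^p-h$ is equivalent to $cf=(ch)^p-ch$. So your condition ``for every nonzero $c$'' collapses to the single condition that $T^p-T-xR(x)$ has no root in $\F(x)$. That rules out a split covering, but not a reducible one. For example, over $\F_4$ one has $y^4+y+x^2+x=(y^2+y+x)(y^2+y+x+1)$, although $c(x^2+x)$ is never of the form $h^4+h$, by your own degree argument. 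So the sentence ``It is geometrically irreducible as soon as \ldots'' is not valid as written, and the degree-divisible-by-$p$ argument after it only establishes this insufficient condition.

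The correct test uses the quadratic subcovers. For $c\in\F_p^\times$, put $w:=\Tr_{p/2}(cy)=\sum_i (cy)^{2^i}$. Then $w^2+w=c\,xR(x)$, and the subcovers cut out by these $w$ are exactly the quotients by the index-$2$ subgroups $\ker\bigl(\gamma\mapsto\Tr_{p/2}(c\gamma)\bigr)$ of $\F_p$. Hence the covering is geometrically irreducible if and only if $c\,xR(x)\neq h^2+h$ for all $c\in\F_p^\times$ and $h\in\F(x)$.

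Your parity observation then finishes the job: $h$ must be a polynomial, $\deg(h^2+h)=2\deg h$ is even, and $\deg\bigl(c\,xR(x)\bigr)=p^e+1$ is odd. The same reduction justifies your claim that $\infty$ is totally ramified when $p$ is not prime: each subextension $w^2+w=c\,xR(x)$ is totally ramified at $\infty$ because $p^e+1$ is odd. Alternatively, you could deduce geometric irreducibility directly from that total ramification over $\F$.

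With this repair, the rest of your argument is correct: the rationality of the point at infinity, \'etaleness over $\A^1$, the different exponent $(p^e+2)(p-1)$, and the Riemann--Hurwitz computation $2g-2=-2p+(p^e+2)(p-1)$.
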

\proof{Smoothness follows from the Jacobian criterion. The other properties follow readily from \cite[6.4.1]{St}. 
\qed}

We also recall the following result. Set $E:=R+R^\ast$, and define 
\[
V_q:=\ker E\cap \F_q. 
\]
\begin{lemma}\label{plin}
The map 
\begin{equation}\label{uR(u)}
    V_{q}\to\F_p,\quad u\mapsto \Tr_{q/p}(uR(u))
\end{equation}
is additive. 
\end{lemma}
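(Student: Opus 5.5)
We need additivity of $u\mapsto \Tr_{q/p}(uR(u))$ on $V_q$. For $u,v\in V_q$,
\[
(u+v)R(u+v)-uR(u)-vR(v)=uR(v)+vR(u),
\]
so it suffices to show $\Tr_{q/p}(uR(v)+vR(u))=0$ for all $u,v\in V_q$. Since the characteristic is $2$, signs play no role.

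The plan is to use Lemma~\ref{b4}, which gives $uR(v)\sim vR^\ast(u)$ in $k[x^{p^{-\infty}},y^{p^{-\infty}}]$. Specializing at $x=u$, $y=v$, both in $\F_q$, gives
\[
uR(v)-vR^\ast(u)=c^p-c
\]
for some $c$ in the perfect closure, evaluated at $(u,v)$. Since $c$ is a polynomial in $p$-power roots of $u,v$ with coefficients in $\F_q$, it lies in $\F_q$. Therefore
\[
\Tr_{q/p}(uR(v))=\Tr_{q/p}(vR^\ast(u)).
\]
Here $R^\ast(u)\in\F_q$, because $R^\ast(u)=\sum (a_iu)^{p^{-i}}$ and $\F_q$ is perfect.

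Next I add $\Tr_{q/p}(vR(u))$ to both sides:
\[
\Tr_{q/p}\bigl(uR(v)+vR(u)\bigr)=\Tr_{q/p}\bigl(v(R+R^\ast)(u)\bigr)=\Tr_{q/p}(vE(u))=0,
\]
since $u\in\ker E$. Characteristic $2$ is what lets the trace of $uR(v)$ be replaced by that of $vR^\ast(u)$ without a sign. This proves additivity; $\F_p$-homogeneity is not required by the statement.

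The only delicate point is justifying that the Artin--Schreier element $c$ from Lemma~\ref{b4} is defined over $\F_q$ after specialization. I would handle this directly from the proof of Lemma~\ref{b4}: there $c=\sum_i\sum_{j=0}^{i-1} z_i^{p^j}$ for $i>0$, with $z_i=y(a_ix)^{p^{-i}}$ (and analogously for $i<0$). This is visibly in $\F_q$ when $x,y\in\F_q$, so $\Tr_{q/p}(c^p-c)=0$.
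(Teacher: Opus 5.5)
Your argument is correct and is essentially the paper's proof: you reduce to the cross term $\Tr_{q/p}(uR(v)+vR(u))$, use Lemma~\ref{b4} to trade one cross term for its adjoint (and you are right that its Artin--Schreier witness lies in $\F_q$ after specialization), and conclude because $E=R+R^\ast$ vanishes on $V_q$; the paper applies Lemma~\ref{b4} to $vR(u)$ instead of $uR(v)$ and ends with $\Tr_{q/p}(uE(v))$. One small inaccuracy: Lemma~\ref{b4} carries no sign in any characteristic, so characteristic $2$ is not what makes that step work.
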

\proof{We need to show that
\[\Tr_{q/p}((u+v)R(u+v))-\Tr_{q/p}(uR(u))-\Tr_{q/p}(vR(v))=0\]
for all $u,v\in V_{q}$. 
The left-hand side equals $\Tr_{q/p}(uR(v)+vR(u)).$ By Lemma \ref{b4}, this is equal to  
\[\Tr_{q/p}(uR(v)+uR^\ast(v))=\Tr_{q/p}(uE(v))=0. \]
\qed}

 \subsection{Setting and Definitions}\label{set}
 In this subsection, we describe the setting  used in what follows. Let $e\geq1$ be an integer. We consider an element 
 \begin{equation}\label{Fdef}
 F=\sum_{i=0}^eb_i\tau^i \in \F_q\{\tau^{\pm1}\}
 \end{equation}
 satisfying the following conditions: 
 \begin{enumerate}
 \item $b_0b_e\neq0$. 
 \item The function $F^\ast$ satisfies $F^\ast(1)=0$. 
 \end{enumerate}
 We also consider the following additional conditions on $F$: 
 \begin{enumerate}
 \item[(iii)] The kernel $W_F^\ast:=\ker (F^\ast\colon \F\to \F)$ is contained in $\F_q$. 
 \item[(iv)] The kernel $V_{F}:=\ker (F^\ast F\colon \F\to \F)$ is contained in $\F_q$. 
 \end{enumerate}
 Note that, by Lemma \ref{kfin}(i), condition (iii) is equivalent to the following:
 \begin{itemize}
 \item[(iii)$'$]  The kernel $W_F:=\ker (F\colon \F\to \F)$ is contained in $\F_q$. 
 \end{itemize}
The vector spaces $W_F^\ast$, $V_F$, and $W_F$ fit into the exact sequence 
\begin{equation}\label{WVV}
0\to W_F\to V_{F}\xrightarrow{F}W_{F}^\ast\to0. 
\end{equation}

 \begin{example}
 Let $W^\ast$ be an $\F_p$-vector subspace of $\F_q$ containing $\F_p$, and set $e:=\dim_{\F_p}W^\ast$. 
Write 
\[
\prod_{v\in W^\ast}(x-v)=\sum_{i=0}^ec_ix^{p^i},\quad c_i\in \F_q,
\]
and define $f:=\sum_{i=0}^ec_i\tau^i$. Then $F:=f^\ast\tau^{e}$ satisfies conditions 
\textup{(i)--(iii)}. 
 \end{example}
 
Assume that conditions (i)--(ii) hold. 
By Corollary \ref{dec2} and the description in \eqref{a}, $F^\ast F$ takes the form 
\[ \sum_{i=1}^e(a_i\tau^i+a_i^{p^{-i}}\tau^{-i}). \]
Explicitly, the coefficients are given by $a_i=\sum_{j=0}^{e-i}(b_jb_{j+i})^{p^{-j}}$. 

Set 
\[
R_F:=\sum_{i=1}^ea_i\tau^i,\qquad E_F:=F^\ast F. 
\]
Then 
\[
E_F=R_F+R_F^\ast.
\]

The following can be proven similarly to Lemma \ref{even}. 
\begin{lemma}\label{even2}
Assume further that \textup{(iii)} holds. Then $V_{F}\subset \F_{q^2}$. 
\end{lemma}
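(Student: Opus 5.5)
We aim to show that every $v\in V_F$ satisfies $v^{q^2}=v$, using the exact sequence \eqref{WVV}
\[0\to W_F\to V_F\xrightarrow{F} W_F^\ast\to 0\]
together with the $\mathrm{Gal}(\F/\F_q)$-equivariance of the symplectic pairing $\omega$ on $V_F=\ker E_F$ from Lemma~\ref{b5}. Write $\sigma$ for the $q$-th power Frobenius. By (iii) and (iii)$'$, both $W_F$ and $W_F^\ast$ lie in $\F_q$, so $\sigma$ acts trivially on them. Since $F$ has coefficients in $\F_q$, the map $F\colon V_F\to W_F^\ast$ is $\sigma$-equivariant. Hence for $v\in V_F$ we have $F(\sigma v)=\sigma F(v)=F(v)$, so $(\sigma-1)v\in W_F$. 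Thus $\sigma-1$ maps $V_F$ into $W_F$ and kills $W_F$, which gives $(\sigma-1)^2=0$ on $V_F$.

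The remaining step is to show $\sigma^2=1$ rather than merely $(\sigma-1)^2=0$. In characteristic $2$ we have $\sigma^2-1=(\sigma-1)^2$ as endomorphisms of the $\F_p$-vector space $V_F$, since $p$ is a power of $2$ and $V_F$ is an $\F_2$-vector space. Therefore $\sigma^2-1=(\sigma-1)^2=0$ on $V_F$, i.e.\ $v^{q^2}=v$ for all $v\in V_F$, which is $V_F\subset\F_{q^2}$. I expect this is the same mechanism as in Lemma~\ref{even}, where evenness of $p$ collapses the unipotent action to one of order $2$.

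The only point needing care is the equivariance. Because $F\in\F_q\{\tau\}$ is an honest polynomial with $\F_q$-coefficients, $F(\sigma v)=\sigma(F(v))$ is immediate. We also need $\sigma(V_F)\subset V_F$, which holds because $E_F=F^\ast F$ has coefficients in $\F_q$ (its $p^{-i}$-th roots stay in the perfect field $\F_q$). With these two facts the argument needs neither the pairing $\omega$ nor nondegeneracy, only the exactness of \eqref{WVV} and conditions (iii)/(iii)$'$, where the equivalence of (iii) and (iii)$'$ comes from Lemma~\ref{kfin}(i). So the main, and fairly mild, obstacle is just the characteristic-$2$ identity $(\sigma-1)^2=\sigma^2-1$.
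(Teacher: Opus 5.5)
Your proof is correct, but it takes a different route from the paper. The paper argues purely in the skew polynomial ring. Writing $q=p^n$, the inclusion $W_F^\ast\subset\F_q=\ker(\tau^n+1)$ and Lemma~\ref{basic}(iii) give $\tau^n+1=GF^\ast$. Taking adjoints and using $(\tau^n+1)(\tau^{-n}+1)=\tau^n+\tau^{-n}$ (characteristic $2$) yields $\tau^{2n}+1=\tau^nG\cdot F^\ast F\cdot G^\ast$. Lemma~\ref{b2}(ii), applied over $k=\F_{q^2}$, then shows that the middle factor $F^\ast F$ has kernel in $\F_{q^2}$. (Lemma~\ref{even} is proved by the same factorization trick, not by a unipotence argument as you guessed, though this plays no role in your proof.)

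You instead compute Frobenius directly on $V_F$: $\sigma-1$ maps $V_F$ into $W_F$ and kills $W_F$, so $(\sigma-1)^2=0$, i.e.\ $\sigma^2=1$ since $2=0$. This is more elementary and makes the mechanism transparent. It is precisely the $N=T+{\rm id}_V$, $N^2=0$ structure the paper exploits later in Lemma~\ref{uuu}. It also only needs $F(V_F)\subset W_F^\ast$, not the full exactness of \eqref{WVV}.

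The cost is that you need both $W_F\subset\F_q$ and $W_F^\ast\subset\F_q$. You therefore rely on the equivalence (iii)$\Leftrightarrow$(iii)$'$ of Lemma~\ref{kfin}(i), which in turn rests on the nondegenerate pairing of Lemma~\ref{b5}. The paper's factorization uses condition (iii) alone, with the adjoint supplying the factor $FG^\ast$ automatically.
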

\proof{Write $q=p^n$. By the assumption and Lemma~\ref{basic}(iii), we can write 
\[\tau^n+1=GF^\ast\]
for some $G\in\F_q\{\tau^{\pm1}\}$. Hence, 
\[\tau^{2n}+1=\tau^n(\tau^n+1)(\tau^{-n}+1)=\tau^nGF^\ast FG^\ast. \]
Since the kernel of $\tau^{2n}+1$ is  $\F_{q^2}$, the assertion follows from Lemma \ref{b2}(ii). 
\qed}

We also introduce the following definitions. 
\begin{definition}\label{idf}
Define $\gamma_F\in\F_q$ by 
\[
\gamma_F:=\sum_{0\leq i<j\leq e}b_i^{p^{-i}}b_j^{p^{-j}}. 
\]
Moreover, define a map $\alpha_F\colon \F_q\to\F_q$ by 
\[\alpha_F(t):=\gamma_F+F^\ast(t)^2. 
\]
Finally, define 
\[
R_{F,t}(x):=R_F(x)+\alpha_F(t)x. 
\]
\end{definition}

The following results illustrate the use of the above definitions. 
For elements $\alpha,\beta\in W_2(\F_q[x])$, we write $\alpha\sim \beta$ if 
\[\alpha-\beta=(c^p,d^p)-(c,d)\]
for some $(c,d)\in W_2(\F_q[x])$. 
\begin{lemma}\label{F0}
Let $F$ be as in \eqref{Fdef}, satisfying conditions \textup{(i)} and \textup{(ii)}. 
Then  
\[(F(x),0)\sim (0,xR_{F,0}(x)). 
\]
\end{lemma}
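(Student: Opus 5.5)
The plan is to expand $(F(x),0)$ directly in $W_2(\F_q[x])$ using the addition law \eqref{add}, and then move every term to ``degree one in $x$'' using the relation $\sim$.

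First, two elementary facts hold in $W_2(\F_q[x])$.

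\begin{itemize}
\item \emph{Iterated addition.} Since $p_0=2$, induction on \eqref{add} gives
\[
\sum_i (u_i,0)=\Bigl(\sum_i u_i,\ \sum_{i<j}u_iu_j\Bigr).
\]
Equivalently, $(\sum_i u_i,0)=\sum_i(u_i,0)+(0,\sum_{i<j}u_iu_j)$. Here we also use that $(0,b)+(a,c)=(a,b+c)$.
\item \emph{Frobenius invariance.} For $u\in\F_q[x]$ and $m\ge0$ we have $(u^{p^m},0)\sim(u,0)$. This holds because
\[
(u^{p^m},0)-(u,0)=\sum_{r=0}^{m-1}\bigl((u^{p^{r+1}},0)-(u^{p^r},0)\bigr),
\]
and each summand has the form $(c^p,d^p)-(c,d)$ with $c=u^{p^r}$, $d=0$. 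Likewise $(0,d^{p^m})\sim(0,d)$, since $(0,d^p)-(0,d)=(0,d^p-d)$.
\end{itemize}

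\textbf{Expansion.} Apply the first fact with $u_i=b_ix^{p^i}$:
\[
(F(x),0)=\sum_{i=0}^e(b_ix^{p^i},0)+\Bigl(0,\sum_{i<j}b_ib_jx^{p^i+p^j}\Bigr).
\]

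\textbf{Linear terms.} Write $b_ix^{p^i}=(b_i^{p^{-i}}x)^{p^i}$. Here $b_i^{p^{-i}}\in\F_q$ because $\F_q$ is perfect, so $b_i^{p^{-i}}x\in\F_q[x]$ and everything stays in the polynomial ring. The second fact then gives $(b_ix^{p^i},0)\sim(b_i^{p^{-i}}x,0)$. Summing with the first fact again,
\[
\sum_i(b_i^{p^{-i}}x,0)=\Bigl(F^\ast(1)x,\ \sum_{i<j}b_i^{p^{-i}}b_j^{p^{-j}}x^2\Bigr)=(0,\gamma_Fx^2),
\]
where the first coordinate vanishes by condition (ii).

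\textbf{Cross terms.} Write $b_ib_jx^{p^i+p^j}=\bigl((b_ib_j)^{p^{-i}}x^{1+p^{j-i}}\bigr)^{p^i}$. The Artin--Schreier part of the second fact gives $(0,b_ib_jx^{p^i+p^j})\sim(0,(b_ib_j)^{p^{-i}}x^{1+p^{j-i}})$. Grouping by $k=j-i\ge1$, the sum becomes
\[
\sum_{k=1}^e\Bigl(\sum_{i=0}^{e-k}(b_ib_{i+k})^{p^{-i}}\Bigr)x\cdot x^{p^k}=\sum_{k=1}^e a_k\,x\,x^{p^k}=xR_F(x).
\]

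\textbf{Conclusion.} Adding the linear and cross contributions gives
\[
(F(x),0)\sim(0,\ xR_F(x)+\gamma_Fx^2)=(0,\ xR_{F,0}(x)),
\]
since $\alpha_F(0)=\gamma_F+F^\ast(0)^2=\gamma_F$.

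This uses that $\sim$ is compatible with addition, which holds because the elements $(c^p,d^p)-(c,d)$ form a subgroup: Frobenius on $W_2$ is additive.

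The main point needing care is the bookkeeping in $W_2$: the carry terms $\sum_{i<j}$ must be tracked correctly, both in the initial expansion and in recombining $\sum_i(b_i^{p^{-i}}x,0)$. That second carry is exactly what produces $\gamma_F$, and it is where condition (ii) is used to kill the first coordinate.
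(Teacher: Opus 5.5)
Your proposal is correct and follows the paper's proof essentially line by line. You expand $(F(x),0)$ with the iterated-addition identity (the paper's Lemma~\ref{ij}), replace each term by its $p^{-i}$-th root using Frobenius invariance of $\sim$, and recombine with Lemma~\ref{ij} to get $(F^\ast(1)x,\gamma_Fx^2)+(0,xR_F(x))$; condition (ii) then kills the first coordinate. Your explicit telescoping argument, and your remark that $\sim$ is compatible with addition, fill in steps the paper leaves implicit.
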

\proof{We compute 
\begin{align*}
(F(x),0)&=\left(\sum_{i=0}^eb_ix^{p^i},0\right)\\
&=\sum_{i=0}^e(b_ix^{p^i},0)+\left(0,\sum_{0\leq i<j\leq e}b_ib_jx^{p^i}x^{p^j}\right)\\
&\sim\sum_{i=0}^e(b_i^{p^{-i}}x,0)+\left(0,\sum_{0\leq i<j\leq e}(b_ib_j)^{p^{-i}}x^{1+p^{j-i}}\right)\\
&=(F^\ast(1)x,\gamma_Fx^2)+(0,xR_F(x))=(F^\ast(1)x, x R_{F,0}(x)). 
\end{align*}
Here, the second and last equalities follow from Lemma \ref{ij} below. Since $F^\ast(1)=0$, the assertion follows. 
\qed}
\begin{lemma}\label{ij}
Let $R$ be an $\F_2$-algebra, and let $x_0,\dots,x_n\in R$. Then, in $W_2(R)$, we have  
\[
\sum_{i=0}^n(x_i,0)=\left(\sum_{i=0}^nx_i, \sum_{0\leq i<j\leq n}x_ix_j\right). 
\]
\end{lemma}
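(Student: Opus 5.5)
We argue by induction on $n$, using the addition law \eqref{add} for $W_2$ in characteristic~$2$. For $n=0$ the statement is simply $(x_0,0)=(x_0,0)$, since the sum of second components over an empty index set is $0$.

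For the inductive step, assume the identity holds for $x_0,\dots,x_{n-1}$. Set $s:=\sum_{i=0}^{n-1}x_i$ and $\sigma:=\sum_{0\le i<j\le n-1}x_ix_j$. By the inductive hypothesis and \eqref{add},
\[
\sum_{i=0}^n(x_i,0)=(s,\sigma)+(x_n,0)=(s+x_n,\ \sigma+0+s x_n).
\]
The first component is $\sum_{i=0}^n x_i$. The second component is
\[
\sigma+\sum_{i=0}^{n-1}x_ix_n=\sum_{0\le i<j\le n}x_ix_j,
\]
because the new terms are exactly the pairs with $j=n$. This completes the induction.

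The only point needing care is that addition in $W_2(R)$ is commutative and associative, so the iterated sum is well-defined regardless of bracketing. This holds because $W_2(R)$ is a ring. One can also check it directly from \eqref{add}: the second component of $(a,b)+(c,d)+(e,f)$ is $b+d+f+ac+ae+ce$, which is symmetric in the three summands. No step is a real obstacle; the whole proof is a one-line computation.
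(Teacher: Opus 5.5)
Your proof is correct and is essentially the paper's argument: the paper also proves this by induction on $n$, using the addition law \eqref{add} in the form $(a,0)+(c,0)=(a+c,ac)$. Your inductive step with $(s,\sigma)+(x_n,0)$ is the same computation, written with the general addition law.
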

\proof{
The result follows by induction using the identity $(a,0)+(c,0)=(a+c,ac)$. 
\qed}

\begin{proposition}\label{Ima} 
Let $F$ be as in \eqref{Fdef}, satisfying conditions \textup{(i)} and \textup{(ii)}. 
Set 
\[W_{F,q}:=W_F\cap \F_q. \]
Then 
\[\alpha_F(\F_q)=\{a\in\F_q\mid \Tr_{q/p}\bigl(uR_{F}(u)+au^2\bigr)=0\text{ for all }u\in W_{F,q}\}. 
\]
\end{proposition}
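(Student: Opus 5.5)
We show that both sides are cosets of the same $\F_2$-subspace (in fact $\F_p$-structure is not needed) and compare cardinalities. Note $\alpha_F(t)=\gamma_F+F^\ast(t)^2$, so $\alpha_F(\F_q)=\gamma_F+\{F^\ast(t)^2\mid t\in\F_q\}$. Since squaring is a bijection of $\F_q$, the set $\{F^\ast(t)^2\}$ is the image of $F^\ast(\F_q)$ under squaring. By Lemma~\ref{kfin}(ii), $F^\ast(\F_q)=\{c\in\F_q\mid \Tr_{q/p}(cu)=0 \ \forall u\in W_{F,q}\}$. Using $\Tr_{q/p}(c^2u^2)=\Tr_{q/p}(cu)^2$ (Frobenius-stability of traces with $p$ even: $\Tr_{q/p}(x^2)=\Tr_{q/p}(x)^2$), we get
\[
\{F^\ast(t)^2\mid t\in\F_q\}=\{b\in\F_q\mid \Tr_{q/p}(bu^2)=0\ \forall u\in W_{F,q}\}.
\]
Hence the claim reduces to showing that $a=\gamma_F$ itself satisfies the condition on the right-hand side, i.e.
\[
\Tr_{q/p}\bigl(uR_F(u)+\gamma_F u^2\bigr)=\Tr_{q/p}\bigl(uR_{F,0}(u)\bigr)=0\qquad\text{for all }u\in W_{F,q};
\]
indeed then the right-hand set is $\gamma_F+\{b\mid \Tr_{q/p}(bu^2)=0\ \forall u\}$, since the condition is affine in $a$ (the map $a\mapsto \Tr_{q/p}(uR_F(u)+au^2)$ differs from $a\mapsto\Tr_{q/p}(au^2)$ by a constant depending on $u$).

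For the key vanishing, we use Lemma~\ref{F0}: in $W_2(\F_q[x])$ we have $(F(x),0)-(0,xR_{F,0}(x))=(c^p,d^p)-(c,d)$ for some $(c,d)$. Specializing $x=u\in W_{F,q}$ (so $F(u)=0$) gives $(0,uR_{F,0}(u))=-( (c(u),d(u))^{(p)}-(c(u),d(u)))$ in $W_2(\F_q)$. Apply the Witt vector trace $\Tr_{q/p}\colon W_2(\F_q)\to W_2(\F_p)$: the trace of an element of the form $\beta^{(p)}-\beta$ with $\beta\in W_2(\F_q)$ is $0$ (telescoping, since $\beta^{(q)}=\beta$). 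The trace of $(0,z)$ is $(0,\Tr_{q/p}(z))$, because $(0,\cdot)$ is an additive embedding $\F_q\to W_2(\F_q)$ (from \eqref{add}, $(0,b)+(0,d)=(0,b+d)$). Thus $\Tr_{q/p}(uR_{F,0}(u))=0$.

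The main care point is this last step: ensuring the equivalence $\sim$ from Lemma~\ref{F0} may be evaluated at $x=u$ and that the Witt trace kills Lang-images, and also the minor point that $\Tr_{q/p}(x^2)=\Tr_{q/p}(x)^2$ to transport Lemma~\ref{kfin}(ii) through squaring. Note the characterization of the right-hand side as a coset is used only set-theoretically, so no cardinality count is needed beyond Lemma~\ref{kfin}(ii).
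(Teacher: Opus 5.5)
Your proof is correct and follows the paper's argument. Both identify the right-hand set as a coset of $F^\ast(\F_q)^2$ via Lemma~\ref{kfin}(ii), and both show that $\gamma_F$ lies in it by specializing Lemma~\ref{F0} at $u\in W_{F,q}$ and taking traces. The differences are minor: you bypass Lemma~\ref{plin} by translating directly by $\gamma_F$, and you kill the Lang-image with the Witt-vector trace instead of extracting an Artin--Schreier preimage $uR_{F,0}(u)=c^p-c$ as the paper does.
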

\proof{Let $M$ denote the set on the right-hand side. Note that the condition 
\[\Tr_{q/p}\bigl(uR_{F}(u)+au^2\bigr)=0\]
is equivalent to 
\[
\Tr_{q/p}(uR_F(u))^{1/2}=\Tr_{q/p}(a^{1/2}u).
\]

Consider the $\F_p$-linear map 
\[\phi\colon \F_q\to {\rm Hom}_{\F_p}(W_{F,q},\F_p),\quad a\mapsto \Tr_{q/p}(a-)|_{W_{F,q}}.\]
By Lemma \ref{plin}, the map\footnote{Since $\F_p$ is a perfect field of characteristic $2$, every element of $\F_p$ admits a unique square root in $\F_p$. } 
\[W_{F,q}\to \F_p,\quad u\mapsto \Tr_{q/p}(uR_F(u))^{1/2}\]
is additive, and hence $\F_p$-linear. Then the set 
\[
M^{1/2}:=\{a^{1/2}\mid a\in M\} 
\]
is equal to the inverse image under $\phi$ of this linear map. Moreover, $\phi$
 is surjective, and $\ker\phi=F^\ast(\F_q)$ by Lemma \ref{kfin}(ii). Consequently, for any element $b\in M$, we have 
\[M=b+F^\ast(\F_q)^2. \]
It  remains to show that $\gamma_F\in M$. Let $u\in W_{F,q}$. Since $F(u)=0$, Lemma \ref{F0} implies that there exists  $c\in\F_q$ such that 
\[
uR_{F,0}(u)=c^p-c. 
\]
 Applying $\Tr_{q/p}$, we obtain 
 \[
 \Tr_{q/p}\bigl(uR_{F,0}(u)\bigr)=0. 
 \]
 This proves $\gamma_F\in M$, and the equality $M=\gamma_F+F^\ast(\F_q)^2=\alpha_F(\F_q)$ follows. 
\qed}

Let $\ell$ be an odd prime, and fix a primitive $4$-th root of unity $\sqrt{-1}$ in $\Ql$. Let 
\[\xi_2\colon W_2(\F_2)\to\Ql^\times\]
be the character defined by $\xi_2(1,0)=\sqrt{-1}$.

\begin{definition}\label{Qrx}
Let  $r$ be a power of $2$. 
\begin{enumerate}
\item Define the character $\xi_r\colon W_2(\F_r)\to\Ql^\times$  to be the composite 
\[
W_2(\F_r)\xrightarrow{\Tr_{r/2}} W_2(\F_2)\xrightarrow{\xi_2}\Ql^\times, 
\]
where $\Tr_{r/2}$ denotes the trace map. 
\item Define the map $Q_r\colon \F_r\to\Ql^\times$ by $Q_r(x):=\xi_r(x,0)$. 
\item Define the character $\psi_r\colon \F_r\to \Ql^\times$ by $\psi_r(x):=\xi_r(0,x)$. 
\item Define the map $B_{Q_r}\colon \F_r\times \F_r\to \Ql^\times$ by 
\[
B_{Q_r}(x,y):=Q_r(x+y)Q_r(x)^{-1}Q_r(y)^{-1}=\psi_r(xy).
\] 
\end{enumerate}
\end{definition}
The definition of $Q_r$ is consistent with that used in Corollary \ref{HD}. 
Note that $\psi_r$ can be expressed as 
 \begin{equation*}
 \psi_r(x)=(-1)^{\Tr_{r/2}(x)}, 
 \end{equation*}
where $\Tr_{r/2}$ denotes the trace map $\F_r\to\F_2$.

\subsection{Calculation of Frobenius Eigenvalues}
 Let $F$ be as in \eqref{Fdef}. We use the notation introduced in Subsection \ref{set}. For every element $t\in \F_q$, 
 define the $\F_q$-curve $D_{F,t}$ by 
\begin{equation}\label{Cadf}
D_{F,t}\colon y^p - y = x R_{F,t}(x), 
\end{equation}
where  $R_{F,t}(x):=R_F(x)+\alpha_F(t)x$. 

 The goal of this subsection is to describe the $\ell$-adic cohomology group 
 \[H^1_c(D_{F,t,\F},\Ql)\]
 as a representation of the geometric Frobenius ${\rm Fr}_q$. 
 

 Let $\F_p^\vee$ denote the character group ${\rm Hom}(\F_p,\Ql^\times)$. 
 For a nontrivial character $\psi\in \F_p^\vee$, let ${\mathcal L}_\psi$ be the Artin--Schreier sheaf on $\A^1_{\F_p}$ defined as in Section~\ref{langtorsor}. For a morphism $f\colon X\to \A^1_{\F_p}$ of schemes, we write $
 {\mathcal L}_\psi(f)$ for the pullback $f^\ast {\mathcal L}_\psi$. 

 Let 
 \[
 \phi\colon D_{F,t}\to \A^1_{\F_q}, \quad (x,y)\mapsto x. 
 \]
 \begin{lemma}\label{decAS}
 Assume that $F$ satisfies conditions \textup{(i)--(ii)}  stated after \eqref{Fdef}. Then the following hold: 
 \begin{enumerate}
 \item The pushforward $\phi_\ast\Ql$ is isomorphic to 
 \[
 \Ql\oplus\bigoplus_{\psi\in\F_p^\vee\setminus\{1\}}{\mathcal L}_\psi(xR_{F,t}(x)). \]
 \item There exists an ${\rm Fr}_q$-equivariant isomorphism 
 \[H^1_c(D_{F,t,\F},\Ql)\cong H^1_c(\A^1_{\F},{\mathcal L}_{\psi_p}(xR_{F,t}(x)))^{\oplus p-1}, \]
 where $\psi_p=(-1)^{\Tr_{p/2}(-)}$. 
 \end{enumerate}
 \end{lemma}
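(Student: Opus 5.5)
Both parts come from the standard decomposition of an Artin--Schreier covering into characters. For (i), I will view $\phi\colon D_{F,t}\to\A^1_{\F_q}$ as the pullback of the Artin--Schreier covering $\A^1\to\A^1$, $y\mapsto y^p-y$, along the morphism $x\mapsto xR_{F,t}(x)$. The Artin--Schreier covering is a finite étale Galois covering with group $\F_p$ (Lemma~\ref{b2}(i)(b) applied to $\tau-1$, whose kernel is $\F_p$). Hence its pushforward of $\Ql$ splits as $\bigoplus_{\psi\in\F_p^\vee}{\mathcal L}_\psi$, exactly as in Lemma~\ref{pLp}, and the trivial character contributes $\Ql$. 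Proper (indeed finite) base change gives
\[
\phi_\ast\Ql\cong \Ql\oplus\bigoplus_{\psi\neq1}{\mathcal L}_\psi(xR_{F,t}(x)).
\]
Conditions (i)--(ii) play no role here beyond making $R_{F,t}$ a well-defined polynomial over $\F_q$.

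For (ii), I will first use that $\phi$ is finite, so that $H^1_c(D_{F,t,\F},\Ql)\cong H^1_c(\A^1_\F,\phi_\ast\Ql)$. This isomorphism is ${\rm Fr}_q$-equivariant, because the decomposition in (i) is defined over $\F_q$; this holds since the characters $\psi$ take values in $\Ql$ and the group $\F_p\subset\F_q$ acts by automorphisms defined over $\F_p$. The trivial summand contributes $H^1_c(\A^1_\F,\Ql)=0$. It then remains to show that each summand $H^1_c(\A^1_\F,{\mathcal L}_\psi(xR_{F,t}(x)))$ with $\psi\neq1$ is Frobenius-equivariantly isomorphic to the one attached to $\psi_p$.

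For that comparison, every nontrivial $\psi$ can be written as $\psi(z)=\psi_p(cz)$ for a unique $c\in\F_p^\times$, so ${\mathcal L}_\psi(f)\cong{\mathcal L}_{\psi_p}(cf)$. Since $p$ is a power of $2$, $c$ has a square root $d\in\F_p^\times$ with $d^2=c$. The substitution $x\mapsto dx$ is an automorphism of $\A^1$ defined over $\F_p\subset\F_q$, so it commutes with ${\rm Fr}_q$. I then need
\[
(dx)R_{F,t}(dx)=d^2xR_{F,t}(x)=c\,xR_{F,t}(x),
\]
which holds because $R_{F,t}$ is $\F_p$-linear and $d\in\F_p$. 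Pulling back along this automorphism therefore identifies ${\mathcal L}_{\psi_p}(c\,xR_{F,t}(x))$ with ${\mathcal L}_{\psi_p}(xR_{F,t}(x))$, and summing over the $p-1$ nontrivial characters gives the stated isomorphism.

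The only point needing care is the Frobenius equivariance. Both the splitting in (i) and the rescaling in the last step must be realized over $\F_q$ and not merely over $\F$; I have argued above why each is, since both only involve $\F_p$-rational data.
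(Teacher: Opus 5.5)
Your proposal is correct and follows the paper's proof essentially verbatim. For (i), both arguments apply proper base change to the Artin--Schreier covering $y\mapsto y^p-y$ along $x\mapsto xR_{F,t}(x)$. For (ii), both use the vanishing of $H^1_c(\A^1_{\F},\Ql)$ and the rescaling by $d$ with $d^2=c$ (the paper's $b^{1/2}$) to identify every nontrivial-character summand with the $\psi_p$ one; your added remarks on ${\rm Fr}_q$-equivariance only make explicit what the paper leaves implicit.
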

 \proof{(i) We have a cartesian diagram 
 \[\xymatrix{
 D_{F,t} 
 \ar[d]\ar[r]^-\phi&\A^1_{\F_q}\ar[d]^-{f}\\
 \A^1_{\F_q}\ar[r]^-{g}&\A^1_{\F_q},}\]
 where $f(x)=xR_{F,t}(x)$ and $g(y)=y^p-y$. By the proper base change theorem, we have $\phi_\ast\Ql\cong f^\ast g_\ast\Ql$. Since $g_\ast\Ql\cong \Ql\oplus\bigoplus_{\psi\in\F_p^\vee\setminus\{1\}}{\mathcal L}_\psi$, the assertion follows. 

 \medskip

 \noindent
(ii) Applying $H^1_c(\A^1_{\F},-)$ to (i), we obtain   
 \[H^1_c(D_{F,t,\F},\Ql)\cong H^1_c(\A^1_{\F},\Ql)\oplus\bigoplus_{\psi\in\F_p^\vee\setminus\{1\}}H^1_c(\A^1_{\F},{\mathcal L}_\psi(xR_{F,t}(x))). \]
The cohomology group $H^1_c(\A^1_{\F},\Ql)$ 
vanishes. Since $\psi_p$ is nontrivial, for every $\psi\in\F_p^\vee\setminus\{1\}$, there exists $b\in \F_p^\times$ such that $\psi=\psi_p(b-)$. Since $\F_p$ is perfect, $b$ admits a unique square root $b^{1/2}\in\F_p$. 
We can compute 
\[{\mathcal L}_\psi(xR_{F,t}(x))={\mathcal L}_{\psi_p}(bxR_{F,t}(x))=
{\mathcal L}_{\psi_p}(b^{1/2}xR_{F,t}(b^{1/2}x)). 
\]
Hence, the change of variable $x\mapsto b^{-1/2}x$ induces an isomorphism 
\[H^1_c(\mathbb{A}^1_{\F},{\mathcal L}_\psi(xR_{F,t}(x)))\cong H^1_c(\mathbb{A}^1_{\F},{\mathcal L}_{\psi_p}(xR_{F,t}(x))).\]
The assertion follows. 
 \qed}

Fix a primitive $4$-th root of unity $\sqrt{-1}\in\Ql$, and define $\xi_p$ as in Definition \ref{Qrx}. 
We compute $H^1_c(\mathbb{A}^1_{\F},{\mathcal L}_{\psi_p}(xR_{F,t}(x)))$ as follows. 
 \begin{proposition}\label{FEV}
 Assume that $F$ satisfies conditions \textup{(i)--(iii)}  stated after \eqref{Fdef}. Then the following statements hold: 
 \begin{enumerate}
 \item Consider the induced morphism $F\colon \A^1_{\F_q}\to \A^1_{\F_q}$. Then there is an isomorphism 
 \[
 F_\ast{\mathcal L}_{\psi_p}(xR_{F,t}(x))\cong \bigoplus_{v\in W_F^\ast}{\mathcal L}_{\xi_p}(x,(t+v)x). \]
 \item We have an isomorphism
 \[H^1_c(\A^1_{\F},{\mathcal L}_{\psi_p}(xR_{F,t}(x)))\cong \bigoplus_{v\in W_F^\ast}H^1_c(\A^1_{\F},{\mathcal L}_{\xi_p}(x,(t+v)x)). \]
 \end{enumerate}
  \end{proposition}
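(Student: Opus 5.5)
Using Lemma~\ref{F0}, I will identify the pullback $F^\ast{\mathcal L}_{\xi_p}(x,tx)$ with ${\mathcal L}_{\psi_p}(xR_{F,t}(x))$. The projection formula together with the decomposition of $F_\ast\Ql$ then gives (i), and (ii) follows by taking cohomology.

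First I compute $F^\ast {\mathcal L}_{\xi_p}(x,tx)={\mathcal L}_{\xi_p}(F(x),tF(x))$. Split $(F(x),tF(x))=(F(x),0)+(0,tF(x))$. Lemma~\ref{Lxi1} shows that ${\mathcal L}_{\xi_p}$ only depends on the class of the Witt vector modulo $(c^p,d^p)-(c,d)$. So Lemma~\ref{F0} gives ${\mathcal L}_{\xi_p}(F(x),0)\cong{\mathcal L}_{\xi_p}(0,xR_{F,0}(x))$. By Lemma~\ref{b4}, $tF(x)\sim xF^\ast(t)$, and in characteristic two $xF^\ast(t)\sim x^2F^\ast(t)^2$, because $z^2-z\sim 0$ and $p$ is a power of $2$. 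Combining these with Lemma~\ref{xipsi}, I get
\[
F^\ast{\mathcal L}_{\xi_p}(x,tx)\cong {\mathcal L}_{\psi_p}\bigl(xR_{F,0}(x)+F^\ast(t)^2x^2\bigr)={\mathcal L}_{\psi_p}(xR_{F,t}(x)),
\]
since $\alpha_F(t)=\gamma_F+F^\ast(t)^2$ and $\gamma_F x^2$ is already contained in $xR_{F,0}(x)$.

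Next, $F\colon\A^1\to\A^1$ is finite étale. It is Galois with group $W_F\subset\F_q$ by condition (iii)$'$ and Lemma~\ref{b2}. Hence $F_\ast\Ql\cong\bigoplus_{\chi}{\mathcal L}_\chi$ over the characters $\chi$ of the $W_F$-torsor. The projection formula gives
\[
F_\ast{\mathcal L}_{\psi_p}(xR_{F,t}(x))\cong {\mathcal L}_{\xi_p}(x,tx)\otimes F_\ast\Ql.
\]
I need to identify the summands of $F_\ast\Ql$ as ${\mathcal L}_{\psi_p}(vx)\cong{\mathcal L}_{\xi_p}(0,vx)$ for $v\in W_F^\ast$. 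The plan is to pull back: by Lemma~\ref{b4}, $F^\ast{\mathcal L}_{\psi_p}(vx)={\mathcal L}_{\psi_p}(vF(x))\cong{\mathcal L}_{\psi_p}(xF^\ast(v))$, which is trivial. Thus each ${\mathcal L}_{\psi_p}(vx)$ is a character sheaf of the torsor $F$. These $p^{e}$ sheaves must be pairwise nonisomorphic, i.e.\ $v\mapsto$ (the associated character of $W_F$) must be injective. That character is $u\mapsto\psi_p(\omega(u,v))$ up to sign, coming from $g(v,x)$ in Lemma~\ref{b3}, and it is injective by the nondegeneracy in Lemma~\ref{b5}(ii). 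A count ($|W_F^\ast|=|W_F|$) then gives $F_\ast\Ql\cong\bigoplus_{v\in W_F^\ast}{\mathcal L}_{\xi_p}(0,vx)$. Tensoring with ${\mathcal L}_{\xi_p}(x,tx)$ and applying Lemma~\ref{Lxi1}(i) produces $\bigoplus_v{\mathcal L}_{\xi_p}(x,(t+v)x)$, which is (i).

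For (ii), I take $H^1_c(\A^1_\F,-)$ of (i), using $H^1_c(\A^1,F_\ast\mathcal G)=H^1_c(\A^1,\mathcal G)$ since $F$ is finite. I expect the main obstacle to be the identification of the characters of $F_\ast\Ql$ and the verification that they are distinct via Lemma~\ref{b5}. Care is also needed to keep the Witt-vector bookkeeping consistent, in particular the sign convention of $\omega$ and the fact that pullback along $F$ sends ${\mathcal L}_{\psi_p}(vx)$ to the trivial sheaf precisely when $v\in W_F^\ast$.
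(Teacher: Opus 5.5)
Your outline is the paper's proof. You compute $F^\ast\mathcal{L}_{\xi_p}(x,tx)$ via Lemmas~\ref{F0} and~\ref{b4}, apply the projection formula, and identify $F_\ast\Ql$ with $\bigoplus_{v\in W_F^\ast}\mathcal{L}_{\psi_p}(vx)$ using that each of these sheaves pulls back to the trivial sheaf. You then tensor using Lemma~\ref{Lxi1}(i), and obtain (ii) by taking $H^1_c$. Two points need comment.

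\textbf{The justification of $xF^\ast(t)\sim x^2F^\ast(t)^2$ is wrong for $p\geq 4$.} Here $\sim$ is the relation ``modulo $c^p-c$'' that you use with Lemmas~\ref{F0} and~\ref{Lxi1}. You claim $z^2-z\sim 0$, but for $p\geq4$ an element $c^p-c\in\F_q[x]$ is either constant or has degree divisible by $p$. So with $\beta=F^\ast(t)\neq0$, the degree-$2$ polynomial $\beta x+\beta^2x^2$ is not of that form. The identity $z^p-z=\sum_i\bigl((z^{2^i})^2-z^{2^i}\bigr)$ only shows that $p$-Artin--Schreier elements are $2$-Artin--Schreier elements, not the converse. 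The true statement, which the paper invokes, holds only at the level of sheaves:
\[
\mathcal{L}_{\psi_p}(f^2)\cong\mathcal{L}_{\psi_p}(f).
\]
It holds because $\psi_p(a^2)=\psi_p(a)$. The map $w\mapsto w^2$ sends the $\F_p$-torsor $w^p-w=f$ isomorphically onto the torsor $w^p-w=f^2$, intertwining the two actions through $a\mapsto a^2$, and this automorphism of $\F_p$ preserves $\psi_p$. With this replacement your chain of isomorphisms is correct and yields $\mathcal{L}_{\psi_p}(xR_{F,t}(x))$.

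\textbf{Your decomposition of $F_\ast\Ql$ differs slightly from the paper's but is correct.} The paper uses neither the Galois structure of $F$ nor the pairing $\omega$. It gets maps $\mathcal{L}_{\psi_p}(vx)\to F_\ast\Ql$ by adjunction and notes they are injective because the source has rank one. It observes that the $\mathcal{L}_{\psi_p}(vx)$ are pairwise non-isomorphic, since $\mathcal{L}_{\psi_p}((v-v')x)$ is nontrivial for $v\neq v'$, and then compares ranks.

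Your argument via Lemma~\ref{b2} and Lemma~\ref{b5}(ii) also works. The trivializing section $g(v,x)$ from Lemma~\ref{b3} satisfies $g(v,x+u)=g(v,x)+\omega(u,v)$, so the associated character of $W_F$ is $u\mapsto\psi_p(\omega(u,v))^{\pm1}$. This character is nontrivial for $v\neq0$ because $\omega(W_F,v)$ is an $\F_p$-subspace of $\F_p$. Your route identifies the characters explicitly, but the distinctness it is used for follows more cheaply from the direct argument above.
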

 \proof{
 Part (ii) follows from (i) by applying $H^1_c$. Thus it suffices to prove (i).  The pullback $F^\ast{\mathcal L}_{\xi_p}(x,tx)$ is isomorphic to ${\mathcal L}_{\xi_p}(F(x),tF(x))$. By Lemmas \ref{b4} and \ref{F0}, we have 
 \[(F(x),tF(x))\sim (0,xR_{F,0}(x)+F^\ast(t)x). 
 \]
 Therefore, by Lemmas \ref{Lxi1} and \ref{xipsi}, 
 \[F^\ast{\mathcal L}_{\xi_p}(x,tx)\cong {\mathcal L}_{\psi_p}(xR_{F}(x)+\gamma_Fx^2+F^\ast(t)x)\cong{\mathcal L}_{\psi_p}(xR_{F,t}(x)). \]
In the last isomorphism, we use the fact that ${\mathcal L}_{\psi_p}(f^2)\cong {\mathcal L}_{\psi_p}(f)$. 
By the projection formula, we obtain 
\[F_\ast{\mathcal L}_{\psi_p}(xR_{F,t}(x))\cong {\mathcal L}_{\xi_p}(x,tx)\otimes F_\ast\Ql. \]
 
 It remains to prove that
  \[F_\ast\Ql\cong \bigoplus_{v\in W_F^\ast}{\mathcal L}_{\psi_p}(vx). \]
  Since the pullback $F^\ast{\mathcal L}_{\psi_p}(vx)$ is isomorphic to 
  \[{\mathcal L}_{\psi_p}(vF(x))\cong {\mathcal L}_{\psi_p}(F^\ast(v)x)=\Ql,\]
  we obtain a nontrivial morphism ${\mathcal L}_{\psi_p}(vx)\to F_\ast\Ql$ by adjunction. Since ${\mathcal L}_{\psi_p}(vx)$ has rank $1$, and hence is irreducible, the morphism must be injective. Moreover, since the sheaves ${\mathcal L}_{\psi_p}(vx)$ are  pairwise non-isomorphic for distinct $v\in W_F^\ast$, the induced morphism 
  \[\bigoplus_{v\in W_F^\ast}{\mathcal L}_{\psi_p}(vx)\to F_\ast\Ql\]
  is also injective. By comparing ranks, we conclude that it is in fact an isomorphism. The assertion follows. 
 \qed}
 
 \begin{theorem}\label{FEV1}
 Assume that $F$ satisfies conditions \textup{(i)--(iii)} stated after \eqref{Fdef}. 
 
 Let $t\in\F_q$, and consider the curve $D_{F,t}$ defined by \eqref{Cadf}. 
  \begin{enumerate}
 \item 
 There exists an ${\rm Fr}_q$-equivariant isomorphism 
 \[H^1_c(D_{F,t,\F},\Ql)\cong \bigoplus_{v\in W_F^\ast}H^1_c(\A^1_{\F},{\mathcal L}_{\xi_p}(x,(t+v)x))^{\oplus p-1}. \]
 \item Each cohomology group $H^1_c(\A^1_{\F},{\mathcal L}_{\xi_p}(x,(t+v)x))$ has dimension $1$. 
 \item Let $\tau_v$ denote the ${\rm Fr}_q$-eigenvalue on  $H^1_c(\A^1_{\F},{\mathcal L}_{\xi_p}(x,(t+v)x))$.  Then 
 \[\tau_v=Q_q(t+v)^{-1}(-1-\sqrt{-1})^{[\F_q:\F_2]}.\]
 \item 
  Let $\overline{D}_{F,t}$ denote the smooth compactification of $D_{F,t}$. Then the $L$-polynomial 
\[
L(\overline{D}_{F,t}/\F_q,T):=\det(1-{\rm Fr}_qT \mid H^1(\overline{D}_{F,t,\F},\Ql))
\]
satisfies 
\[
L(\overline{D}_{F,t}/\F_q,T)=
\prod_{v\in W_F^\ast}(1-\tau_v T)^{p-1},
\]
where $\tau_v=Q_q(t+v)^{-1}(-1-\sqrt{-1})^{[\F_q:\F_2]}$. 
 \end{enumerate}
 \end{theorem}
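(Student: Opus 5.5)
Parts (i) and (ii) are direct consequences of what precedes. For (i), I would combine Lemma~\ref{decAS}(ii) with Proposition~\ref{FEV}(ii). The isomorphism in Proposition~\ref{FEV}(i) is an isomorphism of sheaves on $\A^1_{\F_q}$, so it is defined over $\F_q$. Condition (iii) ensures that each $v\in W_F^\ast$ lies in $\F_q$, hence each summand ${\mathcal L}_{\xi_p}(x,(t+v)x)$ is itself defined over $\F_q$. The resulting decomposition of $H^1_c$ is therefore ${\rm Fr}_q$-equivariant. For (ii), I would apply Proposition~\ref{rkLxi} with $k=\F$, the character $\xi=\xi_p$, and the parameter $t+v$. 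This requires $\xi_p$ to have order $4$, which holds because $\xi_p(1,0)=\xi_2(\Tr_{p/2}(1,0))$ and the trace map $W_2(\F_p)\to W_2(\F_2)$ is surjective, so $\xi_p$ has the same order as the faithful character $\xi_2$.

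For (iii), I would run the argument of Corollary~\ref{HD} with the parameter shifted. Set $s:=t+v\in\F_q$. The proof of Proposition~\ref{rkLxi} gives ${\mathcal L}_{\xi_p}(x,sx)\cong{\mathcal L}_{\xi_p}(x+s,0)\otimes{\mathcal L}_{\xi_p}(s,s^2)$, and translation by $s$ is defined over $\F_q$. Hence ${\rm Fr}_q$ acts on $H^1_c(\A^1_\F,{\mathcal L}_{\xi_p}(x,sx))$ by the eigenvalue on $H^1_c(\A^1_\F,{\mathcal L}_{\xi_p}(x,0))$ times the Frobenius trace of the geometrically constant sheaf ${\mathcal L}_{\xi_p}(s,s^2)$ at an $\F_q$-point. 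The latter is $\xi_q(s,s^2)^{\pm1}$, and I must pin down the sign convention consistently with the Lefschetz formula. I would handle this convention cleanly as follows. Grothendieck--Lefschetz gives $\tau_v=-\sum_{x\in\F_q}\xi_q(x,sx)$, using $\dim=1$ from (ii) and the vanishing of $H^0_c$ and $H^2_c$ as in Corollary~\ref{HD}, with $\xi_q=\xi_p\circ\Tr_{q/p}$. Now $(x,sx)=(x+s,0)+(s,s^2)$ in $W_2$, and $(s,s^2)=(s,0)+(0,s^2)$, so $-(s,s^2)=-(s,0)+(0,s^2)$. Substituting $x\mapsto x+s$, I get
\[
\tau_v=-\sum_{x\in\F_q}\xi_q(x,sx)=\xi_q(s,s^2)^{-1}\cdot\Bigl(-\sum_{y\in\F_q}Q_q(y)\Bigr)\quad\text{after rewriting }(x,sx)=(x+s,0)-(s,s^2)\ldots
\]
The key step is therefore an explicit Witt-vector identity. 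From $(x+s,0)+(s,0)=(x,s(x+s))$, I obtain $(x,sx)=(x+s,0)+(s,0)-(0,s^2)$. Since $\psi_q(s^2)=\psi_q(s)$, this yields $\xi_q(x,sx)=Q_q(x+s)\,Q_q(s)\,\psi_q(s)$. I expect $Q_q(s)\psi_q(s)=Q_q(s)^{-1}$, because $Q_q(s)^2=\xi_q(2(s,0))=\xi_q(0,s^2)=\psi_q(s)$. Summing over $x$ and applying Corollary~\ref{HD} with $r=q$ then gives $\tau_v=Q_q(t+v)^{-1}(-1-\sqrt{-1})^{[\F_q:\F_2]}$. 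This sign and inverse bookkeeping in $W_2$ is the only real obstacle, together with making sure the trace $\Tr_{q/2}$ on $W_2$ is compatible with $\Tr_{p/2}\circ\Tr_{q/p}$, which is clear from the definition.

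For (iv), I would use Lemma~\ref{g_C}: $\overline{D}_{F,t}\setminus D_{F,t}$ is a single $\F_q$-rational point. The excision sequence then gives $H^1_c(D_{F,t,\F})\cong H^1(\overline{D}_{F,t,\F})$, since $H^0(\overline D)\to H^0(\text{pt})$ is an isomorphism and $H^0_c(D)=0$. Combining this with (i)--(iii) yields the stated $L$-polynomial. As a check, the degree is $(p-1)p^e=2g$, matching Lemma~\ref{basic}(ii) and Lemma~\ref{g_C}.
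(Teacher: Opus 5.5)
Your proposal is correct and is essentially the paper's proof: parts (i), (ii), (iv) rest on exactly Lemma~\ref{decAS}(ii) with Proposition~\ref{FEV}(ii), Proposition~\ref{rkLxi}, and Lemma~\ref{g_C}. In (iii), your identity $\xi_q(x,sx)=Q_q(x+s)Q_q(s)^{-1}$ is the one the paper obtains from $\xi_q(x,sx)=Q_q(x)\psi_q(sx)=Q_q(x)B_{Q_q}(s,x)$, followed by the same shift and Corollary~\ref{HD}. Just delete your half-finished display, which is wrong as written: in fact $(x,sx)=(x+s,0)+(s,s^2)$ and $\xi_q(s,s^2)=Q_q(s)^{-1}$, so the prefactor should be $\xi_q(s,s^2)$, not its inverse; your subsequent Witt-vector computation handles this correctly.
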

 \proof{
 Part (i) follows from Lemma \ref{decAS}(ii) and Proposition \ref{FEV}(ii). Part (ii) is proved in Proposition \ref{rkLxi}. 
 
 We now prove (iii). Note that $H^i_c(\A^1_{\F},{\mathcal L}_{\xi_p}(x,(t+v)x))=0$ for $i\neq1$. Indeed, the vanishing for $i\neq1,2$ follows from the fact that  $\mathbb{A}^1_{\F}$ is affine of dimension $1$. For $i=2$, the Poincar\'e duality gives 
 \[H^2_c(\A^1_{\F},{\mathcal L}_{\xi_p}(x,(t+v)x))\cong H^0(\A^1_{\F},{\mathcal L}_{\xi_p}(x,(t+v)x)^\vee\otimes\Ql(1))^\vee. \]
 The  group $H^0(\A^1_{\F},{\mathcal L}_{\xi_p}(x,(t+v)x)^\vee\otimes\Ql(1))$ vanishes, since ${\mathcal L}_{\xi_p}(x,(t+v)x)^\vee$ is a nontrivial  rank one sheaf. 
 
 Therefore, by the Grothendieck--Lefschetz trace formula, we have
 \begin{equation*}
\tau_v=-\sum_{x\in\F_q}\xi_q(x,(t+v)x)=-\sum_{x\in\F_q}Q_q(x)\psi_q((t+v)x). 
\end{equation*}
Using  the identity $\psi_q((t+v)x)=B_{Q_q}(t+v,x)$, we obtain 
\begin{align*}
-\sum_{x\in\F_q}Q_q(x)\psi_q((t+v)x)
&=-\sum_{x\in\F_q}Q_q(x)B_{Q_q}(t+v,x)\\
&=-\sum_{x\in\F_q}Q_q(t+v+x)Q_q(t+v)^{-1}\\
&=Q_q(t+v)^{-1}\left(-\sum_{x\in\F_q}Q_q(x)\right). 
\end{align*}
The desired formula then follows from Corollary \ref{HD}. 

We prove (iv).  
By Lemma \ref{g_C}, the canonical map 
\[
H^1_c(D_{F,t,\F},\Ql)\to H^1(\overline{D}_{F,t,\F},\Ql)
\]
is an isomorphism. Then the assertion follows from (i), (ii), and (iii). 
 \qed}

The following identity relates the exponential sum associated with the van der Geer--van der Vlugt curve to an explicit character sum over $W_F^\ast$. 
 \begin{corollary}\label{gl}
Let the notation and assumptions be as in Theorem \ref{FEV1}. 
Then 
\[
\sum_{x \in \F_q} \psi_q(x R_{F,t}(x))
=-(-1-\sqrt{-1})^{[\F_q:\F_2]} \sum_{v \in W_F^\ast} 
Q_q(t+v)^{-1}. 
\]
\end{corollary}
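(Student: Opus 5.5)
The plan is to apply the Grothendieck--Lefschetz trace formula to the sheaf $\mathcal{L}_{\psi_p}(xR_{F,t}(x))$ on $\A^1_{\F_q}$ and combine it with the cohomology computation of Proposition~\ref{FEV} and Theorem~\ref{FEV1}.

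First, the trace of ${\rm Fr}_q$ at a point $x\in\F_q$ on $\mathcal{L}_{\psi_p}(xR_{F,t}(x))$ equals $\psi_p(\Tr_{q/p}(xR_{F,t}(x)))=\psi_q(xR_{F,t}(x))$. Here we use $\psi_p\circ\Tr_{q/p}=\psi_q$, which holds because $\Tr_{p/2}\circ\Tr_{q/p}=\Tr_{q/2}$. This step is routine, up to fixing the geometric-Frobenius sign convention consistently with the one already used in the proof of Theorem~\ref{FEV1}(iii). Hence
\[
\sum_{x\in\F_q}\psi_q(xR_{F,t}(x))=\sum_{i}(-1)^i\Tr\bigl({\rm Fr}_q\mid H^i_c(\A^1_{\F},\mathcal{L}_{\psi_p}(xR_{F,t}(x)))\bigr).
\]

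Next, we use Proposition~\ref{FEV}(ii). Since the isomorphism in Proposition~\ref{FEV}(i) is an isomorphism of sheaves on $\A^1_{\F_q}$ and $F$ is finite, we have $H^i_c(\A^1,F_\ast\mathcal{G})=H^i_c(\A^1,\mathcal{G})$. This gives a Frobenius-equivariant decomposition in every degree $i$:
\[
H^i_c\bigl(\A^1_{\F},\mathcal{L}_{\psi_p}(xR_{F,t}(x))\bigr)\cong\bigoplus_{v\in W_F^\ast}H^i_c\bigl(\A^1_{\F},\mathcal{L}_{\xi_p}(x,(t+v)x)\bigr).
\]
As shown in the proof of Theorem~\ref{FEV1}(iii), the summands vanish for $i\neq1$, and for $i=1$ each is one-dimensional with eigenvalue $\tau_v$. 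The alternating sum therefore reduces to $-\sum_{v\in W_F^\ast}\tau_v$. Substituting $\tau_v=Q_q(t+v)^{-1}(-1-\sqrt{-1})^{[\F_q:\F_2]}$ yields exactly the stated identity.

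The only point needing care is the equivariance of the decomposition, i.e.\ that the isomorphism of Proposition~\ref{FEV}(i) is defined over $\F_q$. This holds by condition (iii), since every $v\in W_F^\ast$ lies in $\F_q$ and so each $\mathcal{L}_{\psi_p}(vx)$ is a sheaf on $\A^1_{\F_q}$.

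As an alternative cross-check, one can apply the trace formula to the whole curve using Lemma~\ref{decAS}. Then $\#D_{F,t}(\F_q)=q+\sum_{b\in\F_p^\times}\sum_x\psi_p(b\,\Tr_{q/p}(xR_{F,t}(x)))$, and each inner sum equals the $b=1$ sum by the substitution $x\mapsto b^{1/2}x$.
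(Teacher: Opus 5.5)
Your proposal is correct and is essentially the paper's own proof. You apply the Grothendieck--Lefschetz trace formula to $\mathcal{L}_{\psi_p}(xR_{F,t}(x))$, decompose its cohomology via Proposition~\ref{FEV}(ii), and substitute the eigenvalues $\tau_v$ from Theorem~\ref{FEV1}(iii); the vanishing outside degree $1$ and the $\F_q$-rationality of the decomposition, which you check explicitly, are left implicit in the paper.
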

\begin{proof}
The claim follows from the Grothendieck--Lefschetz trace formula, Proposition \ref{FEV}(ii), and 
Theorem \ref{FEV1}(iii). 
\end{proof}

\subsection{Supplement}
So far, we have started with a pair $(F,t)$, where  $F\in\F_q\{\tau^{\pm1}\}$ is as in \eqref{Fdef} and $t\in\F_q$, and studied the curve $D_{F,t}$. In this subsection, given a fixed van der Geer--van der Vlugt curve, we determine precisely  when it is isomorphic to $D_{F,t}$ for some pair $(F,t)$. 

We now formulate our results more precisely. Let $p$ be a power of $2$ and $q$ be a power of $p$. Fix an $\F_p$-linearized polynomial 
\[R(x)=\sum_{i=0}^ea_ix^{p^i}\in \F_q[x],\]
where $e\geq1$ and $a_e\neq0$. We determine when we can write $R=R_{F,t}$ for some $(F,t)$.

Set $E:=R+R^\ast$, and set 
\[
V:=\ker (E\colon \F\to \F), \qquad V_q:=V\cap \F_q. 
\]
Let $\omega$ denote the symplectic form on $V$ defined in Lemma~\ref{b5} applied to $E$.

The following is the main theorem of this subsection. 
\begin{theorem}\label{detR}
Let the notation be as above. Then the following conditions are equivalent: 
\begin{enumerate}
\item[$(1)$] There exists a pair $(F,t)$ such that 
\begin{itemize}
\item $F$ is  as in \eqref{Fdef} and satisfies conditions \textup{(i)--(iii)} thereafter, 
\item $t\in\F_q$, and 
\item $R=R_{F,t}$. 
\end{itemize}
\item[$(2)$] We have $V\subset \F_{q^2}$, and 
\[\Tr_{q^2/p}((u^q+u)R(u))=0 \] 
for all $u\in V$. 
\item[$(3)$] We have $V\subset \F_{q^2}$, and  
\[\Tr_{q/p}(uR(u))=0\]
for all $u\in V_{q}^\perp$. 
\item[$(4)$] There exists a maximal totally isotropic subspace $W\subset V$ with respect to $\omega$ such that $W\subset \F_q$ and 
\[
\Tr_{q/p}(uR(u))=0
\]
for all $u\in W$. 
\end{enumerate}
\end{theorem}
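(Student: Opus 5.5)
We prove the cycle $(1)\Rightarrow(4)\Rightarrow(1)$, then show $(2)\Leftrightarrow(3)$ directly and $(3)\Leftrightarrow(4)$ via a symplectic argument on $V$.

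\emph{$(1)\Rightarrow(4)$.} Suppose $R=R_{F,t}$. In characteristic $2$ the term $\alpha_F(t)x$ contributes $2\alpha_F(t)\tau^0=0$ to $R+R^\ast$, so $E=R_F+R_F^\ast=F^\ast F$ and $V=V_F$. Put $W:=W_F$; by (iii)$'$ we have $W\subset\F_q$. Apply Proposition~\ref{b6} to $E=F^\ast F$ with $X=W$, $f=F$ and $h=F^\ast$. This gives $W^\perp=\ker h^\ast=\ker F=W$, so $W$ is maximal totally isotropic (and $\dim W=e$, $\dim V=2e$ by Lemma~\ref{basic}(ii)). Since $\alpha_F(t)\in\alpha_F(\F_q)$, Proposition~\ref{Ima} gives $\Tr_{q/p}(uR_F(u)+\alpha_F(t)u^2)=\Tr_{q/p}(uR(u))=0$ for all $u\in W$.

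\emph{$(4)\Rightarrow(1)$.} Since $W\subset\F_q$, the polynomial $f_W$ is Galois-stable and hence lies in $\F_q[x]$; it is separable with kernel $W$. Corollary~\ref{dec2} gives $E=F^\ast aF$ with $a\in\F_q^\times$. Replacing $F$ by $a^{1/2}F$, which is allowed because $\F_q$ is perfect, we get $E=F^\ast F$. We check the conditions on $F$.
\begin{itemize}
\item Condition (i): $b_0\neq0$ by separability, and $\deg F=p^{\dim W}=p^e$ gives $b_e\neq0$.
\item Condition (ii): comparing $\tau^0$-coefficients of $E$, which vanish since $E=R+R^\ast$ in characteristic $2$, gives $F^\ast(1)^2=0$.
\item Condition (iii)$'$: this is $W\subset\F_q$.
\end{itemize}
Comparing coefficients of $\tau^i$ for $i\ge1$ in $E=R+R^\ast=R_F+R_F^\ast$ gives $R=R_F+a_0x$. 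Now $\Tr_{q/p}(uR_F(u)+a_0u^2)=\Tr_{q/p}(uR(u))=0$ for all $u\in W=W_{F,q}$, so Proposition~\ref{Ima} yields $a_0=\alpha_F(t)$ for some $t\in\F_q$. Hence $R=R_{F,t}$.

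\emph{$(2)\Leftrightarrow(3)$.} Assume $V\subset\F_{q^2}$, and let $\sigma\colon u\mapsto u^q$, an involution of $V$ preserving $\omega$ (Lemma~\ref{b5}(iv)). For $u\in V_q$ and $w\in V$ we have $\omega(u,w+\sigma w)=\omega(u,w)+\omega(\sigma u,\sigma w)=2\omega(u,w)=0$, so $(1+\sigma)V\subset V_q^\perp$. Since $\ker(1+\sigma)=V_q$, we get $\dim(1+\sigma)V=2e-\dim V_q=\dim V_q^\perp$, and therefore
\[
V_q^\perp=(1+\sigma)V\subset V_q.
\]
For $w=u+u^q$ we have $wR(w)=x+x^q$ with $x=(u+u^q)R(u)$. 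Hence $\Tr_{q/p}(wR(w))=\Tr_{q^2/p}((u^q+u)R(u))$, which gives $(2)\Leftrightarrow(3)$.

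\emph{$(4)\Rightarrow(3)$.} By the argument above we have $(4)\Rightarrow(1)$, and then Lemma~\ref{even2} gives $V\subset\F_{q^2}$. Since $W\subset V_q$, taking orthogonals gives $V_q^\perp\subset W^\perp=W$, so the trace condition holds on $V_q^\perp$.

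\emph{$(3)\Rightarrow(4)$.} This is the step I expect to be the main obstacle. By Lemma~\ref{plin}, $\lambda(u):=\Tr_{q/p}(uR(u))^{1/2}$ is $\F_p$-linear on $V_q$, and by (3) it vanishes on $V_q^\perp$. The space $V_q^\perp$ is contained in $V_q$ and is the radical of $\omega|_{V_q}$. So $\bar V:=V_q/V_q^\perp$ is a nondegenerate symplectic space of dimension $2m=2\dim V_q-2e$, and $\lambda$ descends to $\bar V$. If $\bar\lambda=0$, take any Lagrangian of $\bar V$. Otherwise $\ker\bar\lambda=v^\perp$ for some $v\neq0$, and any Lagrangian $L$ containing $v$ satisfies $L\subset v^\perp=\ker\bar\lambda$. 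The preimage $W$ of $L$ in $V_q$ is totally isotropic and contained in $\F_q$, and $\lambda|_W=0$. Its dimension is $(2e-\dim V_q)+m=e$, so $W$ is maximal totally isotropic in $V$.
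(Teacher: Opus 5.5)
Your proof is correct and follows the paper's route: $(1)\Leftrightarrow(4)$ via Proposition~\ref{b6}, Corollary~\ref{dec2} and Proposition~\ref{Ima}; $(2)\Leftrightarrow(3)$ via $V_q^\perp=(1+\sigma)V\subset V_q$; and $(3)\Rightarrow(4)$ by finding a Lagrangian of $V_q/V_q^\perp$ on which the linear form vanishes. The differences are minor: in $(4)\Rightarrow(3)$ you obtain $V\subset\F_{q^2}$ from $(4)\Rightarrow(1)$ plus Lemma~\ref{even2}, whereas the paper argues directly that $u^q+u\in W^\perp=W$; and you replace Lemma~\ref{tis} by taking a Lagrangian through the vector $v$ with $\ker\bar\lambda=v^\perp$, where $\langle v\rangle$ is exactly the radical used in the paper's proof of that lemma.
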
 

\begin{remark}
    In \cite{ITT0}, the authors study the curve $C_R$ under a certain condition \cite[Assumption~2.4]{ITT0}. We claim that this condition is equivalent to the equivalent conditions in Theorem~\ref{detR}. 
    
    Indeed, let $H_R$ be the finite Heisenberg group defined in \cite[Definition~2.1]{ITT0}, whose underlying set is 
    \[
    H_R=\{(a,b)\in V\times\F\mid b^p+b=aR(a)\}. 
    \]
The condition in loc.~cit.~is the existence of a maximal abelian subgroup $A$ of $H_R$ such that $A\subset \F_q^2$. By \cite[Lemma~2.2]{ITT0} and Remark~\ref{omega in ITT0}, the set of such subgroups $A$ is in bijection with the set of 
 maximal totally isotropic subspaces $W\subset V$ such that $W\subset \F_q$ and, for every $u\in W$, the equation 
\[b^p+b=uR(u)\]
admits a solution in $\F_q$. The assignment $A\mapsto W$ is given by 
\[
W=\overline{A}, 
\]
where $\overline{A}$ denotes the image of $A$ under the projection $H_R\to V,\ (a,b)\mapsto a$. 

By Artin--Schreier theory, the equation 
\[b^p+b=uR(u)\]
admits a solution in $\F_q$ if and only if 
\[
\Tr_{q/p}(uR(u))=0. 
\]
Hence, \cite[Assumption~2.4]{ITT0} is equivalent to condition~$(4)$ in Theorem~\ref{detR}. 
\end{remark}

\begin{corollary}\label{VrFq}
If $R$ satisfies one of the following conditions, then the equivalent conditions in Theorem \ref{detR} hold. 
\begin{enumerate}
\item[$(a)$]  We have $V\subset\F_q$. 
\item[$(b)$]  We have $V\subset\F_{q^2}$, and for all $u\in V_q:=V\cap \F_q$, we have $\Tr_{q/p}(uR(u))=0$. 
\end{enumerate}
\end{corollary}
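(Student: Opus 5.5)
The plan is to verify condition~$(3)$ of Theorem~\ref{detR}, or condition~$(2)$ directly, in each of the two cases.

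\emph{Case $(a)$.} If $V\subset\F_q$, then in particular $V\subset\F_{q^2}$ and $V_q=V$. Since $\omega$ is nondegenerate on $V$ (Lemma~\ref{b5}), we get $V_q^\perp=V^\perp=0$. Condition~$(3)$ is then vacuous, because $u=0$ gives $\Tr_{q/p}(0)=0$. So the equivalent conditions hold.

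A more robust route is to check $(2)$ directly, which avoids any question about how $V_q^\perp$ is normalized. For $u\in V\subset\F_q$ we have $u^q+u=2u=0$ in characteristic~$2$. Hence $\Tr_{q^2/p}\bigl((u^q+u)R(u)\bigr)=0$ trivially, and $(2)$ holds.

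\emph{Case $(b)$.} Here $V\subset\F_{q^2}$ is given, and it remains to show $\Tr_{q^2/p}\bigl((u^q+u)R(u)\bigr)=0$ for all $u\in V$. Write $\sigma(u)=u^q$. Since $R$ has coefficients in $\F_q$ and $E$ is defined over $\F_q$, the map $\sigma$ preserves $V$. We factor the trace as $\Tr_{q^2/p}=\Tr_{q/p}\circ\Tr_{q^2/q}$ and expand. Using Galois invariance of the trace, this gives
\[
\Tr_{q^2/p}\bigl(u^qR(u)+uR(u)\bigr)=\Tr_{q/p}\Tr_{q^2/q}\bigl(u^qR(u)\bigr)+\Tr_{q^2/p}\bigl(uR(u)\bigr).
\]
Now set $w:=u+u^q=\Tr_{q^2/q}(u)\in V_q$. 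By Lemma~\ref{plin} applied over $\F_{q^2}$, the map $u\mapsto \Tr_{q^2/p}(uR(u))$ is additive on $V$ (since $V\subset\F_{q^2}$). This gives
\[
\Tr_{q^2/p}(wR(w))=\Tr_{q^2/p}(uR(u))+\Tr_{q^2/p}(u^qR(u^q)).
\]
The two terms on the right are equal, so the sum vanishes in characteristic~$2$. Expanding $wR(w)$ differently yields
\[
\Tr_{q^2/p}(wR(w))=2\Tr_{q^2/p}(uR(u))+\Tr_{q^2/p}\bigl(u^qR(u)+uR(u^q)\bigr).
\]
By Lemma~\ref{b4} and $E(u)=0$, we have $uR(u^q)\sim u^qR^\ast(u)=u^qR(u)+u^qE(u)=u^qR(u)$, so this expression vanishes as well. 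The information has to come instead from
\[
\Tr_{q^2/p}(wR(w))=\Tr_{q/p}\bigl(\Tr_{q^2/q}(wR(w))\bigr)=\Tr_{q/p}(2wR(w))=0,
\]
which is also trivial. The computation therefore has to be organized more carefully.

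Directly, we have $(u^q+u)R(u)=wR(u)$, and
\[
\Tr_{q^2/p}(wR(u))=\Tr_{q/p}\bigl(w\,\Tr_{q^2/q}R(u)\bigr)=\Tr_{q/p}\bigl(w R(u+u^q)\bigr)=\Tr_{q/p}(wR(w)),
\]
because $R$ has coefficients in $\F_q$ and so $\Tr_{q^2/q}R(u)=R(\Tr_{q^2/q}u)$. Since $w\in V_q$, the hypothesis of $(b)$ gives $\Tr_{q/p}(wR(w))=0$. This is exactly condition~$(2)$, so the proof reduces to the identity $\Tr_{q^2/q}(R(u))=R(w)$ together with $w\in V_q$.

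The main obstacle is bookkeeping rather than substance. One must apply Theorem~\ref{detR} in the form of condition~$(2)$ rather than~$(3)$, and observe that $w=u+u^q$ lies in $V_q$ because $V$ is $\sigma$-stable. Case~$(a)$ then also follows as the special case $w=0$.
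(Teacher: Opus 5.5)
Your final argument is correct and is the paper's proof: in both cases you verify condition~$(2)$, using $u^q+u=0$ for $(a)$ and, for $(b)$, the identity $\Tr_{q^2/p}\bigl((u^q+u)R(u)\bigr)=\Tr_{q/p}\bigl(wR(w)\bigr)$ with $w=u^q+u\in V_q$. The intermediate detours via additivity and Lemma~\ref{b4} are correct but uninformative, so you can simply delete them.
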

\proof{We show that condition $(2)$ in Theorem \ref{detR} holds in each case. 

If $V\subset \F_q$, then $u^q+u=0$ for all $u\in V$, and the condition is clearly satisfied. 

Now assume $(b)$. Let $u \in V$. Since $u\in \F_{q^2}$, 
we have $u^q+u \in \F_q$. 
Thus we obtain  
\[
\Tr_{q^2/p}\bigl((u^q+u)R(u)\bigr)=\Tr_{q/p}\bigl((u^q+u)R(u^q+u)\bigr). 
\]
Since $u \in V$, we have $E(u^q+u)=E(u)^q+E(u)=0$, and hence 
$u^q+u \in V$. Therefore, we have $u^q+u\in V_{q}$. Condition $(2)$ is thus satisfied if $\Tr_{q/p}(vR(v))=0$ for all $v\in V_{q}$. 
\qed}

\medskip

The remainder of this subsection is devoted to the proof of Theorem \ref{detR}. We begin with the easier implications. 
\begin{lemma}\label{i to iv to iii}
Consider the conditions in Theorem \ref{detR}. Then we have implications 
\[(1)\Longleftrightarrow(4)\Longrightarrow(3).\]
\end{lemma}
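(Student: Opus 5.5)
The plan is to prove $(1)\Rightarrow(4)$, then $(4)\Rightarrow(1)$, and finally deduce $(3)$ from $(4)$ together with $(1)$. Throughout I use one observation: since $p_0=2$, the term $\alpha_F(t)x$ contributes $2\alpha_F(t)x=0$ to $R_{F,t}+R_{F,t}^\ast$. Hence $E=R+R^\ast=R_F+R_F^\ast=F^\ast F$ whenever $R=R_{F,t}$.

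\emph{$(1)\Rightarrow(4)$.} Suppose $R=R_{F,t}$. Then $E=F^\ast F$ and $V=V_F$. Take $W:=W_F=\ker F$; it lies in $\F_q$ by condition (iii)$'$. Apply Proposition~\ref{b6} to the self-adjoint element $E$, with $X=W$, $f=F$ and $h=F^\ast$. This gives $W^\perp=\ker h^\ast=\ker F=W$, so $W$ is its own orthogonal complement and is therefore a maximal totally isotropic subspace of $V$. For the trace condition, let $u\in W$ and write $uR(u)=uR_{F,0}(u)+F^\ast(t)^2u^2$.
\begin{itemize}
\item Since $F(u)=0$, Lemma~\ref{F0} gives $\Tr_{q/p}(uR_{F,0}(u))=0$, exactly as in the proof of Proposition~\ref{Ima}.
\item We have $\Tr_{q/p}(F^\ast(t)^2u^2)=\Tr_{q/p}(F^\ast(t)u)^2$. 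By Lemma~\ref{b4}, $\Tr_{q/p}(F^\ast(t)u)=\Tr_{q/p}(tF(u))=0$.
\end{itemize}
Hence $\Tr_{q/p}(uR(u))=0$ for all $u\in W$.

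\emph{$(4)\Rightarrow(1)$.} Let $W$ be as in $(4)$. Because $W\subset\F_q$ is Galois-stable, $f_W$ has coefficients in $\F_q$, so we may take a separable $F\in\F_q\{\tau^{\pm1}\}$ with $\ker F=W$.
\begin{itemize}
\item Corollary~\ref{dec2} gives $E=F^\ast aF$ with $a\in\F_q^\times$. Replacing $F$ by $a^{1/2}F$, which is still defined over $\F_q$ because $\F_q$ is perfect, we get $E=F^\ast F$.
\item The coefficient of $x$ in $E=R+R^\ast$ is $2a_0=0$. The last statement of Corollary~\ref{dec2} then gives $F^\ast(1)^2=0$, i.e.\ condition (ii).
\item Since $\dim V=2e$ and $W$ is maximal isotropic, $\dim W=e$. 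Hence $F$ has $\tau$-degree $e$ with $b_0b_e\neq0$ by Lemma~\ref{basic}(ii), which is condition (i).
\item Condition (iii)$'$ holds because $W\subset\F_q$.
\end{itemize}
Since $E_F=E$, and the coefficients of $\tau^i$ for $i\ge1$ in $E$ determine those of $R$, we get $R=R_F+a_0x$. It remains to see that $a_0\in\alpha_F(\F_q)$. By Proposition~\ref{Ima} this amounts to $\Tr_{q/p}(uR_F(u)+a_0u^2)=\Tr_{q/p}(uR(u))=0$ for $u\in W_{F,q}=W$, which is the hypothesis. Choosing $t$ with $\alpha_F(t)=a_0$ gives $R=R_{F,t}$.

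\emph{$(4)\Rightarrow(3)$.} Using $(1)$, which we now have, Lemma~\ref{even2} gives $V=V_F\subset\F_{q^2}$. Since $W\subset V_q$, we get $V_q^\perp\subset W^\perp=W$, and the trace condition on $W$ then gives it on $V_q^\perp$.

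The step I expect to be most delicate is the normalization in $(4)\Rightarrow(1)$. There one must check that the constant $a$ from Corollary~\ref{dec2} really lies in $\F_q$, so that rescaling keeps $F$ over $\F_q$. One must also check that $F^\ast(1)=0$ is forced by the vanishing of the $x$-coefficient of $E$ in characteristic $2$.
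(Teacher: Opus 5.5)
Your proof is correct. For $(1)\Leftrightarrow(4)$ it is the paper's argument. Proposition~\ref{b6} applied to $E=F^\ast F$ gives $W_F^\perp=W_F$. For the converse, $f_W$ is normalized via Corollary~\ref{dec2} and $t$ is obtained from Proposition~\ref{Ima}. In $(1)\Rightarrow(4)$ you check the trace condition by hand via Lemmas~\ref{F0} and~\ref{b4}, while the paper simply cites Proposition~\ref{Ima}; that proposition's proof contains the same computation, so this is only an unfolding.

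The one genuine difference is how you obtain $V\subset\F_{q^2}$ in $(4)\Rightarrow(3)$.

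The paper argues directly on $(V,\omega)$. For $u\in V$, the element $v=u^q+u$ lies in $V$. By Galois invariance of $\omega$ (Lemma~\ref{b5}(iv)) and $W\subset\F_q$, it is orthogonal to $W$. Hence $v\in W^\perp=W\subset\F_q$, and so $u^{q^2}=u$.

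You instead first pass through $(4)\Rightarrow(1)$ to get a factorization $E=F^\ast F$ with $\ker F^\ast\subset\F_q$. You then invoke Lemma~\ref{even2}, i.e.\ the identity $\tau^{2n}+1=\tau^nGF^\ast FG^\ast$ (with $q=p^n$) combined with the intermediate-covering statement Lemma~\ref{b2}(ii). This is valid, and there is no circularity, since Lemma~\ref{even2} is proved independently of Theorem~\ref{detR}.

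What your route buys is reuse of an existing lemma. Its cost is a detour through the construction of $F$. The paper's argument stays within the linear algebra of $(V,\omega)$, does not need $(1)$ at all, and yields the sharper fact that $u^q+u\in W$ for every $u\in V$.
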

\begin{proof}
We first prove $(1)\Rightarrow(4)$. Let $(F,t)$ be a pair satisfying condition $(1)$. Set $W:=W_F=\ker F$. Then $W\subset \F_q$, and, by Proposition~\ref{b6} applied to the factorization $E=F^\ast F$, we have $W=W^\perp$. Thus, $W$ is maximal totally isotropic. 

It remains to show that 
\[
\Tr_{q/p}\bigl(uR(u)\bigr)=0
\]
    for all $u\in W$. This follows from Proposition~\ref{Ima}, since 
    \[R(x)=R_{F,t}(x)=R_F(x)+\alpha_F(t)x.  \]

We next prove the converse implication $(4)\Rightarrow(1)$. Let $W\subset V$ be a subspace as in $(4)$. Define 
\[f_W(x):=\prod_{v\in W}(x-v)\in \F_q[x].\]
By Corollary~\ref{dec2}, there exists $a\in \F_q^\times$ such that 
\[
E=f_W^\ast af_W. 
\]
Set $F:=a^{1/2}f_W$. Then $F$ clearly satisfies conditions (i) and (iii)$'$ stated after \eqref{Fdef}. Since (iii)$'$ is equivalent to (iii), the polynomial $F$ also satisfies (iii). 
Moreover, condition~\textup{(ii)} follows from the equality
\[
E=F^\ast F
\]
and Corollary~\ref{dec2}.

It remains to prove the existence of $t\in \F_q$ such that $
a_0=\alpha_F(t)$. This follows from Proposition~\ref{Ima} and the assumption on $W$.

\medskip 

\noindent
Finally, we prove $(4)\Rightarrow(3)$. Let $W\subset V$ be a subspace as in $(4)$. Then 
\[
W\subset V\cap \F_q=V_q, 
\]
and hence $V_q^\perp\subset W^\perp$. Since $W$ is maximal totally isotropic, we have $W^\perp=W$. Therefore,
\[
V_q^\perp\subset W,
\]
and consequently
\[
\Tr_{q/p}(uR(u))=0
\qquad
\text{for all }u\in V_q^\perp.
\]
It remains to show that $V\subset \F_{q^2}$. Let $u\in V$, and set $v:=u^q+u$. Since 
\[
E(v)=E(u)^q+E(u)=0, 
\]
we have $v\in V$. On the other hand, for any $w\in W$, we have 
\[
\omega(v,w)=\omega(u^q,w)+\omega(u,w)=0
\]
where the last equality follows from Lemma~\ref{b5}(iv) and the inclusion $W\subset \F_q$. Since $W=W^\perp$, it follows that $v\in W$. In particular, $v\in \F_q$. Therefore,
\[
u^{q^2}-u=v^q-v=0,
\]
which proves that $u\in \F_{q^2}$. 
This completes the proof. 
\end{proof}

Consequently, it remains to prove the implications 
\[
(2)\Longleftrightarrow(3)\Longrightarrow(4). 
\]
In particular, from now on, we may assume that 
\[V\subset\F_{q^2}. \]

We introduce some notation. Define the $\F_p$-linear map  
\[T\colon V\to V,\quad u\mapsto u^q.\]
This is well-defined since the coefficients of $E$ lie in $\F_q$. We also define $N:=T+{\rm id}_V$. 
Since $V\subset \F_{q^2}$, we have $N^2=0$. On the other hand, 
since $\ker N=V_q$, we have a short exact sequence 
\begin{equation}\label{VRqex}
0\to V_{q}\to V\xrightarrow{N}\mathop{\rm Im}N\to 0. 
\end{equation}
\begin{lemma}\label{uuu}
Let the notation and assumptions be as above. Then the following statements hold: 
\begin{enumerate}
\item[$(a)$] We have 
\[V_{q}^\perp=\mathop{\rm Im}N,\qquad\mathop{\rm Im}N\subset V_{q}.\]
\item[$(b)$] We have 
\[
(2)\Longleftrightarrow(3). 
\]
\end{enumerate}
\end{lemma}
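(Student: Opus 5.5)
For (a), I would first prove $\mathop{\rm Im}N\subset V_q$. Since $V\subset\F_{q^2}$, for $u\in V$ we have $N(u)^q=u^{q^2}+u^q=u+u^q=N(u)$, so $N(u)\in V\cap\F_q=V_q$. Equivalently, $N^2=0$.

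Next I would show $\mathop{\rm Im}N\subset V_q^\perp$. Take $u\in V$ and $w\in V_q$. By Lemma~\ref{b5}(iv) with $\sigma$ the $q$-power Frobenius, and using $w^q=w$, we get
\[
\omega(u^q,w)=\omega(u^q,w^q)=\omega(u,w).
\]
Since $p_0=2$, this gives $\omega(N(u),w)=\omega(u^q,w)+\omega(u,w)=0$.

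Equality then follows by counting dimensions. Nondegeneracy of $\omega$ (Lemma~\ref{b5}(ii)) gives $\dim V_q^\perp=\dim V-\dim V_q$. The exact sequence \eqref{VRqex} gives $\dim\mathop{\rm Im}N=\dim V-\dim V_q$. Hence $V_q^\perp=\mathop{\rm Im}N$.

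For (b), both conditions presuppose $V\subset\F_{q^2}$, so it suffices to compare the two trace conditions. Fix $u\in V$ and put $v:=N(u)=u^q+u$. By (a), $v\in\F_q$. Split
\[
\Tr_{q^2/p}(vR(u))=\Tr_{q/p}\bigl(\Tr_{q^2/q}(vR(u))\bigr)=\Tr_{q/p}\bigl(v(R(u)^q+R(u))\bigr).
\]
The coefficients of $R$ lie in $\F_q$, so $R(u)^q+R(u)=R(u^q+u)=R(v)$. Hence
\[
\Tr_{q^2/p}\bigl((u^q+u)R(u)\bigr)=\Tr_{q/p}\bigl(vR(v)\bigr).
\]
As $u$ ranges over $V$, the element $v=N(u)$ ranges over exactly $\mathop{\rm Im}N=V_q^\perp$ by (a). So condition $(2)$ holds if and only if $\Tr_{q/p}(vR(v))=0$ for all $v\in V_q^\perp$, which is condition $(3)$.

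I do not expect a real obstacle. The only care needed is the Galois-invariance step: Lemma~\ref{b5}(iv) applies to $\sigma$ acting on both arguments at once, and $w^q=w$ lets us move the Frobenius onto $u$ alone.
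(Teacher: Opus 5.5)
Your proof is correct and follows the paper's argument essentially step for step. Galois invariance of $\omega$ (Lemma~\ref{b5}(iv)) gives $\mathop{\rm Im}N\subset V_q^\perp$, and nondegeneracy together with \eqref{VRqex} gives equality by a dimension count. The identity $\Tr_{q^2/p}\bigl((u^q+u)R(u)\bigr)=\Tr_{q/p}\bigl(N(u)R(N(u))\bigr)$ then reduces $(2)\Leftrightarrow(3)$ to $\mathop{\rm Im}N=V_q^\perp$. The only cosmetic difference is that you check $\mathop{\rm Im}N\subset V_q$ by computing $N(u)^q=N(u)$ directly, whereas the paper phrases it as $N^2=0$ combined with $\ker N=V_q$.
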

\proof{$(a)$ Since $N^2=0$, the inclusion $\mathop{\rm Im}N\subset V_{q}$ follows from the exact sequence \eqref{VRqex}. Let $u\in V$ and $v\in V_{q}$. Then we have 
\[\omega(Nu,v)=\omega(u^q,v)+\omega(u,v)=0\]
by $v^q=v$ and Lemma \ref{b5}(iv). This shows that  $\mathop{\rm Im}N\subset V_{q}^\perp$. On the other hand, we have 
\[\dim V_{q}^\perp=\dim V-\dim V_{q}, \]
which equals $\dim\mathop{\rm Im}N$ by \eqref{VRqex}. The assertion follows. 

\medskip 

\noindent
$(b)$ Let $u \in V$. Since $\Ima N \subset V_q$ by (a), 
we have $u^q+u=Nu\in \F_q$. Thus, 
\[
\Tr_{q^2/p}\bigl((u^q+u)R(u)\bigr)=\Tr_{q/p}\bigl((u^q+u)\Tr_{q^2/q}(R(u))\bigr)=\Tr_{q/p}\bigl((Nu)R(Nu)\bigr). 
\]
Hence, condition $(2)$ is equivalent to the condition that $v \mapsto \Tr_{q/p}(vR(v))$ vanishes on $\mathop{\rm Im}N$. The assertion then follows since $\mathop{\rm Im}N=V_{q}^\perp$ by $(a)$. 
\qed}

\vspace{3mm}

\noindent\textit{(Proof of Theorem~\ref{detR})}

{By Lemmas~\ref{i to iv to iii} and~\ref{uuu}$(b)$, it remains to prove $(3)\Rightarrow(4)$. 

Assume that condition~$(3)$ holds. By Lemma~\ref{uuu}$(a)$, we have 
\[V_q^\perp\subset V_q.\]
Consider the map 
\[
\varphi \colon V_{q}\to\F_p, \quad u \mapsto \Tr_{q/p}(uR(u))^{1/2}.
\]
By Lemma \ref{plin}, this map is additive, and hence $\F_p$-linear. By condition $(3)$, we have 
\[
V_{q}^\perp\subset \ker \varphi. 
\]
Choose a subspace $X\subset V_{q}$ such that $V_{q}=X\oplus V_{q}^\perp$. Let $f=\dim V_{q}^\perp$. Then, by \eqref{VRqex} and Lemma \ref{uuu}$(a)$, we have 
\[
\dim X=\dim V_q-\dim V_q^{\perp}=\dim V-2 \dim V_q^{\perp}=
2(e-f). 
\]

Applying Lemma \ref{tis} below to the triple $(X,\omega|_X,\varphi)$, we obtain a totally isotropic subspace $W'\subset X$ of dimension $e-f$ such that  $\varphi$ vanishes on $W'$. Then the space $W=W'+V_{q}^\perp$ satisfies all the properties required in $(4)$. 
\qed}

\begin{lemma}\label{tis}
Let $k$ be a field, and let $X$ be a $k$-vector space of even dimension $2r$. Let $\omega$ be a symplectic pairing on $X$, and let $\varphi$ be a $k$-linear map $X\to k$. Then there exists a totally isotropic subspace $W'\subset X$ of dimension $r$  such that $\varphi|_{W'}=0$. 
\end{lemma}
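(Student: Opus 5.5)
We argue by induction on $r$, using only linear algebra for a symplectic pairing $\omega$ on $X$ and the linear form $\varphi$. For $r=0$ we take $W'=0$. For the induction step with $r\geq1$, we look for a nonzero vector $w_1\in\ker\varphi$ and pass to the symplectic reduction $X_1:=\langle w_1\rangle^\perp/\langle w_1\rangle$. Since $\omega$ is alternating, $\omega(w_1,w_1)=0$, so $w_1\in\langle w_1\rangle^\perp$. Nondegeneracy gives $\dim\langle w_1\rangle^\perp=2r-1$, so $X_1$ has dimension $2r-2$ and inherits a nondegenerate symplectic form.

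The key point is that $\varphi$ must descend to $X_1$. This requires $\varphi(w_1)=0$, so we need $w_1\in\ker\varphi$ from the start. We therefore choose $w_1$ as follows.
\begin{itemize}
\item If $\varphi=0$, any nonzero $w_1$ works.
\item Otherwise, nondegeneracy gives a unique $z\in X$, $z\neq0$, with $\varphi=\omega(z,-)$. Then $\varphi(z)=\omega(z,z)=0$, and we take $w_1:=z$.
\end{itemize}
In either case $\varphi|_{\langle w_1\rangle^\perp}$ kills $w_1$, so it induces a linear form $\varphi_1$ on $X_1$.

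By induction there is a totally isotropic subspace $\overline{W}\subset X_1$ of dimension $r-1$ on which $\varphi_1$ vanishes. Let $W'$ be its preimage in $\langle w_1\rangle^\perp$, which has dimension $r$. It remains to check the two required properties.
\begin{itemize}
\item $W'$ is totally isotropic: for $a,b\in W'$, the value $\omega(a,b)$ equals the reduced pairing of their images in $X_1$, which is $0$. This is well defined because $w_1$ is orthogonal to all of $\langle w_1\rangle^\perp$.
\item $\varphi$ vanishes on $W'$: it vanishes on $w_1$, and on $W'$ it is computed via $\varphi_1$, which is zero on $\overline{W}$.
\end{itemize}

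There is essentially no obstacle here. The only subtlety is choosing $w_1$ inside $\ker\varphi$, and we handle it by taking $w_1=z$, the vector representing $\varphi$ under $\omega$, which is automatically isotropic. Equivalently, one can say directly that $\ker\varphi=z^\perp$ contains $z$ and is coisotropic, so it contains a Lagrangian subspace, for example any Lagrangian containing $z$.
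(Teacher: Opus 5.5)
Your proof is correct. In substance it builds the same subspace as the paper, but the key step is justified differently.

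\textbf{The paper's route.} The paper sets $K=\ker\varphi$ and notes that $\omega|_{K\times K}$ must be degenerate, simply because $\dim K=2r-1$ is odd. It then pulls back a maximal totally isotropic subspace of $K/L$, where $L\neq 0$ is the radical, which gives $\dim A=(2r-1+\dim L)/2\geq r$.

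\textbf{Relation to yours.} When $\omega$ is nondegenerate, $K=z^\perp$ and its radical is exactly $\langle z\rangle$, so $K/L$ is your $X_1$. Your induced form $\varphi_1$ is then identically zero, so the rest of your induction only re-proves that Lagrangians exist in $X_1$.

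\textbf{The trade-off.} You locate $L$ explicitly through the vector $z$ with $\varphi=\omega(z,-)$. That step, like $\dim\langle w_1\rangle^\perp=2r-1$, uses nondegeneracy of $\omega$ on $X$, the usual meaning of ``symplectic.'' The paper's parity argument never uses nondegeneracy and works for any alternating form. This costs you nothing here. In the proof of Theorem~\ref{detR}, $X$ is a complement to the radical $V_q^\perp$ of $\omega|_{V_q}$, so $\omega|_X$ is nondegenerate. In that setting your closing remark is the quickest proof of all: any maximal totally isotropic $\Lambda\ni z$ satisfies $\Lambda=\Lambda^\perp\subset z^\perp=\ker\varphi$.
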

\proof{First, we consider the case where $\varphi=0$. Since $\dim X=2r$, any maximal totally isotropic subspace has dimension $\geq r$. The assertion follows in this case. 

Next, assume $\varphi\neq0$. Let $K=\ker \varphi$; then  $\dim K=2r-1$. Since $K$ is odd-dimensional, the restriction $\omega|_{K\times K}$ is degenerate. Let $L\subset K$ be the radical, defined as  the subspace consisting of elements $u\in K$ such that $\omega(u,v)=0$ for all $v\in K$. Since $\omega|_{K\times K}$ is degenerate, $L\neq0$. Furthermore, we have an induced pairing 
\[\overline{\omega}\colon (K/L) \times (K/L)\to k, \]
which is nondegenerate and symplectic. Choose a maximal totally isotropic subspace $\overline{A}\subset K/L$, and let $A$ be its inverse image in $K$. Then we have 
\[\dim A=\dim L+\frac{\dim K-\dim L}{2}=\frac{2r-1+\dim L}{2}. \]
Since $L\neq0$, it follows that $\dim A\geq r$. By construction, $A$ is totally isotropic and satisfies $\varphi|_A=0$. Then we can choose $W'$ to be an $r$-dimensional subspace of $A$. 
\qed}

\section{Maximal and minimal twists of van der Geer--van der Vlugt Curves}\label{max2}
Let $p$ be a power of $2$, and $q$ be a power of $p$. 
\subsection{Review of Results from \cite{GV}}
\label{notation_twist}
Fix a polynomial 
\[R(x)=\sum_{i=1}^ea_ix^{p^i}\in \F_q[x],\]
where $e\geq1$ and $ a_e\neq0$.  For each $a\in \F_q$, set $R_a(x):=R(x)+ax$, and let $C_a$ denote the affine $\F_q$-curve defined by 
\begin{equation}\label{GVCa}
 y^p - y = x R_a(x). 
 \end{equation}

In this section, we regard the curves $C_a$ as a family  parameterized by $a$. Such families were studied extensively by van der Geer--van der Vlugt in \cite{GV}. In the remainder of this subsection, we review their results.

Let $E:=R+R^\ast$, and define  
\[V:=\ker(E\colon \F\to \F),\qquad V_{q}:=V\cap \F_q. \]
Note that $E=R_a+R_a^\ast$ for every $a$, since $a+a=0$. 

The following theorem can be proved in essentially the same way as \cite[Section 5]{GV}. 
\begin{theorem}\label{Npm}
Let $a\in \F_q$. Then the following statements hold: 
\begin{enumerate}
\item If the additive map \eqref{uR(u)}
is nonzero, then 
\[|C_a(\F_q)|=q. \]
\item If the additive map 
\eqref{uR(u)} 
is zero, then  
\[|C_a(\F_q)|=q\pm(p-1)\sqrt{qp^d}, \]
where $d=\dim_{\F_p}V_{q}$. 
\end{enumerate}
\end{theorem}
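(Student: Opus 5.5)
We count points through the exponential sum. By Lemma~\ref{decAS}(i) and the Artin--Schreier decomposition,
\[
|C_a(\F_q)|=q+\sum_{\psi\in\F_p^\vee\setminus\{1\}}\sum_{x\in\F_q}\psi\bigl(xR_a(x)\bigr)=q+\sum_{b\in\F_p^\times}S(b),\qquad S(b):=\sum_{x\in\F_q}\psi_q\bigl(b\,xR_a(x)\bigr),
\]
writing each nontrivial $\psi$ as $\psi_p(b\,\cdot)$ composed with $\Tr_{q/p}$. So it suffices to compute $|S(b)|^2$ and, when it is nonzero, to show that all the $S(b)$ are equal to one real number of absolute value $\sqrt{qp^d}$.

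\textbf{Step 1: squaring the sum.} We have $|S(b)|^2=\sum_{x,y}\psi_q\bigl(b(xR_a(x)-yR_a(y))\bigr)$. Substituting $x=y+u$ gives
\[
xR_a(x)-yR_a(y)=uR_a(u)+uR_a(y)+yR_a(u).
\]
By Lemma~\ref{b4}, $\Tr_{q/p}\bigl(b\,uR_a(y)\bigr)=\Tr_{q/p}\bigl(y\,R_a^\ast(bu)\bigr)$. Since $b\in\F_p$ and $R_a$ is $\F_p$-linear, we get $\Tr_{q/p}\bigl(b(uR_a(y)+yR_a(u))\bigr)=\Tr_{q/p}\bigl(y\,E(bu)\bigr)$.

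Summing over $y$ kills every $u$ with $E(bu)\neq0$. Using Lemma~\ref{kfin}(ii)-type orthogonality, this leaves
\[
|S(b)|^2=q\sum_{u\in V_q}\psi_q\bigl(b\,uR_a(u)\bigr).
\]
By Lemma~\ref{plin}, the map $u\mapsto\Tr_{q/p}(uR_a(u))$ is additive on $V_q$. The summand is therefore a character of $V_q$, which is nontrivial for every $b$ exactly when the map \eqref{uR(u)} is nonzero. This case distinction drives the rest.

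\textbf{Step 2: the nonzero case.} If the map \eqref{uR(u)} is nonzero, then $S(b)=0$ for all $b$. Hence $|C_a(\F_q)|=q$, which is (i).

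\textbf{Step 3: the zero case.} Otherwise $|S(b)|^2=q\,p^d$. Replacing $x$ by $cx$ with $c\in\F_p^\times$ and $c^2=b$ (Lemma~\ref{decAS}(ii)) gives $S(b)=S(1)$ for every $b$. Also $S(1)$ is real, since $\psi_q$ takes values $\pm1$.

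It remains to see that $S(1)=\pm\sqrt{qp^d}$ exactly, rather than merely a real number of that absolute value. Because $S(1)\in\Z$ and $S(1)^2=qp^d$, the value $qp^d$ must be a perfect square. Indeed $[\F_q:\F_p]+d$ is even: it follows from Step 1 that the integer $S(1)^2$ equals $qp^d$, which is a power of $2$, so $\sqrt{qp^d}$ is automatically an integer.

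Thus $S(1)=\pm\sqrt{qp^d}$, and $|C_a(\F_q)|=q\pm(p-1)\sqrt{qp^d}$.

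\textbf{Main obstacle.} I expect the delicate step to be the orthogonality in Step 1. One must use the adjoint identity of Lemma~\ref{b4} carefully with the scalar $b$. One must also check that the cross term really becomes $\Tr_{q/p}(yE(bu))$ with the correct normalization, using $b\in\F_p$ and $E=R_a+R_a^\ast$. The remaining steps are formal.
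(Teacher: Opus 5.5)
Your proof is correct. Its first half coincides with the paper's: both reduce to $|C_a(\F_q)|=q+(p-1)S(1)$ via the substitution $x\mapsto b^{1/2}x$.

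The difference is in evaluating $S(1)=\sum_{x\in\F_q}(-1)^{\Tr_{q/2}(xR_a(x))}$. The paper simply cites \cite[(5.2), (5.3)]{GV} for its value. You instead compute $S(1)^2$ directly:
\begin{itemize}
\item the shift $x=y+u$;
\item Lemma~\ref{b4}, which turns the cross terms into $\Tr_{q/p}(yE(bu))$;
\item orthogonality in $y$, which collapses the sum onto $V_q$;
\item Lemma~\ref{plin}, which makes what remains a character of $V_q$.
\end{itemize}
This is the quadratic-form argument behind van der Geer--van der Vlugt's formula, rewritten in the paper's adjoint language. Your version is self-contained and shows directly why the radical $V_q$ and the map \eqref{uR(u)} govern the dichotomy; the paper saves the space by citation.

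Two small points.

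First, let $\lambda$ denote the map \eqref{uR(u)}. You claim that $u\mapsto\psi_p\bigl(b\lambda(u)\bigr)$ is nontrivial for every $b\in\F_p^\times$ as soon as $\lambda\neq0$; this needs a line. Since $\lambda$ is additive with $\lambda(cu)=c^2\lambda(u)$ for $c\in\F_p$, its image is an $\F_p$-subspace of $\F_p$, hence all of $\F_p$ when $\lambda\neq0$.

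Second, the paragraph about perfect squares is superfluous. $S(1)$ is real with $S(1)^2=qp^d$, which already gives $S(1)=\pm\sqrt{qp^d}$. As written, that paragraph also fails to justify that $[\F_q:\F_p]+d$ is even when $[\F_p:\F_2]$ is even. The parity claim is true, but the reason is that $(u,y)\mapsto\Tr_{q/p}(yE(u))$ is an alternating $\F_p$-bilinear form on $\F_q$ with radical $V_q$. You can simply delete that paragraph.
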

\proof{
Let $\psi_p$ be the character $\F_p\to\C^\times,x\mapsto (-1)^{\Tr_{p/2}(x)}$. Then any additive character $\F_p\to\C^\times$ can be written uniquely as $\psi_{p,b}:=\psi_p(b-)$ for some $b\in\F_p$.
The sequence 
\[0\to\F_p\to\F\xrightarrow{x^p-x}\F\to0\]
is exact. Taking  Galois cohomology $H^i({\rm Gal}(\F/\F_q),-)$, we obtain an exact sequence 
\[\F_q\xrightarrow{x^p-x}\F_q\xrightarrow{\Tr_{q/p}}\F_p\to0.  \]
Therefore, 
for $z\in \F_q$, the equation $y^p-y=z$ has a solution in $\F_q$ if and only if $\Tr_{q/p}(z)=0$. This is equivalent to  $\psi_{p,b}(\Tr_{q/p}(z))=1$ for all  $b\in\F_p$. Furthermore, if $y^p-y=z$ has a solution in $\F_q$, then it has exactly $p$ solutions. 

Thus we have 
\begin{align*}
|C_a(\F_q)|&=\sum_{b\in\F_p}\sum_{x\in\F_q}\psi_{p,b}\bigl(\Tr_{q/p}(xR_a(x))\bigr)\\
&=q+\sum_{b\neq0}\sum_{x\in\F_q}\psi_{p,b}\bigl(\Tr_{q/p}(xR_a(x))\bigr). 
\end{align*}
Note that  
\[\psi_{p,b}\bigl(\Tr_{q/p}(xR_a(x))\bigr)=\psi_p\bigl(\Tr_{q/p}(bxR_a(x))\bigr)=\psi_p\bigl(\Tr_{q/p}((b^{1/2}x)R_a(b^{1/2}x))\bigr). \]
Here $b^{1/2}$ exists in $\F_p$ since $\F_p$ is perfect, and therefore the last equality holds. 
Consequently, the sum $\sum_{x\in\F_q}$ for $\psi_{p,b}$ with $b\neq0$ is equal to that for $\psi_p$. Hence, we obtain  
\[|C_a(\F_q)|=q+(p-1)\sum_{x\in\F_q}(-1)^{\Tr_{q/2}(xR_a(x))}. \]
Hence, we are reduced to determining the sum of $(-1)^{\Tr_{q/2}(xR_a(x))}$. This is done in \cite[(5.2)]{GV} and \cite[(5.3)]{GV}. 
\qed}


Now we recall the following terminology on curves. 
\begin{definition}\label{mmx}
Let $\overline{C}$ be a geometrically connected smooth projective curve of genus $g$ over a finite field $k$. Let $L/k$ be  a finite field extension with $|L|=r$. Then  the Hasse--Weil bound gives 
\[1+r-2g\sqrt{r}\leq|\overline{C}(L)|\leq 1+r+2g\sqrt{r}.\]

We say that $\overline{C}$ is  \emph{$L$-maximal} (resp.\ \emph{$L$-minimal}) if the upper bound (resp.\ the lower bound) is achieved. We say that $\overline{C}$ is \emph{$L$-extremal} if it is either $L$-maximal or $L$-minimal. \end{definition}
Let $\overline{C}_a$ denote the smooth compactification of $C_a$. We shall simply say that $C_a$ is $\F_q$-maximal, $\F_q$-minimal, or $\F_q$-extremal if $\overline{C}_a$ is $\F_q$-maximal, $\F_q$-minimal, or $\F_q$-extremal, respectively.  
\begin{lemma}\label{mmeCa}
The curve $C_a$ is $\F_q$-extremal if and only if $H^1_c(C_{a,\F},\Ql)$ has a single ${\rm Fr}_q$-eigenvalue. Moreover, the curve $C_a$ is $\F_q$-maximal (resp.\ $\F_q$-minimal) if and only if $-\sqrt{q}$ (resp.\ $\sqrt{q}$) is the unique ${\rm Fr}_q$-eigenvalue of $H^1_c(C_{a,\F},\Ql)$. 
\end{lemma}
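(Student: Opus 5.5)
The plan is to compare point counts with the Grothendieck--Lefschetz trace formula and the Hasse--Weil bound, and then use the fact that all Frobenius eigenvalues have absolute value $\sqrt{q}$. Let $g=p^e(p-1)/2$ be the genus of $\overline{C}_a$, which is given by Lemma~\ref{g_C}. By the same lemma, $\overline{C}_a\setminus C_a$ is a single $\F_q$-rational point, and the natural map $H^1_c(C_{a,\F},\Ql)\to H^1(\overline{C}_{a,\F},\Ql)$ is an isomorphism. This holds because $H^0$ of the one-point boundary injects into $H^1_c$ only modulo $H^0(\overline{C})$, and both are one-dimensional. So $H^1_c(C_{a,\F},\Ql)$ has dimension $2g$, and its ${\rm Fr}_q$-eigenvalues $\lambda_1,\dots,\lambda_{2g}$ are those of $H^1(\overline{C}_{a,\F},\Ql)$. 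By Weil's theorem each satisfies $|\lambda_i|=\sqrt q$ under any embedding $\Ql\hookrightarrow\C$.

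Next, the trace formula gives
\[
|\overline{C}_a(\F_q)| = 1+q-\sum_{i=1}^{2g}\lambda_i .
\]
Therefore $\overline{C}_a$ is $\F_q$-maximal exactly when $\sum_i\lambda_i=-2g\sqrt q$, and $\F_q$-minimal exactly when $\sum_i\lambda_i=2g\sqrt q$.

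The key step is equality in the triangle inequality. Since $|\lambda_i|=\sqrt q$ for all $i$, the sum $\sum_i\lambda_i$ has absolute value $2g\sqrt q$ if and only if all $\lambda_i$ are equal to a common $\lambda$. In that case the sum is $2g\lambda$. Suppose $\overline{C}_a$ is extremal. Then $\lambda=\mp\sqrt q$, so there is a single eigenvalue, and it equals $-\sqrt q$ in the maximal case and $\sqrt q$ in the minimal case.

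Conversely, suppose $H^1_c$ has a single eigenvalue $\lambda$. The point count $|\overline{C}_a(\F_q)|=1+q-2g\lambda$ is an integer, so $\lambda$ is rational with $|\lambda|=\sqrt q$. This requires $\sqrt q\in\mathbb{Q}$, and then $\lambda=\pm\sqrt q$. (Alternatively, one notes that the characteristic polynomial has rational coefficients, so $\lambda$ is rational.) Substituting $\lambda=\pm\sqrt q$ into the trace formula yields the minimal or maximal bound, respectively.

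The only slightly delicate point is this last rationality argument, together with checking that the sign convention matches. Both are routine. The main input is the identification $H^1_c\cong H^1$ from Lemma~\ref{g_C}.
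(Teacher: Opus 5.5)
Your proposal is correct and follows the paper's own argument: identify $H^1_c(C_{a,\F},\Ql)$ with $H^1(\overline{C}_{a,\F},\Ql)$ using Lemma~\ref{g_C}, then combine the Grothendieck--Lefschetz trace formula with the Riemann hypothesis, which says every eigenvalue has absolute value $\sqrt{q}$. The paper leaves the remaining details to the reader, and you supply them: the equality case of the triangle inequality, and the rationality of a unique eigenvalue $\lambda$, which follows because $2g\lambda$ is an integer.
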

\proof{By Lemma \ref{g_C}, the canonical map $H^1_c(C_{a,\F},\Ql)\to H^1(\overline{C}_{a,\F},\Ql)$ is an isomorphism. Then the assertion follows from the Grothendieck--Lefschetz trace formula and the Riemann hypothesis for curves over finite fields.
 \qed}

Theorem \ref{Npm} has the following consequence. 
\begin{corollary}\label{exN}
 If there exists $a\in \F_q$ such that $C_a$ is $\F_q$-extremal, then 
 \[
 V_q=V. 
 \]
  In other words, we have $V\subset \F_q$. 
\end{corollary}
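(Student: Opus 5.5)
The plan is to reduce extremality to a point count and then compare with Theorem~\ref{Npm}. Suppose $C_a$ is $\F_q$-extremal for some $a\in\F_q$. By Lemma~\ref{g_C}, $\overline{C}_a$ has genus $g=p^e(p-1)/2$ and exactly one point at infinity, which is $\F_q$-rational. Hence
\[
|C_a(\F_q)|=q\pm 2g\sqrt{q}=q\pm(p-1)p^e\sqrt{q}.
\]
Since $e\geq1$, this differs from $q$, so the additive map \eqref{uR(u)} (with $R_a$ in place of $R$) cannot be nonzero. Indeed, case (i) of Theorem~\ref{Npm} would give exactly $q$ points.

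We are therefore in case (ii) of Theorem~\ref{Npm}, which gives
\[
|C_a(\F_q)|=q\pm(p-1)\sqrt{qp^d},\qquad d=\dim_{\F_p}V_q.
\]
Comparing absolute deviations from $q$, we get $(p-1)p^e\sqrt q=(p-1)p^{d/2}\sqrt q$, and hence $d=2e$.

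It remains to identify $\dim_{\F_p}V$. Note that $E=R+R^\ast$ has the form \eqref{a} with top term $a_e\tau^e$ and bottom term $a_e^{p^{-e}}\tau^{-e}$, both nonzero. So Lemma~\ref{basic}(ii) gives $\dim_{\F_p}V=e-(-e)=2e$. Since $V_q\subset V$ and the two have the same finite dimension, we conclude $V_q=V$, that is, $V\subset\F_q$.

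The only point needing care is the bookkeeping of genus versus the exponent in Theorem~\ref{Npm}. In particular, one must use that $E$ is the same for every twist, since $a+a=0$; this is what lets $V$ and $d$ be independent of $a$. There is no real obstacle beyond checking that the sign ambiguity does not matter, which it does not, since we compare the absolute deviations $|\,|C_a(\F_q)|-q\,|$.
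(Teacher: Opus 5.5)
Your proposal is correct and follows the paper's proof. Lemma~\ref{g_C} (genus $p^e(p-1)/2$, one $\F_q$-rational point at infinity) forces $|C_a(\F_q)|=q\pm(p-1)p^e\sqrt{q}$, and matching this against Theorem~\ref{Npm} gives $d=2e$. You also spell out two steps the paper leaves implicit: ruling out case (i), and using Lemma~\ref{basic}(ii) to get $\dim_{\F_p}V=2e$, so that $d=2e$ forces $V_q=V$.
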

 \proof{
 By Lemma \ref{g_C}, the genus of $\overline{C}_a$ is equal to $p^e(p-1)/2$. Furthermore, $\overline{C}_a\setminus C_a$ consists of a single $\F_q$-rational point. Therefore, by Theorem \ref{Npm}, the extremality of $C_a$ implies 
 \[p^e(p-1)=(p-1)\sqrt{p^d}.\]
 Hence, we have $d=2e$, which is equivalent to $V_{q}=V$. The assertion follows. 
 \qed}

From now on, we assume that $V\subset \F_q$. 
Define 
\begin{gather}\label{Ndf}
\begin{aligned}
T_{R,0} &:= \{a\in\F_q\mid |C_a(\F_q)|=q \},  \\
T_{R,\max} &:= \{a\in\F_q\mid |C_a(\F_q)| = q + (p-1)p^e\sqrt{q}\},  \\
T_{R,\min} &:= \{a\in\F_q\mid |C_a(\F_q)| = q - (p-1)p^e\sqrt{q}\}. 
\end{aligned}
\end{gather}
By Theorem~\ref{Npm}, we have the decomposition 
\[\F_q=T_{R,0}\sqcup T_{R,\max}\sqcup T_{R,\min}\]
and the equalities  
\begin{gather}\label{N0pm}
\begin{aligned}
T_{R,0}&=\{a \in \F_q \mid \exists u\in V,\,\Tr_{q/p}(uR_a(u))\neq0\}, \\
T_{R,\max} \cup T_{R,\min}&=\{a \in \F_q \mid \forall u\in V,\,\Tr_{q/p}(uR_a(u))=0\}. 
\end{aligned}
\end{gather}

Since the genus of $\overline{C}_a$ is equal to $p^e(p-1)/2$, the set $T_{R,\max}$ (resp.\ $T_{R,\min}$) agrees with the set of $a\in \F_q$ such that $\overline{C}_a$ is $\F_q$-maximal (resp.\ $\F_q$-minimal). 

\medskip 

We record the following for later use. 
 \begin{lemma}\label{even}
 Let 
 \[R(x)=\sum_{i=1}^e a_i x^{p^i} \in 
 \F_q[x],\] 
 where $e\geq1$ and $a_e\neq0$. Set 
 \[
 E:=R+R^\ast,\qquad V:=\ker(E\colon \F\to \F).\]
 If $V\subset \F_q$, then $[\F_q:\F_p]$ is even. 
 \end{lemma}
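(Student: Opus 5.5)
We need to prove: if $V=\ker E\subset\F_q$ (where $E=R+R^\ast$), then $n:=[\F_q:\F_p]$ is even. The plan is to show that $\tau^n+1$ right-factors through $E$ and then compare with its adjoint, exploiting that $E$ is self-adjoint.

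First, since $V\subset\F_q=\ker(\tau^n+1)$, Lemma~\ref{basic}(iii) gives a factorization $\tau^n+1=GE$ for some $G\in\F\{\tau^{\pm1}\}$. Because the coefficients of $E$ lie in $\F_q$ and $\ker E\subset \F_q$, I expect $G\in\F_q\{\tau^{\pm1}\}$ via a Galois-invariance argument, though this will probably not be needed. Taking adjoints and using $E^\ast=E$ gives $\tau^{-n}+1=EG^\ast$. Multiplying on the left by $\tau^n$ yields $1+\tau^n=\tau^nEG^\ast$.

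Next I compute kernels. We have $\ker(\tau^n+1)=\F_q$, and from the factorization, $\ker(\tau^nEG^\ast)=\ker(EG^\ast)$, which contains $\ker G^\ast$. A more robust route than kernel-chasing alone is a counting argument. Recall that $V$ carries the nondegenerate symplectic form $\omega$ of Lemma~\ref{b5}(iii), so $\dim_{\F_p}V=2e$ is even; this also follows from Lemma~\ref{basic}(ii), since $E$ ranges from $\tau^{-e}$ to $\tau^{e}$. The surjection $E\colon\F_q\to E(\F_q)$ has kernel $V$, and by Lemma~\ref{kfin}(ii) applied to $F=E$ we get $E(\F_q)=E^\ast(\F_q)=V^\perp_{\Tr}$, the orthogonal complement of $V$ under the trace form $(a,b)\mapsto\Tr_{q/p}(ab)$. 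Counting is consistent with this but does not yet yield parity, so a further ingredient is needed.

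The parity should come from the symplectic structure together with the trace form. Since $E(\F_q)$ is the trace-orthogonal complement of $V$, the question becomes whether $V\subset E(\F_q)$, i.e.\ whether $V$ is trace-isotropic. For $u,v\in V$, Lemma~\ref{b4} gives $uE(x)\sim xE(u)$, which suggests studying the quadratic-type form $u\mapsto\Tr_{q/p}(uR(u))$ on $V$. This map is additive by Lemma~\ref{plin}, and the associated bilinear form should be related to $\omega$.

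The cleanest route may be the one in Lemma~\ref{even2}. Since the paper says Lemma~\ref{even2} is proved "similarly", the intended argument is presumably a factorization argument: write $\tau^n+1=GE$ with $G\in\F_q\{\tau^{\pm1}\}$. Then, as a Frobenius-twisted identity, compare $\tau^n+1$ with $\tau^n(\tau^{-n}+1)=\tau^n E G^\ast$. This gives $GE=\tau^nEG^\ast$, and comparing leading and trailing degrees gives $\deg G=n-e$ and lowest degree $-e$ for $G$. Hence $G^\ast$ has degrees in $[e-n,e]$, and $\tau^nEG^\ast$ has lowest degree $n-e+(e-n)=0$, which is consistent. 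So degrees alone give no parity.

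I therefore expect the real input to be a Galois-descent argument. The Frobenius $T\colon u\mapsto u^p$ is an $\F_p$-linear map of $\F_q$ of order $n$, and $\omega$ on $V$ satisfies the twisted invariance $\omega(u^p,v^p)=\omega(u,v)$. The determinant of Frobenius on $\F_q$ as an $\F_p$-space is $(-1)^{n-1}$, which is vacuous in characteristic $2$. So instead I would look for a contradiction with $n$ odd directly: if $n$ is odd, then $\Tr_{q/p}$ on $V$ gives an $\F_q$-form under which $E(\F_q)\supset V$ fails.

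This last step is the main obstacle, and I would try first to show that $\Tr_{q/p}(uv)=\omega(u,v)$-type identities force $n$ even, by evaluating $g(u,v)$ from Lemma~\ref{b3} for $u,v\in\F_q$ as a trace expression.
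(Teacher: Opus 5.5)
Your proposal does not reach a proof. The decisive step is left open as ``the main obstacle'', and the ingredients you assemble cannot close it.

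The factorization $\tau^n+1=GE$, the self-adjointness of $E$, the identification of $E(\F_q)$ with the trace-orthogonal of $V$ (Lemma~\ref{kfin}(ii)), the form $\omega$, and the additivity of $u\mapsto\Tr_{q/p}(uR(u))$ on $V_q$ all hold, up to signs, for odd $p$ as well. The statement, however, fails for odd $p$. Take $p=3$ and $R(x)=x^{27}+x^{9}-x^{3}$. Then $\tau^{3}(R+R^\ast)=g(\tau)$ with $g(x)=x^6+x^5-x^4-x^2+x+1=(x^3-x-1)(x^3+x^2-1)$, a product of two distinct irreducible factors of $x^{13}-1$ over $\F_3$. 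Hence $V\subset\F_{3^{13}}$ although $13$ is odd. So a genuinely characteristic-$2$ input is required, and your proposal never identifies one.

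Your tentative final step also cannot work as stated. The inclusion $V\subset E(\F_q)$ can fail even when $V\subset\F_q$ and the degree is even: for $p=2$, $R(x)=x^2$, $q=4$ one has $V=\F_4$ but $E(\F_4)=0$. So the failure of $V\subset E(\F_q)$ by itself yields no contradiction.

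The missing idea is a vanishing that characteristic $2$ forces. The paper's route uses the constant term of $E$:
\begin{itemize}
\item Since $V\subset\F_q$, a maximal totally isotropic $W\subset V$ lies in $\F_q$. Corollary~\ref{dec2} then gives $E=F^\ast F$ with $F=\sum_i b_i\tau^i\in\F_q\{\tau\}$.
\item The $\tau^0$-coefficient of $E$ is $\sum_i(b_i^2)^{p^{-i}}=F^\ast(1)^2$. It is $0$ because $R$ has no linear term.
\item Thus $\tau+1$ right-divides $F^\ast$, say $F^\ast=G(\tau+1)$ with $G\in\F_q\{\tau^{\pm1}\}$, and
\[
E=G(\tau+1)(\tau^{-1}+1)G^\ast=G(\tau+\tau^{-1})G^\ast .
\]
\item Lemma~\ref{b2}(ii) then gives $\F_{p^2}=\ker(\tau+\tau^{-1})\subset\F_q$.
\end{itemize}

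A second route, closer to your trace-form instinct, works on all of $\F_q$ rather than on $V$:
\begin{itemize}
\item The $\F_p$-bilinear form $B(x,y):=\Tr_{q/p}(xE(y))$ on $\F_q$ is symmetric by Lemma~\ref{b4}.
\item It is alternating because $xE(x)=xR(x)+xR^\ast(x)\sim 2xR(x)=0$. This is exactly where characteristic $2$ enters; for odd $p$ one only gets $B(x,x)=2\Tr_{q/p}(xR(x))$.
\item Its radical is $V\cap\F_q$, by nondegeneracy of the trace pairing. Hence $[\F_q:\F_p]-\dim_{\F_p}(V\cap\F_q)$ is even.
\item Under $V\subset\F_q$ this gives $[\F_q:\F_p]\equiv\dim_{\F_p}V=2e\equiv 0\pmod 2$.
\end{itemize}
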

 \proof{Let $\omega$ be the symplectic pairing on $V$ defined in Lemma \ref{b5}(iii) applied to $F=E$. By choosing a maximal totally isotropic subspace with respect to $\omega$, we obtain a decomposition $E=F^\ast F$ as in Corollary \ref{dec2}. Furthermore, since the constant term of $E$ is zero, the same corollary implies that $F^\ast(1)=0$. Therefore, we can write $F^\ast=G(\tau+1)$ for some $G\in \F_q\{\tau^{\pm1}\}$. Thus we have 
 $E=G(\tau+\tau^{-1})G^\ast$. Since  $V\subset \F_q$, Lemma \ref{b2}(iii) implies that $\F_{p^2}\subset\F_q$, as desired. 
 \qed}

 \subsection{Explicit Descriptions of $T_{R,\max}$ and $T_{R,\min}$}\label{relNpm}
 
 In this subsection, we describe $T_{R,\max}$ and $ T_{R,\min}$ by using our results in Section~\ref{CFE}. 
 
Let 
\[R(x)=\sum_{i=1}^e a_i x^{p^i} \in 
 \F_q[x],\]
 where $e\geq1$ and $a_e\neq0$. Assume that the kernel of $E:=R+R^\ast$ is contained in $\F_q$. Under this assumption, there always exists $F$ satisfying $R=R_F$. 
 \begin{lemma}\label{F always exists}
There exists an element $F$ as in \eqref{Fdef} satisfying the following properties: 
\begin{enumerate}
    \item The element $F$ satisfies conditions \textup{(i)--(iv)} stated after \eqref{Fdef}. 
    \item We have $R=R_F$. 
\end{enumerate}
 \end{lemma}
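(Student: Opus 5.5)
The plan is to build $F$ from a maximal totally isotropic subspace of $V$ via Corollary~\ref{dec2}, in the same way as the proof of $(4)\Rightarrow(1)$ in Lemma~\ref{i to iv to iii}.

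\emph{Choosing $W$ and factoring $E$.} By Lemma~\ref{b5}(iii) applied to $E=R+R^\ast$, which is self-adjoint, $V$ carries a symplectic form $\omega$. Choose a maximal totally isotropic subspace $W\subset V$ and put $f_W(x)=\prod_{v\in W}(x-v)$. Since $W\subset V\subset\F_q$, we have $f_W\in\F_q[x]$, and $f_W$ is separable with $\ker f_W=W$. The top coefficient of $E$ is $a_e\neq0$ with $e\geq1$, so Corollary~\ref{dec2} gives $a\in\F_q^\times$ with $E=f_W^\ast a f_W$. Set $F:=a^{1/2}f_W$, which is defined over $\F_q$ because $\F_q$ is perfect. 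Then $E=F^\ast F$.

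\emph{Checking conditions (i)--(iv).} We have $\dim W=\frac12\dim V=e$ by Lemma~\ref{basic}(ii), so $F=\sum_{i=0}^e b_i\tau^i$ with $b_0b_e\neq0$; this is (i). The constant term of $E$ is $0$, because $R$ has no linear term. The last assertion of Corollary~\ref{dec2} then gives $F^\ast(1)^2=0$, which is (ii). Next, $\ker F=W\subset\F_q$ is (iii)$'$, which is equivalent to (iii) by Lemma~\ref{kfin}(i). Finally, $\ker F^\ast F=\ker E=V\subset\F_q$, which is (iv).

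\emph{Showing $R=R_F$.} The paper notes $F^\ast F=R_F+R_F^\ast$, where $R_F=\sum_{i\ge1}a_i'\tau^i$ collects exactly the positive-degree part of $F^\ast F$. On the other hand, $E=R+R^\ast$ with $R=\sum_{i\geq1}a_i\tau^i$ has positive-degree part exactly $R$. The identity $E=F^\ast F$ therefore forces $R=R_F$ by comparing coefficients of $\tau^i$ for $i\geq1$.

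Every step is a direct application of earlier results. The only point needing care is confirming that the normalization of $F$ over $\F_q$ (the square root $a^{1/2}$) preserves all hypotheses; this relies on $p_0=2$ and on $\F_q$ being perfect.
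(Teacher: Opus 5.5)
Your argument is correct, and it takes a more direct route than the paper's. The paper proves this lemma in one line. It notes that $V\subset\F_q$ is condition $(a)$ of Corollary~\ref{VrFq}, so condition $(1)$ of Theorem~\ref{detR} holds. Condition (iv) and the equality $R=R_F$ are then left implicit: they follow from $\ker F^\ast F=V$, and from $\alpha_F(t)=0$, obtained by comparing linear terms in $R=R_{F,t}$. That route runs through the chain $(2)\Rightarrow(3)\Rightarrow(4)\Rightarrow(1)$. It therefore uses Lemmas~\ref{uuu} and~\ref{tis} to find a maximal totally isotropic $W$ on which $u\mapsto\Tr_{q/p}(uR(u))$ vanishes, and Proposition~\ref{Ima} to produce $t$.

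Your observation is that no trace condition is needed. $R$ has no linear term, and $R=R_F$ only concerns the coefficients of $\tau^i$ with $i\geq1$. So any maximal totally isotropic $W\subset V$ works, and the construction $F=a^{1/2}f_W$ from Corollary~\ref{dec2} (the same one used in $(4)\Rightarrow(1)$) finishes the proof.

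What the longer route buys is a slightly stronger conclusion. The paper's $F$ also satisfies $0\in\alpha_F(\F_q)$, i.e.\ $C_R=D_{F,t}$ for some $t\in\F_q$. By Proposition~\ref{Ima}, this fails for your $F$ precisely when $\Tr_{q/p}(uR(u))\neq0$ for some $u\in W$. That extra property is not part of the lemma. The later uses of the lemma, in Theorems~\ref{mainthm} and~\ref{recipe for ex curve}, only need $R=R_F$ together with (i)--(iv), so your proof suffices, and it has the advantage of making the implicit steps explicit.
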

\begin{proof}
By assumption, the condition $(a)$ of Corollary~\ref{VrFq}
is satisfied. The claim thus follows since condition~$(1)$ of Theorem~\ref{detR} holds. 
\end{proof}
 
From now on, we choose and fix $F$ as in the above lemma. 
We use the notation introduced in Subsection~\ref{set}. 
 Under the assumption $\ker E\subset \F_q$, the sets $T_{R,\max}$ and $T_{R,\min}$ in \eqref{Ndf} correspond to the $\F_q$-maximal and $\F_q$-minimal curves in the family $\{C_a\}_{a\in\F_q}$. 
 
 Let $Q_q$ be as in Definition \ref{Qrx}. 

\begin{lemma}\label{ghom}
The restriction of $Q_q$ to $W_F^\ast$ 
\[Q_q|_{W_F^\ast}\colon W_F^\ast\to\Ql^\times\]
is a group homomorphism. 
\end{lemma}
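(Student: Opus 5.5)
By definition $Q_q(u+v)=Q_q(u)Q_q(v)B_{Q_q}(u,v)=Q_q(u)Q_q(v)\psi_q(uv)$ for all $u,v\in\F_q$ (Definition~\ref{Qrx}(iv)). So the plan is to show that $\psi_q(uv)=1$, i.e. $\Tr_{q/2}(uv)=0$, for all $u,v\in W_F^\ast$. Condition (iii) gives $W_F^\ast\subset\F_q$, so $Q_q$ is defined there, and the claim then follows at once.

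The key step is a trace orthogonality. Write $F^\ast=\sum_{i=0}^e b_i^{p^{-i}}\tau^{-i}$. Put $G:=\tau^eF^\ast$, an honest separable $\F_p$-linearized polynomial with coefficients in $\F_q$ and kernel $W_F^\ast$. The constant term of $G$ is $b_e^{p^{-e}}\cdot{}^{p^e}$, which is nonzero, and its leading coefficient is $b_0^{p^e}\neq0$. Since $F^\ast(1)=0$, we have $1\in W_F^\ast$.

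The bilinear form $(u,v)\mapsto\Tr_{q/2}(uv)$ on $W_F^\ast$ does not obviously vanish, so I would instead use the Heisenberg-type structure. By Lemma~\ref{b4}, for $u,v\in\F_q$ we have $uF^\ast(v)\sim vF(u)$, hence
\[
\Tr_{q/p}\bigl(uF^\ast(v)\bigr)=\Tr_{q/p}\bigl(vF(u)\bigr).
\]
This alone does not give $\Tr(uv)=0$. A better route is to note that $E_F=F^\ast F$ has $a_0=0$ and to use $W_F^\ast=F(V_F)$ from \eqref{WVV}. Write $u=F(x)$ and $v=F(y)$ with $x,y\in V_F\subset\F_{q^2}$ (Lemma~\ref{even2}). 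Then compute
\[
\Tr_{q/p}\bigl(F(x)F(y)\bigr)=\Tr_{q/p}\bigl(x\,F^\ast(F(y))\bigr)
\]
via Lemma~\ref{b4} applied to $F$ with $x$ replaced by $F(x)$. This needs $x\in\F_q$, which may fail, so instead I would work over $\F_{q^2}$ and take $\Tr_{q^2/p}$. Since $F^\ast F(y)=0$, this gives $\Tr_{q^2/p}(F(x)F(y))=0$. But $F(x)F(y)\in\F_q$, so $\Tr_{q^2/p}(F(x)F(y))=\Tr_{q/p}(2F(x)F(y))=0$ is vacuous. This step is therefore the main obstacle.

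To get past it, I would refine the identity at the level of Witt vectors, using $Q_q$ directly rather than $\psi_q$. Proposition~\ref{FEV} and the argument of Lemma~\ref{F0} show that the pullback $F^\ast\mathcal{L}_{\xi_p}(x,0)$ is $\mathcal{L}_{\psi_p}(xR_{F,0}(x))$, and the projection formula gives $F_\ast\Ql\cong\bigoplus_{v\in W_F^\ast}\mathcal{L}_{\psi_p}(vx)$. Hence
\[
\mathcal{L}_{\xi_p}(x,0)\otimes\mathcal{L}_{\psi_p}(vx)\cong\mathcal{L}_{\xi_p}(x,vx)\cong\mathcal{L}_{\xi_p}(x+v,0)\otimes\mathcal{L}_{\xi_p}(v,v^2).
\]
The direct sum decomposition of $F_\ast\mathcal{L}_{\psi_p}(xR_{F,0})$ is stable under translation by $w\in W_F^\ast$, because $x\mapsto x+w$ pulls $\mathcal{L}_{\xi_p}(x,0)$ back to $\mathcal{L}_{\xi_p}(x,0)\otimes\mathcal{L}_{\psi_p}(wx)\otimes Q_q(w)$. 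Comparing Frobenius traces on the summand indexed by $v+w$ via Theorem~\ref{FEV1}(iii) yields $Q_q(v+w)=Q_q(v)Q_q(w)$. If this is too heavy, the fallback is an elementary computation: show $\Tr_{q/2}(uv)=0$ for $u,v\in W_F^\ast$ by writing $W_F^\ast$-elements as $\tau$-twists and using the adjoint identity together with $F^\ast(1)=0$.
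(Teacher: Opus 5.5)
\textbf{Gap: condition (iv) is never used, and without it the lemma is false.} Your reduction to $\Tr_{q/2}(uv)=0$ for $u,v\in W_F^\ast$ is correct and is the paper's. The trouble is that you work only with conditions (i)--(iii). But condition (iv), $V_F\subset\F_q$, is in force here: in Subsection~\ref{relNpm} the element $F$ is fixed via Lemma~\ref{F always exists}, so it satisfies (i)--(iv).

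The lemma genuinely fails under (i)--(iii) alone. Take $p=q=2$ and $F=\tau+1$. Then $F^\ast(x)=x+x^{1/2}$, so $F^\ast(1)=0$ and $W_F^\ast=\F_2\subset\F_q$. However, $Q_2(1)^2=\xi_2(1,0)^2=-1\neq 1=Q_2(1+1)$; here $V_F=\ker(\tau+\tau^{-1})=\F_4\not\subset\F_2$. So no argument built on (i)--(iii) can succeed, and neither of your replacement routes can work.

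Your cohomological route is in fact circular as written. Translating the target $\A^1$ by $w$ gives
\[
\mathcal{L}_{\xi_p}(x+w,v(x+w))\cong\mathcal{L}_{\xi_p}(x,(v+w)x)\otimes\mathcal{L}_{\xi_p}(w,vw),
\]
where the second factor is geometrically constant with Frobenius value $\xi_q(w,vw)=Q_q(w)\psi_q(vw)$. This holds for every $w\in\F_q$. Combined with Theorem~\ref{FEV1}(iii), it only reproduces $Q_q(v+w)=Q_q(v)Q_q(w)\psi_q(vw)$, the identity you started from. To extract $\psi_q(vw)=1$ you would need $F_\ast\mathcal{L}_{\psi_p}(xR_{F,0}(x))$ itself to be invariant under $T_w$, up to a twist independent of $v$. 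That invariance comes from an $\F_q$-rational translation of the source by some $z\in V_F$ with $F(z)=w$, which is again condition (iv). Your ``elementary fallback'' is not an argument.

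\textbf{The fix.} Once (iv) is used, the step you abandoned goes through at once, and it is exactly the paper's proof. By \eqref{WVV} and (iv), write $u=F(z)$ with $z\in V_F\subset\F_q$. Lemma~\ref{b4} gives $vF(z)-zF^\ast(v)=c^p-c$, where $c$ is obtained by evaluating a polynomial with coefficients in $\F_q$ at $(v,z)$, so $c\in\F_q$. Since $F^\ast(v)=0$, this yields $\Tr_{q/p}(uv)=0$, hence $\Tr_{q/2}(uv)=0$. There is no need to write $v$ as $F(y)$ too, nor to pass to $\F_{q^2}$.
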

\proof{
It suffices to show that $B_{Q_q}(x,y)=1$ for all $x,y\in W_F^\ast$. Since  
\[B_{Q_q}(x,y)=\psi_q(xy)=(-1)^{\Tr_{q/2}(xy)},\]
it suffices to prove that $\Tr_{q/2}(xy)=0$. 
From \eqref{WVV}, we can write $x=F(z)$ for some $z\in V_{F}$. Then, by Lemma \ref{b4}, we have 
\[xy=F(z) y\sim zF^\ast(y)=0.\]
The assertion follows. 
\qed}

We have the following vanishing result for the exponential sums.
\begin{corollary}
Assume that $Q_q|_{W^\ast_F}$ is nontrivial. Then, for $t \in W_F^\ast$, we have
\[
\sum_{x \in \F_q} \psi_q(x R_{F,t}(x))=0. 
\]
\end{corollary}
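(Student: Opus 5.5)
By Corollary~\ref{gl}, the sum equals
\[
-(-1-\sqrt{-1})^{[\F_q:\F_2]}\sum_{v\in W_F^\ast}Q_q(t+v)^{-1}.
\]
So it suffices to show that $\sum_{v\in W_F^\ast}Q_q(t+v)^{-1}=0$ whenever $t\in W_F^\ast$ and $Q_q|_{W_F^\ast}$ is nontrivial.

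Since $t\in W_F^\ast$, the map $v\mapsto t+v$ is a bijection of $W_F^\ast$ onto itself. Reindexing gives
\[
\sum_{v\in W_F^\ast}Q_q(t+v)^{-1}=\sum_{w\in W_F^\ast}Q_q(w)^{-1}.
\]
By Lemma~\ref{ghom}, $Q_q|_{W_F^\ast}$ is a group homomorphism $W_F^\ast\to\Ql^\times$, so $w\mapsto Q_q(w)^{-1}$ is a character of the finite abelian group $W_F^\ast$. It is nontrivial because $Q_q|_{W_F^\ast}$ is. Orthogonality of characters then shows the sum vanishes: choose $w_0$ with $Q_q(w_0)\neq1$, and translating by $w_0$ multiplies the sum by $Q_q(w_0)^{-1}\neq1$.

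There is essentially no obstacle here. The one thing to check is that the hypotheses of Theorem~\ref{FEV1}, namely conditions (i)--(iii), are in force, so that Corollary~\ref{gl} applies. They are, because $F$ was fixed via Lemma~\ref{F always exists}.
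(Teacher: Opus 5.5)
Your proof is correct and is exactly the paper's argument: combine Corollary~\ref{gl} with Lemma~\ref{ghom}, reindex the sum over $W_F^\ast$ using $t\in W_F^\ast$, and use that a nontrivial character of a finite abelian group sums to zero. One small addition: Lemma~\ref{ghom} also relies on condition (iv), $V_F\subset\F_q$, which likewise holds for the $F$ fixed via Lemma~\ref{F always exists}.
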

\begin{proof}
The assertion follows from Corollary \ref{gl} and Lemma \ref{ghom}. 
\end{proof}
For a finite abelian group $G$, we set $G^\vee:=\mathop{\rm Hom}(G,\Ql^\times)$. Consider the composite map 
\[\F_q\xrightarrow{x\mapsto\psi_q(x-)}
\F_q^\vee\to (W_F^\ast)^\vee,\]
where the second arrow is induced by the inclusion $W_F^\ast\hookrightarrow \F_q$. This map is surjective. 
\begin{lemma}\label{kerW_F}
The kernel of the surjection $\F_q\to (W^\ast_F)^\vee$ is equal to $F(\F_q)$. 
\end{lemma}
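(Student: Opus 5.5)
The plan is to reduce the statement to Lemma~\ref{kfin}(ii), applied with the roles of $F$ and $F^\ast$ swapped, and then pass from the additive trace pairing to the character $\psi_q$.

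First we describe the kernel concretely. An element $a\in\F_q$ lies in the kernel of the composite $\F_q\to(W_F^\ast)^\vee$ if and only if $\psi_q(av)=1$ for all $v\in W_F^\ast$, that is, $\Tr_{q/2}(av)=0$ for all $v\in W_F^\ast$.

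Next we move from $\F_2$ to $\F_p$. The set $\{av\mid v\in W_F^\ast\}$ is an $\F_p$-subspace of $\F_q$, because $W_F^\ast$ is an $\F_p$-subspace. Now $\Tr_{q/2}=\Tr_{p/2}\circ\Tr_{q/p}$, and the form $\Tr_{p/2}(cx)$ on $\F_p$ is nondegenerate. Hence $\Tr_{q/2}$ vanishes on $a W_F^\ast$ if and only if $\Tr_{q/p}$ does. Indeed, if $\Tr_{q/p}(av)=s\neq0$, then replacing $v$ by $cv$ with $c\in\F_p$ gives $\Tr_{q/2}(acv)=\Tr_{p/2}(cs)$, and this is nonzero for a suitable $c$. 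So the kernel equals the kernel of
\[
\F_q\to (W_F^\ast\cap\F_q)^\vee,\qquad a\mapsto \Tr_{q/p}(a-)|_{W_F^\ast},
\]
where we use $W_F^\ast\subset\F_q$ by condition (iii).

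We then apply Lemma~\ref{kfin}(ii) with $k=\F_q$ and with $F^\ast$ in place of $F$. Since $(F^\ast)^\ast=F$, the kernel of $F^\ast$ is $W_F^\ast$, and the lemma identifies the kernel above with $(F^\ast)^\ast(\F_q)=F(\F_q)$. The lemma is stated for an arbitrary nonzero element of $k\{\tau^{\pm1}\}$, so it applies to $F^\ast$ even though $F^\ast$ contains negative powers of $\tau$. This finishes the argument.

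Alternatively, one can check directly that $F(\F_q)$ lies in the kernel: for $x\in\F_q$ and $v\in W_F^\ast$, Lemma~\ref{b4} gives $vF(x)\sim xF^\ast(v)=0$, so $\Tr_{q/p}(vF(x))=0$. Equality would then follow by counting. The kernel has order $q/|W_F^\ast|$, by the surjectivity stated before the lemma. On the other hand, $|F(\F_q)|=q/|W_F\cap\F_q|$, and $|W_F\cap\F_q|=|W_F^\ast|$ by Lemma~\ref{kfin}(i).

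The only mildly delicate point is the passage between $\Tr_{q/2}$ and $\Tr_{q/p}$, since the map in the statement is defined using the $\F_2$-valued character $\psi_q$; the $\F_p$-scaling argument above handles it.
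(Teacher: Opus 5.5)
Your proposal is correct and is essentially the paper's proof, which says only that the argument is analogous to Lemma~\ref{kfin}(ii). Your counting alternative (inclusion via Lemma~\ref{b4}, then $|F(\F_q)|=q/|W_F\cap\F_q|=q/|W_F^\ast|$ via Lemma~\ref{kfin}(i)) is exactly that argument; your main route, applying Lemma~\ref{kfin}(ii) verbatim to $F^\ast$ (legitimate since that lemma is stated for any nonzero element of $k\{\tau^{\pm1}\}$ and $F^{\ast\ast}=F$), is the same proof cited formally, and your $\F_p$-scaling step correctly reconciles the $\Ql^\times$-valued dual defined via $\psi_q$ with the $\F_p$-valued dual used there.
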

\proof{
The proof is analogous to that of Lemma \ref{kfin}(ii). 
\qed}

We define a subset $S_{F}$ of $\F_q$ by  
\begin{equation}\label{Spmdf}
 S_{F}:=\{t\in\F_q\mid \forall v\in W_F^\ast,\, Q_q(v)=\psi_q(tv)\}. 
\end{equation}
We set $S_0:=\F_q\setminus S_F$. 
Furthermore, we define 
\begin{gather}\label{s+-}
\begin{aligned}
&S_{F,-}:=\{t\in S_F\mid Q_q(t)=-(\sqrt{-1})^{[\F_q:\F_2]/2}\},\\
&S_{F,+}:=\{t\in S_F\mid Q_q(t)=(\sqrt{-1})^{[\F_q:\F_2]/2}\}. 
\end{aligned}
\end{gather}
Note that $[\F_q:\F_2]$ is even by Lemma \ref{even}. 

We also consider the map $\alpha_F$ defined in Definition \ref{idf}. 
\begin{lemma}\label{ptor}
The map $\alpha_F$ is a pseudo-torsor over $W_F^\ast$. In other words, every fiber of $\alpha_F$ is either empty or of the form $a+W_F^\ast$. 
\end{lemma}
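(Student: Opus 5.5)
We must show: for $t,t'\in\F_q$, $\alpha_F(t)=\alpha_F(t')$ if and only if $t'-t\in W_F^\ast$. Since $\alpha_F(t)=\gamma_F+F^\ast(t)^2$, the constant $\gamma_F$ cancels and the claim reduces to a statement about $F^\ast$ alone.

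First, squaring is injective on $\F_q$ in characteristic $2$, so $\alpha_F(t)=\alpha_F(t')$ is equivalent to $F^\ast(t)^2=F^\ast(t')^2$, i.e.\ $F^\ast(t)=F^\ast(t')$. Second, $F^\ast$ is additive as an $\F_p$-linear map $\F\to\F$. Hence this is equivalent to $F^\ast(t'-t)=0$, i.e.\ $t'-t\in\ker F^\ast=W_F^\ast$.

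It remains to check that every element of the coset $t+W_F^\ast$ lies in the domain $\F_q$. This is exactly condition (iii), $W_F^\ast\subset\F_q$, which holds for the fixed $F$ by Lemma~\ref{F always exists}. Consequently the fiber $\alpha_F^{-1}(\alpha_F(t))$ equals the full coset $t+W_F^\ast$, and every nonempty fiber has this form. The statement writes the fiber as "$a+W_F^\ast$", where $a$ is a point of the fiber, not the value of $\alpha_F$ there.

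There is no real obstacle. The only points needing care are using injectivity of Frobenius on $\F_q$ and invoking condition (iii) so that the whole coset lies inside $\F_q$ rather than merely inside $\F$.
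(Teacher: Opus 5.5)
Your proof is correct. The paper states this lemma without proof, and the evident argument is exactly yours: $\gamma_F$ cancels, squaring is injective in characteristic $2$, and $F^\ast$ is additive with kernel $W_F^\ast$. Condition (iii), $W_F^\ast\subset\F_q$, is what guarantees that the whole coset $t+W_F^\ast$ lies in the domain $\F_q$.
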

The following is the main theorem of this subsection. 
\begin{theorem}\label{mainthm}
The following statements hold: 
\begin{enumerate}
\item We have a decomposition 
\[\F_q
=S_0\sqcup S_{F,-}\sqcup S_{F,+}. \]
\item We have 
\[\alpha_F(S_0)\subset T_{R,0}, \quad \alpha_F(S_{F,-})=T_{R,\max}, \quad \alpha_F(S_{F,+})=T_{R,\min}.\]
\end{enumerate} 
\end{theorem}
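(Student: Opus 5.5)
We combine Theorem~\ref{FEV1} with Lemma~\ref{mmeCa}. Since $R=R_F$, we have $R_{\alpha_F(t)}=R_{F,t}$ and $C_{\alpha_F(t)}=D_{F,t}$. Hence for $t\in\F_q$ the $\mathrm{Fr}_q$-eigenvalues on $H^1_c(C_{\alpha_F(t),\F},\Ql)$ are
\[
\tau_v=Q_q(t+v)^{-1}(-1-\sqrt{-1})^{n},\qquad v\in W_F^\ast,
\]
where $n:=[\F_q:\F_2]$, which is even by Lemma~\ref{even}. We first compute $(-1-\sqrt{-1})^{n}=\bigl((-1-\sqrt{-1})^2\bigr)^{n/2}=(2\sqrt{-1})^{n/2}=\sqrt{q}\,(\sqrt{-1})^{n/2}$, where $\sqrt q:=2^{n/2}$. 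Next, using $Q_q(t+v)=Q_q(t)Q_q(v)\psi_q(tv)$, we get
\[
\tau_v=\sqrt q\,(\sqrt{-1})^{n/2}\,Q_q(t)^{-1}\,\bigl(Q_q(v)\psi_q(tv)\bigr)^{-1}.
\]

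By Lemma~\ref{mmeCa}, $C_{\alpha_F(t)}$ is extremal if and only if all $\tau_v$ coincide, that is, if and only if $v\mapsto Q_q(v)\psi_q(tv)$ is constant on $W_F^\ast$. Evaluating at $v=0$ shows the constant must be $1$. Since $\psi_q(tv)=\pm1=\psi_q(tv)^{-1}$, this condition is exactly $Q_q(v)=\psi_q(tv)$ for all $v$, i.e.\ $t\in S_F$.

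For $t\in S_F$ the common eigenvalue is $\tau_0=\sqrt q(\sqrt{-1})^{n/2}Q_q(t)^{-1}$. It equals $-\sqrt q$ if and only if $Q_q(t)=-(\sqrt{-1})^{n/2}$, and equals $\sqrt q$ if and only if $Q_q(t)=(\sqrt{-1})^{n/2}$. It remains to see that one of these always holds. By the Riemann hypothesis the eigenvalue $\tau_0$ has absolute value $\sqrt q$, and since $Q_q(t)$ is a $4$th root of unity, the eigenvalue lies in $\sqrt q\,\mu_4$. Extremality forces it to be real: the trace of $\mathrm{Fr}_q$ is a rational integer, so with a single eigenvalue $\lambda$ of multiplicity $2g$ we have $2g\lambda\in\Z$, and $\pm\sqrt{-1}\sqrt q$ is not rational. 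Thus $Q_q(t)=\pm(\sqrt{-1})^{n/2}$, and $S_F=S_{F,-}\sqcup S_{F,+}$, which gives (i). The same identification gives $\alpha_F(S_0)\subset T_{R,0}$, $\alpha_F(S_{F,-})\subset T_{R,\max}$ and $\alpha_F(S_{F,+})\subset T_{R,\min}$, using that a curve in the family is either extremal or has exactly $q$ points over $\F_q$ (Theorem~\ref{Npm}).

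The main obstacle is the reverse inclusions, which require surjectivity of $\alpha_F$ onto $T_{R,\max}\cup T_{R,\min}$. Here $W_F\subset V\subset\F_q$, so $W_{F,q}=W_F$. If $a\in T_{R,\max}\cup T_{R,\min}$, then by \eqref{N0pm} we have $\Tr_{q/p}(uR_a(u))=0$ for all $u\in V$, in particular for all $u\in W_F$. Since $R_a(u)=R_F(u)+au$, Proposition~\ref{Ima} gives $a\in\alpha_F(\F_q)$; write $a=\alpha_F(t)$. By the three inclusions above, $t$ cannot lie in $S_0$, and it lies in $S_{F,-}$ exactly when $a$ is maximal. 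This yields the equalities in (ii). Lemma~\ref{ptor} is not needed for this argument.
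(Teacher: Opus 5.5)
Your proposal is correct and follows essentially the same route as the paper. Both arguments identify $C_{\alpha_F(t)}=D_{F,t}$, read off the eigenvalues from Theorem~\ref{FEV1}, and use $Q_q(t+v)Q_q(t)^{-1}=\psi_q(tv)Q_q(v)$ to show that a single eigenvalue occurs exactly when $t\in S_F$; the reverse inclusions then come from Proposition~\ref{Ima} together with \eqref{N0pm}. Your explicit argument that the common eigenvalue must be real, hence $Q_q(t)=\pm(\sqrt{-1})^{[\F_q:\F_2]/2}$, just spells out what the paper leaves implicit in Lemma~\ref{mmeCa} when it says that the argument ``also proves (i)''.
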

\proof{Let $t\in\F_q$. By Theorem \ref{FEV1}, the set of the ${\rm Fr}_q$-eigenvalues on $H^1_c(D_{F,t,\F},\Ql)$ is equal to 
\[\{Q_q(t+v)^{-1}(\sqrt{-1})^{[\F_q:\F_2]/2}2^{[\F_q:\F_2]/2}\mid v\in W_F^\ast\}.\]
This set consists of a single value if and only if $Q_q(t)=Q_q(t+v)$ for any $v\in W_F^\ast$. We compute 
\begin{align*}
Q_q(t+v)Q_q(t)^{-1}=B_{Q_q}(t,v)Q_q(v)=\psi_q(tv)Q_q(v). 
\end{align*}
This is identically equal to $1$ if and only if $t\in S_F$. By Lemma \ref{mmeCa}, it follows that 
\[\alpha_F(S_0)\subset T_{R,0}, \quad \alpha_F(S_{F})\subset T_{R,\max}\cup T_{R,\min}.\]

Now assume $t\in S_F$. In this case, the curve $D_{F,t}$ is $\F_q$-extremal. It is $\F_q$-maximal (resp.\ $\F_q$-minimal) if and only if 
\[
Q_q(t)^{-1}(\sqrt{-1})^{[\F_q:\F_2]/2}=-1\qquad\text{(resp.\ }=1\text{)}. 
\]
This shows the $\alpha_F(S_{F,-})\subset T_{R,\max}$ and $\alpha_F(S_{F,+})\subset T_{R,\min}$. It also proves~(i).

It remains to prove
\[
  \alpha_F(S_{F,-}) = T_{R,\max}, \quad
  \alpha_F(S_{F,+}) = T_{R,\min}.
\]
Since we already know that $\F_q=T_{R,0}\sqcup T_{R,\max}\sqcup T_{R,\min}$ and
\[
\alpha_F(S_0)\subset T_{R,0},\quad 
  \alpha_F(S_{F,-}) \subset T_{R,\max}, \quad
  \alpha_F(S_{F,+}) \subset T_{R,\min}, 
\]
it suffices to show $T_{R,\max} \cup T_{R,\min}\subset \alpha_F(\F_q)$. 
This follows from Proposition \ref{Ima} together with  \eqref{N0pm}. 
\qed
}

\section{Examples of Extremal Curves and Periods}
\label{constrmaxcurve}
In this section, we provide various constructions of extremal van der Geer--van der Vlugt curves, as consequences of the results established above. Let $p$ be a power of $2$.

\subsection{Maximal $\F_{q^2}$-curves}
Let $q$ be a power of $p$. Assume that $[\F_q:\F_p]$ is even. 
Let $F$ be an $\F_p$-linearized polynomial as in~\eqref{Fdef} satisfying conditions \textup{(i)--(iv)} thereafter. The following gives examples of $\F_q$-curves that are maximal over $\F_{q^2}$. 
\begin{theorem}\label{3}
Let $t$ be an element of $\F_q$ satisfying $\Tr_{q/2}(t)=([\F_q:\F_2]/2)+1$. Then the curve $D_{F,t}$ is $\F_{q^2}$-maximal. 
\end{theorem}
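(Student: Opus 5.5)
The plan is to compute the ${\rm Fr}_{q^2}$-eigenvalues directly from Theorem~\ref{FEV1} and check that they all equal $-q$. By the analogue of Lemma~\ref{mmeCa} over $\F_{q^2}$, this is equivalent to $\F_{q^2}$-maximality. That analogue holds because the genus is $p^e(p-1)/2$ and there is one rational point at infinity (Lemma~\ref{g_C}), together with the trace formula.

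Put $n:=[\F_q:\F_2]$, which is even by assumption. The argument has four steps.

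\textbf{Step 1 (squaring the eigenvalues).} By Theorem~\ref{FEV1}(iii),(iv), the ${\rm Fr}_q$-eigenvalues on $H^1(\overline{D}_{F,t,\F},\Ql)$ are the numbers $\tau_v=Q_q(t+v)^{-1}(-1-\sqrt{-1})^{n}$ for $v\in W_F^\ast$, each with multiplicity $p-1$. Hence the ${\rm Fr}_{q^2}$-eigenvalues are
\[
\tau_v^2=Q_q(t+v)^{-2}(2\sqrt{-1})^{n}=Q_q(t+v)^{-2}\,2^{n}(-1)^{n/2}.
\]

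\textbf{Step 2 (computing $Q_q^2$).} We compute $Q_q(x)^2$ in $W_2(\F_2)$. Write $\Tr_{q/2}(x,0)=(s,c)$, where $s=\Tr_{q/2}(x)$. By the addition law \eqref{add}, $2(s,c)=(0,s^2)=(0,s)$. Since $\xi_2(0,1)=\xi_2(2\cdot(1,0))=(\sqrt{-1})^2=-1$, this gives
\[
Q_q(x)^2=(-1)^{\Tr_{q/2}(x)},
\]
so that $Q_q(x)^{-2}=(-1)^{\Tr_{q/2}(x)}$ as well. Therefore
\[
\tau_v^2=(-1)^{\Tr_{q/2}(t)+\Tr_{q/2}(v)+n/2}\,q .
\]

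\textbf{Step 3 (reduction).} We need $\tau_v^2=-q$ for every $v\in W_F^\ast$. Using the hypothesis $\Tr_{q/2}(t)=n/2+1$, this is equivalent to $\Tr_{q/2}(v)=0$ for all $v\in W_F^\ast$.

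\textbf{Step 4 (main point: $\Tr_{q/2}(v)=0$ on $W_F^\ast$).} Condition (ii) gives $F^\ast(1)=0$, so $1\in W_F^\ast$. The proof of Lemma~\ref{ghom} then shows $\Tr_{q/2}(xy)=0$ for all $x,y\in W_F^\ast$; taking $y=1$ gives the claim.

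That argument uses condition (iv) in an essential way. By \eqref{WVV}, each $x\in W_F^\ast$ can be written as $x=F(z)$ with $z\in V_F\subset\F_q$. By Lemma~\ref{b4}, $F(z)\cdot 1\sim zF^\ast(1)=0$, and the relation $\sim$ here holds in $\F_q$ because $z\in\F_q$. Applying $\Tr_{q/2}$ therefore kills $x$.

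This step is where condition (iv) enters; everything else is bookkeeping in $W_2(\F_2)$. The one point to double-check is the sign convention $\xi_2(0,1)=-1$ used in Step 2.
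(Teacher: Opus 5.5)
Your proof is correct and follows essentially the paper's argument: both read off the eigenvalues from Theorem~\ref{FEV1}(iii) and use the content of Lemma~\ref{ghom}, which is where condition (iv) enters, to control the dependence on $v\in W_F^\ast$. The only difference is the order of steps. The paper first shows the ${\rm Fr}_q$-eigenvalues are $\pm\sqrt{-1}\,2^{[\F_q:\F_2]/2}$, via $Q_q(t+v)Q_q(t)^{-1}=\psi_q(tv)Q_q(v)\in\{\pm1\}$, and then squares; you square first and use $Q_q(x)^2=\psi_q(x)$, which reduces to $\psi_q|_{W_F^\ast}=1$, the same underlying fact.
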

\proof{By the assumption $\Tr_{q/2}(t)=([\F_q:\F_2]/2)+1$, the quantity $Q_q(t)^{-1}(\sqrt{-1})^{[\F_q:\F_2]/2}$ is a primitive $4$-th root of unity. On the other hand, for any $v\in W_F^\ast$, we have 
\[
Q_q(t+v)Q_q(t)^{-1}=B_{Q_q}(v,t)Q_q(v). 
\]
Note that $B_{Q_q}(v,t) \in \{\pm 1\}$. 
Since $W_F^\ast$ is an $\F_2$-vector space, 
we also have $Q_q(v) \in \{\pm 1\}$ by Lemma \ref{ghom}. 

Therefore, by Theorem \ref{FEV1}(iii), the ${\rm Fr}_q$-eigenvalues are of the form $\pm(\sqrt{-1})2^{[\F_q:\F_2]/2}$. It follows that all ${\rm Fr}_{q^2}$-eigenvalues are equal to $-2^{[\F_q:\F_2]}$. This shows that $D_{F,t}$ is $\F_{q^2}$-maximal. 
\qed}

\subsection{Linear-Algebraic Construction of $\F_q$-extremal Curves}
Let $q$ be a power of $p$. Assume that $[\F_q:\F_p]$ is even. 

As a consequence of Theorem~\ref{mainthm}, we give a recipe for constructing $\F_q$-extremal van der Geer--van der Vlugt curves from linear-algebraic data. 
Consider a pair $(W,t)$, where 
\begin{itemize}
    \item $W$ is an $\F_p$-linear subspace of $\F_q$ such that $1\in W$. 
    \item $t$ is an element of $\F_q$ such that 
    \[
    Q_q(v)=\psi_q(tv)\qquad\text{for all } v\in W. 
    \]
\end{itemize}
To such a pair $(W,t)$, we define 
\[
f_W(x):=\prod_{v\in W}(x-v),\qquad F_W:=(\tau^{-e}f_W)^\ast, 
\]
where $e:=\dim_{\F_p}W$. One can check that $F_W$ satisfies conditions \textup{(i)--(iii)} stated after \eqref{Fdef}. 
Let $D_{F_W,t}$ be the $\F_q$-curve defined for the pair $(F_W,t)$. 
\begin{theorem}\label{recipe for ex curve}
The following statements hold: 
\begin{enumerate}
    \item The $\F_q$-curve $D_{F_W,t}$ is $\F_q$-extremal. It is $\F_q$-maximal (resp.\ $\F_q$-minimal) if and only if 
    \[
    Q_q(t)=-(\sqrt{-1})^{[\F_q:\F_2]/2}\qquad\text{(resp.\ }Q_q(t)=(\sqrt{-1})^{[\F_q:\F_2]/2}\text{)}. 
    \]
    \item Every $\F_q$-extremal van der Geer--van der Vlugt curve is obtained in this way. More precisely, let 
\[R(x)=\sum_{i=0}^e a_i x^{p^i} \in 
 \F_q[x],\]
 where $e\geq1$ and $a_e\neq0$, and assume that $C_R$ is $\F_q$-extremal. Then there exists a pair $(W,t)$ as above and $a\in\F_q^\times$ such that $(x,y)\mapsto (ax,y)$ induces an isomorphism 
 \[
 D_{F_W,t}\xrightarrow{\cong}C_R. 
 \]
\end{enumerate}
\end{theorem}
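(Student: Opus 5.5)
Part (i) follows from Theorem~\ref{FEV1}, once we see that $W_{F_W}^\ast=W$. Since $F_W^\ast=\tau^{-e}f_W$, the kernel of $F_W^\ast$ equals $\ker f_W=W\subset\F_q$. Condition (i) holds because $f_W$ is separable and monic of $\tau$-degree $e$, so the coefficients of $F_W$ at $\tau^0$ and $\tau^e$ are nonzero. Condition (ii) holds because $1\in W$. Hence the hypotheses of Theorem~\ref{FEV1} are satisfied, and the Frobenius eigenvalues are $\tau_v=Q_q(t+v)^{-1}(-1-\sqrt{-1})^{[\F_q:\F_2]}$ for $v\in W$.

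As in the proof of Theorem~\ref{mainthm}, we have $Q_q(t+v)Q_q(t)^{-1}=\psi_q(tv)Q_q(v)$, and the assumption on $t$ says exactly that $t\in S_{F_W}$, so this ratio is $1$ for every $v\in W$. Therefore all eigenvalues coincide, with common value $Q_q(t)^{-1}(\sqrt{-1})^{[\F_q:\F_2]/2}\sqrt q$, using $(-1-\sqrt{-1})^2=2\sqrt{-1}$ and $[\F_q:\F_2]$ even. Lemma~\ref{mmeCa} then shows that the curve is extremal, maximal exactly when $Q_q(t)=-(\sqrt{-1})^{[\F_q:\F_2]/2}$, and minimal in the opposite case. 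One should double-check that $Q_q(t)\in\{\pm(\sqrt{-1})^{[\F_q:\F_2]/2}\}$, but this is forced by the Riemann hypothesis, since the eigenvalue must then be $\pm\sqrt q$.

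For part (ii), let $C_R$ be extremal. First, by the proof of Corollary~\ref{exN} (which only uses Theorem~\ref{Npm}, valid with $a_0$ included), $V=\ker(R+R^\ast)\subset\F_q$. Also $\Tr_{q/p}(uR(u))=0$ for all $u\in V$, by \eqref{N0pm}. So condition (4) of Theorem~\ref{detR} holds for any maximal totally isotropic $W_0\subset V$. Its proof gives $R=R_{F,t_0}$ with $F=c\,f_{W_0}$, where $c=a^{1/2}\in\F_q^\times$.

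We next convert this $F$ into the normalized form $F_W$. Theorem~\ref{FEV1} and Lemma~\ref{mmeCa} show that $t_0\in S_F$. We look for $b\in\F_q^\times$ and a subspace $W$ such that $F(bx)=\lambda F_W(x)$, and then rescale $x$.

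Here is the key computation. Scaling $x\mapsto bx$ transforms $R_{F,t}$ into $b\,R_{F',t'}$ with $F'=F\circ b$. The curve $y^p-y=bx\,R(bx)$ is the pullback of $C_R$, and we want it to equal $D_{F',t'}$. Since $xR_{F',t'}(x)$ is quadratic in $F'$, we need $(F\circ b)$ to match $F_W$ up to a factor $\lambda$ with $\lambda^2=1$, i.e.\ $\lambda=1$; a general $\lambda$ could be absorbed via $y$, but the statement only allows scaling $x$.

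The natural choice is to take $W:=b^{-1}\cdot\ker F^\ast$, where $b$ is such that $b^{-1}W_F^\ast$ contains $1$ and $F_W$ is appropriately monic. Then $(F\circ b)^\ast=b\,F^\ast$ has kernel $b^{-1}W_F^\ast=W$, so $(F\circ b)^\ast=\mu\,\tau^{-e}f_W$ for some $\mu$ by Lemma~\ref{basic}(i). We then use the freedom to rescale $b$ by an element $s$ with $s^{p^{-e}}$-type weight to force $\mu=1$. Since $E=F^\ast F$ is fixed, only the total scaling of $E$ matters, and $E\circ b$ must be compatible.

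The main obstacle is exactly this normalization: showing that the constant $\mu$ can be absorbed by a choice of $a\in\F_q^\times$. The scaling changes $E$ to $b\,E\circ b$, i.e.\ $(F b)^\ast(F b)$, and the leading-coefficient equation takes the form $\mu^{\,\cdot}\, b^{\,\cdot}=1$ with exponents of the shape $1+p^{\pm e}$. This equation is solvable in $\F_q^\times$ because $1+p^{e}$ type maps are surjective on odd-order groups, so squares are unique in characteristic $2$.

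Finally, the new parameter $t$ equals $t_0$ transported by $b$, satisfies $t\in S_{F_W}$ by part (i)'s criterion applied in reverse, and the condition $1\in W$ holds by the choice of $b$. Whether the map goes $C_R\to D$ or $D\to C_R$ just depends on inverting $a$.
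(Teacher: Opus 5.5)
Part (i) of your proposal is correct and is the paper's argument. The preliminary steps of part (ii) are also valid: getting $V\subset\F_q$, producing $(F,t_0)$ with $R=R_{F,t_0}$ from the proof of Theorem~\ref{detR}, and deducing $t_0\in S_F$ from Theorem~\ref{FEV1} and Lemma~\ref{mmeCa}. These substitute for the paper's use of Lemma~\ref{F always exists} and Theorem~\ref{mainthm}(ii).

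The gap is in the normalization step, which you flag yourself as the main obstacle.

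First, the kernel computation is wrong. $(F\circ b)^\ast=bF^\ast$ is the map $x\mapsto b\,F^\ast(x)$, and its kernel is $\ker F^\ast=W_F^\ast$, not $b^{-1}W_F^\ast$. Rescaling $x$ moves $W_F$ to $b^{-1}W_F$ but leaves $W_F^\ast$ unchanged. So with $W:=b^{-1}W_F^\ast$, the identity $(F\circ b)^\ast=\mu\,\tau^{-e}f_W$ fails whenever $b^{-1}W_F^\ast\neq W_F^\ast$.

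Second, your way of absorbing the leftover constant rests on a false claim. The map $x\mapsto x^{1+p^e}$ is not surjective on $\F_q^\times$ in general. For $p=2$, $e=1$, $q=4$ it is trivial on $\F_4^\times$, and $y^2+y=x^3$ is $\F_4$-maximal, so this case genuinely occurs. Moreover, matching one leading coefficient of $E$ would not force $F\circ b=F_W$ anyway. Your remark that $t$ must be ``transported by $b$'' comes from the same confusion: $t$ does not change.

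The missing observation is that nothing has to be solved. Take $W:=W_F^\ast$ itself. Then $1\in W$ by condition (ii), and $W\subset\F_q$ by (iii). Both $F^\ast$ and $F_W^\ast=\tau^{-e}f_W$ are supported in $\tau$-degrees $-e,\dots,0$ and have kernel $W$. So Lemma~\ref{basic}(i) gives $F^\ast=aF_W^\ast$, with $a=b_0\in\F_q^\times$ the $\tau^0$-coefficient of $F$. Taking adjoints, $F=F_Wa$, i.e.\ $F(x)=F_W(ax)$.

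From $F^\ast F=aF_W^\ast F_Wa$, comparing positive degrees gives $R_F(x)=aR_{F_W}(ax)$. Also $\gamma_F=a^2\gamma_{F_W}$ and $F^\ast(t_0)=aF_W^\ast(t_0)$, so $\alpha_F(t_0)=a^2\alpha_{F_W}(t_0)$ with the \emph{same} $t_0$. Hence $xR(x)=(ax)R_{F_W,t_0}(ax)$. Finally, $t_0\in S_F=S_{F_W}$, because $S_F$ depends only on $W_F^\ast=W=W_{F_W}^\ast$.
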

\begin{proof}
(i)  Since $\psi_q(tv)$ is a sign, we have $Q_q(v)=Q_q(v)^{-1}$ for all $v\in W$. Thus, the equality $Q_q(v)=\psi_q(tv)$ can be rewritten as 
\[
Q_q(v)^{-1}=\psi_q(tv)=Q_q(t+v)Q_q(v)^{-1}Q_q(t)^{-1}, 
\]
and hence $Q_q(t+v)=Q_q(t)$ is independent of $v\in W$. The assertion follows from Theorem~\ref{FEV1}. 

\medskip 

\noindent
(ii) Let $R(x)$ be an $\F_p$-linearized polynomial as in the statement. By Corollary~\ref{exN}, we have $\ker(R+R^\ast)\subset \F_q$. Hence, by Lemma~\ref{F always exists}, 
    we can write 
    \[
    R(x)-a_0x=R_{F}(x)
    \]
    for some $F$ satisfying conditions \textup{(i)--(iv)} stated after~\eqref{Fdef}. Moreover, by Theorem~\ref{mainthm}(ii), there exists $t\in S_{F}$ such that $a_0=\alpha_{F}(t)$. 

    Set $W:=\ker F^\ast$. Then, since $\ker F^\ast=\ker F_W^\ast$,  there exists $a\in\F_q^\times$ such that 
    \[
    F^\ast=aF_W^\ast. 
    \]
    Hence, 
    \[
    R+R^\ast=F^\ast F=aF_W^\ast F_W a=a(R_{F_W}+R_{F_W}^\ast)a. 
    \]
    Comparing the coefficients in positive degree, we obtain 
    \[
    R(x)=aR_{F_W}(ax)+\alpha_F(t)x. 
    \]
    Moreover, since $F(x)=F_W(ax)$, we have $\alpha_F(t)=a^2\alpha_{F_W}(t)$. Thus, 
    \[
    R(x)=aR_{F_W,t}(ax), 
    \]
    and the claim follows. 
\end{proof}

\subsection{More Explicit Descriptions of $S_{F,-}$ and $T_{R,\max}$}\label{MEDST}
Let $q$ be a power of $p$. 
Let $F$ be as in \eqref{Fdef}, and assume that conditions (i)--(iv) thereafter are satisfied. 
Then, by Lemma \ref{even}, the degree $[\F_q:\F_p]$ is even. 
We use the notation from Subsections \ref{set} and \ref{relNpm}.

Recall that 
\[
R_{F,t}(x)=R_F(x)+\alpha_F(t)x. 
\]
We set $R:=R_F$. 
\begin{theorem}\label{easycase}
We further assume that $[\F_q:\F_p]$ is divisible by $4$, and that $W_F^\ast\subset \F_{q^{1/4}}$. Then the following hold: 
\begin{enumerate}
\item We have 
\[S_F=F(\F_q),\qquad T_{R,\max} \cup T_{R,\min}=\gamma_F+E_F(\F_q)^2. 
\]
\item We have 
\begin{align*}
S_{F,-}&=\{\,F(u)\mid u\in \F_q,\ \Tr_{q/2}(uR_{F,0}(u))=([\F_q:\F_2]/4)+1\,\}, \\
T_{R,\max}&=\{\,\gamma_F+E_F(u)^2\mid u\in \F_q,\ \Tr_{q/2}(uR_{F,0}(u))=([\F_q:\F_2]/4)+1\, \}. 
\end{align*}
\end{enumerate}
\end{theorem}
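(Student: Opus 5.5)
The plan is to use the hypothesis $W_F^\ast\subset\F_{q^{1/4}}$ to show that $Q_q$ is trivial on $W_F^\ast$. Then $S_F$ becomes the kernel of a character map, and $Q_q(t)$ for $t\in S_F$ can be computed through Lemma~\ref{F0}. Write $r:=q^{1/4}$, so that $[\F_q:\F_r]=4$.

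\emph{Step 1: $Q_q|_{W_F^\ast}=1$.} For $v\in\F_r$, transitivity of the trace gives
\[
\Tr_{q/2}(v,0)=\Tr_{r/2}\bigl(\Tr_{q/r}(v,0)\bigr).
\]
Since $v$ is fixed by the $r$-power Frobenius, all four conjugates of $(v,0)$ equal $(v,0)$, so $\Tr_{q/r}(v,0)=4\cdot(v,0)$. This vanishes because $W_2$ of an $\F_2$-algebra is killed by $4$. Hence $Q_q(v)=1$ for all $v\in W_F^\ast$. The defining condition of $S_F$ in \eqref{Spmdf} then reads $\psi_q(tv)=1$ for all $v\in W_F^\ast$. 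In other words, $S_F$ is the kernel of $\F_q\to(W_F^\ast)^\vee$, and Lemma~\ref{kerW_F} gives $S_F=F(\F_q)$.

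\emph{Step 2: the union $T_{R,\max}\cup T_{R,\min}$.} From the proof of Theorem~\ref{mainthm} we have $\alpha_F(S_F)=T_{R,\max}\cup T_{R,\min}$. Therefore
\[
T_{R,\max}\cup T_{R,\min}=\gamma_F+F^\ast(F(\F_q))^2=\gamma_F+E_F(\F_q)^2,
\]
which proves (i).

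\emph{Step 3: the value of $Q_q$ on $S_F$.} Take $t=F(u)$ with $u\in\F_q$. Lemma~\ref{F0} is an identity in $W_2(\F_q[x])$, and I plan to specialize it at $x=u$. This gives
\[
(F(u),0)-(0,uR_{F,0}(u))=(c^p,d^p)-(c,d)
\]
for some $(c,d)\in W_2(\F_q)$. The right-hand side has trivial image under $\Tr_{q/p}\colon W_2(\F_q)\to W_2(\F_p)$, since $\Tr_{q/p}$ is invariant under the $p$-power Frobenius. Applying $\xi_q$ therefore yields
\[
Q_q(F(u))=\xi_q\bigl(0,uR_{F,0}(u)\bigr)=\psi_q\bigl(uR_{F,0}(u)\bigr)=(-1)^{\Tr_{q/2}(uR_{F,0}(u))}.
\]
Put $N:=[\F_q:\F_2]$. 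This is $[\F_q:\F_p]\cdot[\F_p:\F_2]$, so $4\mid N$ and $(\sqrt{-1})^{N/2}=(-1)^{N/4}$. The condition $Q_q(t)=-(\sqrt{-1})^{N/2}$ defining $S_{F,-}$ thus becomes
\[
\Tr_{q/2}\bigl(uR_{F,0}(u)\bigr)=N/4+1 \quad\text{in } \F_2.
\]
This gives the description of $S_{F,-}$. Finally, Theorem~\ref{mainthm}(ii) gives $T_{R,\max}=\alpha_F(S_{F,-})=\{\gamma_F+E_F(u)^2\}$ with $u$ ranging over the same set, using $F^\ast(F(u))=E_F(u)$.

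The only delicate point is Step 3. One must check that the relation $\sim$ of Lemma~\ref{F0}, proved over $\F_q[x]$, survives specialization to $x=u\in\F_q$; this holds because the specialization map $W_2(\F_q[x])\to W_2(\F_q)$ is a ring homomorphism commuting with Frobenius. One must also check that Lang-type differences are killed by $\xi_q$, which is the trace argument given above. The remaining steps are bookkeeping with earlier results.
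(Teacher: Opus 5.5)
Your proposal is correct and follows the paper's proof essentially step for step. You show $Q_q$ is trivial on $W_F^\ast$ because $W_2(\F_q)$ is killed by $4$, then get $S_F=F(\F_q)$ from Lemma~\ref{kerW_F}, then specialize Lemma~\ref{F0} at $x=u$ to obtain $Q_q(F(u))=\psi_q(uR_{F,0}(u))$, and finally use Theorem~\ref{mainthm} to pass to $T_{R,\max}$ and $T_{R,\max}\cup T_{R,\min}$; you only add detail the paper leaves implicit (the transitivity-of-trace computation and why Lang-type differences vanish under $\xi_q$).
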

\begin{proof}
We prove the assertions for $S_F$ and $S_{F,-}$; those for $T_{R,\max} \cup T_{R,\min}$ and $T_{R,\max}$ then follow from Theorem \ref{mainthm} and $E_F=F^\ast F$. 

(i) Since $W_F^\ast\subset \F_{q^{1/4}}$ and 
$W_2(\F_q)$ is annihilated by $4$, we have $Q_q(v)=1$ for all $v\in W_F^\ast$. Therefore, by \eqref{Spmdf}, we have 
$S_F=\{t \in \F_q \mid \forall v \in W_F^\ast,\ \psi_q(tv)=1\}$. 
Thus the equality $S_F=F(\F_q)$ follows from Lemma \ref{kerW_F}. 

(ii) We need to compute $Q_q(F(u))$. By Lemma \ref{F0}, we have 
\[
Q_q(F(u))=\psi_q(uR_{F,0}(u))=(-1)^{\Tr_{q/2}(uR_{F,0}(u))}. 
\]
 The claim then follows from the definition of $S_{F,-}$ in \eqref{s+-}. 
\end{proof}
\begin{corollary}
    Under the assumptions in Theorem \ref{easycase}, the curve $D_{F,0}$ is $\F_q$-extremal. It is $\F_q$-maximal if and only if $[\F_q:\F_2]/4$ is odd. 
\end{corollary}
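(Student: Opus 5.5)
The corollary concerns $t=0$ under the hypotheses of Theorem~\ref{easycase}. The plan is to decide whether $0$ lies in $S_{F,-}$ or $S_{F,+}$, and then read off the answer from the eigenvalue formula of Theorem~\ref{FEV1}.

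First, $0\in S_F$. Theorem~\ref{easycase}(i) gives $S_F=F(\F_q)$, and $0=F(0)$. Alternatively, one checks the definition directly. The proof of Theorem~\ref{easycase}(i) shows that $Q_q(v)=1$ for all $v\in W_F^\ast$. This uses $W_F^\ast\subset\F_{q^{1/4}}$: the trace $\Tr_{q/2}(v,0)$ factors through $\Tr_{q/q^{1/4}}$, which on $W_2$ is multiplication by $4$ for $v$ in the subfield, hence $0$. On the other side, $\psi_q(0\cdot v)=1$. So $0\in S_F$.

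By the proof of Theorem~\ref{mainthm} (equivalently Theorem~\ref{recipe for ex curve}(i)), every $t\in S_F$ gives a curve $D_{F,t}$ with a single ${\rm Fr}_q$-eigenvalue. By Lemma~\ref{mmeCa}, $D_{F,0}$ is therefore $\F_q$-extremal.

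Next we determine the sign. Theorem~\ref{FEV1}(iii) with $t=0$, $v=0$ gives the unique eigenvalue
\[
\tau_0=Q_q(0)^{-1}(-1-\sqrt{-1})^{[\F_q:\F_2]}.
\]
Here $Q_q(0)=\xi_2(0,0)=1$. Write $n:=[\F_q:\F_2]$, which is divisible by $4$ because $4\mid[\F_q:\F_p]$. Then $(-1-\sqrt{-1})^2=2\sqrt{-1}$, so $\tau_0=(2\sqrt{-1})^{n/2}=2^{n/2}(-1)^{n/4}=\sqrt q\,(-1)^{n/4}$.

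By Lemma~\ref{mmeCa}, $D_{F,0}$ is maximal exactly when $\tau_0=-\sqrt q$, that is, exactly when $n/4$ is odd. This matches the criterion $Q_q(t)=-(\sqrt{-1})^{n/2}$ with $t=0$: the condition becomes $1=-(-1)^{n/4}$.

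As a consistency check, Theorem~\ref{easycase}(ii) with $u=0$ gives $\Tr_{q/2}(0)=0$. This equals $n/4+1 \bmod 2$ precisely when $n/4$ is odd, which agrees with the computation above.

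The only step needing care is confirming $Q_q|_{W_F^\ast}\equiv1$, and this is already contained in the proof of Theorem~\ref{easycase}. The rest is arithmetic with fourth roots of unity.
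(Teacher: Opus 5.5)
Your proof is correct. It is essentially the argument the paper intends: the corollary is the case $t=F(0)=0$ of Theorem~\ref{easycase}(ii) (equivalently, of Theorem~\ref{mainthm} combined with Theorem~\ref{FEV1}), where $Q_q|_{W_F^\ast}\equiv 1$ places $0$ in $S_F$, and the sign comes from comparing $Q_q(0)=1$ with $-(\sqrt{-1})^{[\F_q:\F_2]/2}=-(-1)^{[\F_q:\F_2]/4}$. Your direct evaluation $\tau_0=2^{[\F_q:\F_2]/2}(-1)^{[\F_q:\F_2]/4}$ and your consistency check via Theorem~\ref{easycase}(ii) with $u=0$ are both unwindings of that same specialization.
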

 
In the following theorem, we consider a more general setting. 
\begin{theorem}\label{exmax}
Let $q_1$ be a power of $p$ such that $q_1^2\mid q$. Assume that $F$ satisfies conditions \textup{(i)--(iv)} stated after \eqref{Fdef}. We further assume that $W_F^\ast\subset \F_{q_1}$. Write  $q=q_1^{2n}$. Then the following hold: 
\begin{enumerate}
\item There exists an element $t_0\in\F_{q_1^2}$ such that $t_0^{q_1}+t_0=1$. 
\item For $t_0$ as in \textup{(i)}, we have 
\[
S_F=t_0+F(\F_q), \qquad 
T_{R,\max} \cup T_{R,\min}=\alpha_F(t_0)+E_F(\F_q)^2.\]

\item For $t_0$ as in \textup{(i)}, we have 
\begin{align*}
S_{F,-} &= \{\,t_0+F(u)\mid u\in \F_q,\ \Tr_{q/2}(uR_{F,t_0}(u))=n+1\,\},\\
T_{R,\max} &= \{\,\alpha_F(t_0)+E_F(u)^2\mid u\in\F_q,\ \Tr_{q/2}(uR_{F,t_0}(u))=n+1\,\}.
\end{align*}

\end{enumerate}
\end{theorem}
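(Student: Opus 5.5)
Part (i) is immediate: the map $\F_{q_1^2}\to\F_{q_1}$, $x\mapsto x^{q_1}+x$, is the trace $\Tr_{q_1^2/q_1}$, which is surjective. So some $t_0\in\F_{q_1^2}$ has $t_0^{q_1}+t_0=1$.

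For (ii), the plan is to exhibit $t_0$ as one element of $S_F$. Once that is done, $S_F$ is a coset of $F(\F_q)$. Indeed, by \eqref{Spmdf}, $t,t'\in S_F$ forces $\psi_q((t-t')v)=1$ for all $v\in W_F^\ast$. Conversely, shifting $t\in S_F$ by something with this property stays in $S_F$. Lemma~\ref{kerW_F} identifies that set of shifts with $F(\F_q)$.

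To see $t_0\in S_F$, I need $Q_q(v)=\psi_q(t_0v)$ for $v\in W_F^\ast\subset\F_{q_1}$.
\begin{itemize}
\item Since $v\in\F_{q_1}$ and $q=q_1^{2n}$, I will compute $Q_q(v)$ via transitivity of the Witt trace. This gives $\Tr_{q/q_1}(v,0)=2n\cdot(v,0)$ in $W_2(\F_{q_1})$.
\item Using $2(a,b)=(0,a^2)$, this equals $n\cdot(0,v^2)=(0,nv^2)$. Hence $Q_q(v)=\psi_{q_1}(nv^2)=\psi_{q_1}(v)^n$.
\item On the other side, $\psi_q(t_0v)=\psi_{q_1}\bigl(\Tr_{q/q_1}(t_0)v\bigr)$. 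Here $\Tr_{q/q_1}(t_0)=n\Tr_{q_1^2/q_1}(t_0)=n$, so $\psi_q(t_0v)=\psi_{q_1}(nv)=\psi_{q_1}(v)^n$.
\end{itemize}
The two sides agree, so $t_0\in S_F$ and $S_F=t_0+F(\F_q)$.

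For $T_{R,\max}\cup T_{R,\min}$, apply $\alpha_F$ using Theorem~\ref{mainthm}. Since $F^\ast$ is additive and we are in characteristic $2$,
\[
\alpha_F(t_0+F(u))=\gamma_F+\bigl(F^\ast(t_0)+E_F(u)\bigr)^2=\alpha_F(t_0)+E_F(u)^2.
\]

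For (iii), I must compute $Q_q(t_0+F(u))$. The first step is the Witt-vector relation
\[
(t_0+F(u),0)=(t_0,0)+(F(u),0)+(0,t_0F(u)).
\]
The key step is to transform each summand modulo the relation $\sim$ of Lemma~\ref{F0}:
\begin{itemize}
\item By Lemma~\ref{F0}, $(F(u),0)\sim(0,uR_{F,0}(u))$.
\item By Lemma~\ref{b4}, $t_0F(u)\sim uF^\ast(t_0)$.
\item By the squaring relation, $(0,uF^\ast(t_0))\sim(0,u^2F^\ast(t_0)^2)$.
\end{itemize}
Adding, $(t_0+F(u),0)\sim(t_0,0)+(0,uR_{F,t_0}(u))$, which gives
\[
Q_q(t_0+F(u))=Q_q(t_0)\,(-1)^{\Tr_{q/2}(uR_{F,t_0}(u))}.
\]

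It remains to find $Q_q(t_0)$. Use $\Tr_{q/q_1^2}(t_0,0)=n(t_0,0)$, and split into the parity of $n$:
\begin{itemize}
\item For $n$ even, $n(t_0,0)=(0,\tfrac n2 t_0^2)$.
\item For $n$ odd, $n(t_0,0)=(t_0,0)+(0,\tfrac{n-1}{2}t_0^2)$.
\end{itemize}
This reduces the problem to $Q_{q_1^2}(t_0)$. Next compute $\Tr_{q_1^2/q_1}(t_0,0)=(t_0,0)+(t_0^{q_1},0)=(1,t_0^{q_1+1})$. Writing $m=[\F_{q_1}:\F_2]$, one then needs $\xi_{q_1}(1,\mathrm{N}(t_0))$. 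Here $\Tr_{q_1/2}(1,c)=(m\bmod 2,\ \Tr(c)+\binom m2)$, giving $Q_{q_1^2}(t_0)=(\sqrt{-1})^{m}(-1)^{\Tr_{q_1/2}(\mathrm N t_0)}$.

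The main obstacle is the bookkeeping in this step. I must check that $Q_q(t_0)$ times $(-1)^{n+1}$ equals the sign $-(\sqrt{-1})^{[\F_q:\F_2]/2}=-(\sqrt{-1})^{nm}$ up to absorbing constants into the trace condition. Any leftover constant such as $\Tr(\mathrm N t_0)$ should be independent of the choice of $t_0$. If it is not, I would rechoose $t_0$, since $t_0$ may be adjusted by $\F_{q_1}$ to fix it. With this identity in hand, the condition $Q_q(t_0+F(u))=-(\sqrt{-1})^{[\F_q:\F_2]/2}$ becomes $\Tr_{q/2}(uR_{F,t_0}(u))=n+1$. Applying $\alpha_F$ as in (ii) then gives the description of $T_{R,\max}$.
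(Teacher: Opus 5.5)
The gap is in part (iii), in the evaluation of $Q_q(t_0)$; your parts (i), (ii) and the rest of (iii) are correct and follow the paper's argument. Your reduction $Q_q(t_0+F(u))=Q_q(t_0)\,(-1)^{\Tr_{q/2}(uR_{F,t_0}(u))}$ is sound. One small imprecision: with the paper's $\sim$, defined via $p$-th powers, $(0,a)\sim(0,a^2)$ is false in general when $p>2$. What you actually use, and what holds, is $\psi_q(a^2)=\psi_q(a)$.

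Your own computation gives $Q_q(t_0)=Q_{q_1^2}(t_0)^n=(\sqrt{-1})^{nm}(-1)^{n\Tr_{q_1/2}(t_0^{q_1+1})}$. So the maximality condition becomes $\Tr_{q/2}(uR_{F,t_0}(u))=1+n\Tr_{q_1/2}(t_0^{q_1+1})$. For $n$ odd this is the asserted $n+1$ only if $\Tr_{q_1/2}(t_0^{q_1+1})=1$, and you never prove this. If that trace were $0$, the statement would be false for odd $n$.

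Your fallback of re-choosing $t_0$ fails for two reasons. First, the theorem is asserted for every $t_0$ as in (i). Second, every solution of $t^{q_1}+t=1$ has the form $t_0+c$ with $c\in\F_{q_1}$. Since $t_0^{q_1}=t_0+1$, we get $(t_0+c)^{q_1+1}=(t_0+c)^2+(t_0+c)=t_0^{q_1+1}+c^2+c$. As $\Tr_{q_1/2}(c^2+c)=0$, the quantity you hoped to adjust is an invariant that has to be computed.

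The missing fact is Lemma~\ref{Trt_0}(ii), and it is short to prove. Suppose $\Tr_{q_1/2}(t_0^{q_1+1})=0$. Additive Hilbert 90 gives $x\in\F_{q_1}$ with $x^2+x=t_0^{q_1+1}=t_0^2+t_0$. Then $(t_0+x)(t_0+x+1)=0$, so $t_0\in\F_{q_1}$, which contradicts $t_0^{q_1}+t_0=1$.

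Hence $\Tr_{q_1/2}(t_0^{q_1+1})=1$. This gives $Q_{q_1^2}(t_0)=-(\sqrt{-1})^{m}$ and $Q_q(t_0)=(-1)^n(\sqrt{-1})^{[\F_q:\F_2]/2}$. The paper packages this as $\Tr_{q_1^2/2}(t_0,0)=s(1,0)+(0,1)$. With this value your argument for (iii), and then for $T_{R,\max}$, goes through.
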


\begin{remark}
    Theorem \ref{easycase} can be viewed as the special case of Theorem \ref{exmax} where $q_1=q^{1/4}$. In fact, when $W_F^\ast\subset \F_{q^{1/4}}$, one can show that $t_0\in F(\F_q)$  using Lemma \ref{kerW_F}.  
\end{remark}

\begin{corollary}\label{exmaxc}
    Assume that $F$ satisfies conditions \textup{(i)--(iv)} and that $W_F^\ast\subset \F_{\sqrt{q}}$. 
    Let $t_0\in \F_q$ be an element satisfying $t_0^{\sqrt{q}}+t_0=1$. 
    Then the curve $D_{F,t_0}$ is $\F_q$-maximal. 
\end{corollary}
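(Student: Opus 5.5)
The plan is to obtain the corollary as the special case $q_1=\sqrt{q}$, $n=1$ of Theorem~\ref{exmax}, and then to check that $t_0$ itself lies in $S_{F,-}$.

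\textbf{Hypotheses of Theorem~\ref{exmax}.} Since $F$ satisfies conditions (i)--(iv), Lemma~\ref{even} shows that $[\F_q:\F_p]$ is even. Hence $q_1:=\sqrt{q}$ is a power of $p$ with $q_1^2=q$, so $q_1^2\mid q$ and $q=q_1^{2n}$ with $n=1$.

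The remaining hypothesis $W_F^\ast\subset\F_{q_1}$ is assumed. The given $t_0$ satisfies $t_0^{q_1}+t_0=1$, so it is an element as in Theorem~\ref{exmax}(i). It lies in $\F_{q_1^2}=\F_q$, because $t_0^{q}=(1+t_0)^{q_1}=1+t_0^{q_1}=t_0$.

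\textbf{$t_0$ lies in $S_{F,-}$.} By Theorem~\ref{exmax}(iii),
\[
S_{F,-}=\{\,t_0+F(u)\mid u\in\F_q,\ \Tr_{q/2}(uR_{F,t_0}(u))=n+1\,\}.
\]
With $n=1$, the required trace value is $n+1=2=0$ in $\F_2$. Taking $u=0$ gives $\Tr_{q/2}(0)=0$, so $t_0=t_0+F(0)\in S_{F,-}$.

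\textbf{Conclusion.} Since $t_0\in S_{F,-}\subset S_F$, the proof of Theorem~\ref{mainthm} applies: all ${\rm Fr}_q$-eigenvalues $\tau_v$ in Theorem~\ref{FEV1} coincide. Moreover $Q_q(t_0)=-(\sqrt{-1})^{[\F_q:\F_2]/2}$, so the common eigenvalue is $-2^{[\F_q:\F_2]/2}=-\sqrt{q}$. Lemma~\ref{mmeCa} then shows that $D_{F,t_0}$ is $\F_q$-maximal. Equivalently, $\alpha_F(t_0)\in T_{R,\max}$, and $D_{F,t_0}=C_{\alpha_F(t_0)}$ in the family with $R=R_F$.

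\textbf{Fallback computation.} The only real content is inside Theorem~\ref{exmax}. If one wants to avoid it, the key point is the direct computation of $Q_q(t_0)$. Writing $s:=[\F_q:\F_2]$, I would first use Lemma~\ref{ij} to get
\[
\Tr_{q/q_1}(t_0,0)=(t_0,0)+(t_0^{q_1},0)=(1,t_0^{1+q_1}).
\]
Then I would apply $\Tr_{q_1/2}$ and the addition law \eqref{add}; the first component of the result is $s/2 \bmod 2$. This would give $Q_q(t_0)=-(\sqrt{-1})^{s/2}$ after checking the second component. In addition, one checks $t_0\in S_F$, i.e.\ $Q_q(v)=\psi_q(t_0v)$ for $v\in W_F^\ast\subset\F_{q_1}$. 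Both sides should equal $1$: the right side because $\Tr_{q/q_1}(t_0v)=v(t_0+t_0^{q_1})=v$ and then $\Tr_{q_1/2}(v)=0$ is needed, and the left side because $(v,0)$ traces through $\F_{q_1}$ twice. I expect this second-component bookkeeping to be the main obstacle if done by hand.
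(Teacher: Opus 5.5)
Your main argument is correct and is exactly the paper's proof: you specialize Theorem~\ref{exmax}(iii) to $q_1=\sqrt{q}$, $n=1$, so the required trace value $n+1=2$ is $0$ in $\F_2$, and $u=0$ shows $t_0\in S_{F,-}$; you then conclude via Theorem~\ref{mainthm}. One caveat about your optional fallback: for $v\in W_F^\ast\subset\F_{q_1}$ one has $\Tr_{q/q_1}(v,0)=2(v,0)=(0,v^2)\neq 0$ in $W_2$, so $Q_q(v)$ and $\psi_q(t_0v)$ both equal $\psi_{q_1}(v)=(-1)^{\Tr_{q_1/2}(v)}$ rather than $1$ (e.g.\ $v=1$ when $[\F_{q_1}:\F_2]$ is odd), and thus the vanishing of $\Tr_{q_1/2}(v)$ is neither needed nor true in general.
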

\begin{proof}
The assertion follows from Theorem \ref{exmax}(iii) applied to $q_1=\sqrt{q}$. 
\end{proof}

To prove Theorem \ref{exmax}, we need the following auxiliary lemma. 
\begin{lemma}\label{Trt_0}
Let $r=2^s$ be a power of $2$. 
Let $t_0\in\F$ be an element satisfying $t_0^{r}+t_0=1$. The following hold: 
\begin{enumerate}
\item We have $t_0\in\F_{r^2}$ and $t_0^{r+1}\in \F_{r}$. 
\item We have $\Tr_{r/2}(t_0^{r+1})=1$. 
\item Let $\Tr_{r^2/2}\colon W_2(\F_{r^2})\to W_2(\F_2)$ be the trace map. Then 
\[\Tr_{r^2/2}(t_0,0)=s(1,0)+(0,1). \]
\end{enumerate}
\end{lemma}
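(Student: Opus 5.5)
The plan is to derive (i) and (ii) directly from the defining relation $t_0^r=t_0+1$, and then to get (iii) by factoring the Witt vector trace through $\F_r$.

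For (i), raising $t_0^r=t_0+1$ to the $r$-th power gives
\[
t_0^{r^2}=t_0^r+1=t_0,
\]
so $t_0\in\F_{r^2}$. Next, $t_0^{r+1}=t_0(t_0+1)=t_0^2+t_0$. Moreover
\[
(t_0^{r+1})^r=t_0^r\,t_0^{r^2}=t_0^r\,t_0=t_0^{r+1},
\]
so $t_0^{r+1}\in\F_r$.

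For (ii), I will use Artin--Schreier theory over $\F_r$, which is the same Galois-cohomology argument as in the proof of Theorem~\ref{Npm}, now with $p$ replaced by $2$. Set $y:=t_0^{r+1}=t_0^2+t_0\in\F_r$. The element $t_0$ is a root of $x^2+x=y$. It does not lie in $\F_r$, because $t_0^r=t_0+1\neq t_0$. Hence $x^2+x=y$ has no solution in $\F_r$, since its two roots are $t_0$ and $t_0+1$. By the exactness of $\F_r\xrightarrow{x^2+x}\F_r\xrightarrow{\Tr_{r/2}}\F_2\to0$, this forces $\Tr_{r/2}(y)=1$.

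For (iii), I will first check that the Witt vector trace is transitive: $\Tr_{r^2/2}=\Tr_{r/2}\circ\Tr_{r^2/r}$ on $W_2(\F_{r^2})$. This holds because the componentwise Frobenius $(x,y)\mapsto(x^2,y^2)$ is a ring endomorphism of $W_2$. Consequently the sum over the $2s$ conjugates splits as a sum over $\mathrm{Gal}(\F_{r^2}/\F_r)$ followed by a sum over $\mathrm{Gal}(\F_r/\F_2)$.

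With transitivity in hand, the Witt addition law \eqref{add} gives
\[
\Tr_{r^2/r}(t_0,0)=(t_0,0)+(t_0^r,0)=(t_0+t_0^r,\,t_0^{r+1})=(1,t_0^{r+1}).
\]
Next I will decompose this vector as $(1,y)=(1,0)+(0,y)$. The element $(0,y)$ lies in the subgroup $\{(0,b)\}$, on which addition is simply $(0,a)+(0,b)=(0,a+b)$. Therefore
\[
\Tr_{r/2}(1,y)=\Tr_{r/2}(1,0)+\Tr_{r/2}(0,y)=s(1,0)+(0,\Tr_{r/2}(y)).
\]
By (ii), $\Tr_{r/2}(y)=1$, and the right-hand side equals $s(1,0)+(0,1)$, which is the claim.

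I expect the main obstacle to be the transitivity of the trace on $W_2$. It is not hard, but it should be stated explicitly, because $\Tr$ on $W_2$ is defined as a sum of Frobenius conjugates in the non-componentwise Witt group law.
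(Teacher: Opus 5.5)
Your proof is correct. Parts (i) and (ii) are essentially the paper's argument; the paper phrases (ii) as a contradiction: if $\Tr_{r/2}(t_0^{r+1})=0$, then $t_0^2+t_0=x^2+x$ for some $x\in\F_r$, which would force $t_0\in\F_r$.

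For (iii) you take a genuinely different route. The paper does not pass through $\Tr_{r^2/r}$ on Witt vectors. Instead it uses Lemma~\ref{ij} to write $\Tr_{r^2/2}(t_0,0)$ as the pair $(e_1,e_2)$ of elementary symmetric functions of the $2s$ conjugates of $t_0$. It reads these off as coefficients of $\Nr_{r^2/2}(x+t_0)=\prod_{i=0}^{s-1}\bigl(x^2+x+t_0^{2^i(r+1)}\bigr)$, which gives $\bigl(s,\binom{s}{2}+\Tr_{r/2}(t_0^{r+1})\bigr)$. It then matches this with $s(1,0)=\bigl(s,\binom{s}{2}\bigr)$.

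Your argument replaces that symmetric-function bookkeeping with structural facts: the Witt trace is additive and transitive, $\Tr_{r^2/r}(t_0,0)=(1,t_0^{r+1})$, and $\{(0,b)\}$ is a subgroup with ordinary addition. The $\binom{s}{2}$ term never has to be computed, since it stays packaged inside $s(1,0)$.

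The paper's version buys explicit coordinates for the trace and needs no justification of transitivity. The paper nonetheless uses transitivity tacitly elsewhere, e.g.\ in the proof of Theorem~\ref{exmax}(iii), so your verification of it is a reasonable addition. Both arguments are tower arguments through $\F_r$: yours works at the level of $W_2$, the paper's at the level of the characteristic polynomial of $t_0$.
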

\proof{(i) 
The first assertion follows from 
$t_0^{r^2}=(t_0+1)^{r}=t_0$. 
The second one follows since 
$(t_0^{r+1})^{r-1}=t_0^{r^2-1}=1$. 

(ii) Suppose to the contrary that $\Tr_{r/2}(t_0^{r+1})=0$. Then there exists $x\in \F_{r}$ such that  $t_0^{r+1}=x^2+x$. Since $t_0^{r}=t_0+1$, we have 
\[
t_0^2+t_0=t_0(t_0+1)=t_0^{r+1}=x^2+x,
\]
which implies $t_0\in\F_{r}$, a contradiction. 

(iii) By Lemma \ref{ij}, we have 
 \[\Tr_{r^2/2}(t_0,0)=
 \sum_{i=0}^{2s-1} (t_0^{2^i},0)=
 \left(\Tr_{r^2/2}(t_0),\, \sum_{0\leq i<j\leq2s-1}t_0^{2^i}t_0^{2^j}\right). 
 \]
  Note that the coefficient of $x^{2s-1}$ in 
  $\Nr_{r^2/2}(x+t_0):=\prod_{i=0}^{2s-1}(x+t_0^{2^i})$ equals the first component above, and that of $x^{2s-2}$ equals the second one. Since 
  $t_0^r+t_0=1$, we compute 
  \begin{align*}
  \Nr_{r^2/2}(x+t_0)&=\Nr_{r/2}\Nr_{r^2/r}(x+t_0)\\
  &=\Nr_{r/2}(x^2+x+t_0^{r+1})=\prod_{i=0}^{s-1}\left(x^2+x+t_0^{2^i(r+1)}\right). 
  \end{align*}
  Hence  
  \[\Tr_{r^2/2}(t_0,0)=\left(s, \frac{s(s-1)}{2}+\Tr_{r/2}(t_0^{r+1})\right). \]
  The claim then follows from $s(1,0)=(s, \frac{s(s-1)}{2})$ and~(ii).  
\qed}

\begin{proof}[(Proof of Theorem \ref{exmax})]
(i) The assertion follows from Lemma \ref{Trt_0}(i).

(ii) We  prove $S_F=\{t_0+F(u)\mid u\in\F_q\}$. The assertion for $T_{R,\max} \cup T_{R,\min}$ then follows from Theorem \ref{mainthm}(ii). By the assumption $W_F^\ast\subset \F_{q_1}$, for every $u\in W_F^\ast$, we have 
\begin{align*}
Q_q(u)=\xi_2\bigl(\Tr_{q_1/2}(2n(u,0))\bigr)=\psi_2\bigl(\Tr_{q_1/2}(nu^2)\bigr)=\psi_{q_1}(nu). 
\end{align*}
Since $q=q_1^{2n}$ and $\Tr_{q_1^2/q_1}(t_0)=t_0^{q_1}+t_0=1$, 
we have 
\[
\Tr_{q/q_1}(t_0)=\Tr_{q_1^2/q_1} \circ \Tr_{q/q_1^2}(t_0)=
n\Tr_{q_1^2/q_1}(t_0)=n.
\]
Hence, we obtain  
\[\psi_q(t_0u)=\psi_{q_1}(\Tr_{q/q_1}(t_0)u)=\psi_{q_1}(nu).\]
 Thus we have $t_0\in S_F$, which proves the assertion by Lemma \ref{kerW_F}. 

(iii) 
Let $u\in \F_q$. Then we have $t_0+F(u)\in S_{F,-}$ if and only if 
\[Q_q(t_0+F(u))=-(\sqrt{-1})^{[\F_q:\F_2]/2}. \]
Set $q_1 = 2^s$. Using the relation $ns = [\F_q:\F_2]/2$ together with Lemma \ref{Trt_0}(iii), we compute
\begin{align*}
Q_q(t_0) &= \xi_2 \circ \Tr_{q_1^2/2} \circ \Tr_{q/q_1^2}(t_0,0) \\
         &= \xi_2\bigl(n \Tr_{q_1^2/2}(t_0,0)\bigr) \\
         &= \xi_2\bigl(ns(1,0) + n(0,1)\bigr) \\
         &= (-1)^n (\sqrt{-1})^{[\F_q:\F_2]/2}.
\end{align*}
Thus we obtain 
\begin{align*}
Q_q(t_0+F(u))&=Q_q(t_0)Q_q\bigl(F(u)\bigr)\psi_q\bigl(t_0F(u)\bigr)\\
&=(-1)^n(\sqrt{-1})^{[\F_q:\F_2]/2}\psi_q\bigl(uR_{F,0}(u)\bigr)\psi_q\bigl(F^\ast(t_0)^2u^2\bigr)&\\&=(-1)^n(\sqrt{-1})^{[\F_q:\F_2]/2}\psi_q(uR_{F,t_0}(u)). 
\end{align*}
Here, in the second equality, we use Lemmas \ref{b4} and \ref{F0}.  This proves the assertion for $S_{F,-}$. That for $T_{R,\max}$ follows from this by applying Theorem \ref{mainthm}(ii). 
\end{proof}

\subsection{Examples of Theorem~\ref{exmax}}\label{EMT}

We give some examples to which Theorem~\ref{exmax} applies.
Fix a  polynomial 
\[f(x)=\sum_{i=0}^eb_ix^{i}\in\F_p[x]\] 
satisfying the following properties: 
\begin{enumerate}
\item $b_0b_e\neq0$.
\item $f(1)=0$. 
\item All the roots of $f(x)$ are simple.  
\end{enumerate}

\begin{example}
Let $c_0=1,c_1,\dots,c_{e-1}\in\F_p\setminus\{0\}$ be distinct elements. Then $f(x):=\prod_{i=0}^{e-1}(x+c_i)$ satisfies the above properties. 
\end{example}

Set  $g(x):=x^ef(x)f(x^{-1})$. 
\begin{lemma}
\label{coeff}
The polynomial $g(x)$ can be written as 
\[g(x)=a_ex^{2e}+a_{e-1}x^{2e-1}+\cdots+a_1x^{e+1}+a_1x^{e-1}+\cdots+a_{e-1}x+a_e
\]
with $a_i\in\F_p$. 
\end{lemma}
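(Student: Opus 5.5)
The plan is to expand $g(x)=x^ef(x)f(x^{-1})$ directly and read off its coefficients, using only that $f$ has coefficients in $\F_p$, that $p$ is a power of $2$, and that $f(1)=0$.

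Writing $f(x)=\sum_{i=0}^eb_ix^i$, we have
\[
g(x)=x^e\sum_{i,j=0}^e b_ib_jx^{i-j}=\sum_{i,j=0}^e b_ib_jx^{e+i-j}.
\]
This is a genuine polynomial of degree at most $2e$, because $i-j$ ranges over $[-e,e]$. Grouping the terms by $k:=i-j$, the coefficient of $x^{e+k}$ is
\[
c_k:=\sum_{j}b_{j+k}b_j,
\]
where $j$ runs over the indices with $0\le j,\,j+k\le e$. Substituting $j\mapsto j-k$ shows that $c_{-k}=c_k$. Hence $g$ is palindromic, and the coefficients of $x^{e+k}$ and $x^{e-k}$ agree for each $k$.

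For $1\le k\le e$ we set $a_k:=c_k=\sum_{j=0}^{e-k}b_jb_{j+k}$, which lies in $\F_p$ since all $b_j\in\F_p$. This is exactly the form claimed, with the coefficient of $x^{e+k}$ and of $x^{e-k}$ both equal to $a_k$. As a consistency check, this agrees with the formula for $a_i$ in Subsection~\ref{set}, since the Frobenius twists $(\cdot)^{p^{-j}}$ are trivial on $\F_p$. The leading coefficient is $a_e=b_0b_e\neq0$ by property (i).

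The only point that needs the hypotheses is the vanishing of the middle coefficient $c_0$, since the displayed form of $g$ has no $x^e$ term. Here we use characteristic $2$:
\[
c_0=\sum_{j=0}^eb_j^2=\Bigl(\sum_{j=0}^eb_j\Bigr)^2=f(1)^2=0,
\]
where the last equality is property (ii). This step is the only real content of the lemma; everything else is bookkeeping of indices. Property (iii), that the roots of $f$ are simple, is not needed for this statement.
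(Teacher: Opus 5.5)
Your proof is correct and essentially the paper's: both establish that $g$ is palindromic and then use characteristic $2$ together with $f(1)=0$ to kill the middle coefficient. The only cosmetic difference is that the paper evaluates $g(1)=a_0+2\sum_{i\ge 1}a_i=a_0$ and compares it with $f(1)^2$, whereas you compute $a_0=\sum_j b_j^2=\bigl(\sum_j b_j\bigr)^2$ directly.
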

\proof{The polynomial $g(x)$ satisfies $g(x)=x^{2e}g(x^{-1})$. Thus one can write 
\[g(x)=a_ex^{2e}+a_{e-1}x^{2e-1}+\cdots+a_1x^{e+1}+a_0x^e+a_1x^{e-1}+\cdots+a_{e-1}x+a_e.\]
We have $a_0=g(1)=f(1)^2=0$. 
\qed}

\begin{lemma}\label{n/2odd}
Let $n$ be the smallest integer $\geq1$ such that the polynomial $g(x)$ divides $x^n+1$. 
 Then  $n$ is even, and $n/2$ is odd. 
  \end{lemma}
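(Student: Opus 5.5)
The plan is to work directly with the roots of $g$ in $\F$ and their multiplicities. Since $b_0b_e\neq0$ and $f$ has simple roots, we can write $f(x)=b_e\prod_{i=1}^e(x+\alpha_i)$ with distinct nonzero $\alpha_i\in\F$. Then
\[
g(x)=x^ef(x)f(x^{-1})=b_eb_0\prod_{i=1}^e(x+\alpha_i)\prod_{i=1}^e(x+\alpha_i^{-1}),
\]
because $x^ef(x^{-1})$ is the reciprocal polynomial of $f$ and has roots $\alpha_i^{-1}$. The first consequence is that every root of $g$ is nonzero. The second is that every root $\beta$ of $g$ has multiplicity at most $2$: $\beta$ equals at most one $\alpha_i$ and at most one $\alpha_i^{-1}$. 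Finally, since $f(1)=0$, the root $1$ occurs both among the $\alpha_i$ and among the $\alpha_i^{-1}$, so $(x+1)^2\mid g$.

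\textbf{Step 1: $n$ is even.} Since $(x+1)^2\mid g\mid x^n+1$, the polynomial $x^n+1$ has $1$ as a multiple root. Its derivative $nx^{n-1}$ must then vanish at $x=1$. In characteristic $2$ this means $n$ is even, so we write $n=2m$ and $x^n+1=(x^m+1)^2$.

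\textbf{Step 2: $m=n/2$ is odd.} Suppose instead that $m=2k$ is even, so $x^n+1=(x^k+1)^4$. Every root $\beta$ of $g$ is a root of $x^n+1$, and hence of $x^k+1$. Since $\beta$ occurs in $g$ with multiplicity at most $2$, and $(x+\beta)^2\mid(x^k+1)^2$, comparing factorizations over $\F$ gives $g\mid (x^k+1)^2=x^{2k}+1$. But $2k=n/2<n$, which contradicts the minimality of $n$. Hence $m$ is odd.

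The only delicate point is the multiplicity bound: it uses the simplicity of the roots of $f$ together with the observation that a root can coincide with at most one $\alpha_i$ and at most one $\alpha_i^{-1}$. Everything else is direct factorization in characteristic $2$. The existence of $n$ is implicit in the statement; it also follows from the same description, since the roots of $g$ are nonzero elements of a finite field, each of multiplicity at most $2$.
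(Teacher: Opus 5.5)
Your proof is correct. It rests on the same two facts the paper uses. First, the double root of $g$ at $1$ forces $n$ to be even. Second, $g$ is the product of the two separable polynomials $f(x)$ and $x^ef(x^{-1})$, so every root of $g$ has multiplicity at most $2$. Hence $g$ divides $(x^N+1)^2=x^{2N}+1$ as soon as all its roots are $N$-th roots of unity.

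The packaging differs. The paper determines $n$ exactly as $2n_0$, where $n_0$ is the least common multiple of the multiplicative orders of the roots of $f$. This $n_0$ is odd because $\F^\times$ has no elements of even order. The paper shows $n_0\mid n$ and that $n$ is even, so $2n_0\mid n$; it also shows $g\mid x^{2n_0}+1$.

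You instead argue by contradiction with minimality, using the identity $x^{4k}+1=(x^k+1)^4$ in place of the oddness of multiplicative orders. Your route never mentions orders and stays purely at the level of factorizations. The paper's route gives the explicit value $n=2n_0$.
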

\proof{Let $\alpha_1,\dots,\alpha_e$ be the roots of $f(x)$ in $\F$, and let $n_0$ be the least common multiple of the orders of $\alpha_i$ in the multiplicative group $\F^\times$. Note that $n_0$ is odd, since $\F^\times$ does not have elements of even order. 

We claim that $n=2n_0$. Indeed, since $\alpha_i^n+1=0$ for each $i$, the integer $n$ must be a multiple of $n_0$. Moreover, 
since $g(1)=g'(1)=0$, the polynomial $g(x)$ is not separable. Hence $x^n+1$ is not separable, which implies  that $n$ is even. 

Therefore, $n$ must be a multiple of $2n_0$. 
On the other hand, by assumption, both $f(x)$ and $x^ef(x^{-1})$ have only simple roots. By the choice of $n_0$, the polynomial $x^{n_0}+1$ is divisible by both $f(x)$ and $x^ef(x^{-1})$. Hence, $g(x)$ divides $x^{2n_0}+1$. 

This shows that $n\leq 2n_0$. Since $n$ is a multiple of $2n_0$, the claim follows. 
 \qed}

\medskip

Consider the $\F_p$-linearized polynomial
\[
R(x):=a_ex^{p^e}+a_{e-1}x^{p^{e-1}}+\cdots+a_1x^p.
\]
Here, $a_1,\ldots,a_e \in \F_p$ are the coefficients of
$g(x):=x^ef(x)f(x^{-1})$ 
as in Lemma \ref{coeff}. 
We shall use the same notation as in
Section \ref{max2}.
We also write $R$ for $\sum_{i=1}^e a_i \tau^i \in \F_p\{\tau^{\pm1}\}$. 
Set $E:=R+R^\ast$. 
Note that $E=\tau^{-e}g(\tau)$. 

Let $q$ be a power of $p$. 
Take an element $a\in \F_q$ and consider the $\F_q$-curve
associated with $R_a(x) := R(x) + ax$
\begin{equation}\label{exf}
C_a\colon y^p + y = x R_a(x). 
\end{equation} 

In the following theorem, we give a necessary and sufficient condition for the curve $C_a$ to be $\F_q$-extremal or $\F_q$-maximal.
We note that Theorem \ref{exRx} may be regarded as a characteristic $2$ analogue of Tatematsu's results in odd characteristic; see \cite[Theorem 1.1]{ITT}. 

\begin{theorem}\label{exRx}
Let $n$ be the integer from Lemma \ref{n/2odd}. Define  $\widetilde{\alpha}(u)\in\F_p[u^{p^{-\infty}}]$ by 
\[
\widetilde{\alpha}(u):=\sqrt{R(1)}+f'(1)+E(u). 
\]
Then the following hold: 
\begin{enumerate}
\item If there exists $a\in \F_q$ such that $C_a$  is $\F_q$-extremal (in the sense of Definition \ref{mmx}), then  $\F_{p^n}\subset \F_q$. 
\item Assume that $\F_{p^n}\subset \F_q$, and set $q=p^{ns}$. Then the curve $C_a$ is $\F_q$-extremal if and only if $a=\widetilde{\alpha}(u)^2$ for some $u\in\F_q$. 
Moreover, $C_a$ is $\F_q$-maximal if and only if 
$a=\widetilde{\alpha}(u)^2$ for some $u\in \F_q$ satisfying 
\[
\Tr_{q/2}(uR(u)+\widetilde{\alpha}(0)u)=s+1.
\]
\end{enumerate}
\end{theorem}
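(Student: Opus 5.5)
The plan is to realize $R$ as $R_F$ for an explicit $F$ with coefficients in $\F_p$, and then apply Corollary~\ref{exN}, Theorem~\ref{mainthm} and Theorem~\ref{exmax} with $q_1:=p^{n/2}$.

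\textbf{Choice of $F$.} Put $F:=f(\tau)=\sum_{i=0}^e b_i\tau^i\in\F_p\{\tau^{\pm1}\}$. Since the $b_i$ lie in $\F_p$, they are fixed by $p$-power Frobenius, so $F^\ast=f(\tau^{-1})$. Hence
\[
F^\ast F=f(\tau^{-1})f(\tau)=\tau^{-e}g(\tau)=E .
\]
The positive-degree part of $E$ is $R$, so $R_F=R$. Condition (i) holds because $b_0b_e\neq0$. Condition (ii) holds because $F^\ast(1)=\sum_i b_i=f(1)=0$.

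\textbf{Part (i).} The subring $\F_p[\tau]$ of $\F_p\{\tau^{\pm1}\}$ is commutative. For $m\ge1$, Lemma~\ref{basic}(iii) applied to $\tau^m+1$ and $g(\tau)$ shows that $\ker E=\ker g(\tau)\subset\F_{p^m}$ if and only if $\tau^m+1\in\overline{\F}\{\tau^{\pm1}\}\,g(\tau)$. Comparing degrees and using that $g$ has coefficients in $\F_p$, this should be equivalent to $g(x)\mid x^m+1$ in $\F_p[x]$, i.e.\ to $n\mid m$. I expect checking this equivalence carefully (passing from divisibility in the twisted ring to polynomial divisibility) to be the only mildly delicate point.

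Now suppose some $C_a$ is extremal. Corollary~\ref{exN} gives $V\subset\F_q$, so $n\mid[\F_q:\F_p]$, which proves (i). Conversely, if $\F_{p^n}\subset\F_q$ then $V\subset\F_q$, so condition (iv) holds.

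\textbf{Condition (iii) and the setup for Theorem~\ref{exmax}.} Next I will show $W_F^\ast=\ker f(\tau^{-1})$ lies in $\F_{q_1}$ with $q_1=p^{n/2}$. Since $f(\tau^{-1})=\tau^{-e}\,(x^ef(x^{-1}))|_{x=\tau}$, its kernel equals $\ker\bigl(x^ef(x^{-1})\bigr)(\tau)$. By the proof of Lemma~\ref{n/2odd}, $x^ef(x^{-1})$ divides $x^{n_0}+1$ with $n_0=n/2$ odd. Hence $W_F^\ast\subset\F_{p^{n/2}}=\F_{q_1}$, giving condition (iii). Writing $q=p^{ns}=q_1^{2s}$, Theorem~\ref{exmax} applies with its $n$ replaced by $s$.

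\textbf{Choice of $t_0$.} Take $t_0\in\F_{p^2}$ with $t_0^p+t_0=1$. Because $n/2$ is odd, $t_0^{q_1}=t_0+1$, so $t_0$ is admissible in Theorem~\ref{exmax}(i). Moreover $t_0^{p^{-i}}=t_0+i$ in characteristic $2$. Therefore
\[
F^\ast(t_0)=\sum_i b_i(t_0+i)=t_0f(1)+\sum_i ib_i=f'(1).
\]
Also $\gamma_F=\sum_{i<j}b_ib_j$. Since $a_i=\sum_j b_jb_{j+i}$, this sum equals $\sum_i a_i=R(1)$. Hence
\[
\alpha_F(t_0)=R(1)+f'(1)^2=\widetilde\alpha(0)^2 .
\]

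\textbf{Part (ii).} Theorem~\ref{exmax}(ii) gives
\[
T_{R,\max}\cup T_{R,\min}=\alpha_F(t_0)+E(\F_q)^2=\{\widetilde\alpha(u)^2\mid u\in\F_q\},
\]
using characteristic $2$ and $E_F=E$. This is the extremality criterion. For maximality, Theorem~\ref{exmax}(iii) gives the condition $\Tr_{q/2}(uR_{F,t_0}(u))=s+1$. Here
\[
uR_{F,t_0}(u)=uR(u)+\widetilde\alpha(0)^2u^2,
\]
and $\Tr_{q/2}(\widetilde\alpha(0)^2u^2)=\Tr_{q/2}(\widetilde\alpha(0)u)$. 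So the condition becomes $\Tr_{q/2}(uR(u)+\widetilde\alpha(0)u)=s+1$, which is the stated criterion.
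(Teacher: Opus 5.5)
Your proposal is correct and follows the paper's proof essentially step for step. It uses the same choice $F=f(\tau)$ with $F^\ast F=E$, derives (i) from Corollary~\ref{exN} together with Lemma~\ref{basic}(iii), gets the inclusion $W_F^\ast\subset\F_{p^{n/2}}$ from $x^ef(x^{-1})\mid x^{n/2}+1$, and applies Theorem~\ref{exmax} with $q_1=p^{n/2}$ and $t_0^p+t_0=1$, so that everything reduces to $\alpha_F(t_0)=R(1)+f'(1)^2=\widetilde{\alpha}(0)^2$. The step you flag, passing from divisibility in the twisted ring to $g(x)\mid x^m+1$, is routine: divide with remainder in the commutative subring $\F_p[\tau]$ and compare kernel dimensions via Lemma~\ref{basic}(ii). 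The paper passes over this point just as briefly.
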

\proof{(i) Let $V$ be the kernel of $E\colon \F\to \F$. 
By Lemma \ref{basic}(iii), $n$ is the smallest integer such that $V\subset \F_{p^n}$. Thus the condition $\F_{p^n}\subset \F_q$ is necessary for $C_a$ to be $\F_q$-extremal by Corollary \ref{exN}. 

\medskip

\noindent
(ii) From now on, we assume $\F_{p^n}\subset \F_q$, and 
write $q=p^{ns}$. Let $n_0=n/2$. 
Set $F:=f(\tau)=\sum_{i=0}^eb_i\tau^i$. Then $F^\ast=f(\tau^{-1})$, since each $b_i$ belongs to $\F_p$. Hence, $F^\ast F=E$. We use the notation from Subsection \ref{set}. 

We first show that $W_F^\ast\subset \F_{p^{n_0}}$. Indeed, since $g(x)$ divides $x^n+1$, the factor $x^ef(x^{-1})$ divides $x^n+1=(x^{n_0}+1)^2$. However, the polynomial $x^ef(x^{-1})$ has distinct roots, and hence divides $x^{n_0}+1$. This implies that one can write 
\[\tau^{n_0}+1=hF^\ast\]
for some $h\in\F_p\{\tau^{\pm1}\}$, which is equivalent to $W_F^\ast\subset \F_{p^{n_0}}$ by Lemma \ref{basic}(iii). 

Fix an element $t_0\in \F_{p^2}\subset \F_{p^n}$ satisfying $t_0^{p}+t_0=1$. Since $n_0$ is odd by Lemma \ref{n/2odd}, 
\[t_0^{p^{n_0}}+t_0=\sum_{i=0}^{n_0-1}(t_0^p+t_0)^{p^i}=n_0=1. \]
Therefore, we can apply Theorem \ref{exmax} to 
$(F,t_0,q_1)=(F,t_0,p^{n_0})$ to deduce that 
\begin{align*}
&T_{R,\max} \cup T_{R,\min}=\alpha_F(t_0)+E(\F_q)^2, \\
&T_{R,\max}=\{\,\alpha_F(t_0)+E(u)^2\mid u\in\F_q,\, \Tr_{q/2}(uR_{F,t_0}(u))=s+1\,\}. 
\end{align*}
It remains to prove that $\alpha_F(t_0)=\widetilde{\alpha}(0)^2=R(1)+f'(1)^2$. 
Since $b_i\in\F_p$, the term $\gamma_F$ is equal to 
 \[\sum_{0\leq i<j\leq e}b_ib_j=R(1). 
 \]
 Since $t_0^{p^{-1}}=t_0+1$, we have $t_0^{p^{-i}}=t_0+i$ for every integer $i \ge 0$. Thus,  
 \[
 F^\ast(t_0)=\sum_{i=0}^e b_i t_0^{p^{-i}}=t_0\sum_{i=0}^eb_i+\sum_{i=0}^eib_i. 
 \]
The term $\sum_{i=0}^eb_i=f(1)$ is  zero by assumption, and the term $\sum_{i=0}^eib_i$ is equal to $f'(1)$. Hence  
\begin{equation}\label{Ft0}
F^\ast(t_0)=f'(1). 
\end{equation}
This proves 
\[\alpha_F(t_0)=R(1)+f'(1)^2,  \]
and the claim follows. 
\qed}

\medskip

In the following proposition,
we consider an affine curve of the form 
\begin{equation}\label{xpax}
X_a\colon y^p+y=x^{p+1}+ax^2
\end{equation} 
for general $a \in \F_q$. When $a=0$, the smooth compactification of $X_0$ is $\F_{p^2}$-isomorphic to the Hermitian curve defined by the homogenous equation 
\[
X^{p+1}+Y^{p+1}+Z^{p+1}=0
\]
in $\mathbb{P}^2_{\F_{p^2}}$. 
This curve is known to be $\F_{p^2}$-maximal \cite[Example 6.3.6]{St}. Note that $X_a$ is 
a particular case of \eqref{exf}
by taking $f(x) = x+1$ and $R_a(x) = x^p + a x$.

\begin{proposition}\label{Hermitian curve}
Let $p$ be a power of $2$, and let $q=p^m$ be a power of $p$.   We assume that $m$ is even, and write $m=2m_0$. 

Let $a\in \F_q$, and define $\alpha:=\Tr_{q/p^2}(a)$.  Consider the curve $X_a$ defined in \eqref{xpax}. Then the following hold: 
\begin{enumerate}
\item If $\alpha\neq0$, then $X_a$ is not $\F_q$-extremal (in the sense of Definition \ref{mmx}). The ${\rm Fr}_q$-eigenvalues are 
\[\pm(\sqrt{-1})^{t(\alpha^{p+1})}2^{[\F_q:\F_2]/2}, \]
each with the same multiplicity. Here, $t(\alpha^{p+1})$ denotes the integer in $\{0,1\}$ congruent to $\Tr_{p/2}(\alpha^{p+1})$ modulo $2$. 
\item If $\alpha=0$, then the curve $X_a$ is $\F_q$-extremal. Furthermore, $X_a$ is $\F_q$-maximal if and only if 
\[m_0+\Tr_{p/2}(\beta)=1,\]
 where $\beta\in\F_p$ is given by 
\[
\beta:=\sum_{\substack{0\leq i<j\leq m-1 \\ 2\mid j,\ 2\nmid i}}a^{p^i}a^{p^j}. 
\]
\end{enumerate}
\end{proposition}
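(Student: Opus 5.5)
We will realize $X_a$, or its base change to $\F_{q^2}$, as a curve $D_{F,t}$ and then read off its Frobenius eigenvalues from Theorem~\ref{FEV1}.

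\textbf{Setup.} Take $F:=\tau+1$, so that $F^\ast=\tau^{-1}+1$. This $F$ satisfies the conditions after \eqref{Fdef}:
\begin{itemize}
\item condition (i) holds since $b_0b_1=1$;
\item condition (ii) holds since $F^\ast(1)=1+1=0$;
\item condition (iii) holds since $W_F^\ast=\ker(x+x^{1/p})=\F_p\subset\F_q$.
\end{itemize}
Moreover $V_F=\ker(\tau+\tau^{-1})=\F_{p^2}\subset\F_q$ because $m$ is even. From the definitions we get $R_F(x)=x^p$, $\gamma_F=1$ and $\alpha_F(t)=1+(t+t^{1/p})^2$. Hence $X_a=D_{F,t}$ whenever $a=\alpha_F(t)$ with $t\in\F_q$.

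By Proposition~\ref{Ima}, with $W_{F,q}=\F_p$ and $u^{p+1}=u^2$ for $u\in\F_p$, the image $\alpha_F(\F_q)$ consists of those $a$ with $u^2\bigl(m+\Tr_{q/p}(a)\bigr)=0$ for all $u\in\F_p$. Since $m$ is even, this is the condition $\Tr_{q/p}(a)=\alpha+\alpha^p=0$. We also have $\Tr_{q/p}(a)=\Tr_{p^2/p}(\alpha)$.

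\textbf{Case (ii), $\alpha=0$.} Here $a=\alpha_F(t)$ for some $t\in\F_q$, and we apply Theorem~\ref{mainthm}, equivalently Theorem~\ref{recipe for ex curve}(i). For $v\in\F_p$ we have $2(v,0)=(0,v^2)$, so $Q_q(v)=\psi_p(m_0v)$. On the other hand $\psi_q(tv)=\psi_p(v\Tr_{q/p}(t))$. From $a=\alpha_F(t)$ one gets $\Tr_{q/p}(t)$ in terms of $\Tr_{q/p}(a)$ and $m$, and this should yield $t\in S_F$, i.e. extremality.

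For maximality we must compute $Q_q(t)$ and compare it with $-(\sqrt{-1})^{m_0[\F_p:\F_2]}$. Write $\Tr_{q/2}(t,0)$ via Lemma~\ref{ij}, as in Lemma~\ref{Trt_0}(iii). The second Witt component is a sum of products $t^{p^i}t^{p^j}$, and we rewrite it in terms of $a$ using $t+t^{1/p}=1+\sqrt a$. The expected outcome is the sign $(-1)^{m_0+\Tr_{p/2}(\beta)+1}$, with the odd/even index pattern in $\beta$ coming from solving $t^{p}=t+1+\sqrt{a}^{\,p}$ iteratively. We expect this bookkeeping of Witt carries to be the main computation.

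\textbf{Case (i), $\alpha\neq0$.} Now $\Tr_{q/p}(a)\neq0$, so $X_a$ is not of the form $D_{F,t}$ over $\F_q$, and we base change to $\F_{q^2}$. There $\Tr_{q^2/p}(a)=2\Tr_{q/p}(a)=0$, so $a=\alpha_F(t)$ for some $t\in\F_{q^2}$. Theorem~\ref{FEV1} over $\F_{q^2}$ then gives the ${\rm Fr}_{q^2}$-eigenvalues $Q_{q^2}(t+v)^{-1}(-1-\sqrt{-1})^{2[\F_q:\F_2]}$ for $v\in\F_p$.

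We next check that these eigenvalues are all equal, i.e. $t\in S_F$ over $\F_{q^2}$. Then we identify the common value as $(-1)^{t(\alpha^{p+1})}2^{[\F_q:\F_2]}$. This requires computing $Q_{q^2}(t)$, again by Lemma~\ref{ij} and a norm computation as in Lemma~\ref{Trt_0}; the quantity $\alpha^{p+1}=\Nr_{p^2/p}(\alpha)$ should appear naturally there.

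It follows that the ${\rm Fr}_q$-eigenvalues are $\pm(\sqrt{-1})^{t(\alpha^{p+1})}2^{[\F_q:\F_2]/2}$. To get equal multiplicities of the two signs, note that the map \eqref{uR(u)} for $R_a$ on $V=\F_{p^2}$ is nonzero. Indeed, $u\mapsto\Tr_{q/p}(u^{p+1}+au^2)$ equals $u\mapsto\Tr_{p^2/p}(u^2\alpha)$ up to a term that vanishes since $m$ is even, and this is nonzero when $\alpha\ne0$. So Theorem~\ref{Npm}(i) gives $|X_a(\F_q)|=q$, and the trace of ${\rm Fr}_q$ on $H^1_c$ vanishes. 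A vanishing trace forces equal multiplicities of $+$ and $-$, and it also rules out extremality by Lemma~\ref{mmeCa}.

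The main obstacle in both cases is the explicit evaluation of $Q_q(t)$ or $Q_{q^2}(t)$ in terms of $a$.
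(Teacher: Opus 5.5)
Your framework is sound. Realizing $X_a$ as a $D_{F,t}$, reading the eigenvalues off Theorem~\ref{FEV1}, and getting equal multiplicities in (i) from $|X_a(\F_q)|=q$ (Theorem~\ref{Npm}(i)) all work. The gap is that the proposal stops exactly where the proof has to start. Beyond extremality, the whole content of the proposition is the explicit value of two quantities: $Q_q(t)$ for $t\in\F_q$ with $t+t^{1/p}=1+\sqrt a$ in (ii), and $Q_{q^2}(t)$ in (i). You describe both only as ``expected outcomes'' that ``should appear naturally''. As written, neither the phase $(\sqrt{-1})^{t(\alpha^{p+1})}$ nor the criterion $m_0+\Tr_{p/2}(\beta)=1$ is proved. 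There is also a false step: $\alpha\neq0$ does not imply $\Tr_{q/p}(a)=\alpha+\alpha^p\neq0$ (take $\alpha\in\F_p^\times$). This is harmless because you base change anyway, but ``$X_a$ is not of the form $D_{F,t}$ over $\F_q$'' holds only for your particular choice $F=\tau+1$.

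The paper avoids the Witt-carry bookkeeping in both parts.
\begin{itemize}
\item In (i) it stays over $\F_q$ by rescaling to $F=\alpha^{-(p+1)/4}(\tau+1)\alpha^{1/2}$. This $F$ still has $F^\ast F=\tau+\tau^{-1}$, $F^\ast(1)=0$ and $W_F^\ast=\F_p$. Now $\alpha_F(t)=\alpha^{(1-p)/2}(t^{1/p}+t+1)^2$ and $W_F=\alpha^{-1/2}\F_p$, on which $\Tr_{p^2/p}(\alpha u^2)$ vanishes. So Proposition~\ref{Ima} gives $t\in\F_q$ with $X_a=D_{F,t}$ over $\F_q$. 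The eigenvalues are $\pm Q_q(t)^{-1}(-1-\sqrt{-1})^{[\F_q:\F_2]}$, so $Q_q(t)$ is needed only up to sign, i.e.\ only through $\Tr_{q/2}(t)$. That is immediate from $\Tr_{q/p}(t)=\alpha^{(p+1)/4}+m_0$.
\item In (ii) the paper invokes Theorem~\ref{exRx} with $f(x)=x+1$, where the Witt computation was already done via the base point $t_0^p+t_0=1$ and Lemma~\ref{Trt_0}(iii). This gives: $X_a$ is maximal if and only if $a=u^{p^2}+u$ with $\Tr_{q/2}(u^{p+1})=m_0+1$. The identity $u^{p+1}+(u^{p+1})^p=au^p$ then telescopes to $\Tr_{q/p}(u^{p+1})=\beta$ when $\alpha=0$.
\end{itemize}

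Your base-change route for (i) can in fact be finished in a few lines. Write $Q_{q^2}(t)=\xi_q\bigl((t,0)+(t^q,0)\bigr)=Q_q(s)\,\psi_q(t^{q+1})$, where $s=t^q+t=(\alpha+\alpha^p)^{1/2}\in\F_p$. Writing $t^{p^{-k}}=t+c_k$, one gets $\Tr_{q/p}(t^{q+1})=m_0(1+s)+\alpha^{(p+1)/2}$. Hence $Q_{q^2}(t)=(-1)^{m_0[\F_p:\F_2]+\Tr_{p/2}(\alpha^{p+1})}$, which gives the required ${\rm Fr}_{q^2}$-eigenvalue. Some such computation has to be written out. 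For (ii) you still need an actual argument, for instance via Theorem~\ref{exRx}.
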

\proof{Set $R(x)=x^p$. The kernel of $E=\tau+\tau^{-1}$ is  $\F_{p^2}$.  Note that for $x\in \F_{p^2}$, the element $x^{p+1}$ lies in $\F_p$. Therefore, for every $x\in\F_{p^2}$, we have 
\begin{equation}\label{trxpax}
\Tr_{q/p}(xR(x)+ax^2)=2m_0x^{p+1}+\Tr_{p^2/p}(\Tr_{q/p^2}(a)x^2)
=\Tr_{p^2/p}(\alpha x^2). 
\end{equation}
By \eqref{N0pm}, the curve $X_a$ is $\F_q$-extremal if and only if the form above vanishes identically on $\F_{p^2}$, which is equivalent to $\alpha=0$. This proves the first parts of (i) and (ii). 

\medskip 

Assume that $\alpha\neq0$. In this case, set 
\[
F:=\alpha^{(p-1)/4}\tau+\alpha^{(1-p)/4}=\alpha^{-(p+1)/4}(\tau+1)\alpha^{1/2}. \]
Since $\alpha^{-(p+1)/4}\in\F_p$, we have $F^\ast=\alpha^{(1-p)/4}(\tau^{-1}+1)$. Hence, 
\[
 F^\ast(1)=0,\qquad W_F^\ast=\F_p,\qquad F^\ast F=E. 
\]

Furthermore, we have 
\[
\alpha_F(t)=\alpha^{(1-p)/2}+F^\ast(t)^2=\alpha^{(1-p)/2}(t^{p^{-1}}+t+1)^2. \]
We have $W_F=\alpha^{-1/2}\F_p$. Thus, the form \eqref{trxpax} vanishes on $W_{F,q}=W_F$. 
 Therefore, by Proposition \ref{Ima}, we can write 
\begin{equation}\label{a=alpha t}
    a=\alpha_F(t)=\alpha^{(1-p)/2}(t^{p^{-1}}+t+1)^2
\end{equation}
for some $t\in \F_q$. 

We apply Theorem \ref{FEV1} to compute the ${\rm Fr}_q$-eigenvalues. For $v\in W_F^\ast=\F_p$, we have 
\[Q_q(t+v)Q_q(t)^{-1}=\psi_q(tv)Q_q(v). 
\]
Since $X_a$ is not $\F_q$-extremal, the ratio above is not identically $1$. Consequently, by Lemma \ref{ghom}, half of the eigenvalues are given by \[Q_q(t)^{-1}(-1-\sqrt{-1})^{[\F_q:\F_2]},\] 
and the other half  by its negative. Note that 
\[Q_q(t)=\xi_2\bigl(\Tr_{q/2}(t),\ast\bigr)=\pm\xi_2\bigl(\Tr_{q/2}(t),0\bigr). \] 
Therefore, to complete the proof of~(i), it suffices to compute $\Tr_{q/2}(t)$. Using \eqref{a=alpha t}, 
we compute   
\begin{align*}
\Tr_{q/p}(t)&=\sum_{i=0}^{m_0-1}(t^{p^{-1}}+t)^{p^{-2i}}\\
&=\sum_{i=0}^{m_0-1}(a\alpha^{(p-1)/2})^{p^{-2i}/2}+m_0=\alpha^{(p+1)/4}+m_0. 
\end{align*}
Thus, 
\[\Tr_{q/2}(t)=\Tr_{p/2}(\alpha^{p+1})+m_0[\F_p:\F_2],\]
and the remaining part of~(i) follows from 
$(-1-\sqrt{-1})^{[\F_q:\F_2]}=(\sqrt{-1})^{m_0[\F_p:\F_2]}\cdot 2^{[\F_q:\F_2]/2}$. 

\medskip

Now assume that $\alpha=0$. 
Set $f(x):=x+1$. Then 
\[g(x)=xf(x) f(x^{-1})=x^2+1,\qquad E=\tau^{-1} g(\tau)=\tau+\tau^{-1}. \]
Thus, the curve $X_a$ is the special case of \eqref{exf} where $f(x)=x+1$. Since 
$\widetilde{\alpha}(u)=E(u)$ and 
$\widetilde{\alpha}(0)=0$,  
Theorem \ref{exRx}(ii) implies that 
the curve $X_a$ is $\F_q$-maximal if and only if there exists $u\in \F_q$ satisfying $a=u^{p^2}+u$ and 
\[\Tr_{q/2}(u^{p+1})=m_0+1.
\]
Therefore, it suffices to prove $\Tr_{q/p}(u^{p+1})=\beta$ for any $u$ satisfying $a=u^{p^2}+u$. We have 
\[u^{p+1}+(u^{p+1})^p=u^{p+1}+(a+u)u^p=au^p. \]
Hence, since $\alpha=\Tr_{q/p^2}(a)=0$, 
\begin{align*}
\Tr_{q/p}(u^{p+1})=\sum_{i=0}^{m_0-1}(au^p)^{p^{2i}}
&=\sum_{i=0}^{m_0-1}a^{p^{2i}}(a+a^{p^2}+\cdots+a^{p^{2(i-1)}}+u)^p \\
&=\beta+\Tr_{q/p^2}(a) u^p=\beta. 
\end{align*}
Thus the assertion follows. 
\qed}

\subsection{Periods and Parities}\label{periods}
Let $p$ be a power of $2$. 
Let $\overline{C}$ be a geometrically connected smooth projective curve over $\mathbb{F}_p$. Assume that it is supersingular; equivalently, assume that $\overline{C}$ is $\mathbb{F}_{p^N}$-minimal for some integer $N \geq 1$. 

In this subsection, we consider the following notion. 
\begin{definition}
    \begin{enumerate}
        \item The \emph{$\F_p$-period} $\mu(\overline{C})$ of $\overline{C}$ is the smallest integer $n \geq 1$ such that $\overline{C}$ is $\mathbb{F}_{p^n}$-extremal.
        \item The \emph{$\F_p$-parity} $\delta(\overline{C})$ of $\overline{C}$ is defined to be $-1$ if $\overline{C}$ is $\F_{p^{\mu(\overline{C})}}$-maximal,  and $1$ if $\overline{C}$ is $\F_{p^{\mu(\overline{C})}}$-minimal\footnote{See the footnote in \S\ref{Periods and Parities}}. 
    \end{enumerate}
\end{definition}
 A similar notion was introduced for abelian varieties in \cite[Definition 4.1]{KP} and \cite[Section 2]{SX}. 

 Let 
 \[(\mu,\delta)\in \Z_{\geq1}\times\{\pm1\}. \]
 We consider the following problem: for which pairs $(\mu,\delta)$ does there exist an $\F_p$-linearized polynomial 
 \[
 R(x)=\sum_{i=0}^ea_ix^{p^i}\in \F_p[x], 
 \]
 where $e\geq1$ and $a_e\neq0$, such that 
 \[(\mu,\delta)=(\mu(\overline{C}_R),\delta(\overline{C}_R))?\] 

We begin with some well-known cases. 
\begin{proposition}\label{well known case for period}
Let $\mu$ be an integer $\geq1$. Then the following hold: 
\begin{enumerate}
\item If $\mu$ is odd, then there exists no $\F_p$-linearized polynomial $R(x)$ as above such that the $\F_p$-curve $\overline{C}_R$ has $\F_p$-period $\mu$. 
\item Assume that $\mu$ is even, and write $\mu=2m$. Then the $\F_p$-curve 
\[
C_R\colon y^p+y=x^{p^m+1}
\]
satisfies 
\[
(\mu,-1)=(\mu(\overline{C}_R),\delta(\overline{C}_R)). 
\]
\end{enumerate}
\end{proposition}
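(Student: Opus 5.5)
For part (i), I would reduce to Lemma~\ref{even}. Suppose $\overline{C}_R$ is $\F_{p^\mu}$-extremal. Then Corollary~\ref{exN}, applied over $\F_{p^\mu}$, gives $V:=\ker(R+R^\ast)\subset\F_{p^\mu}$. Here one takes $a=a_0$ and $R-a_0x$ as the base polynomial, noting that $E$ does not depend on $a_0$. Lemma~\ref{even}, applied with $q=p^\mu$, then shows that $[\F_{p^\mu}:\F_p]=\mu$ is even. So no odd period is possible. The only subtlety is that Lemma~\ref{even} is stated for $R$ without linear term. This is harmless, because $R$ and $R-a_0x$ give the same $E$.

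For part (ii), take $R(x)=x^{p^m}$, so $E=\tau^m+\tau^{-m}$. Then $V=\ker E=\{x:x^{p^{2m}}=x\}=\F_{p^{2m}}$. Lemma~\ref{even} already forces the period to be even, and Corollary~\ref{exN} requires $V\subset\F_{p^n}$ for extremality over $\F_{p^n}$. Hence $2m\mid n$ for every extremal level $n$, and in particular $\mu\geq 2m$.

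It remains to show that $\overline{C}_R$ is $\F_{p^{2m}}$-maximal. Over $\F_{p^{2m}}$, every $u\in V$ satisfies $u^{p^m+1}\in\F_{p^m}$. So $\Tr_{p^{2m}/p}(u^{p^m+1})=2\Tr_{p^m/p}(u^{p^m+1})=0$, which means the curve is extremal by Theorem~\ref{Npm}. To decide maximal versus minimal, I would use the standard fact that $y^p+y=x^{p^m+1}$ is a quotient of the Hermitian curve $y^{p^m}+y=x^{p^m+1}$ via $y\mapsto \Tr_{p^m/p}(y)$. The Hermitian curve is $\F_{p^{2m}}$-maximal \cite[Example 6.3.6]{St}, and a quotient of a maximal curve is maximal. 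Alternatively, one can count points directly: for $x\in\F_{p^{2m}}$, $N(x)=x^{p^m+1}\in\F_{p^m}$ and $\Tr_{p^{2m}/p}(N(x))=0$. So every $x$ gives $p$ points, and the total is $p^{2m}\cdot p+1=1+q+(p-1)p^m\sqrt{q}$ with $q=p^{2m}$. This attains the Hasse--Weil upper bound, since $g=p^m(p-1)/2$ by Lemma~\ref{g_C}.

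This gives $\mu=2m$ and $\delta=-1$. The point count is the cleanest route and is what I would write. The only step needing care is confirming that no smaller level $n<2m$ is extremal, and Corollary~\ref{exN} handles this via $V=\F_{p^{2m}}$.
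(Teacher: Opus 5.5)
Your proposal is correct. Part (i) is the same as the paper's argument (Corollary~\ref{exN} plus Lemma~\ref{even}). In part (ii), the lower bound $\mu(\overline{C}_R)\geq 2m$ is also the paper's argument: $V=\F_{p^{2m}}$ forces $2m\mid n$ at every extremal level $n$.

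You differ from the paper in the proof of $\F_{p^{2m}}$-maximality. The paper realizes $C_R$ as the quotient of $y^{p^m}+y=x^{p^m+1}$ via $(x,y)\mapsto\bigl(x,\sum_{i=0}^{m-1}y^{p^i}\bigr)$. It obtains maximality of that Hermitian-type curve from Proposition~\ref{Hermitian curve}(ii), with $p$ replaced by $p^m$ and $a=0$. It then uses the fact that a quotient of a maximal curve is maximal. You mention this route but choose the direct count, and the count is valid. For $x\in\F_{p^{2m}}$ we have $x^{p^m+1}\in\F_{p^m}$, so its trace to $\F_p$ vanishes and each $x$ gives exactly $p$ affine points. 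Adding the single rational point at infinity and using $g=p^m(p-1)/2$ from Lemma~\ref{g_C}, you get $1+p^{2m+1}=1+q+2g\sqrt{q}$ with $q=p^{2m}$. This also makes your preliminary appeal to Theorem~\ref{Npm} redundant.

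Your route is self-contained and elementary. It needs only the Artin--Schreier trace criterion and Lemma~\ref{g_C}. It bypasses the quotient-of-maximal fact, which rests on divisibility of $L$-polynomials. It also bypasses Proposition~\ref{Hermitian curve}, which depends on Theorem~\ref{exRx} and the cohomological machinery of Section~\ref{CFE}. The paper's route is shorter given what it has already proved. It also explains the maximality structurally, by exhibiting $\overline{C}_R$ as a quotient of the Hermitian curve.
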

\begin{proof}
    (i) The assertion follows from Corollary \ref{exN} and Lemma \ref{even}. 

    (ii) Observe that the curve $C$ is a quotient of the curve 
    \[
    D\colon y^{p^m}+y=x^{p^m+1}  
    \]
    via the morphism 
    \[
    D\to C_R,\quad (x,y)\mapsto \left(x,\sum_{i=0}^{m-1}y^{p^i}\right). 
    \]
    By Proposition~\ref{Hermitian curve}(ii) applied to $p=p^m$ and $a=0$, we see that $D$ is $\F_{p^\mu}$-maximal. Hence, its quotient $C_R$ is also $\F_{p^\mu}$-maximal. 

    It remains to prove $\mu=\mu(\overline{C}_R)$. Let $N$ be a positive integer such that $C_R$ is $\F_{p^N}$-extremal. Then, by Corollary \ref{exN}, we must have $V=\F_{p^{\mu}}\subset \F_{p^N}$. This proves $\mu\leq N$, and hence $\mu=\mu(\overline{C}_R)$. 
\end{proof}

It remains to treat the case where $\mu$ is even and $\delta=1$. In this case, it is not true in general that there exists some $R$ for which 
\begin{equation}\label{mu-1=}
    (\mu,1)=(\mu(\overline{C}_R),\delta(\overline{C}_R)). 
\end{equation}
\begin{lemma}
    The case $\mu=2$ is impossible. Furthermore, when $p=2$, the case $\mu=4$ is also impossible. 
\end{lemma}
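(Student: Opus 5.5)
The plan is to descend to $\F_p$ and read the parity off the $\F_p$-Frobenius eigenvalues. Since $R$ has coefficients in $\F_p$, we will apply Theorem~\ref{detR} over the base field $\F_p$ rather than $\F_{p^\mu}$. This produces a pair $(F,t)$ defined over $\F_p$, or over a small extension, and Theorem~\ref{FEV1} then gives the ${\rm Fr}_p$-eigenvalues explicitly. Squaring them (for $\mu=2$), or raising them to the fourth power (for $\mu=4$), gives the ${\rm Fr}_{p^\mu}$-eigenvalues. We will show these are forced to be $-p^{\mu/2}$, never $+p^{\mu/2}$.

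\emph{Case $\mu=2$.} Suppose $\overline{C}_R$ is $\F_{p^2}$-extremal. Then Corollary~\ref{exN} gives $V\subset\F_{p^2}$, and \eqref{N0pm} over $q=p^2$ gives $\Tr_{p^2/p}(uR(u))=0$ for all $u\in V$. The first step is to translate this into condition $(2)$ of Theorem~\ref{detR} with $q=p$, namely $\Tr_{p^2/p}((u^p+u)R(u))=0$. For this we use that $E$ has coefficients in $\F_p$, so that $u\mapsto u^p$ preserves $V$ and $\omega$. We then expand $\Tr_{p^2/p}(u^pR(u))$ using Lemma~\ref{b4}, which should identify it with $\Tr_{p^2/p}(uR(u))$ up to a trace of $u\cdot(\text{something in }\ker E)$. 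Granting this, Theorem~\ref{detR} yields $(F,t)$ over $\F_p$ with $R=R_{F,t}$. Theorem~\ref{FEV1} with $q=p=2^k$ then gives ${\rm Fr}_p$-eigenvalues $\tau_v=Q_p(t+v)^{-1}(-1-\sqrt{-1})^{k}$. Hence the ${\rm Fr}_{p^2}$-eigenvalues are
\[
\tau_v^2=Q_p(t+v)^{-2}(2\sqrt{-1})^{k}.
\]
Now $Q_p(x)^2=\xi_2(2\Tr_{p/2}(x,0))=\psi_p(x)$, which is $\pm1$ and in general a sign times a power of $\sqrt{-1}$. The remaining task is a direct computation. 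We check that $\tau_v^2=p$ would force $\psi_p(t+v)=(\sqrt{-1})^{k}$. This is impossible for $k$ odd. For $k$ even we must pin down $\Tr_{p/2}(t+v)$ modulo $2$ via $F^\ast(1)=0$ and the definition of $\alpha_F$, and show that it always gives $-p$.

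\emph{Case $p=2$, $\mu=4$.} Here $V\subset\F_{16}$ but $V\not\subset\F_4$. We use the same strategy with base field $\F_4$. First we verify condition $(2)$ of Theorem~\ref{detR} for $q=4$, using $\F_{16}$-extremality together with Galois invariance of $\omega$ (Lemma~\ref{b5}(iv)). This gives $(F,t)$ over $\F_4$. The eigenvalues are then $Q_4(t+v)^{-1}(-1-\sqrt{-1})^2=Q_4(t+v)^{-1}\cdot 2\sqrt{-1}$, so their squares, the ${\rm Fr}_{16}$-eigenvalues, are $-4\,Q_4(t+v)^{-2}=-4\,\psi_4(t+v)$. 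Minimality would require $\psi_4(t+v)=-1$ for every $v\in W_F^\ast$. Since $0\in W_F^\ast$, this says $\Tr_{4/2}(t)=1$. Since $1\in W_F^\ast$ (because $F^\ast(1)=0$), it also says $\Tr_{4/2}(t+1)=1$. But $\Tr_{4/2}(1)=0$, so these two conditions are consistent, and a finer argument is needed. Instead we will compare with Theorem~\ref{3}, which shows that $\Tr_{4/2}(t)=([\F_4:\F_2]/2)+1=0$ gives maximality over $\F_{16}$. The goal is then to show that extremality over $\F_{16}$ forces that trace condition, again via Proposition~\ref{Ima} and Lemma~\ref{ghom}.

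The main obstacle is the descent step in both cases: showing that $\F_{p^\mu}$-extremality of a curve defined over $\F_p$ (resp.\ $\F_4$) implies condition $(2)$ of Theorem~\ref{detR} over the smaller field. If that step fails, we would fall back on a direct count over $\F_p$. Because $R$ has coefficients in $\F_p$, each $\psi$-isotypic part $H^1_c(\A^1_\F,{\mathcal L}_\psi(xR(x)))$ is ${\rm Fr}_p$-stable. Writing the ${\rm Fr}_p$-eigenvalues as $\pm\sqrt{-p}$ or $\pm\sqrt{p}$, we would compare their sum with the character sum bound of Theorem~\ref{Npm} over $\F_p$. The hope is that the dimension count $d=\dim V\cap\F_p$ together with the evenness from Lemma~\ref{even} excludes the minimal sign.
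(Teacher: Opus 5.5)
There is a genuine gap, and it is the step you flagged: the descent to the smaller field is not merely unproven, it is false in exactly the cases you need.

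\textbf{The case $\mu=2$.} Here $e=1$ (since $2e=\dim V\le 2$) and $V=\F_{p^2}$. $\F_{p^2}$-extremality says $\Tr_{p^2/p}(uR(u))=a_0\Tr_{p^2/p}(u)^2=0$ for all $u\in\F_{p^2}$, i.e.\ $R=a_1x^p$.

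Now suppose $(F,t)$ is defined over $\F_p$ with $e=1$. The condition $F^\ast(1)=0$ forces $F=b(1+\tau)$ with $b\in\F_p^\times$. Moreover $F^\ast(t)=b(t+t^{1/p})=0$ for $t\in\F_p$, so $R_{F,t}=b^2(x^p+x)$, which has $a_0=a_1\neq0$. Hence no $\F_{p^2}$-extremal $R$ is of the form $R_{F,t}$ over $\F_p$.

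Concretely, for $R=x^p$ you get $\Tr_{p^2/p}((u^p+u)R(u))=\Tr_{p^2/p}(u)^2$, which is not identically zero. So condition~(2) of Theorem~\ref{detR} with $q=p$ fails, and no manipulation via Lemma~\ref{b4} can rescue it.

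Your own formula confirms this. If the descent held, the ${\rm Fr}_{p^2}$-eigenvalues would be $\psi_p(t+v)(2\sqrt{-1})^{k}$ with $v$ running over $W_F^\ast=\F_p$. These take both signs, so no such curve would ever be $\F_{p^2}$-extremal. That contradicts the $\F_{p^2}$-maximality of $y^p+y=x^{p+1}$.

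\textbf{The case $p=2$, $\mu=4$.} The same failure occurs. $R=x^4$ is $\F_{16}$-extremal of period $4$ (in fact maximal). But with $q=4$ one has $V_4^\perp=\F_4$ (the image of $u\mapsto u^4+u$), and $\Tr_{4/2}(u\cdot u^4)=\Tr_{4/2}(u)$ on $\F_4$. So condition~(3) of Theorem~\ref{detR} fails.

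Incidentally, period $4$ does not force $V\not\subset\F_4$: for $R=x^2+x$ one has $V=\F_4$.

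Your planned rescue via Theorem~\ref{3} is also unavailable. For $e=2$ its hypothesis (iv) fails over $\F_4$, and in any case it only gives a sufficient condition for maximality, not the forcing statement you need.

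\textbf{The fallback.} Point counts over $\F_p$ cannot close the gap. For $R=a_1x^p$, the map $u\mapsto\Tr_{p/p}(uR(u))=a_1u^2$ on $V\cap\F_p=\F_p$ is nonzero, so $|C_R(\F_p)|=p$ and the ${\rm Fr}_p$-trace vanishes. This is equally consistent with eigenvalues $\pm\sqrt{p}$ or $\pm\sqrt{-p}$ in equal numbers, so counting over $\F_p$ cannot detect the sign.

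\textbf{The missing idea.} The parameter $t$ attached to an extremal curve lives in the large field ($t^p+t=1$, resp.\ $t^4+t=1$). So Theorem~\ref{FEV1} must be applied over $\F_{p^\mu}$ itself. The paper does this after using $\dim V=2e\le\mu$ to cut down to a short list:
\begin{itemize}
\item For $\mu=2$: $R=ax^p+bx$. A linear substitution in $x$ over $\F_{p^2}$ reduces this to $x^p+b_1x$, which Proposition~\ref{Hermitian curve} settles.
\item For $p=2$, $\mu=4$: the candidates are $x^2$ and $x^2+x$, again handled by Proposition~\ref{Hermitian curve}, and $x^4+ax$. For the latter, $F=\tau^2+1$ over $\F_{16}$ gives $S_F=\{t\mid t^4+t=1\}=S_{F,-}$ by Lemma~\ref{Trt_0}(iii), so no minimal twist exists.
\end{itemize}
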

\begin{proof}
    Let $\mu=2$, and suppose that there exists $R$ for which \eqref{mu-1=} holds. 
    Since $\overline{C}_R$ is $\F_{p^2}$-extremal, we have $V:=\ker(R+R^\ast)\subset \F_{p^2}$. Since $\dim_{\F_p}V\geq2$, we must have $\dim_{\F_p}V=2$. Hence, $R(x)=ax^p+bx$ for some $a\in \F_p^\times$ and $b\in\F_p$. 

    In the field $\F_{p^2}$, one can write $a=c^{p+1}$. Then, under the change of coordinates $(x,y)\mapsto (cx,y)$, the curve $C_R$ is $\F_{p^2}$-isomorphic to $C_{R_1}$ where $R_1(x)=x^p+b_1x$. However, by Proposition~\ref{Hermitian curve}(i), the curve $C_{R_1}$ cannot be $\F_{p^2}$-minimal. 

    \medskip 
    
    \noindent
 Assume now that $\mu=4$ and $p=2$, and suppose that there exists $R$ for which \eqref{mu-1=} holds. Since $V\subset \F_{p^4}$ and $\F_p=\{0,1\}$, there are three possibilities for $R$: 
\begin{align*}
x^p, \quad x^p+x,\quad x^{p^2}+ax\qquad(a=0,1). 
\end{align*}
By Proposition~\ref{Hermitian curve}, we have $\delta(\overline{C}_R)=-1$ in the first two cases. 

It remains to consider the third case. 
Set $F:=\tau^2+1$. Then $R=R_F$. Let $q=p^4$. Using the fact that $W_F^\ast=\F_{p^2}$, one can show that  
\[
S_F=\{t\in\F_{p^4}\mid t^{p^2}+t=1\}. 
\]
By Lemma~\ref{Trt_0}(iii), it follows that $S_F=S_{F,-}$, and hence $S_{F,+}=\emptyset$. Therefore, $C_R$ is not $\F_{p^4}$-minimal. 
\end{proof}

The following proposition, together with the lemma above, gives a complete answer in the case where $\mu$ is a multiple of $4$. 
\begin{proposition} \label{not well known case for period}
Assume that $\mu=2m$ where $m$ is even. Then, in each of the following cases, the given $R(x)$ satisfies 
\[
(\mu,1)=(\mu(\overline{C}_R),\delta(\overline{C}_R)). 
\]
\begin{enumerate}
    \item If $m=2$ and $\F_p\neq\F_2$, then 
    \[
    R(x):=x^p+ax, 
    \]
    where $a\in \F_p^\times$ satisfies $\Tr_{p/2}(a)=0$. 
\item If $m\geq4$, then 
    \[
R(x):=x^p+x^{p^3}+\cdots+x^{p^{m-1}}=\sum_{i=1}^{m/2}x^{p^{2i-1}}. 
\]
\end{enumerate}
\end{proposition}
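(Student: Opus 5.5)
The plan is to treat the two cases separately. In each case I would establish two things: first that $\overline{C}_R$ is not $\F_{p^N}$-extremal for any $N<\mu$, and second that it is $\F_{p^\mu}$-minimal. For the first point the tool is Corollary~\ref{exN}: extremality over $\F_{p^N}$ forces $V:=\ker(R+R^\ast)\subset\F_{p^N}$. Odd $N$ are excluded by Lemma~\ref{even}.

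\emph{Case (i).} Here $R=x^p+ax$ with $a\in\F_p^\times$ and $\Tr_{p/2}(a)=0$, and $E=\tau+\tau^{-1}$, so $V=\F_{p^2}$. The only candidate period below $4$ is $N=2$. Over $q=p^2$ I would apply Proposition~\ref{Hermitian curve} with $m_0=1$: there $\alpha=\Tr_{p^2/p^2}(a)=a\neq0$, so $X_a=C_R$ is not $\F_{p^2}$-extremal by part (i). Over $q=p^4$ we have $m_0=2$ and $\alpha=a^{p^2}+a=0$, so part (ii) shows $C_R$ is $\F_{p^4}$-extremal. The only pair $(i,j)$ with $0\le i<j\le 3$, $i$ odd and $j$ even, is $(1,2)$, which gives $\beta=a^{p}a^{p^2}=a^2$. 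Hence $\Tr_{p/2}(\beta)=\Tr_{p/2}(a)=0$, and $m_0+\Tr_{p/2}(\beta)=0\neq1$ in $\F_2$. So the curve is $\F_{p^4}$-minimal, which gives $(\mu,\delta)=(4,1)$.

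\emph{Case (ii).} Here $R=\sum_{i=1}^{m/2}\tau^{2i-1}$ with $m\ge4$ even. The key observation is the factorization $E=F^\ast F$ with
\[
F:=\sum_{j=0}^{m-1}\tau^j .
\]
To check it, note that $F^\ast F=\sum_{d}(m-|d|)\tau^d$, and $m-|d|$ is odd exactly when $d$ is odd. Moreover $F^\ast(1)=m=0$. Writing $F=(\tau^m+1)(\tau+1)^{-1}$ formally, we have $E=\tau^{-(m-1)}g(\tau)$ with
\[
g(x)=\frac{x^{2m}+1}{x^2+1}=\Bigl(\frac{x^m+1}{x+1}\Bigr)^2 .
\]
By Lemma~\ref{basic}(iii), $V\subset\F_{p^N}$ if and only if $g\mid x^N+1$. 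Write $m=2^am'$ with $m'$ odd. The root $1$ of $g$ has multiplicity $2(2^a-1)$, and the other roots are the nontrivial $m'$-th roots of unity. Requiring $g\mid x^N+1$ then forces $2^{a+1}m'=2m$ to divide $N$, i.e.\ $\mu(\overline{C}_R)\ge 2m$. Conversely $V\subset\F_{p^{2m}}$, and $W_F^\ast\subset\F_{p^m}$ because $F^\ast$ divides $\tau^{-m}+1$. So $F$ satisfies conditions (i)--(iv) with $q=p^{2m}$, and $R=R_F$ since $R$ has no linear term.

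It remains to decide whether $a=0$ lies in $T_{R,\min}$. I would first confirm extremality, i.e.\ $0\in T_{R,\max}\cup T_{R,\min}$, via \eqref{N0pm}: check that $\Tr_{q/p}(uR(u))$ vanishes on $V$. Then Theorem~\ref{mainthm} provides $t\in S_F$ with $\alpha_F(t)=0$. Since $\gamma_F=\binom{m}{2}\equiv m/2$ in $\F_2$, this amounts to $F^\ast(t)^2=m/2$. The parity is then read off from $Q_q(t)$ compared with $(\sqrt{-1})^{[\F_q:\F_2]/2}$.

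The step I expect to be the main obstacle is this final evaluation of $Q_q(t)$. Theorem~\ref{exmax} does not apply directly: an element $t_0$ with $t_0^{p^m}+t_0=1$ need not satisfy $\alpha_F(t_0)=0$. I would still use Theorem~\ref{exmax} with $q_1=p^m$, $n=1$ and such a $t_0\in\F_{p^{2m}}$. This gives $T_{R,\max}\cup T_{R,\min}=\alpha_F(t_0)+E(\F_q)^2$, so I would solve $E(u)^2=\alpha_F(t_0)$ for some $u\in\F_q$. Computing $F^\ast(t_0)=\sum_j t_0^{p^{-j}}$ should be tractable through the relation $t_0^{p^{-m}}=t_0+1$. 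Finally, I would show that $\Tr_{q/2}(uR_{F,t_0}(u))=1$, which rules out maximality and hence gives minimality. I expect this to reduce, through the alternating structure of $R$, to a parity count in $m/2$; the rest of the argument is routine.
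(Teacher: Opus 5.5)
Your case (i) is correct. Applying Proposition~\ref{Hermitian curve}(ii) over $\F_{p^4}$ with $\beta=a^2$ is a legitimate variant of the paper's argument, which instead squares the $\F_{p^2}$-eigenvalues $\pm p$ from part (i). In case (ii), several steps are also sound: the factorization $E=F^\ast F$ with $F=\sum_{j=0}^{m-1}\tau^j$, the verification of conditions (i)--(iv), and your root-multiplicity argument giving $V\subset\F_{p^N}$ if and only if $2m\mid N$. (The paper gets the lower bound on the period from $\dim V=2(m-1)$ and the fact that $E$ is not a binomial.)

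The gap is that you never prove the statement that yields $\delta=1$, namely that $C_R$ is $\F_{p^{2m}}$-minimal. You call it the main obstacle and only predict that it reduces to a parity count. The detour you propose cannot close it by itself. Since $E(u)=F^\ast(F(u))$, the condition $\alpha_F(t_0)+E(u)^2=0$ is literally $\alpha_F(t_0+F(u))=0$. So going through Theorem~\ref{exmax} with $q_1=p^m$ just returns you to the original task: exhibit an explicit element of $\alpha_F^{-1}(0)$ and evaluate $Q_q$ there. Until that is done, the claimed value $\Tr_{q/2}(uR_{F,t_0}(u))=1$ is unsupported.

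The missing idea is to look for $t$ in $\F_{p^2}$ rather than $\F_{p^{2m}}$. Take $t_0\in\F_{p^2}$ with $t_0^p+t_0=1$. Then $t_0^{p^{-i}}=t_0+i$, so $F^\ast(t_0)=mt_0+\binom m2=\binom m2\in\F_2$, and hence $\alpha_F(t_0)=\binom m2+\binom m2^2=0$. Thus $C_R=D_{F,t_0}$ exactly. Theorem~\ref{FEV1} then gives the eigenvalues $Q_q(t_0+v)^{-1}(\sqrt{-1})^{[\F_q:\F_2]/2}2^{[\F_q:\F_2]/2}$ for $v\in W_F^\ast=\ker\Tr_{p^m/p}$. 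Write $Q_q(t_0+v)=\psi_q(t_0v)Q_q(t_0)Q_q(v)$ and evaluate each factor:
\begin{itemize}
\item $\psi_q(t_0v)=1$, since $t_0,v\in\F_{p^m}$ gives $\Tr_{q/p^m}(t_0v)=2t_0v=0$;
\item $Q_q(v)=\xi_{p^m}(0,v^2)=\psi_p(\Tr_{p^m/p}(v)^2)=1$;
\item $Q_q(t_0)=Q_{p^2}(t_0)^m=\bigl(-(\sqrt{-1})^{[\F_p:\F_2]}\bigr)^m=(\sqrt{-1})^{[\F_q:\F_2]/2}$, by Lemma~\ref{Trt_0}(iii) and $2\mid m$.
\end{itemize}
So every ${\rm Fr}_q$-eigenvalue equals $\sqrt q$. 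This gives extremality and minimality at once, with no separate check via \eqref{N0pm} needed.
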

\begin{proof}
(i) In this case, by Proposition~\ref{Hermitian curve}(i) applied to $q=p^2$, the curve $C_R=X_a$ is not $\F_{p^2}$-extremal, and the ${\rm Fr}_{p^2}$-eigenvalues are given by 
\[
\pm 2^{[\F_{p}:\F_2]}. 
\]
Therefore, $C_R$ is $\F_{p^4}$-minimal. This proves 
\[
(\mu(\overline{C}_R),\delta(\overline{C}_R))=(4,1). 
\]

\medskip

\noindent
(ii) In this case, define 
\[
E:=R+R^\ast,\qquad V:=\ker E. 
\]
Let $N$ be the smallest integer $\geq1$ such that 
\[
V\subset \F_{p^N}. 
\]
We first claim that $\mu\leq N$. Indeed, by comparing dimensions, we have 
\[
2(m-1)=\dim_{\F_p}
V\leq N.\]
On the other hand, by Lemma~\ref{even}, $N$ must be even. Hence, it suffices to show that $2(m-1)<N$. 

Suppose, to the contrary, that $2(m-1)=N$. Then, we must have $V=\F_{p^N}$. By Lemma~\ref{basic}(iii), it follows that 
\[
E=a\tau^M(\tau^N+1)
\]
for some $a\in \F^\times$ and $M\in\Z$. However, this is impossible, since $E$ has more than two nonzero coefficients. 
Thus, we conclude that 
\[\mu=2m\leq N. \]

By Corollary~\ref{exN}, for every integer $n\geq1$, extremality of $C_R$ over $\F_{p^n}$ implies that $V\subset \F_{p^n}$. Hence, to prove the assertion, it suffices to show that $C_R$ is $\F_{p^\mu}$-minimal. 

\medskip 

We show that $C_R$ is $\F_{p^\mu}$-minimal. Set $e:=m-1$ and $q:=p^\mu$. Define 
    \[
    F:=\sum_{i=0}^{m-1}\tau^i\in\F_p\{\tau^{\pm1}\}. 
    \]
    Since $m$ is even, we have $F^\ast(1)=0$. We claim that 
    \begin{equation}\label{WF=kerTr}
        W_F^\ast:=\ker F^\ast=\ker(\Tr_{p^m/p}\colon \F_{p^m}\to \F_p).
    \end{equation}
Indeed, the inclusion $\supset$ is clear from a direct computation, and then the equality follows by comparing cardinalities. 

Consequently, the element $F$ satisfies conditions~\textup{(i)--(iii)} stated after \eqref{Fdef}. Observe that 
    \[
    R+R^\ast=F^\ast F. 
    \]
Moreover, 
\[
\alpha_F(t)=\frac{m(m-1)}{2}+F^\ast(t)^2. 
\]
    Let $t_0\in\F_{p^2}$ be such that $t_0^{p}+t_0=1$. Then 
    \[
    F^\ast(t_0)=\sum_{i=0}^{m-1}t_0^{p^{-i}}=\sum_{i=0}^{m-1}(t_0-i)=\frac{m(m-1)}{2}, 
    \]
    and hence $\alpha_F(t_0)=0$. Thus, we obtain $D_{F,t_0}=C_R$. 

Therefore, by Theorem~\ref{FEV1}(iv), the ${\rm Fr}_q$-eigenvalues of $\overline{C}_R$ are given by 
\[
Q_q(t_0+v)^{-1}(\sqrt{-1})^{[\F_q:\F_2]/2}\cdot 2^{[\F_q:\F_2]/2}\qquad (v\in W_F^\ast). 
\]
It remains to show that $Q_q(t_0+v)=(\sqrt{-1})^{[\F_q:\F_2]/2}$ for all $v\in W_F^\ast$. 
We compute 
\[
Q_q(t_0+v)=\psi_q(t_0v)Q_q(t_0)Q_q(v). 
\]
By \eqref{WF=kerTr} and $t_0\in\F_{p^2}$, it follows that  
\begin{align*}
\psi_q(t_0v)&=1,\\
    Q_q(t_0)&=Q_{p^2}(t_0)^m=\bigl(-(\sqrt{-1})^{[\F_p:\F_2]}\bigr)^m,\qquad(\text{Lemma~\ref{Trt_0}(iii)})\\
    Q_q(v)&=\xi_{p^m}(\Tr_{p^{2m}/p^m}(v,0))=\xi_{p^m}(0,v^2)=\psi_p\bigl(\Tr_{p^m/p}(v)^2\bigr)=1. 
\end{align*}
Since $m$ is even, we obtain $Q_q(t_0+v)=(\sqrt{-1})^{[\F_q:\F_2]/2}$, and the curve $C_R$ is $\F_{p^\mu}$-minimal. This completes the proof. 
\end{proof}

When $\mu=2m$ with $m\geq1$ odd, we have the following result. 
\begin{proposition}\label{2odd impossible}
Let $m\geq1$ be an odd integer, and let 
\[
R(x)=\sum_{i=0}^ea_ix^{p^i}\in \F_p[x], 
\]
where $e\geq1$ and $a_e\neq0$. Assume that there exists an element 
\[F\in \F_p\{\tau^{\pm1}\} 
\]
as in \eqref{Fdef} such that 
\begin{itemize}
    \item $F$ satisfies conditions \textup{(i)--(ii)} stated thereafter, 
    \item $W_F=\ker F\subset \F_{p^m}$, and 
    \item $R+R^\ast=F^\ast F$. 
\end{itemize}
Then $C_R$ is not $\F_{p^{2m}}$-minimal. In particular, 
\[
(\mu(\overline{C}_R),\delta(\overline{C}_R))\neq (2m,1)
\]
for such $R$. 
\end{proposition}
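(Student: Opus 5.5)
The plan is to reduce to the explicit description of $S_{F,-}$ in Theorem~\ref{exmax}, applied with $q:=p^{2m}$, $q_1:=p^m$ and $n=1$. Suppose, for contradiction, that $C_R$ is $\F_q$-minimal.

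First I would put $R$ into the form $R_{F,t}$.
\begin{itemize}
\item By Corollary~\ref{exN}, extremality gives $V:=\ker(R+R^\ast)\subset\F_q$. Since $V=\ker(F^\ast F)$, condition (iv) holds for $F$ over $\F_q$.
\item Since $F$ has coefficients in $\F_p$, Lemma~\ref{kfin}(i) applied over $k=\F_{p^m}$, together with $\dim W_F=\dim W_F^\ast$ (Lemma~\ref{basic}(ii)), upgrades $W_F\subset\F_{p^m}$ to $W_F^\ast\subset\F_{p^m}$. So $F$ satisfies (i)--(iv) over $\F_q$, and the hypothesis $W_F^\ast\subset\F_{q_1}$ of Theorem~\ref{exmax} holds.
\item Comparing coefficients of positive degree in $R+R^\ast=F^\ast F=R_F+R_F^\ast$ gives $R=R_F+a_0x$. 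Theorem~\ref{mainthm} then says that $\F_q$-minimality is equivalent to $a_0=\alpha_F(t)$ for some $t\in S_{F,+}$.
\end{itemize}
So it suffices to show that $S_{F,+}=\emptyset$, i.e.\ $S_F=S_{F,-}$.

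For $t_0$ I would take an element of $\F_{p^2}$ with $t_0^p+t_0=1$. Because $m$ is odd, $t_0^{p^m}+t_0=m=1$, exactly as in the proof of Theorem~\ref{exRx}. Moreover $t_0^{p^{-i}}=t_0+i$, so $F^\ast(t_0)=\sum_i ib_i\in\F_p$; this uses $\sum_i b_i=F(1)=0$, which follows from $F^\ast(1)=0$ since the $b_i$ lie in $\F_p$. Hence $\alpha_F(t_0)\in\F_p$, and $R_{F,t_0}$ has coefficients in $\F_p$. By Theorem~\ref{exmax}(ii),(iii) with $n=1$, we have $S_F=t_0+F(\F_q)$, and $t_0+F(u)\in S_{F,-}$ if and only if $\Tr_{q/2}(uR_{F,t_0}(u))=0$. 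So everything reduces to showing that
\[
\Tr_{q/2}\bigl(uR_{F,t_0}(u)\bigr)=0 \qquad\text{for every } u\in\F_q \text{ with } t_0+F(u)\in S_{F,+}\text{-candidates, i.e.\ with } \alpha_F(t_0+F(u))=a_0.
\]

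The main obstacle is that this trace need not vanish for arbitrary $u\in\F_q$, so the choice of $u$ must be used.
\begin{itemize}
\item The vanishing is clear when $u\in\F_{p^m}$. Then $uR_{F,t_0}(u)\in\F_{p^m}$ because the coefficients lie in $\F_p$, and $\Tr_{q/p^m}$ of an element of $\F_{p^m}$ is $2(\cdot)=0$.
\item The value $\Tr_{q/2}(uR_{F,t_0}(u))$ depends only on $t=t_0+F(u)$. Indeed, $Q_q(t)$ is determined by $t$ and, by the proof of Theorem~\ref{exmax}(iii), equals $-(\sqrt{-1})^{[\F_q:\F_2]/2}\psi_q(uR_{F,t_0}(u))$.
\item It is also constant on the fibre $\alpha_F^{-1}(a_0)=t+W_F^\ast$. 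This follows from Lemma~\ref{ptor} combined with $t\in S_F$, which gives $Q_q(t+v)=Q_q(t)$.
\end{itemize}
Hence it is enough to find one $u\in\F_{p^m}$ with $\alpha_F(t_0+F(u))=a_0$, that is, $E(u)^2=c:=a_0+\alpha_F(t_0)\in\F_p$.

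To find such a $u$, I would use Lemma~\ref{kfin}(ii) over $\F_{p^m}$: $c^{1/2}\in E(\F_{p^m})$ if and only if $c^{1/2}$ is $\Tr_{p^m/p}$-orthogonal to $V\cap\F_{p^m}$. This orthogonality is where the extremality condition \eqref{N0pm}, namely $\Tr_{q/p}(uR_{a_0}(u))=0$ for all $u\in V$, must be used, specialized to $u\in V\cap\F_{p^m}$. If it only holds up to the problematic vector $1\in W_F$, whose trace $m=1$ is nonzero, I would instead adjust $t_0$ by an element of $\F_{p^m}$ in $t_0+F(\F_q)$ to absorb the discrepancy. The final step, once such a $u$ exists, is to conclude $Q_q(t)=-(\sqrt{-1})^{[\F_q:\F_2]/2}$, which contradicts minimality.
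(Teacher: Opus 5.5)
Your reductions up to the last step are sound, but the last step carries the whole content of the proposition, and the route you propose for it cannot work. Because $F$ has coefficients in $\F_p$ and $F^\ast(1)=0$, you have $F(1)=0$. Hence $1\in W_F\subset V\cap\F_{p^m}$, and $\Tr_{p^m/p}(1)=m=1$. By Lemma~\ref{kfin}(ii) over $\F_{p^m}$, an element $c^{1/2}\in\F_p$ therefore lies in $E(\F_{p^m})$ only if $c=0$. So your target ``find $u\in\F_{p^m}$ with $E(u)^2=c$'' is equivalent to $a_0=\alpha_F(t_0)$, which you never prove.

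Neither of your two suggested tools gives this. First, the condition in \eqref{N0pm} is vacuous on $V\cap\F_{p^m}$: for $w\in\F_{p^m}$ the element $wR_{a_0}(w)$ lies in $\F_{p^m}$, so $\Tr_{q/p^m}$ kills it. Second, replacing $t_0$ by $t_0+x$ with $x\in\F_{p^m}$ changes $c^{1/2}$ by $F^\ast(x)$. But $F^\ast(\F_{p^m})\cap\F_p=0$, by the same trace computation with $1\in W_F$, so no such adjustment can absorb $c\neq0$. Relatedly, your first reduction to $S_{F,+}=\emptyset$ is false in general. The $\F_2$-quadratic form $u\mapsto\Tr_{q/2}(uR_{F,t_0}(u))$ has polar form $\Tr_{q/2}(uE(v))$, which is nonzero unless $V=\F_q$. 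Only the fibre over $a_0$ can be controlled.

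The missing idea is an argument that uses the oddness of $m$ through $\F_{p^2}\not\subset\F_{p^m}$. The paper argues as follows.
\begin{itemize}
\item Take $t\in S_{F,+}$ with $\alpha_F(t)=a_0$. For $v\in W_F^\ast\subset\F_{p^m}$ one has $Q_q(v)=\psi_{p^m}(v)$ and $\psi_q(tv)=\psi_{p^m}((t^{p^m}+t)v)$.
\item Lemma~\ref{kerW_F} over $\F_{p^m}$ then gives $t^{p^m}+t+1=F(s)$ with $s\in\F_{p^m}$.
\item Write $\tau^m+1=gF^\ast$ with $g\in\F_p\{\tau^{\pm1}\}$. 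Since $F^\ast(t)\in\F_p$ (because $a_0,\gamma_F\in\F_p$), this gives $F(s)\in\F_p$.
\item Suppose $F(s)\neq0$. Since $W_F\subset\F_{p^m}$ has codimension one in $\ker((\tau+1)F)$, the element $s\in\F_{p^m}\setminus W_F$ forces $\ker((\tau+1)F)\subset\F_{p^m}$.
\item But $F(1)=0$ and $\F_p\{\tau\}$ is commutative, so $(\tau+1)F$ is divisible by $(\tau+1)^2=\tau^2+1$. Hence its kernel contains $\F_{p^2}$, which is impossible for $m$ odd.
\item Thus $t^{p^m}+t=1$, and Lemma~\ref{Trt_0}(iii) gives $Q_q(t)=-(\sqrt{-1})^{[\F_q:\F_2]/2}$ directly, contradicting $t\in S_{F,+}$.
\end{itemize}

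In your notation, this is exactly the statement that $u^{p^m}+u\in W_F$ when $t=t_0+F(u)$. Once you have it, $t-t_0\in\F_{p^m}$ and $F^\ast(t-t_0)\in\F_p$, hence $F^\ast(t-t_0)=0$ and $c=0$. Your framework then closes with $u=0$, but without this step it does not.
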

\begin{proof}
Set $q:=p^{2m}$. Suppose, for contradiction, that $C_R$ is $\F_{p^{2m}}$-minimal. Then, by Corollary~\ref{exN}, we have 
\[
\ker (R+R^\ast)\subset \F_{p^{2m}}. 
\]
Hence, $F$ satisfies conditions \textup{(i)--(iv)} stated after \eqref{Fdef}, and thus Theorem~\ref{mainthm} applies. Consequently, there exists $t\in S_{F,+}$ such that 
\[
a_0=\alpha_F(t). 
\]

Note that, by Lemma~\ref{kfin}(i), the second assumption on $F$ is equivalent to 
\begin{equation}\label{WF in pm}
    W_F^\ast:=\ker F^\ast \subset \F_{p^m}. 
\end{equation}
Hence, for every $v\in W_F^\ast$, 
\[
Q_q(v)=\psi_{p^m}(v). 
\]
On the other hand, since $t\in S_{F}$, this is equal to 
\[
\psi_q(tv)=\psi_{p^m}((t^{p^m}+t)v). 
\]
Thus, by Lemma~\ref{kerW_F}, there exists $s\in \F_{p^m}$ such that 
\[
t^{p^m}+t+1=F(s). 
\]
By \eqref{WF in pm}, there exists $g\in \F_p\{\tau^{\pm1}\}$ such that 
\[
x^{p^m}+x=g(F^\ast(x)). 
\]
Since $\alpha_F(t)=\gamma_F+F^\ast(t)^2=a_0\in\F_p$ and $\gamma_F\in \F_p$, we have 
\[
F(s)=g(F^\ast(t))+1\in\F_p, 
\]
and hence 
\[
s\in W':=\ker\bigl((\tau+1)F\bigr)\cap \F_{p^m}. 
\]

Assume first that $F(s)=0$. In this case, $t^{p^m}+t=1$, and Lemma~\ref{Trt_0}(iii) implies that $Q_q(t)=-(\sqrt{-1})^{[\F_q:\F_2]/2}$. This contradicts the assumption that $t\in S_{F,+}$. 

Assume now that $F(s)\in\F_p^\times$. In this case, $W'$ is strictly larger than $W_F$. On the other hand, $W_F$ has codimension $1$ in $\ker\bigl((\tau+1)F\bigr)$. Thus, 
\[
W'=\ker\bigl((\tau+1)F\bigr), 
\]
and hence 
\[
\ker\bigl((\tau+1)F\bigr)\subset \F_{p^m}. 
\]
By $F(x) \in \F_p[x]$, we have 
$F(1)=F^\ast(1)=0$. 
Since $F(x)$ is $\F_p$-linearized, we have 
$\tau+1\mid F$. Hence,  
\[
\F_{p^2}\subset \ker\bigl((\tau+1)F\bigr)\subset \F_{p^m}. 
\]
This is impossible because $m$ is odd. 
\end{proof}

When $\F_p$ contains a primitive $m$-th root of unity, the polynomial $R$ always admits such a factorization. 
\begin{proposition}\label{2odd impossible 2}
Let $m\geq1$ be an integer such that the polynomial $x^m+1$ splits completely over $\F_p$. 
Let 
\[
R(x)=\sum_{i=0}^ea_ix^{p^i}\in\F_p[x], 
\]
  where $e\geq1$ and $a_e\neq0$. Assume that 
  \[
  V:=\ker(R+R^\ast)\subset \F_{p^{2m}}. 
  \]
Then $R$ admits an element $F$ satisfying the conditions in Proposition~\ref{2odd impossible}. 
\end{proposition}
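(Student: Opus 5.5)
The plan is to exploit the fact that all coefficients lie in $\F_p$. Because $a^p=a$ for $a\in\F_p$, the subring $\F_p\{\tau^{\pm1}\}$ is commutative and isomorphic to the Laurent polynomial ring $\F_p[\tau^{\pm1}]$. The adjoint acts on it by $\tau\mapsto\tau^{-1}$. So everything reduces to factoring an ordinary polynomial over $\F_p$.

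Write $E:=R+R^\ast$. Its $\tau^0$-coefficient is $a_0+a_0=0$, so $E=\tau^{-e}g(\tau)$ with
\[
g(x)=a_ex^{2e}+\cdots+a_1x^{e+1}+a_1x^{e-1}+\cdots+a_e\in\F_p[x],
\]
exactly as in Lemma~\ref{coeff}. This $g$ is palindromic, $x^{2e}g(x^{-1})=g(x)$, and has zero middle coefficient. Since $p$ is a power of $2$, the hypothesis that $x^m+1$ splits completely over $\F_p$ forces $m$ to be odd. Otherwise $x^m+1=(x^{m/2}+1)^2$ would have repeated roots, whereas splitting requires $m\mid p-1$, which is odd. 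Hence
\[
x^{m}+1=\prod_{\zeta\in\mu_m}(x+\zeta),\qquad \mu_m\subset\F_p^\times,
\]
with distinct factors, and the only self-inverse element of $\mu_m$ is $1$.

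First step: locate the roots of $g$. Since $V=\ker E\subset\F_{p^{2m}}$, Lemma~\ref{basic}(iii) gives $E=h(\tau^{2m}+1)$ up to a unit. Translating to polynomials, $g$ divides $x^{2m}+1=(x^m+1)^2$, so
\[
g=a_e\prod_{\zeta\in\mu_m}(x+\zeta)^{n_\zeta},\qquad n_\zeta\le2.
\]
Palindromicity gives $n_\zeta=n_{\zeta^{-1}}$. Next, $g(1)=2\sum_i a_i=0$, so $n_1\ge1$. Roots other than $1$ occur in inverse pairs with equal multiplicities, and $\deg g=2e$ is even, so $n_1$ is even. Hence $n_1=2$.

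Second step: build a monic polynomial $f$ with simple roots in $\mu_m$ such that $f(x)\cdot x^{e}f(x^{-1})=\lambda\, g(x)$ for some $\lambda\in\F_p^\times$. The choice of $f$ is as follows.
\begin{itemize}
\item Take the factor $x+1$, accounting for $n_1=2$.
\item For each pair $\{\zeta,\zeta^{-1}\}$ with $\zeta\neq1$ and $n_\zeta=1$, take the factor $x+\zeta$.
\item For each such pair with $n_\zeta=2$, take the factor $(x+\zeta)(x+\zeta^{-1})$.
\end{itemize}
The reversed polynomial $x^ef(x^{-1})$ has the inverse roots, up to the scalar $f(0)$. So the product has exactly multiplicities $n_\zeta$, the degree of $f$ is $e$, and $f$ has simple roots in $\mu_m$. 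Since $\F_p$ is perfect of characteristic $2$, choose $c\in\F_p^\times$ with $c^2f(0)=a_e$. Then set $F:=c\,f(\tau)$.

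Verification. We have $F^\ast=c\,f(\tau^{-1})$, so
\[
F^\ast F=c^2\tau^{-e}\bigl(\tau^ef(\tau^{-1})\bigr)f(\tau)=\tau^{-e}g(\tau)=E.
\]
The coefficient conditions follow directly from the construction.
\begin{itemize}
\item Condition (i) holds: $b_e=c\neq0$ and $b_0=cf(0)\neq0$.
\item Condition (ii) holds: $F^\ast(1)=c\sum_i b_i'=c\,f(1)=0$, where $f=\sum_i b_i'x^i$.
\item $W_F\subset\F_{p^m}$: since $f$ has simple roots in $\mu_m$, $f\mid x^m+1$ in $\F_p[x]$. Hence $\tau^m+1=h\,F$ in $\F_p\{\tau^{\pm1}\}$, and so $\ker F\subset\ker(\tau^m+1)=\F_{p^m}$.
\end{itemize}

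The main point needing care is the multiplicity bookkeeping in the first step, namely showing $n_1=2$ exactly. This is where the vanishing middle coefficient, giving $g(1)=0$, and the fact that $m$ is odd, making $1$ the unique self-inverse root, both enter. It is what makes $f(1)=0$, and hence $F^\ast(1)=0$, possible with simple roots.
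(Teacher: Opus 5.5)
Your argument is correct when $m$ is odd, and it takes a different route from the paper. The paper works on the symplectic space $V$ with the Frobenius $\varphi$. It shows $(\ker N)^\perp\subset\ker N$ for $N=\varphi^m+\mathrm{id}_V$, then finds a $\varphi$-stable maximal totally isotropic subspace of $\ker N/(\ker N)^\perp$ by the eigenvector induction of Lemma~\ref{mti stable}. It pulls this back to a $\varphi$-stable Lagrangian $W\subset V\cap\F_{p^m}$ and obtains $F$ from Corollary~\ref{dec2}. You instead use the commutativity of $\F_p\{\tau^{\pm1}\}$ to reduce everything to factoring the palindromic polynomial $g$. This is more elementary and makes $F$ explicit, whereas the paper's route is uniform in $m$. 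There is one small slip: Lemma~\ref{basic}(iii) gives $\tau^{2m}+1=hE$, not $E=h(\tau^{2m}+1)$. The conclusion $g\mid x^{2m}+1$ that you actually use is the right one.

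The genuine gap is your reduction to odd $m$. ``Splits completely'' means factoring into linear factors, with repetitions allowed, so the hypothesis does not force $m$ to be odd. For example, $x^2+1=(x+1)^2$ splits over every $\F_p$. In general, $x^m+1$ splits completely if and only if the odd part $m'$ of $m$ divides $p-1$, so your claim that splitting requires $m\mid p-1$ is false. The statement includes even $m$, and the paper's proof covers it, since Lemma~\ref{mti stable} only needs an eigenvector.

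For even $m$ your construction actually fails, because $f$ cannot be taken with simple roots. Take $m=2$ and $R(x)=x^{p^2}$. Then $V=\F_{p^4}$, $g=(x+1)^4$ (so $n_1=4$, not $\le 2$), and the only admissible $f$ is $(x+1)^2$, giving $F=\tau^2+1$.

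The repair is to replace ``simple roots'' by ``$f\mid x^m+1$''. Write $m=2^km'$ with $m'$ odd. Then $x^{2m}+1=\prod_{\zeta\in\mu_{m'}}(x+\zeta)^{2^{k+1}}$, so $n_\zeta\le 2^{k+1}$. Put $k_1:=n_1/2$, which is $\ge1$ because $n_1$ is even and positive. For each pair $\zeta\neq\zeta^{-1}$, split $n_\zeta=k_\zeta+k_{\zeta^{-1}}$ with both parts $\le 2^k$. Then $f:=\prod_\zeta(x+\zeta)^{k_\zeta}$ has the following properties:
\begin{itemize}
\item it divides $(x^{m'}+1)^{2^k}=x^m+1$;
\item it has degree $e$ and satisfies $f(1)=0$;
\item it satisfies $f(x)\cdot x^ef(x^{-1})=f(0)a_e^{-1}g(x)$.
\end{itemize}
With this $f$, the rest of your verification goes through unchanged. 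Your odd-$m$ construction is the case $k=0$.
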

\begin{proof}
Since $R$ has coefficients in $\F_p$, the Frobenius ${\rm Fr}_p$ on $\F$ restricts to an $\F_p$-linear automorphism $\varphi$ of $V$. 
By Corollary~\ref{dec2}, it suffices to find a maximal totally isotropic subspace $W\subset V$ such that $\varphi(W)=W$ and $W\subset \F_{p^m}$. 

Define $N:=\varphi^m+{\rm id}_V$. We claim that 
\[
(\ker N)^\perp\subset\ker N. 
\]
Indeed, for every $v\in (\ker N)^\perp$ and every $u\in V$, 
\[
\omega(Nv,u)=\omega(v,Nu). 
\]
Since $V\subset\F_{p^{2m}}$, we have $N^2=0$, 
and hence 
\[
\Ima N\subset\ker N. 
\]
Since $v \in (\ker N)^{\perp}$ and 
$Nu \in \ker N$, we obtain $\omega(Nv,u)=\omega(v,Nu)=0$. Since $u$ is arbitrary and $\omega$ is nondegenerate, it follows that $Nv=0$, proving the claim. 

Set $X:=\ker N/(\ker N)^\perp$. Then the form $\omega$ naturally induces a nondegenerate symplectic form $\omega_X$ on $X$. Moreover, $\varphi$ naturally induces an $\F_p$-linear automorphism $\varphi_X$ of $X$ by Lemma \ref{b5}(iv). 
By Lemma~\ref{mti stable} below, $X$ admits a maximal totally isotropic subspace $W'$ such that $\varphi_X(W')=W'$. Since $\ker N=V\cap\F_{p^m}$, the inverse image $W$ of $W'$ in $\ker N$ satisfies 
the required properties. 

This completes the proof. 
\end{proof}
\begin{lemma}\label{mti stable}
Let $k$ be a field, and let $X$ be a finite-dimensional $k$-vector space equipped with a nondegenerate symplectic form $\omega_X$. Let $\varphi_X$ be a $k$-linear automorphism of finite order $m$ satisfying 
\[
\omega_X(\varphi_X(u), \varphi_X(v))=\omega_X(u, v)\quad\text{for all }u,v\in X. 
\]
    Assume that $x^m-1$ splits completely over $k$. Then $X$ admits a maximal totally isotropic subspace $W'$ such that $\varphi_X(W')=W'$. 
\end{lemma}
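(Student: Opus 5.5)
The plan is to argue by induction on $\dim_k X$, splitting off a $\varphi_X$-stable isotropic line at each step. Since $X$ carries a nondegenerate symplectic form, $\dim X$ is even. When $\dim X=0$ we take $W'=0$.

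For the inductive step, assume $\dim X\geq 2$. Since $\varphi_X^m=\mathrm{id}_X$, the minimal polynomial of $\varphi_X$ divides $x^m-1$. By assumption $x^m-1$ splits completely over $k$, so the minimal polynomial has a root $\lambda\in k$. Hence $\varphi_X$ has an eigenvector $v\neq 0$ with $\varphi_X(v)=\lambda v$. Set $L:=kv$. Since $\omega_X$ is alternating, $L$ is totally isotropic, so $L\subset L^\perp$, and $\dim L^\perp=\dim X-1$ by nondegeneracy. Because $\varphi_X$ preserves $\omega_X$ and $\varphi_X(L)=L$, we get $\varphi_X(L^\perp)=L^\perp$: if $u\in L^\perp$, then $\omega_X(\varphi_X u,v)=\lambda^{-1}\omega_X(\varphi_X u,\varphi_X v)=\lambda^{-1}\omega_X(u,v)=0$. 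Here $\lambda\neq 0$ since $\varphi_X$ is an automorphism.

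Now put $Y:=L^\perp/L$. Since $L$ is exactly the radical of $\omega_X|_{L^\perp}$, the form $\omega_X$ descends to a nondegenerate symplectic form $\omega_Y$ on $Y$, with $\dim Y=\dim X-2$. The automorphism $\varphi_X$ induces a $k$-linear automorphism $\varphi_Y$ of $Y$ preserving $\omega_Y$. Its order $m'$ divides $m$, so $x^{m'}-1$ divides $x^m-1$ and therefore also splits completely over $k$. Thus the hypotheses hold for $(Y,\omega_Y,\varphi_Y)$, and by induction there is a maximal totally isotropic subspace $W''\subset Y$ with $\varphi_Y(W'')=W''$, of dimension $(\dim X-2)/2$.

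Let $W'\subset L^\perp$ be the inverse image of $W''$. Then $W'$ is $\varphi_X$-stable, because both $L^\perp$ and $L$ are and $W''$ is $\varphi_Y$-stable. It is totally isotropic: for $u,u'\in W'$ we have $\omega_X(u,u')=\omega_Y(\bar u,\bar u')=0$. Finally, $\dim W'=1+(\dim X-2)/2=\dim X/2$, so $W'$ is maximal totally isotropic. This completes the induction.

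I do not expect a serious obstacle. The only points needing care are that the eigenvalue exists in $k$ because of the splitting hypothesis, and that this hypothesis passes to the induced automorphism, which holds because its order divides $m$. Note that in characteristic $2$ the polynomial $x^m-1$ may have repeated roots; this is harmless, since only the existence of one root in $k$ is used.
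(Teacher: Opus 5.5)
Your proof is correct and is the same argument as the paper's: induction on $\dim X$, using an eigenvector of $\varphi_X$ to obtain a stable isotropic line $L$, then applying the induction hypothesis to $L^\perp/L$ and taking the inverse image. You also spell out details the paper leaves implicit, namely that $L^\perp$ is $\varphi_X$-stable and that the induced automorphism has order dividing $m$, so the splitting hypothesis carries over to $L^\perp/L$.
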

\begin{proof}
We prove the assertion by induction on $\dim X$. The case $\dim X=0$ is trivial. 

Assume that $\dim X>0$. Since $x^m-1$ splits completely, $\varphi_X$ admits a nonzero eigenvector $u\in X$. Set $L:=\langle u\rangle$. This is totally isotropic and stable under $\varphi_X$. Therefore, the quotient space 
$L^\perp /L$ carries a nondegenerate symplectic form and linear automorphism induced by $\omega_X$ and $\varphi_X$, respectively. By the induction hypothesis, $L^\perp/L$ admits a maximal totally isotropic subspace that is stable under the induced automorphism. Taking its inverse image in $L^\perp$, we obtain a maximal totally isotropic subspace of $X$ that is stable under $\varphi_X$. 
\end{proof}

As a consequence of Propositions~\ref{2odd impossible} and~\ref{2odd impossible 2}, we obtain the following. 
\begin{corollary}\label{lasc}
Let $m\geq1$ be an odd integer. Assume that $\F_p$ contains a primitive $m$-th root of unity. Then, for any $R$, 
\[
(\mu(\overline{C}_R),\delta(\overline{C}_R))\neq(2m,1). 
\]
\end{corollary}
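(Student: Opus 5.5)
Suppose, for contradiction, that $(\mu(\overline{C}_R),\delta(\overline{C}_R))=(2m,1)$, so $\overline{C}_R$ is $\F_{p^{2m}}$-minimal. Then Corollary~\ref{exN}, applied with $q=p^{2m}$, gives
\[
V:=\ker(R+R^\ast)\subset \F_{p^{2m}}.
\]
The aim is to feed $R$ into Proposition~\ref{2odd impossible 2} and then into Proposition~\ref{2odd impossible}, which together forbid $\F_{p^{2m}}$-minimality.

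First check the hypothesis of Proposition~\ref{2odd impossible 2}, namely that $x^m+1$ splits completely over $\F_p$. Since $m$ is odd, it is prime to $2$, so $x^m+1=x^m-1$ is separable in characteristic $2$. Its roots are exactly the $m$-th roots of unity, i.e.\ the powers of a primitive $m$-th root of unity $\zeta\in\F_p$. Hence it splits completely over $\F_p$, and with $V\subset\F_{p^{2m}}$ Proposition~\ref{2odd impossible 2} yields an $F\in\F_p\{\tau^{\pm1}\}$ with the following properties:
\begin{itemize}
\item $F$ satisfies conditions (i)--(ii) stated after \eqref{Fdef};
\item $W_F\subset\F_{p^m}$;
\item $R+R^\ast=F^\ast F$.
\end{itemize}

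A small point to check is that the $F$ produced via Corollary~\ref{dec2} really satisfies (i)--(ii). It is separable of degree $e$, and $F^\ast(1)^2$ equals the $\tau^0$-coefficient of $R+R^\ast$, which is $a_0+a_0=0$. This is the same argument as in Lemma~\ref{even}. Coefficients in $\F_p$ come from the $\varphi$-stability of $W$, and $b_e\neq0$ follows from $\dim W=e$.

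Proposition~\ref{2odd impossible} now applies with this $F$ and says that $C_R$ is not $\F_{p^{2m}}$-minimal. This contradicts the assumption, so $(\mu(\overline{C}_R),\delta(\overline{C}_R))\neq(2m,1)$.

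The substantive work is already done in the two propositions, so the only real obstacle is verifying that their hypotheses match: the splitting of $x^m+1$ (which uses that $m$ is odd) and the inclusion $V\subset\F_{p^{2m}}$ (which uses minimality through Corollary~\ref{exN}).
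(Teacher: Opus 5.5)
Your argument is correct and matches the paper, which obtains the corollary by combining Propositions~\ref{2odd impossible 2} and~\ref{2odd impossible}: minimality over $\F_{p^{2m}}$ gives $\ker(R+R^\ast)\subset\F_{p^{2m}}$ by Corollary~\ref{exN}, the primitive $m$-th root of unity makes $x^m+1$ split over $\F_p$, and Proposition~\ref{2odd impossible 2} then supplies the $F$ that Proposition~\ref{2odd impossible} needs. Your separate check that this $F$ satisfies (i)--(ii) is correct, but it is redundant, since the statement of Proposition~\ref{2odd impossible 2} already guarantees it.
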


\appendix

\section{Galois-theoretic criterion for maximality}
\label{Gal}

Let $p$ be a power of a prime number $p_0$. In this section, we also consider the case where $p_0$ is odd. Let $S$ be a normal connected scheme of finite type over $\mathbb{F}_{p}$. Fix elements $a_0,\dots,a_e\in \Gamma(S,{\mathcal O}_S)$ where $e\geq1$ and $a_e$ is invertible. Let $R(x)=\sum_{i=0}^ea_ix^{p^i}$. Consider the relative smooth affine $S$-curve defined by 
\[
C_R \colon y^p-y= xR(x). 
\]

For an $S$-scheme $X$ and a morphism $S'\to S$ of schemes, we write $X_{S'}$ for the base change $X\times_SS'$. 

Let $T\to S$ be a connected finite \'etale Galois covering with Galois group $G$. For a morphism $s\to S$ from the spectrum of a finite field, we write ${\rm Fr}_s$ for 
the conjugacy class in $G$ containing the geometric Frobenius element at $s$.

The following is the main theorem of this section. 
\begin{theorem}\label{T_12}
The following statements hold: 
\begin{enumerate}
\item 
There exists a unique connected finite \'etale Galois covering $T_1\to S$ with Galois group $G_1$ satisfying the following property: For any morphism $s\to S$ from the spectrum of a finite field, 
the curve $C_{R,s}$ is $s$-extremal if and only if 
the conjugacy class ${\rm Fr}_s$ is trivial. In other words, the condition that $C_{R,s}$ is $s$-extremal is equivalent to the existence of a lift $s\to T_1$ of the morphism $s\to S$.  

\item There exists a unique connected finite \'etale Galois covering $T_2\to S$ with Galois group $G_2$ satisfying the following property: For any morphism $s\to S$ from the spectrum of a finite field, 
the curve $C_{R,s}$ is $s$-minimal  if and only if 
the conjugacy class ${\rm Fr}_s$ in $G_2$ is trivial. 
\end{enumerate}
\end{theorem}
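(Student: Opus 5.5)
Uniqueness in both parts follows from Chebotarev density. A connected Galois cover $T\to S$ is determined by the set of closed points $s$ at which ${\rm Fr}_s$ is trivial, namely the points that split completely. Two covers with the same split set have a compositum in which the same points split completely, so Chebotarev forces them to coincide. The real work is existence, and the plan is to realize extremality and minimality as conditions on the monodromy of the lisse sheaf $\mathcal{H}:=R^1\pi_!\Ql$, where $\pi\colon C_R\to S$ is the structure map.

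\textbf{Properness and smoothness of $\mathcal{H}$.} The compactification $\overline{C}_R\to S$ is a smooth proper family. The point at infinity is a single section: $a_e$ is invertible, and Lemma~\ref{g_C} works fibrewise. Hence $\mathcal{H}$ is lisse of rank $p^e(p-1)$ and pure of weight $1$ (after fixing an isomorphism $\Ql\cong\C$). Let $\rho\colon\pi_1(S)\to GL(\mathcal{H}_{\bar s})$ be the monodromy representation. By Lemma~\ref{mmeCa}, which only uses the genus formula and the Riemann hypothesis and so holds over any finite field $k(s)$, the curve $C_{R,s}$ is $s$-extremal if and only if $\rho({\rm Fr}_s)$ is the scalar $\pm\sqrt{|k(s)|}$. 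It is $s$-minimal if and only if this scalar is $+\sqrt{|k(s)|}$.

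\textbf{Normalizing away the weight.} To remove the weight I twist by a half Tate twist: choose a rank one character $\chi$ of ${\rm Gal}(\F/\F_p)$ sending ${\rm Fr}_p$ to $\sqrt{p}$, with a fixed choice of sign. Set $\mathcal{H}':=\mathcal{H}\otimes\chi^{-1}$, pulled back to $S$. A point with $|k(s)|=p^d$ is then minimal if and only if $\rho'({\rm Fr}_s)=1$, and extremal if and only if $\rho'({\rm Fr}_s)=\pm1$. The subtlety is the sign of $\sqrt{|k(s)|}=\sqrt{p}^d$. Maximal means eigenvalue $-\sqrt{q}$ with $\sqrt{q}$ positive, so I must check that $\chi({\rm Fr}_s)$ equals the positive square root. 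That holds once $\chi({\rm Fr}_p)$ is the positive real $\sqrt{p}$ under the chosen embedding.

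\textbf{Finiteness of the image.} Next I show that the image $\Gamma:=\rho'(\pi_1(S))$ is finite. Every fibre is supersingular, since its Frobenius eigenvalues are $\sqrt{q}$ times roots of unity by the geometric covering by Hermitian-type curves. In characteristic $2$ this is also visible from Theorem~\ref{FEV1} after a finite extension, and in odd characteristic from \cite{ITT,TT}. More robustly, the geometric monodromy is finite: over the finite cover of $S$ trivializing $V=\ker(R+R^\ast)$ together with its Heisenberg data, the family becomes isotrivial up to an Artin--Schreier twist. Failing that, I would use that all Frobenius eigenvalues of $\mathcal{H}'$ are roots of unity of bounded order, since they lie in a field of bounded degree. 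By Chebotarev, the Zariski closure of $\Gamma$ then consists of quasi-unipotent elements; because $\mathcal{H}'$ is pure, hence semisimple on Frobenius, these elements have finite order, and Burnside--Schur gives that $\Gamma$ is finite. This finiteness is the main obstacle.

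\textbf{Constructing $T_1$ and $T_2$.} Granting finiteness, let $T_2\to S$ be the Galois cover with group $G_2:=\Gamma$ corresponding to $\ker\rho'$. Then ${\rm Fr}_s$ is trivial in $G_2$ exactly when $C_{R,s}$ is minimal. For $T_1$, take $G_1:=\Gamma/(\Gamma\cap\{\pm1\})$; then ${\rm Fr}_s$ is trivial in $G_1$ exactly when $\rho'({\rm Fr}_s)=\pm1$, that is, when $C_{R,s}$ is extremal. Both covers are connected because $S$ is connected and the image is taken to be all of $G_i$. Finally, the choice of $\chi$ does not affect which Frobenius classes are trivial, so the uniqueness argument above applies.
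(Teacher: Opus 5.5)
Your argument is correct, and your uniqueness step is the paper's, but your existence proof takes a genuinely different route.

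\textbf{How the paper proves existence.} It never works with the full monodromy of $R^1\pi_!\Ql$.

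\begin{enumerate}
\item It first passes to the cover $S_1\to S$ that trivializes the Heisenberg group scheme $H_R$. This is legitimate because, by Lemma~\ref{H_R splits}(ii), $H_{R,s}$ splits at every extremal point.
\item It proves that each $R^1g_!\mathcal{L}_\psi(xR(x))$ is lisse, using Laumon's semicontinuity of the Swan conductor \cite{Lau}.
\item It combines the Stone--von Neumann irreducibility of $V_\psi$ under $H_R$ with Schur's lemma. So $\pi_1(S_1)$ acts on each $\psi$-isotypic piece through a character $\chi_\psi$.
\item It cuts out $T_1$ and $T_2$ by the characters $\chi_\psi\chi_{\psi'}^{-1}$ and $\Ql(-1/2)\chi_\psi^{-1}$. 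Finiteness comes from the explicit relation $\theta_\psi^{4p_0}=1$ of \cite[Theorem~1.2(1)]{TT}.
\end{enumerate}

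\textbf{What each route buys.} You replace all of this with soft inputs: a smooth proper relative compactification, supersingularity of every fibre, Chebotarev density, and Burnside. Your argument would therefore apply verbatim to any family of supersingular curves with lisse $H^1$. Your half twist with $\chi({\rm Fr}_p)=\sqrt p$ is defined on all of $\pi_1(S)$, so you never need a morphism $S\to{\rm Spec}(\F_{p^2})$. The paper's route gives more structure in return. Frobenius acts by a scalar on each $V_\psi$ automatically, $T_1$ and $T_2$ visibly dominate the splitting cover of $H_R$, and one gets an explicit exponent bound for $G_2$ over $S_1$.

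\textbf{Justifications to repair.}
\begin{enumerate}
\item Purity does not imply that Frobenius is semisimple. You need Weil's theorem that Frobenius acts semisimply on the Tate module of an abelian variety over a finite field, applied to the Jacobian of $\overline{C}_{R,s}$. You use this twice: to pass from ``single eigenvalue $\pm\sqrt q$'' (Lemma~\ref{mmeCa}) to ``$\rho({\rm Fr}_s)$ is a scalar'', and to get $\rho'({\rm Fr}_s)^N=1$. In the paper this comes for free from Schur's lemma.
\item The finiteness step should rest on a precise supersingularity statement. For example, use Theorem~\ref{FEV1} after a suitable finite extension in characteristic $2$, and \cite{TT} in odd characteristic. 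Then argue: $\{g\mid g^N=1\}$ is closed and Frobenius elements are dense, so $\Gamma$ has exponent dividing $N$, and Burnside gives finiteness. Your remarks about Hermitian coverings and isotriviality ``up to an Artin--Schreier twist'' are not arguments and can be dropped.
\item Smoothness of $\overline{C}_R\to S$ along the section at infinity needs a short chart computation, for instance with the uniformizer $x^{p^{e-1}}/y$. The paper avoids this by using the Swan-conductor argument instead.
\end{enumerate}
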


\begin{corollary}\label{T2 dominates T1}
 The Galois covering $T_2$ dominates $T_1$. Fix an $S$-morphism $T_2\to T_1$. Let $s\to T_1$ be a morphism from the spectrum of a finite field. We view $s$ as an $S$-scheme by the composite morphism $s\to T_1\to S$. Then the curve $C_{R,s}$ is $s$-maximal if and only if the morphism $s\to T_1$ does not lift to $s\to T_2$. 
\end{corollary}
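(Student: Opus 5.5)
The plan is to deduce Corollary~\ref{T2 dominates T1} directly from the characterizing properties of $T_1$ and $T_2$ in Theorem~\ref{T_12}, using the Chebotarev density theorem to compare coverings. The key observation is that minimality implies extremality. Hence every closed point $s$ of $S$ whose Frobenius is trivial in $G_2$ (i.e.\ $s$ splits completely in $T_2$) also has trivial Frobenius in $G_1$.

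\textbf{Step 1: domination.} Consider the compositum. Let $T_{12}$ be a connected component of $T_1\times_S T_2$; it is a connected finite \'etale Galois covering of $S$ whose Galois group $H$ embeds into $G_1\times G_2$. The points that split completely in $T_{12}$ are exactly those splitting completely in both $T_1$ and $T_2$. By Step~0 these coincide with the points splitting completely in $T_2$.

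By Chebotarev density for schemes of finite type over $\F_p$ (normal and connected, so $\pi_1$ is defined and Frobenii are dense), the set of points splitting completely in a Galois cover has density $1/|\mathrm{group}|$. This gives $|H|=|G_2|$, so the projection $T_{12}\to T_2$ is an isomorphism. Consequently $T_2$ dominates $T_1$.

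An alternative formulation works at the level of $\pi_1(S)$. Let $N_1,N_2$ be the corresponding open normal subgroups. Frobenius elements of points with trivial image in $G_2$ lie in $N_1$, and Chebotarev shows such Frobenii are dense in $N_2$. Since $N_1$ is closed, $N_2\subset N_1$.

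\textbf{Step 2: the maximality criterion.} Fix the $S$-morphism $T_2\to T_1$ and a morphism $s\to T_1$. Composing with $T_1\to S$, the point $s$ lifts to $T_1$, so by Theorem~\ref{T_12}(i) the curve $C_{R,s}$ is $s$-extremal. It is therefore either $s$-maximal or $s$-minimal, and these are exclusive because the genus is positive.

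Minimality is equivalent, by Theorem~\ref{T_12}(ii), to the existence of some lift $s\to T_2$ over $S$. It remains to check that this is the same as a lift of the given $s\to T_1$ through $T_2\to T_1$. One direction is clear. Conversely, given an $S$-lift $\tilde s\colon s\to T_2$, its image in $T_1$ differs from the given point by an element of $G_1$. Since $G_2\to G_1$ is surjective, we can adjust $\tilde s$ by a preimage in $G_2$ to make it compatible. Thus $C_{R,s}$ is $s$-maximal if and only if $s\to T_1$ does not lift to $T_2$.

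The main obstacle is the Chebotarev step. One must apply the density theorem for general normal connected $S$ of finite type (e.g.\ via Serre's version for schemes) rather than for curves. One must also handle morphisms $s\to S$ from arbitrary finite fields, not just closed points. The latter reduces to closed points, since the Frobenius of a field extension is a power of the closed point's Frobenius.
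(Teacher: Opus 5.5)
Your proof is correct, but it reaches the domination statement by a different route from the one the paper intends. The paper does not write out a proof of the corollary. The intended argument reads domination directly off the construction in the proof of Theorem~\ref{T_12}:
\begin{itemize}
\item $T_1$ corresponds to $\bigcap_{\psi,\psi'}\ker(\chi_\psi\chi_{\psi'}^{-1})$;
\item $T_2$ corresponds to $\bigcap_{\psi}\ker\theta_\psi$, where $\theta_\psi=\Ql(-1/2)\cdot\chi_\psi^{-1}$;
\item since $\chi_\psi\chi_{\psi'}^{-1}=\theta_{\psi'}\theta_\psi^{-1}$, the second subgroup lies in the first, so $T_2$ dominates $T_1$ in one line.
\end{itemize}
The maximality criterion then follows as in your Step~2: extremal and not minimal.

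You instead use only the characterizing properties of Theorem~\ref{T_12}. Minimal implies extremal, so every point split in $T_2$ is split in $T_1$, and you conclude with Chebotarev, the same tool the paper uses for uniqueness. Your route has two advantages. It is independent of the specific construction, including the preliminary reduction to $S_1$. It also shows that domination holds for any pair of coverings with the stated properties.

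The cost is invoking Chebotarev for schemes of finite type over $\F_p$. There you must use Dirichlet density, or better your $\pi_1$-formulation via density of Frobenius conjugacy classes. Natural density genuinely fails when a covering contains a constant field extension.

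Your Step~2 also makes explicit something the paper leaves tacit. An $S$-lift $s\to T_2$ can be adjusted to be compatible with the fixed morphism $T_2\to T_1$. For this, state the fact you are using: any $S$-morphism between connected Galois coverings is equivariant for a surjective homomorphism $G_2\to G_1$. That is what justifies your ``adjust by a preimage in $G_2$'' step.

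Finally, the ``Step~0'' you cite is the unlabeled first paragraph; label it so the reference is clear.
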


The remainder of this section is devoted to the proof of Theorem~\ref{T_12}. 
To this end, we recall several constructions associated with the curve $C_R$; in the case where $S$ is the spectrum of a finite field, these are discussed in  \cite[\S2.2]{ITT0} and \cite[\S2.1]{TT}. Since the same constructions work over a general base scheme $S$ without modification, we omit the straightforward verifications.

Define 
\[
 E_R(x):=R(x)^{p^e}+\sum_{i=0}^e(a_ix)^{p^{e-i}}. 
\]
Since $a_e$ is invertible, $E_R$ defines a finite \'etale morphism $E_R\colon \A^1_S\to \A^1_S$. Consider the cartesian diagrams 
\[
\xymatrix{
H_R\ar[d]\ar[r]&V_R\ar[d]\ar[r]&S\ar[d]^-{0_S}\\
C_R\ar[r]^-{\phi}&\A^1_S\ar[r]^-{E_R}&\A^1_S, 
}
\]
where $\phi$ denotes the projection $(x,y)\mapsto x$, and $0_S$ denotes the zero section $S\to \A^1_S$. Since both $\phi$ and $E_R$ are finite \'etale, $V_R$ and $H_R$ are finite \'etale $S$-schemes. 
For an $S$-scheme $T$, we identify 
\begin{align*}
   V_R(T)&=\{a\in\Gamma(T,\mathcal{O}_T)\mid E_R(a)=0\},\\
H_R(T)&=\{(a,b)\in V_R(T)\times \Gamma(T,\mathcal{O}_T)\mid b^p-b=aR(a)\}. 
\end{align*}

We recall that the $S$-schemes $V_R$ and $H_R$ are naturally endowed with additional algebraic structures. Since $E_R$ is $\F_p$-linear, its kernel $V_R$ carries the structure of an $\F_p$-vector space scheme over $S$. Hence, $V_R$ may be identified with a locally constant constructible \'etale sheaf of $\F_p$-vector spaces on $S$. 

Define 
\begin{align*}
    f_R(x,y) &\coloneqq -\sum_{i=0}^{e-1}\left(\sum_{j=0}^{e-i-1}(a_i x^{p^i} y)^{p^j}
+(x  R(y))^{p^i}\right),\\
\omega_R(x,y)&\coloneqq f_R(x,y)-f_R(y,x).
\end{align*}
Then $\omega_R$ defines a symplectic form on $V_R$. Moreover, it is nondegenerate, in the sense that the associated morphism of \'etale sheaves 
\[
V_R\to \mathcal{H}om_{\F_p}(V_R,\F_p)
\]
is an isomorphism. When $S$ is the spectrum of a field, this is verified in \cite[Lemma~2.6]{Tsu}, and the general case follows from this by taking geometric stalks.

For $(a,b),(c,d)\in H_R$, define 
\[
(a,b)\cdot (c,d):=(a+c,b+d+f_R(a,c)). 
\]
One checks that this defines a group law on $H_R$, so that $H_R$ becomes a group scheme over $S$. The commutator is computed as 
\[
[(a,b), (c,d)]=(0,\omega_R(a,c)). 
\]

By using the nondegeneracy of $\omega_R$, it follows that the center of $H_R$ is 
\[
\F_p\cong\{0\}\times\F_p\subset H_R. 
\]

Finally, we recall that $H_R$ acts on $C_R$ from the right via 
\[
(x,y)\cdot (a,b):=(x+a,y+b+f_R(x,a)).
\]

\medskip

Assume now that $S={\rm Spec}(\F_q)$ for a finite field extension $\F_q/\F_p$. The group of $\F$-valued points $H_R(\F)$ 
is naturally equipped with an ${\rm Fr}_q$-action. 
Consider the induced semidirect product 
\[
G:=H_R(\F)\rtimes {\rm Fr}_q^\Z. 
\]
Through the right action of $H_R$ on $C_R$, the cohomology group 
\[
H_c^1:=H_c^1(C_{R,\F},\Ql)
\]
becomes a representation of the group $G$. 
For every nontrivial character $\psi:\F_p\to\Ql^\times$, let $V_\psi$ denote the $\psi$-isotypic component of $H^1_c$ as an $H_R(\F)$-representation. Here, we identify $\F_p=Z(H_R(\F))$, as explained above. Since $\F_p$ is in the center of the group $G$, the space $V_\psi$ is stable under the action of $G$, and thus defines a representation 
\[
\rho_\psi\colon G\to GL(V_\psi). 
\]

We have an isomorphism 
\[
V_\psi\cong H^1_c(\A^1_{\F},\mathcal{L}_\psi(xR(x))). 
\]
This follows from the decomposition 
\[
\phi_\ast\Ql\cong \bigoplus_{\psi\in\F_p^\vee}\mathcal{L}_\psi(xR(x))
\]
and the fact that $\mathcal{L}_\psi(xR(x))$ is the $\psi$-isotypic component of $\phi_\ast\Ql$. Consequently, by \cite[Lemma~3.3]{TT}, we have $\dim V_\psi=p^e$.

We need the following result. 
\begin{lemma}\label{H_R splits}
    Assume that $S={\rm Spec}(\F_q)$ for a finite field extension $\F_q/\F_p$. Then the following hold: 
    \begin{enumerate}
        \item The $H_R(\F)$-representation $V_\psi$ is irreducible. The group homomorphism 
        \[
        \rho_\psi|_{H_R(\F)}\colon H_R(\F)\to GL(V_\psi)
        \]
is injective. 
\item Assume that $C_R$ is $\F_q$-extremal. Then the \'etale covering $H_R\to S$ splits completely. We have $\F_{p^2}\subset \F_q$. 
    \end{enumerate}
\end{lemma}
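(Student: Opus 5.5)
For part (i), I would use the standard theory of finite Heisenberg groups. The center of $H_R(\F)$ is $\F_p=\{0\}\times\F_p$. The quotient $H_R(\F)/\F_p\cong V_R(\F)$ is an $\F_p$-vector space of dimension $2e$, because $E_R$ is separable of degree $p^{2e}$. The commutator pairing $\omega_R$ on it is nondegenerate. By the Stone--von Neumann theorem, for each nontrivial central character $\psi$ there is a unique irreducible representation of $H_R(\F)$ with central character $\psi$, and it has dimension $p^e$. I would verify this by character theory. Take an irreducible $\rho$ with central character $\psi$. If $h\notin Z$, pick $g$ with $[g,h]=z\neq 0$. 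Then $\chi_\rho(h)=\chi_\rho(ghg^{-1})=\psi(z)^{\pm1}\chi_\rho(h)$, so $\chi_\rho(h)=0$. Hence $\langle\chi_\rho,\chi_\rho\rangle=(\dim\rho)^2\,|Z|/|H_R(\F)|$, which forces $\dim\rho=p^e$. Now $V_\psi$ has dimension $p^e$ and central character $\psi$, so it is this irreducible representation. For injectivity, suppose $\rho_\psi(h)=1$. If $h\notin Z$, then its trace is $p^e\neq 0$, contradicting the vanishing just shown. If $h=(0,z)\in Z$, then $\psi(z)=1$, and since $\psi$ is nontrivial on $\F_p$ (a simple group of prime order up to the $p_0$-structure; more precisely $\ker\psi$ is a proper subgroup), I still need to argue $z=0$. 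This is the subtle point, since $p$ may be a proper power of $p_0$ and $\psi$ need not be faithful on $\F_p$. My plan is to pass through the conjugation action of $V_R(\F)$, using that $\{\,(0,z):\psi(z)=1\,\}$ is a central subgroup. If $\rho_\psi(0,z)=1$ for some $z\neq0$, then $\psi$ is trivial on $z$. However, by Lemma~\ref{decAS}-style rescaling, $V_\psi\cong V_{\psi(b\,\cdot)}$ twisted by the automorphism $(a,b')\mapsto(\lambda a,b\,b')$ of $H_R$ with $\lambda^2=b$ when the coefficients allow it. Failing that, I would restrict to injectivity modulo $\ker\psi|_{\F_p}$. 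I expect this step is exactly where the paper's precise formulation matters, so first I would check whether ``injective'' is meant on $H_R(\F)/\ker\psi$. Otherwise, I would prove it by comparing the center actions across all $\psi$ via the Galois twist above.

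For part (ii), assume $C_R$ is $\F_q$-extremal. By Lemma~\ref{mmeCa} (whose argument applies for any $R$), ${\rm Fr}_q$ acts on $H^1_c$, and hence on each $V_\psi$, as a scalar $\pm\sqrt q$. In $G$ we have ${\rm Fr}_q\,h\,{\rm Fr}_q^{-1}={\rm Fr}_q(h)$, so
\[
\rho_\psi({\rm Fr}_q(h))=\rho_\psi({\rm Fr}_q)\,\rho_\psi(h)\,\rho_\psi({\rm Fr}_q)^{-1}=\rho_\psi(h).
\]
By the injectivity in (i), ${\rm Fr}_q(h)=h$ for all $h\in H_R(\F)$. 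So all points of the finite \'etale scheme $H_R$ are $\F_q$-rational, which means it splits completely.

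Finally, $V_R(\F)\subset\F_q$ has dimension $2e\ge 2$ and is the image of $H_R(\F)$. I then apply the argument of Lemma~\ref{even}, which works in any characteristic via a factorization $E=F^\ast aF$ as in Corollary~\ref{dec2}, together with Lemma~\ref{b2}(ii). Alternatively, I would observe that $\mathrm{Fr}_q$ fixes $H_R(\F)$, and $V\subset\F_q$ forces $[\F_q:\F_p]$ even, as in Corollary~\ref{exN} combined with Lemma~\ref{even}. Either route gives $\F_{p^2}\subset\F_q$.
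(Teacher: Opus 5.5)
Your treatment of irreducibility and of the splitting of $H_R$ follows the paper. The last claim of (ii), $\F_{p^2}\subset\F_q$, has a genuine gap.

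Both of your routes go through Lemma~\ref{even}, i.e.\ ``$V\subset\F_q$ forces $[\F_q:\F_p]$ even''. That is a characteristic-$2$ fact. It rests on $F^\ast(1)=0$, which comes from the identity $a_0=F^\ast(1)^2$ in Corollary~\ref{dec2}, and that identity holds only for $p_0=2$. The appendix explicitly allows $p_0$ odd, and there the implication fails even if you use the full splitting of $H_R$.

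Here is a counterexample. Take $p$ odd and $R(x)=x^p+\tfrac12x$, so that $R+R^\ast=\tau+1+\tau^{-1}$.
\begin{itemize}
\item Since $(\tau-1)(\tau^2+\tau+1)=\tau^3-1$, you get $V\subset\F_{p^3}$.
\item For $u\in V$ put $u_i:=u^{p^i}$, so that $u_0+u_1+u_2=0$. Then $\Tr_{p^3/p}(uR(u))=\sum_{i<j}u_iu_j+\tfrac12\sum_iu_i^2=\tfrac12(u_0+u_1+u_2)^2=0$.
\item Hence $H_R$ splits completely over $\F_{p^3}$, although $[\F_{p^3}:\F_p]=3$ is odd.
\end{itemize}

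So for odd $p_0$ you must use extremality itself, through the shape of the Frobenius eigenvalues. The paper does this as follows.
\begin{itemize}
\item By \cite[Corollary~A.8]{TT}, $-\zeta_{p,\psi}\bigl(\frac{c}{q}\bigr)G_\psi$ is an ${\rm Fr}_q$-eigenvalue. Here $c$ does not depend on $\psi$, and $\zeta_{p,\psi}$ is a $p$-th root of unity.
\item For $a\in\F_p^\times$ one has $G_{\psi_a}=\bigl(\frac{a}{q}\bigr)G_\psi$. Equality of all eigenvalues therefore makes $\bigl(\frac{a}{q}\bigr)$ a $p$-th root of unity, hence equal to $1$.
\item So every element of $\F_p^\times$ is a square in $\F_q$, which gives $\F_{p^2}\subset\F_q$.
\end{itemize}
For $p_0=2$ your route is the paper's.

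Two repairs are also needed in (i).

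First, in showing $\chi_\rho(h)=0$ for $h=(a,b)$ with $a\neq0$, you need $\psi([g,h])\neq1$, not merely $[g,h]\neq0$. Such a $g$ exists because $\omega_R(a,-)\colon V_R\to\F_p$ is a nonzero linear form, hence surjective.

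Second, your worry about the center is justified. If $p$ is not prime, then $(0,b)$ with $0\neq b\in\ker\psi$ acts trivially on $V_\psi$. So injectivity for a single $\psi$ is false, and the rescaling idea cannot rescue it. What your trace argument does prove is $\ker\rho_\psi=\{(0,b)\mid\psi(b)=1\}$, hence $\bigcap_{\psi\neq1}\ker\rho_\psi=1$. This is what the paper's proof actually establishes (``as $\psi$ is arbitrary''), and it is all that (ii) needs. Extremality makes ${\rm Fr}_q$ a scalar on every $V_\psi$ simultaneously, so ${\rm Fr}_q(h)h^{-1}$ lies in every $\ker\rho_\psi$.
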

\begin{proof}
(i) We first prove irreducibility. We apply the Stone--von Neumann theorem for a finite two-step nilpotent group; for this theorem, we refer to \cite[Exercises~4.1.4--4.1.8]{Bum}. 

Since the character $\psi$ is nontrivial, it is generic in the sense of loc.~cit. Therefore, by \cite[Exercise~4.1.8]{Bum}, there exists a unique irreducible representation $\pi_\psi$ of $H_R(\F)$ whose central character is $\psi$. Moreover, its dimension is equal to $p^e$.

Since the center $\F_p$ acts on $V_\psi$ via $\psi$,  the uniqueness of $\pi_\psi$ implies that 
\[
V_\psi\cong \pi_\psi^{\oplus n}
\]
 for some $n\in \Z_{\geq0}$. Since both $V_\psi$ and $\pi_\psi$ have dimension $p^e$, it follows that $n=1$. Hence, $V_\psi$ is irreducible. 
 
We now prove that $\rho_\psi$ is injective on $H_R(\F)$. Let $g=(a,b)\in H_R(\F)$ lie in the kernel. If $a=0$, then $b\in \F_p$, and hence $g\in Z(H_R(\F))$. Thus, for each nontrivial character $\psi$, the element  $g$ acts on $V_\psi$ by scalar multiplication by $\psi(b)$. Since $\rho_\psi(g)=1$, it follows that $\psi(b)=1$. As $\psi$ is arbitrary, we conclude that $b=0$, and hence $g=1$.  

Assume now that $a\neq0$. By the nondegeneracy of $\omega_R$, there exists an element $h\in H_R(\F)$ such that 
\[
[g,h]=(0,c)\qquad \text{for some }c\in\F_p\setminus\{0\}. 
\]
Choose a nontrivial character $\psi$ such that $\psi(c)\neq1$. Then $[g,h]$ acts nontrivially on $V_\psi$. In particular, $\rho_\psi(g)\neq1$. This proves injectivity. 

 \medskip 

 \noindent
 (ii) We first prove that $H_R$ splits. It suffices to show that ${\rm Fr}_q$ commutes with $H_R(\F)$ in $G$. 
 
By the assumption that $C_R$ is $\F_q$-extremal, the image $\rho_\psi({\rm Fr}_q)$ is a scalar matrix. In particular, it commutes with $\rho_\psi(H_R(\F))$. On the other hand, by~(i), the representation $\rho_\psi$ is injective on $H_R(\F)$. Therefore, ${\rm Fr}_q$ commutes with $H_R(\F)$ in $G$. This proves that $H_R$ splits. 

We now show that $\F_{p^2}\subset \F_q$. Since $H_R$ splits, the intermediate covering $V_R$ also splits. Thus, the case where $p_0=2$ is proved in Lemma~\ref{even}. Assume that $p_0$ is odd. In this case,  \cite[Corollary~A.8]{TT} implies that, for each nontrivial character $\psi$, the value 
\[
-\zeta_{p,\psi} \cdot \left(\frac{c}{q}\right)\cdot  G_\psi, \qquad(G_\psi:=\sum_{x\in \F_q}\psi(x^2))
\]
appears as an ${\rm Fr}_q$-eigenvalue of $C_R$, where $c\in \F_q^\times$ is independent of $\psi$, and $\zeta_{p,\psi}$ is a certain $p$-th root of unity. 
Since $C_R$ is $\F_q$-extremal, all ${\rm Fr}_q$-eigenvalues on $H_c^1$ are equal. For every $a\in \F_q^\times$, let $\psi_a$ denote the character $x\mapsto \psi(ax)$. Then 
\[
\zeta_{p,\psi} G_\psi=\zeta_{p,\psi_a} G_{\psi_a}=\zeta_{p,\psi_a}\left(\frac{a}{q}\right) G_{\psi}, 
\]
and therefore $\zeta_{p,\psi}=\zeta_{p,\psi_a}(\frac{a}{q})$. Taking $p$-th powers, we obtain $(\frac{a}{q})=1$ for all $a\in\F_q^\times$. Thus every element of $\F_p^\times$ is a square in $\F_q$, which implies that $\F_{p^2}\subset\F_q$. 
\end{proof}

\medskip 

\noindent\textit{(Proof of Theorem~\ref{T_12})}

{The uniqueness of $T_1$ and $T_2$ follows from the Chebotarev density theorem. We prove the existence by constructing them explicitly. 

\medskip 

\noindent
By Lemma~\ref{H_R splits}, for any morphism $s\to S$ from the spectrum of a finite field such that $C_s$ is $s$-extremal, the finite \'etale $s$-scheme $H_{R,s}$ splits completely. 
Therefore, the Galois coverings $T_1$ and $T_2$ with the required properties should trivialize the finite \'etale $S$-scheme $H_R$. 

Let $S_1\to S$ be the 
smallest connected Galois covering that trivializes $H_R$. Replacing $S_1$ by $S$, we may assume that $H_R$ is trivial over $S$. 
Since $S$ is connected, the $S$-scheme $H_{R}$ can be identified with the constant group scheme associated with the group  $H_R(S)$. 

Since $V_R\to S$ is an intermediate covering of $H_R\to S$, $V_R$ is also trivial. We also identify it with the constant $S$-scheme associated with the $\F_p$-vector space $V_R(S)$. By abuse of notation, 
we write $V_R$ and $H_R$ for $V_R(S)$ and $H_R(S)$, respectively. 

\medskip 

One checks that the action of $H_R$ on $C_R$ is free and that the quotient morphism $C_{R}\to C_{R}/H_R$ is identified with  $E_R\circ\phi\colon C_{R}\to\A^1_{S}$. In the case where $S$ is the spectrum of a finite field, this is proved in \cite[Lemma~2.7]{TT}. The general case follows by the same argument.

Fix a maximal totally isotropic subspace $\overline{A}\subset V_R$ and set $A:=\{(a,b)\in H_R\mid a\in \overline{A}\}$, which is a maximal abelian subgroup of $H_R$. Put $F_A(x):=\prod_{a\in \overline{A}}(x-a)$. The quotient morphism $C_{R}\to C_{R}/A$ is identified with $F_{A}\circ\phi\colon C_{R}\to \A^1_{S}$.

Let $f\colon C_{R}\to S$ denote the structure morphism. We compute $R^1f_!\Ql$ as follows. First we have 
\[\phi_!\Ql\cong \Ql\oplus\bigoplus_{\psi\in\F_p^\vee\setminus\{1\}}{\mathcal L}_{\psi}(xR(x)). \]
Let $g\colon \A^1_{S}\to S$ denote the structure morphism. Since 
$R^1 g_! \Ql=0$, we have 
\[R^1f_!\Ql\cong \bigoplus_{\psi\in\F_p^\vee\setminus\{1\}}R^1g_!({\mathcal L}_{\psi}(xR(x)))=\bigoplus_{\psi\in\F_p^\vee\setminus\{1\}}R^1g_!\mathcal{M}_\psi, \]
where we set $\mathcal{M}_\psi:={\mathcal L}_{\psi}(xR(x))$. 
We claim that $R^1g_!\mathcal{M}_\psi$ is a smooth $\Ql$-sheaf on $S$. Indeed, consider the canonical compactification 
\[
\A^1_S\xhookrightarrow{j}\mathbb{P}^1_S\xrightarrow{\overline{g}}S. 
\]
Then 
\[
R^1g_!\mathcal{M}_\psi\cong R^1\overline{g}_!j_!\mathcal{M}_\psi.
\]
Consequently, by \cite[Corollaire~2.1.2]{Lau} applied to $j_!\mathcal{M}_\psi$, smoothness follows once we check that the function 
\[
S\to\Z,\quad s\mapsto {\rm Sw}_\infty(j_!\mathcal{M}_\psi|_{\mathbb{P}^1_s}\bigr)
\]
is constant, where ${\rm Sw}_\infty$ denotes the Swan conductor at the infinity. This conductor is equal to $p^{e}+1$ by \cite[Lemma~3.3]{TT}, and hence constant. 

Hence, the sheaf $R^1g_!\mathcal{M}_\psi$ is smooth. Moreover, it carries an action of $H_R$. 
Take and fix a geometric point $\eta\to S$ lying over a closed point. Then the stalk 
\[
V_\psi:=(R^1g_!\mathcal{M}_\psi)_\eta
\]
becomes a continuous $\Ql$-representation of the group $\pi_1(S,\eta)\times H_R$. By the proper base change theorem, $V_\psi$ is isomorphic to 
\[
H^1_c(\A^1_\eta,\mathcal{L}_\psi(xR(x))|_{\A^1_\eta}). 
\]
Thus, $V_\psi$ is irreducible as an $H_R$-representation by Lemma~\ref{H_R splits}(i). Since the actions of $\pi_1(S,\eta)$ and $H_R$ commute,  the group $\pi_1(S,\eta)$ acts on $V_\psi$ through a character 
\[\chi_\psi\colon \pi_1(S,\eta)\to\Ql^\times. \]

Using these characters, we define the $S$-scheme $T_1$ to be the Galois covering corresponding to the intersection of the kernels of the characters 
    \[
    \chi_\psi\cdot\chi_{\psi'}^{-1}\qquad\text{for all }\psi,\psi'\in \F_p^\vee\setminus\{1\}. 
    \]

Similarly, we define $T_2$ to be the Galois covering corresponding to the intersection of the kernels of the characters 
    \[
    \Ql(-1/2)\cdot\chi_\psi^{-1}\qquad\text{for all }\psi\in\F_p^\vee\setminus\{1\}, 
    \]
    where $\Ql(-1/2)$ denotes the 
character defined by the composite 
\[\pi_1(S)^{\mathrm{ab}} \to \pi_1({\rm Spec}(\F_{p^2}))\to\Ql^\times.\]
Here, the first map is induced by the structure morphism $S\to{\rm Spec}(\F_{p^2})$, whose existence follows from Lemma~\ref{H_R splits} and the Chebotarev density theorem, and the 
second map sends the geometric Frobenius element to $p$. 
    
To verify that these definitions indeed give \emph{finite} coverings, it suffices to show that the displayed characters have finite order. For this, it is enough to prove that the character 
\[
\theta_\psi\coloneqq \Ql(-1/2)\cdot \chi_\psi^{-1}
\]
has finite order. In fact, we show that $\theta_\psi^{4p_0}$ is trivial. Since this can be checked at closed points, we may assume that $S={\rm Spec}(\F_q)$ for some finite field extension $\F_q/\F_p$. 

This follows from \cite[Theorem 1.2(1)]{TT}, and the proof is complete.
 \qed}

\subsection*{Acknowledgement} 
T. I. is supported by JSPS KAKENHI Grant Numbers 23K20786, 24K21512 and 25K00905.\\
D. T. is supported by JSPS KAKENHI Grant Number 25KJ0122.\\
T. T. is supported by JSPS KAKENHI Grant Numbers 25K06959 and 23K20786.


\begin{thebibliography}{99}
\bibitem{BP}
R.~Blache and T.~Pierre,
\textit{Zeta functions of quadratic Artin--Schreier curves in characteristic two},
Acta Arith.\ \textbf{207} (2023), No.~1, 39--56.

\bibitem{Bum}
D.~W.\ Bump, 
\textit{Automorphic forms and representations}, 
Cambridge Studies in Advanced Mathematics, 55, Cambridge Univ.\ Press, Cambridge, 1997.

\bibitem{C}
R.\ S.\ Coulter,
\textit{The number of rational points of a class of Artin--Schreier curves},
Finite Fields Appl.\ \textbf{8} (2002), No.~4, 397--413.


\bibitem{Go}
D. Goss,
\textit{Basic Structures of Function Field Arithmetic},
Springer, 1996.

\bibitem{GV}
G.\ van der Geer and M.\ van der Vlugt,
\textit{Reed--Muller codes and supersingular curves. I},
Compositio Math.\ \textbf{84} (1992), No.~3, 333--367.


\bibitem{ITT}
T.\ Ito, R.\ Tatematsu and T.\ Tsushima,
\textit{Criteria of maximality and minimality of van der Geer--van der Vlugt curves},
Finite Fields Appl.\ \textbf{111} (2026), Article ID 102781.

\bibitem{ITT0}
T.\ Ito, D.\ Takeuchi and T.\ Tsushima,
\textit{The $L$-polynomials of van der Geer--van der Vlugt curves in characteristic $2$},
J.\ Lond.\ Math.\ Soc.\ \textbf{113} (2026), No.~3, Article ID e70523.

\bibitem{KP}
V.\ Karemaker and R.\ Pries, 
\textit{Fully maximal and fully minimal abelian varieties}, 
J.\ Pure Appl.\ Algebra \textbf{223} (2019), No.~7, 3031--3056.


\bibitem{Lau}
G.\ Laumon, 
\textit{Semi-continuit\'e{} du conducteur de Swan (d'apr\`es P.\ Deligne)}, in {\it The Euler-Poincar\'e{} characteristic}, pp.\ 173--219, Ast\'erisque, 82--83, Soc.\ Math.\ France, Paris. 


\bibitem{St}
H.\ Stichtenoth,
\textit{Algebraic Function Fields and Codes},
2nd ed., Graduate Texts in Mathematics, vol.\ 254,
Springer, 2009.

\bibitem{SX}
H.\ Stichtenoth and C.\ Xing, 
\textit{On the structure of the divisor class group of a class of curves over finite fields},
Arch.\ Math.\ (Basel) \textbf{65} (2) (1995), 141--150. 

\bibitem{TT}
D.\ Takeuchi and T.\ Tsushima,
\textit{Gauss sums and van der Geer--van der Vlugt curves},
Bull.\ Lond.\ Math.\ Soc.\ \textbf{56} (2024), No.~2, 602--623.

\bibitem{Tsu}
T.\  Tsushima, 
\textit{Local Galois representations associated to additive
polynomials}, 
Manuscr.\ Math.\ \textbf{174} (2024), No.~3--4, 1151--1182. 

\end{thebibliography}
\end{document}